\theoremstyle{plain}% default
\newtheorem{lem}{Lemme}[section]
\newtheorem{thm}[lem]{Theorem}
\newtheorem{prop}[lem]{Proposition}
\newtheorem{cor}[lem]{Corollary}
\newtheorem{as}[lem]{Assumption}
\theoremstyle{definition}
\newtheorem{defin}[lem]{Definition}
\newtheorem{exa}[lem]{Example}
\theoremstyle{remark}
\newtheorem{re}[lem]{Remark}
\numberwithin{equation}{section}
\numberwithin{figure}{section}
\newcommand{\cD}{\mathcal{D}}
\newcommand{\cE}{\mathcal{E}}
\newcommand{\cF}{\mathcal{F}}
\newcommand{\cG}{\mathcal{G}}
\newcommand{\cH}{\mathcal{H}}
\newcommand{\cM}{\mathcal{M}}
\newcommand{\cO}{\mathcal{O}}
\newcommand{\cS}{\mathcal{S}}
\newcommand{\cU}{\mathcal{U}}
\newcommand{\cV}{\mathcal{V}}
\newcommand{\bN}{\mathbf{N}}
\newcommand{\bZ}{\mathbf{Z}}
\newcommand{\bR}{\mathbf{R}}
\newcommand{\bC}{\mathbf{C}}
\newcommand{\fb}{\mathfrak{b}}
\newcommand{\fg}{\mathfrak{g}}
\newcommand{\fh}{\mathfrak{h}}
\newcommand{\fk}{\mathfrak{k}}
\newcommand{\fm}{\mathfrak{m}}
\newcommand{\fn}{\mathfrak{n}}
\newcommand{\fp}{\mathfrak{p}}
\newcommand{\ft}{\mathfrak{t}}
\newcommand{\fz}{\mathfrak{z}}
\newcommand{\fu}{\mathfrak{u}}
\newcommand{\sF}{\mathscr{F}}
\newcommand{\sG}{\mathscr{G}}
\DeclareMathOperator{\Trs}{\mathrm Tr_s}
\DeclareMathOperator{\Tr}{\mathrm Tr}
\DeclareMathOperator{\Ad}{\mathrm Ad}
\DeclareMathOperator{\vol}{\mathrm vol}
\DeclareMathOperator{\ad}{\mathrm ad}
\DeclareMathOperator{\Sp}{\mathrm{Sp}}
\DeclareMathOperator{\Supp}{\mathrm Supp}
\renewcommand{\Re}{\mathrm{Re}\,}
\DeclareMathOperator{\im}{\mathrm Im}
\DeclareMathOperator{\End}{\mathrm End}
\DeclareMathOperator{\Hom}{\mathrm Hom}
\DeclareMathOperator{\rk}{\mathrm rk}
\DeclareMathOperator{\GL}{\mathrm GL}
\newcommand{\<}{\langle}
\renewcommand{\>}{\rangle}
\newcommand{\ol}{\overline}
\newcommand{\ul}{\underline}
\newcommand{\p}{\partial}
\renewcommand{\(}{\left(}
\renewcommand{\)}{\right)}
\renewcommand{\[}{\left[}
\renewcommand{\]}{\right]}
\newcommand{\g}{\geqslant}
\newcommand{\e}{\epsilon}
\newcommand{\bbS}{\mathbb{S}}
\begin{document}

\title{Flat vector bundles and analytic torsion on  orbifolds}
\author[S. Shen and J. Yu]{Shu Shen and Jianqing Yu}

\begin{abstract}
This article is devoted to a study of flat orbifold  vector bundles. 
We construct a bijection between the isomorphic classes of proper 
flat orbifold  vector bundles and the equivalence classes of 
representations of the orbifold fundamental groups of  base orbifolds. We establish a Bismut-Zhang like  anomaly formula for the Ray-Singer metric on the determinant line of the cohomology of  a compact orbifold with coefficients in an orbifold  flat vector bundle. We show that the analytic torsion of  an acyclic unitary flat orbifold vector bundle is equal to the  value at zero of a dynamical zeta function when the underlying orbifold is  a compact locally symmetric space of reductive type, which extends one of the results obtained by  the first author for compact locally symmetric manifolds.
\end{abstract}

\maketitle
\tableofcontents

\settocdepth{section}
\section*{Introduction}
 Orbifolds were introduced by Satake \cite{Satake_gene_mfd} under name of $V$-manifold as manifolds with quotient singularities.  They appear naturally, for example,  in the geometry of 3-manifolds,  in the  symplectic reduction, in the problems on moduli spaces, and  in string theory, etc.

It is natural to consider  the index theoretic problem and the associated secondary invariants on  orbifolds. 
 Satake \cite{SatakeGaussB} and Kawasaki \cite{Kawasaki_Orb_sign, Kawasaki_RR} extended   
the classical Gauss-Bonnet-Chern Theorem,  the Hirzebruch signature Theorem and the Riemann-Roch-Hirzebruch Theorem.
For the secondary invariants,  Ma \cite{Ma_Orbifold_immersion} studied  the  holomorphic torsions and Quillen metrics associated with  holomorphic orbifold vector bundles, and  Farsi \cite{Farsi_eta} introduced an orbifold version eta invariant and extended  the Atiyah-Patodi-Singer Theorem.  In this article, we study  flat orbifold vector bundles and the associated secondary invariants, i.e., analytic torsions or more precisely   Ray-Singer metrics.

Let us recall   some results on  flat vector bundles on manifolds.
Let $Z$ be a connected smooth manifold, and let $F$ be a complex  flat vector bundle on $Z$. 
Equivalently, $F$ can be obtained via a complex representation of the fundamental group $\pi_1(Z)$ of $Z$, which %unique up to equivalence and 
is called the holonomy representation. Denote by $H^\cdot(Z,F)$ the cohomology of 
the sheaf of locally constant sections of $F$. 

Assume that $Z$ is compact.
Given  metrics $g^{TZ}$ and $g^F$ on  $TZ$ and $F$, the Ray-Singer metric \cite{RSTorsion} on the determinant line $\lambda$ of $H^\cdot(Z,F)$ is defined by the product of  the analytic torsion %, which is a spectral invariant of the Hodge Laplacian, 
with an $L^2$-metric on $\lambda$. %  obtained   via Hodge Theory.  

If $g^F$ is flat, or equivalently if the holonomy representation is 
unitary,  then the celebrated Cheeger-Müller Theorem 
\cite{Ch79,Muller78} tells us that in this case the Ray-Singer metric 
coincides with the so-called Reidemeister metric 
\cite{ReidemeisterTorsion}, which is a topological invariant of the unitarily  flat vector bundles constructed with the help of a triangulation on $Z$.  Bismut-Zhang \cite{BZ92} and Müller \cite{Muller2} simultaneously considered generalizations of this result. In  \cite{Muller2}, Müller extended it to the case %where the dimension of $Z$ is odd and 
where $g^F$ is unimodular or equivalently  the holonomy representation  is unimodular.  In \cite{BZ92}, Bismut and Zhang studied the dependence of the Ray-Singer metric on $g^{TZ}$ and  $g^F$.  They gave an anomaly formula \cite[Theorem 0.1]{BZ92} for the variation of the logarithm of the Ray-Singer metric on   $g^{T Z}$ and $g^F$ as an integral of a locally calculable Chern-Simons  form on $Z$. They
  generalized the original Cheeger-Müller Theorem to arbitrary flat vector bundles with arbitrary Hermitian metrics \cite[Theorem 0.2]{BZ92}. 
  In \cite{BZ94},  Bismut and Zhang  also considered the extensions to the equivariant case. Note that both in \cite{BZ92, BZ94}, the existence of a Morse function whose gradient satisfies the Smale transversality condition \cite{S61,S67} plays an important role. 
  
  From the dynamical side,  motivated by  a remarkable similarity \cite[Section 3]{MilnorZcover} between  the analytic  torsion and Weil's zeta function, 
  Fried \cite{FriedRealtorsion}  showed that, when the underlying manifold is hyperbolic,
   the analytic torsion of an acyclic unitarily flat vector bundle is equal to
   the  value at zero  of the Ruelle dynamical zeta function.  In \cite[p.66 Conjecture]{Friedconj}, he conjectured similar results hold true for more general spaces.

%    In \cite{BGdeRham}, Bismut and Goette  introduced the $V$-invariant\footnote{The terminology  $V$-invariant has nothing to do with the $V$-manifold.}  for compact manifolds equipped with compact Lie group actions. 
%  Like  Euler characteristic number,  the $V$-invariant localizes to the critical submanifold of 
%  a given  invariant Morse-Bott function \cite[Theorem 4.10]{BGdeRham}.  In \cite[Introduction]{B05}, Bismut explained that by a path integral argument at a formal level  the analytic torsion can be considered as the  $V$-invariant of the free loop space $LZ$ equipped with the canonical $\bbS^1$-action.  Moreover, one can think of  the Cheeger-M\"uller Theorem  on one hand and of the Fried conjecture  on the other hand as two versions of the same result, that is  the localization formula of the $V$-invariant of $LZ$ applied to two different $\bbS^1$-invariant Morse-Bott functionals  on $LZ$ (c.f. \cite[Introduction]{B05}, \cite[Section 2]{Shen_Fried_Conj_CRAS}  and \cite[Introduction 0.3]{Shfried}).

   In \cite{Shfried}, following the early contribution of  Fried 
   \cite{FriedRealtorsion} and  Moscovici-Stanton \cite{MStorsion}, 
   the  author  showed the Fried conjecture on closed locally 
   symmetric manifolds of the reductive type. The proof is based on  
   Bismut's explicit   semisimple orbital integral formula 
   \cite[Theorem 6.1.1]{B09}. (See  Ma's talk \cite{Ma_bourbaki} at Séminaire Bourbaki for an introduction.)

  In this article,  we extend most   of the above results  to 
  orbifolds.  
%   The case of  Cheeger-M\"uller/Bismut-Zhang Theorem involves, however, real difficulties as the existence of a Morse function whose gradient satisfies the Smale transversality condition is not clear to us, and is therefore not considered here.   
    Now, we will describe our results in more details and explain 
	the techniques used in the proof.

\subsection{Orbifold fundamental group and holonomy representation} 
Let $Z$ be a connected  orbifold with the associated groupoid $\cG$. Following Thurston \cite{Thurston_geo_3_maniflod}, 
let $X$ be the universal covering orbifold of $Z$ with the deck transformation group $\Gamma$, which is called  orbifold fundamental group of $Z$. Then, $Z=\Gamma\backslash X$. In an analogous way as in the classical homotopy theory of ordinary paths on topological  spaces,   Haefliger \cite{Haefliger_orb} introduced the $\cG$-paths and their homotopy theory.  He  gave an explicit construction of  $X$ and $\Gamma$ following the classical methods.

%
%That means $\cU$  is an open cover of $Z$ and for any $U\in \cU$, $U$ is   a quotient  of an open subset $\widetilde{U}$ of Euclidean space by a finite group $G_U$. We will use the groupoid description of $Z$. 
%
%
%If $\{U\}_{U\in \cU}$ is such a family of  open subset which covers $Z$, then the orbifold $Z$ can be reconstructed by gluing the quotient space $G_U\backslash \widetilde{U}$ via local transformation maps on $Z$. Most of  geometric objets can be considered as objects on the disjoint unions of $\widetilde{U}$ which are invariant under the local transformation maps.
%
%%
%As the covering theory for topological space, Thurston introduced the covering orbifold theory. He constructed the  universal covering orbifold $p:X\to Z$. The orbifold fundamental group $\pi_1^{\rm orb}(Z)$ are then defined by the orbifold deck transformation of $p:X\to Z$.  
%A path description for the  universal covering orbifold and orbifold fundamental group was introduced by Haefliger\cite{Haefliger_orb}.

If  $F$ is a  complex proper flat orbifold vector bundle of rank $r$, in Section \ref{Sec:Top}, we constructed a parallel transport  along a $\cG$-path. In this way, we obtain a representation  $\rho:\Gamma\to \GL_r(\bC)$ of $\Gamma$, which is called the holonomy representation of $F$.  
Denote by  $\cM^{\rm pr}_r(Z)$ the  isomorphic classes  of complex proper flat orbifold  vector bundles of rank $r$ on $Z$, and denote  by $ {\rm Hom}(\Gamma,\mathrm{GL}_r (\bC))/_\sim$  the equivalence classes  of complex  representations of  $\Gamma$ of dimension $r$. We show the following theorem. 

%More precisely, we show a bijection between the isomorphism classes of  proper flat vector bundles on a given orbifold and the equivalence classes of  representations of the orbifold fundamental group.  To a flat orbifold vector bundle, we introduce the analytic torsion, which a priori is a spectral invariant of the Hodge Laplacian associated to the Riemannian metric on $Z$ and the Hermitian metric  on the flat vector bundle. 
%
%
% We establish the  anomaly formula for the Ray-Singer metrics. When the underlying orbifold is locally symmetric of reductive type and the orbifold flat vector bundle is unitarily flat and acyclic, then the dynamical zeta function, constructed via closed geodesics on orbifold, has a meromorphic extension to $\bC$, which is holomorphic at zero, and its zero value is equals to the analytic torsion. 
%
%
%can be constructed from

\begin{thm}
\label{thm:1}
The above construction descends to  a well-defined bijection  
\begin{align}
\cM^{\mathrm pr}_r(Z)\simeq {\rm{Hom}}\big(\Gamma,\GL_r(\bC)\big)/_\sim.
\end{align}
\end{thm}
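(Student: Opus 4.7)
The plan is to transplant to the orbifold setting the classical bijection between flat vector bundles and fundamental-group representations, using Haefliger's theory of $\cG$-paths and the orbifold universal cover $X\to Z$. I would first establish well-definedness of the holonomy map, then construct an explicit inverse from representations, and finally check that the two are mutually inverse.

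For well-definedness, I need to show that parallel transport of a flat orbifold connection along a $\cG$-path depends only on the endpoint-fixed $\cG$-homotopy class of the path. Inside a single orbifold chart this is the classical homotopy invariance of flat parallel transport; across charts it follows from the invariance of the flat connection under the orbifold transition pseudogroup. Changing the basepoint conjugates the representation by a fixed element, and composing with a flat isomorphism $F\to F'$ conjugates it by the induced map on fibers, so both choices preserve the equivalence class of the representation and the map $\cM_r^{\mathrm{pr}}(Z)\to\mathrm{Hom}(\Gamma,\GL_r(\bC))/_\sim$ is well defined.

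Surjectivity proceeds by exhibiting an inverse. Given $\rho\colon\Gamma\to\GL_r(\bC)$, form
\begin{equation*}
F_\rho=\Gamma\backslash(X\times\bC^r),
\end{equation*}
with $\Gamma$ acting diagonally by deck transformations on $X$ and by $\rho$ on $\bC^r$. The tautological flat connection on $X\times\bC^r$ is $\Gamma$-invariant and descends to $F_\rho$, endowing it with the structure of a proper flat orbifold vector bundle on $Z$. A direct computation along lifts of $\cG$-loops in $Z$ to $\cG$-paths in $X$ shows that its holonomy equals $\rho$.

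The substantial remaining step, and the one I expect to be the main obstacle, is injectivity. Given $F$ with holonomy $\rho$, I would pull $F$ back to a $\Gamma$-equivariant proper flat orbifold vector bundle $\widetilde F$ on $X$ and invoke an orbifold version of the monodromy theorem: because $X$ is orbifold-simply-connected, flat parallel transport along any $\cG$-path in $X$ is determined by its endpoints, and prescribing a frame at a basepoint should produce a global flat frame of $\widetilde F$. The delicate point is to ensure that this trivialization is compatible with the local isotropy data of $\widetilde F$ at points of $X$ with non-trivial $\Gamma$-stabilizer, which is where the properness of $F$ enters. Once the trivialization $\widetilde F\simeq X\times\bC^r$ is in place, the $\Gamma$-equivariant structure on the trivial bundle reads as a representation equivalent to $\rho$, and descent to $Z$ yields $F\simeq F_\rho$, completing the bijection.
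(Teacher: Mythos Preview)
Your overall strategy is correct, and your treatment of well-definedness and surjectivity matches the paper's (the paper's Step~1 is precisely the computation that the holonomy of $X\times_\rho\bC^r$ is $\rho$). The divergence is in the injectivity step, and there your proposal leaves a genuine gap.

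You propose to pull $F$ back to $\widetilde F$ on $X$ and trivialize it by parallel transport, appealing to $\pi_1^{\rm orb}(X)=1$. The ``delicate point'' you flag is real, but you have slightly misidentified it: the obstruction is not at points with non-trivial $\Gamma$-stabilizer, but at orbifold singularities of $X$ itself, i.e.\ points where the local group $H_x=\ker(G_x\to\Gamma_x)$ is non-trivial. For $\widetilde F$ to be trivial \emph{as a proper orbifold bundle}, each $H_x$ must act trivially on the fiber. This does follow---the action of $g\in H_x$ on the fiber is the parallel transport along the $\cG$-loop $c_{x,g}$, which is null-$\cG$-homotopic in $X$ since $\pi_1^{\rm orb}(X)=1$---but you have not supplied this argument, and without it your trivialization is only a trivialization of sheaves of sections, not of orbifold bundles. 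You also implicitly use that pullback of proper orbifold vector bundles along the orbifold covering $p:X\to Z$ is well defined and preserves properness and flatness; this is true but needs to be said, since pullback along general orbifold maps in Satake's sense is problematic.

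The paper sidesteps both issues by a different, rather elegant, device: instead of pulling $F$ back to $X$ and trivializing, it applies Haefliger's universal-cover construction directly to the total space $\cF$ of $F$. Using that $F$ is proper, the groupoid of $\cF$ is $\cG^F=(\cG_0\times\bC^r,\cG_1\times\bC^r)$, and a $\cG^F$-path $(c,v)$ can be straightened via parallel transport of $F$ along $c$ into the pair $([c],w(1))$ with $w(1)=\tau_c^{-1}u$; this identifies the universal cover of $\cF$ with $X\times\bC^r$ and the deck group with $\Gamma$ acting by $\rho$ on the $\bC^r$-factor. Theorem~\ref{thm:uX} then gives $\cF\simeq X\times_\rho\bC^r$ as orbifolds, and linearity in the $\bC^r$-variable upgrades this to an isomorphism of orbifold vector bundles. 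This approach never needs to define pullback to $X$ or separately verify compatibility with local isotropy---those are absorbed into the universal-cover machinery applied once to $\cF$.
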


The difficulty of the proof  lies in the injectivity, which consists in  showing that $F$ is isomorphic to the quotient of $X\times  \bC^r$ by the $\Gamma$-action induced by the deck transformation on $X$ and by the holonomy representation on $\bC^r$. Indeed, applying  Haefliger's construction, in subsection \ref{sec:hol}, we  show directly that the  universal covering orbifold of the total space  of $F$ is  $X\times  \bC^r$. Moreover, its deck transformation group is isomorphic to  $\Gamma$ with the desired  action on $X\times \bC^r$. 

%and the deck transformation of $\Gamma$ on $X\times  \bC^r$ is as desired. %obtained exactly  by  the product action of the deck transformation of $\Gamma$ on $X$ and the holomony representation of $\Gamma$ on $\bC^r$.

We remark that  on the universal covering orbifold there exist non trivial and non proper flat orbifold vector bundles. Thus, Theorem \ref{thm:1} no longer holds true for non proper orbifold vector bundles.  

On the other hand,  for a general orbifold vector bundle $E$ which is not necessarily proper, there exists a proper  subbundle $E^{\rm pr}$ of $E$ such that
\begin{align}\label{eq:inEpr}
C^\infty(Z,E)=C^\infty\(Z,E^{\rm pr}\).
\end{align}
Moreover, if $E$ is flat, $E^{\rm pr}$ is also flat.  For a $\Gamma$-space $V$, we denote by $V^\Gamma$ the set of  fixed points in $V$. By Theorem \ref{thm:1} and \eqref{eq:inEpr}, we get:
\begin{cor}\label{cor:1}
 For any  (possibly non proper) flat orbifold vector bundle $F$ on a connected orbifold $Z$, there exists  a representation of the orbifold fundamental group $\rho:\Gamma\to \GL_r(\bC)$ such that 
 \begin{align}
 C^\infty(Z,F)=C^\infty\(X,\bC^r\)^\Gamma.
 \end{align}
\end{cor}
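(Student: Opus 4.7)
The plan is to derive the corollary directly from Theorem \ref{thm:1} combined with the identity \eqref{eq:inEpr} and the flatness of $F^{\rm pr}$ noted just above it; in this sense the corollary is bookkeeping on top of the main theorem, and I would keep the proof very short.

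First, I would use \eqref{eq:inEpr} to replace $F$ by its proper flat subbundle $F^{\rm pr}$, thereby reducing the claim to producing a representation $\rho:\Gamma\to\GL_r(\bC)$, with $r=\rk(F^{\rm pr})$, for which
\begin{align}
C^\infty\(Z,F^{\rm pr}\)=C^\infty\(X,\bC^r\)^\Gamma.
\end{align}

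Second, I would apply Theorem \ref{thm:1} to the proper flat orbifold vector bundle $F^{\rm pr}$ to obtain the holonomy representation $\rho$. Here I would invoke not only the bijection but the explicit geometric content of the injectivity argument, recalled in the paragraph following the statement of Theorem \ref{thm:1}: as a flat orbifold vector bundle, $F^{\rm pr}$ is isomorphic to the quotient of $X\times\bC^r$ by the diagonal $\Gamma$-action (deck transformations on $X$, the representation $\rho$ on $\bC^r$). Taking smooth sections of this quotient then identifies $C^\infty(Z,F^{\rm pr})$ with the space of $\Gamma$-equivariant smooth maps $X\to\bC^r$, i.e., with $C^\infty(X,\bC^r)^\Gamma$. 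Chaining this with the first step yields the conclusion.

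No step here looks delicate: the substance of the argument is absorbed into Theorem \ref{thm:1} and the construction of $F^{\rm pr}$. The only point that warrants a line of verification is the final descent of sections, but this works smoothly for a \emph{proper} flat orbifold vector bundle because properness forces the local isotropy groups to act trivially on the fibers, so equivariant descent of sections proceeds exactly as in the manifold case. In particular, the argument does not require $X$ itself to be a manifold.
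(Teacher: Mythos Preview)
Your proposal is correct and follows exactly the paper's approach: the paper derives Corollary~\ref{cor:1} in one line from Theorem~\ref{thm:1} together with \eqref{eq:inEpr} (restated later as \eqref{eq:Epr}), and you have simply unpacked that derivation. One small caution: your remark that ``properness forces the local isotropy groups to act trivially on the fibers'' is not literally true for the local groups $G_U$ on $Z$; what makes the identification $C^\infty(Z,F^{\rm pr})\simeq C^\infty(X,\bC^r)^\Gamma$ work is rather that Theorem~\ref{thm:1} exhibits the pullback of $F^{\rm pr}$ to $X$ as the trivial bundle $X\times\bC^r$, so the local groups $H_x$ on $X$ act trivially on the fibers and smooth sections on $X$ are just smooth $\bC^r$-valued functions.
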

By abuse of notation, in this case, although $\rho$ is not unique, we still call  $\rho$ a holonomy representation of $F$.

Waldron  informed us that in his PhD thesis \cite{Waldron} he proved  Theorem 
\ref{thm:1} in  a more abstract setting using
differentiable stacks.

\subsection{Analytic torsion on orbifolds}
%As explained by Bismut, the $V$-invariant is a finite dimensional analogue of  the analytic torsion. We study the $V$-invariant in  subsection \ref{sec:V}. 
%
%Let $L$ be a compact Lie group with Lie algebra $\fl$. 
%Let $S$ be a compact orbifold with an action of $L$. For $a\in \fl$, the $V$-invariant $V_a(S)$ is defined with with the help of a Lie group actions.  We show that 
%If $f$ is Morse-Bott function on $Z$, 
%
%
%
%\begin{thm}
%\label{thm:1.5}
%The orbifold $V$-invariant does not depends on the choice of the invariant metric $g^{TZ}$.
%\end{thm}
%The analytic torsion is defined for $\rho$. 
Assume that $Z$ is a compact orbifold of dimension $m$. Let $\Sigma 
Z$ be the strata of $Z$, which has a natural orbifold structure. 
Write $Z\coprod \Sigma Z=\coprod_{i=0}^{l_0}Z_i$ as a disjoint union 
of connected components. We denote  $m_i\in \bN$ the multiplicity of  $Z_i$ (see \eqref{eq:mi25}).
Let $F$ be a complex flat orbifold  vector bundle on $Z$. Let 
$\lambda$ be the determinant line  of the cohomology $H^\cdot(Z,F)$ (see \eqref{eq:r=detH}).

Let $g^{TZ}$ and $g^F$ be   metrics on $TZ$ and $F$. Denote by $\Box^Z$ the associated Hodge Laplacian acting on the space $\Omega^\cdot(Z,F)$ of smooth forms with values in $F$. By the orbifold Hodge Theorem \cite[Proposition 2.1]{Daiyu}, we have the canonical isomorphism
\begin{align}\label{eq:hodgein}
H^\cdot(Z,F)\simeq \ker \Box^Z. 
\end{align}

As in the case of smooth manifolds, by \cite{Kawasaki_Orb_sign} (or by the short time asymptotic expansions of the heat trace \cite[Proposition 2.1]{Ma_Orbifold_immersion}),  the analytic torsion $T(F)$ is still well-defined. It is a real positive number defined  by the following  weighted product of the zeta regularized determinants 
\begin{align}
T(F)=\prod_{i=1}^m {\rm det}\( \Box^{Z}|_{\Omega^i(Z,F)}\)^{(-1)^ii/2}.
\end{align}
Let $|\cdot|^{\rm RS,2}_\lambda$ be the $L^2$-metric on $\lambda$ induced by $g^{TZ},g^F$ via \eqref{eq:hodgein}. 
The Ray-Singer metric on $\lambda$ is then given by 
\begin{align}
\|\cdot\|^{\rm RS}_\lambda=T(F)|\cdot|^{\rm RS}_\lambda.
\end{align}
We remark that as in the smooth case, if $Z$ is of even dimension and 
orientable, if $F$ is unitarily flat, in Proposition \ref{prop:48}, we  show that  $T(F)=1$.

In Section \ref{Sec:Tor}, we study the dependence of $\|\cdot\|^{\rm RS,2}_\lambda$ on $g^{TZ}$ and $g^F$. To state our result, let us introduce some notation. Let $(g^{\prime TZ},g^{\prime F})$ be another pair of metrics.  Let $\|\cdot\|^{\prime \rm RS,2}_\lambda$ be  the   Ray-Singer metric for $(g^{\prime TZ},g^{\prime F})$. 
Let $\nabla^{TZ}$ and $\nabla^{\prime TZ}$ be the respective Levi-Civita connections on $TZ$ for  $g^{TZ}$ and $g^{\prime TZ}$. Denote by $o(TZ)$  the orientation line of $Z$. Consider the Euler form
$e(TZ,\nabla^{TZ})\in \Omega^{m}(Z,o(TZ))$
 and the first odd Chern form $  
 \frac{1}{2}\theta(\nabla^F,g^F)=\frac{1}{2}\Tr[(g^{F})^{-1}\nabla^F g^F]\in \Omega^1(Z).$
  Denote by 
  \begin{align}
 & e\(Z_i,\nabla^{TZ_i}\)\in \Omega^{\dim 
  Z_i}(Z_i,o(TZ_i)),&\theta_i\(\nabla^F,g^F\)\in \Omega^1(Z_i)
  \end{align}
  the canonical extensions of  $e(TZ,\nabla^{TZ})$ and 
$\theta(\nabla^F,g^F)$ to $Z_i$ (see subsection \ref{sec:chara}). 
Let  
\begin{align}
\widetilde{e}(TZ_i,\nabla^{TZ_i},\nabla^{\prime TZ_i})\in 
\Omega^{\dim Z_i-1}(Z_i,o(TZ_i))/d \Omega^{\dim Z_i-2}(Z_i,o(TZ_i))
\end{align}
and $\widetilde{\theta}_i(\nabla^F,g^F,g^{\prime F})\in C^\infty(Z_i)$ be the associated Chern-Simons  forms such that 
\begin{align}
\begin{aligned}
d\,\widetilde{e}\(TZ_i,\nabla^{TZ_i},\nabla^{\prime 
TZ_i}\)&=e\(Z_i,\nabla^{\prime TZ_i}\)-e\(Z_i,\nabla^{TZ_i}\),\\
d\,\widetilde{\theta}_i\(\nabla^F,g^F,g^{\prime 
F}\)&=\theta_i\(\nabla^F,g^{\prime F}\)-\theta_i\(\nabla^F,g^F\).
\end{aligned}
\end{align}
In Section \ref{Sec:Tor}, %instead of following the original  proof given by Bismut-Zhang \cite[Theorem 0.1]{BZ92} for manifolds, 
we show:
\begin{thm}
\label{thm:2}The following identity holds:
\begin{multline}\label{eq:ano1}
\log\(\frac{\|\cdot\|^{\prime {\rm RS},2}_{\lambda}}{\|\cdot\|^{ \rm{RS},2}_{\lambda}}\)=\sum^{l_0}_{i=0}\frac{1}{m_i}\bigg(\int_{Z_i}\widetilde{\theta}_i\(\nabla^F,g^F,g^{\prime F}\)e\(Z_i,\nabla^{TZ_i}\)\\
- \int_{Z_i}\theta_i\(\nabla^F,g^{\prime F}\)\widetilde{e}\(TZ_i,\nabla^{TZ_i},\nabla^{\prime TZ_i}\)\bigg).
\end{multline}
\end{thm}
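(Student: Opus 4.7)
My plan is to adapt the variational method of Bismut-Zhang \cite{BZ92} to the orbifold setting, using the Kawasaki-type local expansion of the orbifold heat supertrace \cite{Kawasaki_RR} which is responsible for the appearance of the sum over strata with weights $1/m_i$. Both sides of \eqref{eq:ano1} are additive under composition of infinitesimal variations of the metrics, so it is enough to work with a smooth one-parameter family $(g^{TZ}_s, g^F_s)_{s\in [0,1]}$ joining $(g^{TZ},g^F)$ to $(g^{\prime TZ},g^{\prime F})$, check that the derivatives in $s$ of both sides agree, and integrate from $s=0$ to $s=1$.

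\textbf{Differentiating the Ray-Singer metric.} Using the Mellin transform of the $\zeta$-regularized determinants and the orbifold Hodge isomorphism \eqref{eq:hodgein}, a computation parallel to \cite[Theorem 2.9]{BZ92}, which relies only on the spectral resolution of the Hodge Laplacian, expresses $\frac{d}{ds}\log\|\cdot\|^{\rm RS,2}_{\lambda,s}$ as a regularized integral in $t>0$ of heat supertraces of the form
\begin{align*}
\Trs\Big[N\,\omega_s\big(e^{-t\Box^Z_s} - P_s\big)\Big],
\end{align*}
where $N$ is the degree operator on $\Omega^\cdot(Z,F)$, $\omega_s$ is a zero-order operator built from $\dot g^{TZ}_s$ and $\dot g^F_s$, and $P_s$ is the orthogonal projector onto $\ker\Box^Z_s$. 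The large-$t$ behaviour is controlled by the spectral gap above $0$ and is completely insensitive to the orbifold nature of $Z$.

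\textbf{Localisation on each stratum.} The key step is the small-$t$ analysis, which is where the orbifold structure enters. By the orbifold short-time expansion \cite[Proposition 2.1]{Ma_Orbifold_immersion}, the supertrace above decomposes as a sum over the strata $Z_i$, each summand weighted by $1/m_i$ and given, up to negligible higher order terms in $t$, by a locally computable integral over $Z_i$. Near a point of $Z_i$ the orbifold is modelled on a quotient of a Euclidean ball by a finite group, so the Bismut-Zhang local calculation \cite[Sections 3--4]{BZ92}, applied to the invariant part of the heat kernel on the uniformising cover, reproduces exactly the Chern-Simons type integrands appearing on the right-hand side of \eqref{eq:ano1}. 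Integrating in $s$ from $0$ to $1$ then reconstructs $\widetilde\theta_i(\nabla^F,g^F,g^{\prime F})$ and $\widetilde e(TZ_i,\nabla^{TZ_i},\nabla^{\prime TZ_i})$ modulo exact forms; the exact contributions vanish after integration over the closed orbifolds $Z_i$ by Stokes, since $e(Z_i,\nabla^{TZ_i})$ and $\theta_i(\nabla^F,g^{\prime F})$ are closed.

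\textbf{Main obstacle.} The principal difficulty lies in the third step: one has to identify the small-$t$ constant term of an equivariant heat supertrace on a finite cover of a local uniformising chart with the product of the canonical extensions of the Euler form and the Chern-Simons class of $\theta$, incorporating properly the orientation twist by $o(TZ_i)$ and the multiplicity $m_i$. The bookkeeping around the canonical extension procedure of subsection \ref{sec:chara} is the subtle point, but since only the variation of the Ray-Singer metric is at stake, no Morse function or Smale transversality is required --- which is precisely why this Bismut-Zhang type anomaly formula extends to orbifolds even though the full Cheeger-M\"uller theorem currently does not.
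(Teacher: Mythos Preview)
Your strategy is sound and would lead to the anomaly formula, but it is not the route the paper takes. You propose to follow the original Bismut--Zhang variational method from \cite[Section IV]{BZ92}: differentiate $\log\|\cdot\|^{\rm RS,2}_{\lambda,s}$ directly, express the derivative as a regularized $t$-integral of a weighted heat supertrace, and identify the small-$t$ constant term stratum by stratum via the orbifold heat expansion. The paper instead adopts the Bismut--Lott transgression framework \cite{BLott}: it views $s\in\bR$ as the base of a trivial fibration, builds the flat superconnection $A'=d^{\bR}+d^Z$ on $W=\bR\times\Omega^\cdot(Z,F)$, and shows (Theorem~\ref{prop:tran}) that the odd form $u_t=\Trs[h(B_t)]$ interpolates between $u_0=\sum_i m_i^{-1}\int_{Z_i}e\,h_i$ and $u_\infty=h(\nabla^{W_0},g^{W_0})$; the anomaly formula is then literally the identity $u_0-u_\infty=d^{\bR}\log T$ (Corollary~\ref{cordT} and the remark following it). For the small-$t$ limit the paper does not invoke the Bismut--Zhang local computations but rather the equivariant results of Bismut--Goette \cite{BG01}, after localizing via finite propagation speed as in \cite{Ma_Orbifold_immersion}. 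The Bismut--Lott route buys a unified proof of the anomaly formula and the orbifold Gauss--Bonnet--Chern theorem (Theorem~\ref{thm:GBC}) in one stroke, and avoids tracking the regularized $t$-integral by hand; your route is closer to the historical argument and is perhaps more transparent if one only cares about the anomaly, but the bookkeeping you flag as the ``main obstacle'' is exactly what the superconnection formalism is designed to absorb.
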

The arguments in Section \ref{Sec:Tor} %instead of following the original  proof given by Bismut-Zhang \cite[Theorem 0.1]{BZ92} for manifolds, 
are inspired by %we show  Theorem \ref{thm:2} following the arguments  given by 
Bismut-Lott \cite[Theorem 3.24]{BLott}, who gave a unified proof for the family local index  theorem and the anomaly formula \cite[Theorem 0.1]{BZ92}. Conceptually, their proof is  simpler and more natural than the original proof given by Bismut-Zhang \cite[Section IV]{BZ92}. 
%showed a more general anomaly formula for analytic torsion form. 
Also, our proof relies on  the finite propagation speeds for the solutions of hyperbolic equations on orbifolds, which is originally due to Ma \cite{Ma_Orbifold_immersion}.

If $Z$ is of odd dimension  and orientable, then all the $Z_i$, for $0\le i\le l_0$, is of odd dimension. By Theorem \ref{thm:2}, %if $F$ is unitarily flat, and if for all $0\l i\l l$,
%\begin{align}\label{eq:8}
%\int_{Z_i}e\(Z_i,\nabla^{TZ_i}\) =0,
%\end{align}
 the Ray-Singer metric $\|\cdot\|^{{\rm RS},2}_{\lambda}$ does not 
 depend on the metrics $g^{TZ}, g^F$; it becomes a topological invariant. %Note that \eqref{eq:8} holds true when $Z$ is of odd dimension  and orientable.

\subsection{A solution of Fried conjecture on  locally symmetric orbifolds}
In \cite[p. 537]{FriedRealtorsion}, Fried 
raised the question of extending his result \cite[Theorem 1]{FriedRealtorsion} to hyperbolic orbifolds on the equality between  the analytic torsion   and the zero value of the Ruelle dynamical zeta function associated to a unitarily flat  acyclic  vector bundle
on hyperbolic manifolds. In Section \ref{Sec:Fried}, we extend 
Fried's  result to more general compact odd dimensional \footnote{The even dimensional case is trivial.} locally symmetric  orbifolds of the reductive type. %where \eqref{eq:8} holds.

Let $G$ be a linear connected real  reductive group with Cartan involution $\theta\in {\rm Aut}(G)$. Let 
$K\subset G$ be the set of fixed points of $\theta$ in $G$, so that $K$ is a maximal compact subgroup of $G$. Let $\fg$ and $\fk$ be the Lie algebras of $G$ and $K$. Let $\fg=\fp\oplus \fk$ be the Cartan decomposition. Let $B$ be an $\Ad(G)$-invariant and $\theta$-invariant non degenerate bilinear form on $\fg$ such that $B|_{\fp}>0$ and $B|_{\fk}<0$. 
Recall that  an element $\gamma\in G$ is said to be semisimple if and only if $\gamma$ can be conjugated to $e^ak^{-1}$ with $a\in \fp$, $k\in K$, $\Ad(k)a=a$. And $\gamma$ is said to be elliptic if and only if  $\gamma$ can be conjugated into $K$. Note that if $\gamma$ is semisimple,  its centralizer  $Z(\gamma)$ in $G$ is still reductive with maximal compact subgroup $K(\gamma)$.

Take  $X=G/K$ to be the associated symmetric space. Then, $B|_{\fp}$ induces a $G$-invariant Riemannian metric $g^{TX}$ on $X$ such that $(X,g^{TX})$ is of non positive sectional  curvature.  Let $d_X$ be the Riemannian distance on $X$. 
%if the deplacement function $x\in X\to d_X(x,\gamma x)$ has a minimal point in $X$. In this case, denote by  $X(\gamma)\subset X$  the set of minimal points, and by $l_\gamma\g 0$ the minimal value. If $Z(\gamma)$ is the centralizer of $\gamma$ in $G$, then $Z(\gamma)$ acts on $X(\gamma)$. Also, $l_\gamma$   depends only on the conjugacy class of $\gamma$ in $G$.
% If $l_{\gamma}=0$, or equivalently $\gamma$ can be conjugated to an element in  $K$, $\gamma$ is called elliptic. 
%

Let $\Gamma\subset G$  be a cocompact discrete subgroup of $G$. Set $Z=\Gamma\backslash G/K$. Then $Z$ is a compact orbifold with universal covering orbifold $X$. 
To simplify the notation in Introduction, we assume that $\Gamma$ acts effectively on $X$. 
Then $\Gamma$ is the orbifold fundamental group of $Z$.  Clearly, 
$\Gamma$ contains only semisimple elements. %Let $\Gamma_e$ be the subset of $\Gamma$ consisting of elliptic elements, and 
Let $\Gamma_+$ be the subset of $\Gamma$ consisting of non elliptic 
elements. Take $[\Gamma]$ to be the set of conjugacy classes of 
$\Gamma$. Denote by  $[\Gamma_+]\subset [\Gamma]$  the set of non 
elliptic conjugacy classes.

%The geodesic flow on the unit tangent bundle of $Z$ is still well-defined. 

Proceeding as in the proof for the manifold case \cite[Proposition 5.15]{DuistermaatKolkVaradarajan}, 
%the set of closed geodesics with zero lengths is given by   
%\begin{align}
%\coprod_{[\gamma]\in [\Gamma_e]} \Gamma(\gamma)\backslash X(\gamma)=Z\coprod \Sigma Z,
%\end{align}
 the set of closed geodesics  of  positive lengths  consists of a 
 disjoint union of smooth connected compact orbifolds
$\coprod_{[\gamma]\in [\Gamma_+]} B_{[\gamma]}$.
Moreover,  $B_{[\gamma]}$ is diffeomorphic to  $ \Gamma\cap Z(\gamma)\backslash Z(\gamma)/K(\gamma)$. Also, all the elements in  $B_{[\gamma]}$ have the same length $l_{[\gamma]}>0$. Clearly, the 
geodesic flow induces a locally free $\bbS^1$-action on 
$B_{[\gamma]}$. By an analogy to the  multiplicity $m_i$ of  $Z_i$ in 
$Z\coprod \Sigma Z$, we can define the multiplicity   $m_{[\gamma]}$ 
of the quotient orbifold $\mathbb{S}^1\backslash B_{[\gamma]}$ (see 
\eqref{eq:mulr}). Denote by  $\chi_{\rm 
orb}\big(\mathbb{S}^1\backslash B_{[\gamma]}\big)\in \mathbf{Q}$  the 
orbifold Euler characteristic number \cite[Section 3.3]{SatakeGaussB} (see also \eqref{eq:chiorb}) of $\mathbb{S}^1\backslash B_{[\gamma]}$. In Section \ref{Sec:Fried},  we show:

\begin{thm}
\label{thm:3}
If $\dim Z$ is odd, and if $F$ is a unitarily flat  orbifold  vector 
bundle on $Z$ with holonomy $\rho:\Gamma\to \mathrm{U}(r)$, then the dynamical zeta function 
\begin{align}
R_\rho(\sigma)=\exp\(\sum_{[\gamma]\in [\Gamma_+]}\Tr[\rho(\gamma)]\frac{\chi_{\mathrm{orb}}\(\mathbb{S}^1\backslash B_{[\gamma]}\)}{m_{[\gamma]}}e^{-\sigma l_{[\gamma]}}\)
\end{align}
is  well-defined and holomorphic  on $\Re(\sigma)\gg1$, and extends 
meromorphically to $\bC$. There exist explicit constants $C_\rho\in 
\bR$ with $C_\rho\neq0$ and $r_\rho\in \bZ$ (see \eqref{eq:Cr}) such that as $\sigma\to0$, 
\begin{align}
R_\rho(\sigma)=C_\rho T(F)^2\sigma^{r_\rho}+\cO(\sigma^{r_\rho+1}).
\end{align}
Moreover, if $H^\cdot(Z,F)=0$, we have 
\begin{align}
&C_\rho=1,&r_\rho=0,
\end{align}
so that 
\begin{align}
R_\rho(0)=T(F)^2.
\end{align}
\end{thm}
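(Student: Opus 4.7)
The plan is to adapt the approach of \cite{Shfried} for compact odd-dimensional locally symmetric manifolds, the main analytic input being Bismut's explicit semisimple orbital integral formula \cite[Theorem 6.1.1]{B09} and the main geometric input being an orbifold Selberg-type trace formula on $Z=\Gamma\backslash G/K$ that expands the heat trace of the Hodge Laplacian as a sum over conjugacy classes $[\gamma]\in[\Gamma]$ weighted by orbital integrals $\cO_\gamma(\exp(-t\Box^X))$ and volumes of $\Gamma\cap Z(\gamma)\backslash Z(\gamma)/K(\gamma)$.

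First, I would encode $T(F)$ as $\exp\bigl(\zeta_\rho'(0)/2\bigr)$, where, for $\Re(s)\gg 1$,
\begin{align}
\zeta_\rho(s)=-\frac{1}{\Gamma(s)}\int_0^\infty t^{s-1}\sum_{i=0}^{m}(-1)^{i}i\,\Tr\bigl[\exp(-t\Box_i^{Z})P_i^{\perp}\bigr]\,dt,
\end{align}
with $\Box_i^{Z}$ the Hodge Laplacian on degree $i$ and $P_i^{\perp}$ the orthogonal projection off its kernel. By the short-time orbifold heat kernel asymptotics \cite{Kawasaki_Orb_sign,Ma_Orbifold_immersion} and the orbifold Hodge isomorphism \eqref{eq:hodgein}, $\zeta_\rho$ extends meromorphically to $\bC$ and is regular at $s=0$. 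Lifting the heat kernel to $X=G/K$, I would then expand the heat trace via the orbifold trace formula and decompose the sum over $[\Gamma]$ into elliptic classes and non-elliptic semisimple classes indexed by $[\Gamma_+]$; the factor $\Tr[\rho(\gamma)]$ enters through the unitarily flat bundle $F$ using Corollary \ref{cor:1}.

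For each $[\gamma]\in[\Gamma_+]$, Bismut's formula expresses $\cO_\gamma(\exp(-t\Box^X))$ as a Gaussian-type density concentrated on the minimizing geodesic locus, whose Mellin transform is a rational multiple of $e^{-\sigma l_{[\gamma]}}$. Summed over $[\Gamma_+]$, these pieces recover the logarithmic derivative of $R_\rho(\sigma)$; the combinatorial factor $\chi_{\rm orb}(\bbS^1\backslash B_{[\gamma]})/m_{[\gamma]}$ arises from the Gauss-Bonnet-Chern computation on $B_{[\gamma]}\simeq \Gamma\cap Z(\gamma)\backslash Z(\gamma)/K(\gamma)$ combined with the normalization of the geodesic $\bbS^1$-action exactly as in the manifold case. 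The elliptic (including identity) conjugacy classes, whose orbital integrals are local polynomial-in-$1/t$ densities, contribute only the constant $C_\rho$ and the integer $r_\rho$. Combining with the regularity of $\zeta_\rho$ at $s=0$ yields both the meromorphic continuation of $R_\rho$ and the asymptotic $R_\rho(\sigma)=C_\rho T(F)^{2}\sigma^{r_\rho}+\cO(\sigma^{r_\rho+1})$; when $H^\cdot(Z,F)=0$, the odd-dimensional Fried-Moscovici-Stanton vanishing forces $C_\rho=1$ and $r_\rho=0$.

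The hard part, absent in \cite{Shfried}, is the contribution of the non-trivial elliptic conjugacy classes. In the manifold case only $[\gamma]=[1]$ contributes to the local anomaly, and Bismut's formula at $\gamma=1$ reduces to a Plancherel integral which vanishes in odd dimension. In the orbifold case, for each elliptic $\gamma$ conjugated into $K$ one must apply Bismut's formula at a semisimple element with non-trivial $K$-component, verify that the finite-dimensional representation theory of $K(\gamma)$ yields an integer order $r_\rho$, and reassemble these local contributions into densities supported on the strata $\Sigma Z$ in a way compatible with the Kawasaki-Satake orbifold index conventions and with the combinatorial factors $m_{[\gamma]}$ and $\chi_{\rm orb}(\bbS^1\backslash B_{[\gamma]})$ entering $R_\rho$. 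This matching of elliptic orbital integrals with orbifold local densities is where the bulk of the technical work will concentrate.
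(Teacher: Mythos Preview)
Your proposal identifies the right ingredients (Bismut's orbital integral formula, the orbifold Selberg trace formula, the split into elliptic and hyperbolic classes), but it misses the structural mechanism that actually connects $T(F)$ to $R_\rho(0)$, and several of your intermediate claims do not hold as stated.

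First, the paper's argument begins with a case split on the fundamental rank $\delta(G)$ that you do not mention. When $\delta(G)\geq 3$, both sides are trivially $1$: by \cite[Theorem 4.12]{Shfried} every orbital integral $\Trs^{[\gamma]}[N^{\Lambda^\cdot}\exp(-tC^{\fg,X}/2)]$ vanishes, so $T(F)=1$; and a Gauss--Bonnet argument gives $\chi_{\rm orb}(\bbS^1\backslash B_{[\gamma]})=0$ for all $[\gamma]\in[\Gamma_+]$, so $R_\rho\equiv 1$. The substantive case is $\delta(G)=1$, further split according to whether $G$ has compact center.

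Second, for $\delta(G)=1$ the paper does \emph{not} Mellin-transform the heat trace directly. It introduces, for each $0\le j\le 2l$, an auxiliary virtual $K$-representation $\widehat{\eta}_j$ built from $\Lambda^j(\fn^*)$, satisfying $\sum_i(-1)^{i-1}i\,\Lambda^i(\fp^*)=\sum_j(-1)^jE_{\widehat{\eta}_j}$ in $RO(K)$, and a Selberg zeta function $Z_{\eta_j,\rho}(\sigma)$. The two key identities are
\[
Z_{\eta,\rho}(\sigma)=\det{}_{\rm gr}\bigl(C^{\fg,Z,\widehat\eta,\rho}+\sigma_\eta+\sigma^2\bigr)\exp\bigl(P_{\eta,\rho}(\sigma)\bigr),
\qquad
R_\rho(\sigma)=\prod_{j=0}^{2l}Z_{\eta_j,\rho}\bigl(\sigma+(j-l)|\alpha|\bigr)^{(-1)^{j-1}},
\]
where $P_{\eta,\rho}$ is an explicit \emph{odd} polynomial assembled from the elliptic orbital integrals. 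Meromorphic continuation of $R_\rho$ comes from the regularized-determinant formula, not from regularity of your $\zeta_\rho$ at $s=0$. Your sentence ``whose Mellin transform is a rational multiple of $e^{-\sigma l_{[\gamma]}}$'' is not correct: no Mellin transform of a Gaussian in $t$ produces $e^{-\sigma l}$; what is used instead is the Gaussian integral $\int_0^\infty t^{-1/2}e^{-|a|^2/2t-\sigma^2 t/2}\,dt\sim e^{-\sigma|a|}/\sigma$ that underlies the Selberg-zeta construction.

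Third, your description of the role of the elliptic terms is inverted. The constants $C_\rho$ and $r_\rho$ are \emph{spectral}: $r_j=m_{\eta_j,\rho}(0)$ are kernel multiplicities of the Casimir operators $C^{\fg,Z,\widehat{\eta}_j,\rho}$, and $C_\rho,r_\rho$ are built from the $r_j$ via \eqref{eq:Cr}. The elliptic orbital integrals enter only through the polynomials $P_{\eta_j,\rho}$, and it is the functional equation $Z_{\eta,\rho}(\sigma)=Z_{\eta,\rho}(-\sigma)\exp(2P_{\eta,\rho}(\sigma))$ that reconciles elliptic and hyperbolic contributions. In particular, the conclusion $C_\rho=1$, $r_\rho=0$ when $H^\cdot(Z,F)=0$ follows from a Hodge-theoretic argument forcing all $r_j=0$, not from any vanishing of elliptic orbital integrals.

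You correctly identify that the genuinely new work relative to \cite{Shfried} is the computation of $\Trs^{[k^{-1}]}[\exp(-tC^{\fg,X,\widehat\eta}/2)]$ for nontrivial elliptic $k\in T$ via Bismut's formula. But the way this feeds into the final statement is through the polynomials $P_{\eta,\rho}$ in the Selberg-zeta factorization above, not through a direct Mellin identity between the torsion zeta function and $R_\rho$.
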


The proof of Theorem \ref{thm:3} is  similar to the one  given in \cite{Shfried}, except that in the current  case, we also need to take account of the contribution of elliptic orbital integrals in the analytic torsion. On the other hand, let us note that a  priori elliptic  elements do not contribute to the  dynamical zeta function. This seemingly contradictory phenomenon has already appeared in the smooth case. In fact, 
in the current case, the elliptic and non elliptic orbital integrals  are related  via functional equations of certain Selberg zeta functions. 

We refer the readers to the papers of  Giulietti-Liverani-Pollicott \cite{GLP2013} and Dyatlov-Zworski \cite{DyatlovZworski,zworski_zero} for other points of view on the dynamical zeta function on negatively curved manifolds. 

Let us also mention  Fedosova's recent work \cite{Fedosova_orb_zeta, Fedosova_orb_hyper,Fedosova_orb_finite} on the Selberg zeta function and  the asymptotic behavior of the analytic torsion of unimodular flat orbifold vector bundles on   hyperbolic orbifolds.

\subsection{Organisation of the article}
This article  is organized as follows.
In Section \ref{Sec:1}, we introduce some basic notation  on the determinant line and characteristic forms. Also we recall some standard terminology  on  group actions on topological spaces.

In Section \ref{Sec:Top}, we recall the definition of orbifolds,   orbifold vector bundles, and the $\cG$-path theory of Haefliger \cite{Haefliger_orb}.   We show Theorem \ref{thm:1}.

In Section \ref{Sec:diff}, we explain  how to extend the usual differential calculus and Chern-Weil theory on manifolds to orbifolds. %We extends Bismut-Goette's $V$-invariant to compact orbifolds with compact Lie group action.

In Section \ref{Sec:Tor}, we  study    the analytic torsion and 
Ray-Singer metric  on orbifolds.   Following \cite{BLott}, we show in 
a unified way an orbifold version of  Gauss-Bonnet-Chern Theorem and Theorem \ref{thm:2}. Some estimates on heat kernels are postponed to Section \ref{Sec:estheat}.

In Section \ref{Sec:Fried}, we study the analytic torsion on locally symmetric orbifold  using the Selberg trace formula and Bismut's semisimple orbital integral formula. We show Theorem \ref{thm:3}.

\subsection{Notation}
In the whole paper, we use the superconnection formalism of Quillen 
\cite{Quillensuper} (see also \cite[Section 1.3]{BGV}).
 Here we just briefly recall that if $A$ is a $\bZ_2$-graded algebra, 
 if $a,b\in A$,  the supercommutator $[a, b]$ is 
given by $[a,b]=ab-(-1)^{\deg a \deg b}ba$. 
%  \begin{align}
%    [a,b]=ab-(-1)^{\deg a \deg b}ba.
%  \end{align}
If $B$ is another $\bZ_2$-graded algebra, we denote by $A\widehat{\otimes} B$ the super tensor algebra.
If $E = E^+ \oplus E^-$ is a $\bZ_2$-graded vector space, the algebra $\End(E)$ is $\bZ_2$-graded. If $\tau = \pm 1$ on
$E^\pm$, if $a \in \End(E)$, the supertrace $\Trs[a]$ is defined by 
$  \Trs[a]=\Tr[\tau a]$.
% \begin{align}
%  \Trs[a]=\Tr[\tau a].
% \end{align}
We make the convention that $\mathbf{N}=\{0,1, 2,\cdots\}$ and $\mathbf{N}^*=\{1,2,\cdots\}$.  If $A$ is a finite set, we denote by $|A|$ its cardinality. 
\subsection*{Acknowledgement}
%We thank Professor X. Ma for bringing this problem to our attention.
The work  started while S.S. was visiting the
University of Science and Technology of China in July, 2016.
He would  like to thank the School of Mathematical Sciences for hospitality.
%and partial financial support. 
S.S. was  supported by a grant from the European Research Council (E.R.C.) under European Union's Seventh Framework Program (FP7/2007-2013) /ERC grant agreement (No.~291060) and by Collaborative Research Centre ``Space-Time-Matte" (SFB 647) founded by the German Research Foundation (DFG).
J.Y. was partially
supported by NSFC (No.~11401552, No.~11771411). 

\settocdepth{subsection}

\section{Preliminary}\label{Sec:1}
The purpose of this section is to recall some basic definitions and constructions.
%
%algebraic constructions on  determinant lines and Berezin integrals, some convention of group actions on topological spaces, and  the Chern-Weil construction on  characteristic forms and  associated classes of Chern-Simons forms on manifolds. 
This section is organized as follows.
In subsection \ref{sec:det}, we  introduce the basic conventions on  determinant lines.

In subsection \ref{sec:group}, we recall some standard terminology 
 of group actions on topological spaces, which will be used in the whole paper.

In subsection \ref{sec:chm}, we recall the  Chern-Weil construction 
on  characteristic forms and  the associated secondary classes of Chern-Simons forms on manifolds. 
\subsection{Determinants}\label{sec:det}
Let $V$ be a complex  vector space of finite dimension. We denote by $V^*$ the dual space of $V$, and by $\Lambda^\cdot V$ the exterior algebra of $V$.
 Set $  \det V=\Lambda^{\dim V} V.$
%  \begin{align}
%    \det V=\Lambda^{\dim V} V.
%  \end{align}
 Clearly, $\det V$ is a  line. We use the convention that $\det 0=\bC.$
%  \begin{align}
%    \det 0=\bC.
%  \end{align} 
If  $\lambda$  is a  line, we denote by $\lambda^{-1}=\lambda^*$ the dual line.
%\subsection{Berezin integral}\label{sec:Bere}
% Let $E$ be a real Euclidean space of dimension $m$ with
%the scalar product $\<,\>$, and let $V$ be a real vector space of finite dimension.
%
%Suppose temporarily that $E$ is oriented and that $e_1,\ldots, e_m$ is an oriented orthogonal basis of $E$. Let $e^1,\ldots,e^m$ be the corresponding dual basis of $E^*$. We define $\int^B$ to be the linear map from $\Lambda^\cdot(V^*)\widehat{\otimes}\Lambda^\cdot(E^*)$ into $\Lambda^\cdot(V^*)$, such that if $\alpha\in \Lambda^\cdot(V^*), \beta\in \Lambda^\cdot(E^*)$,
%\begin{align}\label{eq:defBere}
% & \int^B\alpha\beta=0, \hbox{ if } \deg \beta<m,&\int^B\alpha e^1\wedge\cdots\wedge e^m=(-1)^{m(m+1)/2}\alpha.
%\end{align}
%More generally, if $o(E)$ is the orientation line of $E$, then $\int^B$ defines a linear map from $\Lambda^\cdot(V^*)\widehat{\otimes}\Lambda^\cdot(E^*)$ into $\Lambda^\cdot(V^*)\widehat{\otimes} o(E)$, which is called a Berezin integral.
%
%Let $A\in \End^{\mathrm{anti}}(E)$ be an antisymmetric endomorphism of $E$. We identify $A$ with
%\begin{align}\label{eq:ALam}
%  \dot{A}=\frac{1}{2}\sum_{1\l i,j\l m}\<e_i,Ae_j\>e^i\wedge e^j\in \Lambda^2(E^*).
%\end{align}
%By definition, the Pfaffian $\mathrm{Pf}[A]\in o(E)$ of $A$ is given by
%\begin{align}\label{eq:PfA}
%\mathrm{Pf}[A]=\int^B\exp\(-\dot{A}\).
%\end{align}
%Clearly, $\mathrm{Pf}[A]$ vanishes if $m$ is odd.

%\subsection{Characteristic forms  on manifolds}

\subsection{Group actions}\label{sec:group}
Let $L$ be a topological  group  acting continuously on a topological 
space $S$. The action of $L$ is  said to be  free if for any $g\in L$ and $g\neq 1$, the set of fixed points of $g$ in $S$ is empty. The action of $L$  is said to be effective if the morphism of groups $L\to {\rm Homeo}(S)$ is injective, where ${\rm Homeo}(S)$ is the group of homeomorphisms of $S$. The action of $L$ is said to be properly discontinuous if for any  $x\in S$ there is a neighborhood $U$ of $x$ such that the set 
\begin{align}
\{g\in L: gU\cap U\neq \varnothing\}
\end{align}
is finite.

If $L$ acts on the right (resp.~left) on the topological space $S_0$ (resp.~ $S_1$), denote by $S_0/L$ (resp.~ $L\backslash S_1$) the quotient space, and by $S_0\times_{L} S_1$ the quotient of $S_0\times S_1$ by the left action defined by 
\begin{align}
&g(x_0,x_1)=(x_0g^{-1},gx_1),& \hbox{for } g\in L \hbox{ and } (x_0,x_1)\in S_0\times S_1.
\end{align}
%where $g\in L$, $x_0\in S_0$ and $x_1\in S_1$.
If $S_2$ is another left $L$-space, denote by  $S_1 \tensor[_L]{\times}{} S_2$ the quotient of $S_1\times S_2$ by the evident left action of $L$. 

\subsection{Characteristic forms on manifolds}\label{sec:chm}
Let $S$ be a  manifold. Denote by  $\big(\Omega^\cdot(S),d^S\big)$  
the de Rham complex of $S$, and by  $H^\cdot(S)$  its de Rham cohomology.

%Let $e(E, \nabla^E)$, $f(F,\nabla^F)$ and $\widehat{A}(E)$ and $\rm{ch}(E,\nabla^E)$

Let $E$ be a real  vector bundle of rank $r$ equipped  with a  Euclidean metric $g^E$. Let $\nabla^E$ be a metric connection, and  let  $R^E=(\nabla^E)^2$ be the curvature of $\nabla^E$. Then $R^E$ is a $2$-form on $S$ with values in  antisymmetric endomorphisms of $E$.               

%  Let us recall the definition of  Euler form and the associated class of  Chern-Simons form.
%We apply the Berezin integration formalism of subsection \ref{sec:Bere} to $V=TS$. If $\omega \in \Omega^\cdot(S,\Lambda^\cdot(E^*))$, then  $\int^{B}\omega\in \Omega^\cdot(S,o(E))$. 
%If $e_1,\cdots,e_r$ is an orthogonal basis of $E$, as in \eqref{eq:ALam}, set
%\begin{align}\label{eq:Rdot}
%  \dot{R}^{E}=\frac{1}{2}\sum_{1\l i,j\l r}\left \<e_i,R^{E} e_j\right\>e^i\wedge e^j\in \Omega^\cdot\big(S,\Lambda^\cdot(E)\big).
%\end{align}
If $A$ is an antisymmetric matrix, denote by ${\rm Pf}[A]$ the Pfaffian \cite[(3.3)]{BZ92} of $A$. Then ${\rm Pf}[A]$ is a polynomial function of $A$, which is a square root of $\det[A]$. Let $o(E)$ be the orientation line of $E$. The Euler  form  of $\(E,\nabla^{E}\)$ is given by
\begin{align}\label{eq:eE}
  e\(E,\nabla^{E}\)=\mathrm{Pf}\[\frac{R^{E}}{2\pi}\]\in \Omega^{r}\big(S,o(E)\big).
%=\int^{B}\exp\(-\dot{R}^{E}/2\pi\)
\end{align}
%By the construction, $\{e(\widetilde{E}_{U},\nabla^{\widetilde{E}_{U}})\}_{U\in \cU}$ is a family of $G_U$-invariant closed differential forms such that  \eqref{eq:gss} holds. It defines a closed form $e(E,\nabla^{E})\in \Omega^{r}\big(Z,o(E)\big)$, which is called the Euler form of $(E,\nabla^E)$. 
%Note that if $r$ is odd, then $e(E,\nabla^{E})=0$.

The cohomology class $e(E)\in H^r(S,o(E))$ of $e(E,\nabla^E)$ does not depend on the choice of $(g^E,\nabla^E)$. More precisely,  if $g^{\prime E}$ is another metric on $E$, and if $\nabla^{\prime E}$ is another connection on $E$ which preserves 
$g^{\prime E}$, we can define a class of Chern-Simons $(r-1$)-form 
\begin{align}
\widetilde{e}\(E,\nabla^{E}, \nabla^{\prime E}\)\in \Omega^{r-1}\big(S,o(E)\big)/d\Omega^{r-2}\big(S,o(E)\big)
\end{align}
 such that 
\begin{align}\label{eq:wie}
d^S\widetilde{e}\(E,\nabla^{E}, \nabla^{\prime E}\)=e\(E,\nabla^{\prime E}\)-e\(E,\nabla^{E}\).
\end{align}
Note that  if $r$ is odd, then $e(E,\nabla^{E})=0$ and 
$\widetilde{e}(E,\nabla^{E}, \nabla^{\prime E})=0$. 
%Note also that if $r$ is odd, then $\widetilde{e}\(E,\nabla^{E}, \nabla^{\prime E}\)=0$. 

Let us describe the construction of $\widetilde{e}(E,\nabla^{E}, \nabla^{\prime E})$.
 Take a smooth family $(g^{ E}_s,\nabla^{E}_s)_{s\in \bR}$   of metrics and metric connections  such that 
\begin{align}\label{eq:E01}
&\(g^{ E}_0,\nabla^{E}_0\)=\(g^{E},\nabla^{E}\), &\(g^{ E}_1,\nabla^{ E}_1\)=\(g^{\prime E},\nabla^{\prime E}\).
\end{align}
Set
\begin{align}\label{eq:pi}
\pi:\bR\times S\to S.
\end{align}
We equip  $\pi^*E$ with a Euclidean metric $g^{\pi^* E}$ and with a metric connection $\nabla^{\pi^*E} $ defined  by 
\begin{align}\label{eq:pugE}
&g^{\pi^* E}|_{\{s\}\times S}=g^{E}_s,
&\nabla^{\pi^*E} =ds\wedge \(\frac{d}{ds}+\frac{1}{2}g^{ E,-1}_s\frac{d}{ds} g^{ E}_s\)+\nabla^{E}_s.
\end{align}
Write 
\begin{align}\label{eq:aass}
e\(\pi^*E,\nabla^{\pi^*E}\)=e\(E,\nabla^{E}_s\)+ds\wedge \alpha_s\in \Omega^{r}\big(\bR\times S, \pi^*o(E)\big).
\end{align}
Since  $e\(\pi^*E,\nabla^{\pi^*E}\)$ is closed, by \eqref{eq:aass}, for $s\in \bR$, we have
\begin{align}\label{eq:aseu}
\frac{\p}{\p s} e\(E,\nabla^{E}_s\)=d^S  \alpha_s.
\end{align}
Then, $\widetilde{e}(E,\nabla^{E}, \nabla^{\prime E})\in\Omega^{r-1}\big(S,o(E)\big)/d\Omega^{r-2}\big(S,o(E)\big) $ is defined  by  the class of
\begin{align}\label{eq:intalpha}
\int_0^1\alpha_s ds\in \Omega^{r-1}\big(S,o(E)\big).
\end{align}
Note that  $\widetilde{e}(E,\nabla^{E}, \nabla^{\prime E})$
does not depend on the choice of smooth family  $(g^{ 
E}_s,\nabla^{E}_s)_{s\in \bR}$ (c.f. \cite[Proposition 2.7]{BruningMa06}). Also, \eqref{eq:wie} is a consequence 
of  \eqref{eq:E01} and \eqref{eq:aseu}.

Let us recall the definition of  the $\widehat{A}$-form of $(E,\nabla^E)$. For $x\in \bC$, set
\begin{align}\label{eq:defAhat}
  \widehat{A}(x)=\frac{x/2}{\sinh(x/2)}.
\end{align}
The $\widehat{A}$-form  of $(E,\nabla^E)$  is given   by
\begin{align}\label{eq:Ahatg}
\widehat{A}\(E,\nabla^E\)=\[\det\(\widehat{A}\(-\frac{R^{E}}{2i\pi}\)\)\]^{1/2}\in \Omega^\cdot(S).
\end{align}

Let $L$ be a compact Lie group. 
Assume that  $L$ acts fiberwisely and linearly on the vector bundle $E$ over $S$, which preserves $(g^E,\nabla^E)$. 
Take $g\in L$. Assume that $g$ preserves the orientation of $E$. Let $E(g)$ be the subbundle of $E $ defined by the 
fixed points  of $g$. Let  $\pm\theta_1,\cdots,\pm\theta_{s_0}$, $0<\theta_i\le \pi$ be the district nonzero angles of the action of $g$ on $E$. Let $E_{\theta_i}$ be the subbundle of $E$ on which $g$ acts by a rotation of angle $\theta_i$. The subbundles $E(g)$ and $E_{\theta_i}$ are canonically equipped with Euclidean metrics and  metric connections $\nabla^{E(g)},\nabla^{E_{\theta_i}}$.

For $\theta\in \bR-2\pi \bZ$, set 
\begin{align}
\widehat{A}_\theta(x)=\frac{1}{2\sinh\(\frac{x+i\theta}{2}\)}.
\end{align}
Given $\theta_i$, let $\widehat{A}_{\theta_i}(E_{\theta_i},\nabla^{E_{\theta_i}})$ be  the corresponding multiplicative genus. The equivariant  $\widehat{A}$-form of $(E,\nabla^E)$ is given by 
\begin{align}\label{eq:Agpm1}
\widehat{A}_g\(E,\nabla^E\)=\widehat{A}\(E(g),\nabla^{E(g)}\)\prod_{i=1}^{s_0} \widehat{A}_{\theta_i}\(E_{\theta_i},\nabla^{E_{\theta_i}}\)\in \Omega^\cdot(S).
\end{align}

Let $E'$ be  a complex  vector bundle  carrying a  connection $\nabla^{E'}$ with curvature $R^{E'}$. Assume that 
$E'$ is equipped with a fiberwise linear action of $L$, which preserves $\nabla^{E'}$. For $g\in L$, the equivariant 
 Chern character form of $(E',\nabla^{E'})$ is given by 
\begin{align}\label{eq:chg}
  \mathrm{ch}_g\(E',\nabla^{E'}\)=\Tr\[g\exp\(-\frac{R^{E'}}{2i\pi}\)\]\in \Omega^{\rm even}(S).
\end{align}

As before, $\widehat{A}_g(E,\nabla^E)$, $\mathrm{ch}_g(E',\nabla^{E'})$ are closed. Their cohomology classes do not depend on the choice of  connections. The closed forms in \eqref{eq:Agpm1} and $\eqref{eq:chg}$ on $S$ are exactly the ones that appear in the Lefschetz fixed point formula of Atiyah-Bott \cite{AtiyahBott67, AtiyahBott68}. Note that there are questions of signs to be taken care of, because of the need to distinguish between $\theta_i$ and $-\theta_i$. We refer to the above references for more detail.

Let $F$ be a  flat vector bundle on $S$ with flat connection 
$\nabla^F$. Let $g^F$ be a Hermitian metric  on $F$. Assume that $F$ 
is equipped with a fiberwise and linear action of $L$ which preserves 
$\nabla^{F}$ and $g^F$. Following \cite[Definition 4.1]{BZ92}, put
\begin{align}\label{eq:omeF}
\omega\(\nabla^{F},g^{F}\)=\(g^{F}\)^{-1}\nabla^{F}g^{F}.
\end{align}
Then, $\omega(\nabla^{F},g^{F})$ is a $1$-form on $S$ with values in symmetric endomorphisms of $F$. For $x\in \bC$, set 
\begin{align}\label{eq:h}
h(x)=xe^{x^2}.
\end{align}
Following  \cite[Defintion 1.7]{BLott} and \cite[Definition 1.7]{BG01}, for $g\in L$, 
the equivariant  odd Chern character form of $(F,\nabla^F)$ is given by 
\begin{align}\label{eq:defh}
h_g\(\nabla^F,g^F\)=\sqrt{2i\pi}\Tr\[gh\(\frac{\omega\(\nabla^{F},g^{F}\)/2}{\sqrt{2i\pi}}\)\]\in \Omega^{\rm odd}(S).
\end{align}
When $g=1$, we denote by  $h\(\nabla^F,g^F\)=h_1\(\nabla^F,g^F\)$.

By \cite[Theorem 1.11]{BLott} and \cite[Theorem 1.8]{BG01}, we know 
that the cohomology class  $h_g(\nabla^F)\in H^{\rm odd}(S)$ of $h_g(\nabla^F,g^F)$ does not depend on $g^F$.  
If $g^{\prime F}$ is another $L$-invariant Hermitian metric on $F$, we can define the class of Chern-Simons form
$\widetilde{h}_g(\nabla^F,g^F,g^{\prime F})\in\Omega^{\rm even }(S)/d \Omega^{\rm odd}(S)$  such that 
\begin{align}\label{eq:dehgg}
d^S\widetilde{h}_g(\nabla^F,g^F,g^{\prime F})=h_g\(\nabla^{ F},g^{\prime F}\)-h_g\(\nabla^F,g^F\).
\end{align}
More precisely, choose a smooth family of $L$-invariant metrics $(g^F_s)_{s\in \bR}$ such that 
\begin{align}
&g_0^F=g^F,& g_1^F=g^{\prime F}.
\end{align}
Consider the projection $\pi $ defined in \eqref{eq:pi}. Equip 
$\pi^*F$  with the following flat connection and Hermitian metric
\begin{align}\label{eq:pucon}
&\nabla^{\pi^*F}=d^{\bR}+\nabla^F,
&g^{\pi^* F}\Big|_{\{s\}\times S}=g^F_s.
\end{align}
%By \eqref{eq:omeF}, \eqref{eq:pucon} and \eqref{eq:pugf}, 
%\begin{align}\label{eq:wpi}
%\omega^{\pi^*F}=\omega^{F}+ds\wedge g_s^{F,-1} \frac{\p}{\p s}g_s^{F}.
%\end{align}
Write
\begin{align}\label{eq:bbss}
h_g\(\nabla^{\pi^* F},g^{\pi^* F}\)=h_g\(\nabla^{F},g^{ F}_s\)+ds \wedge \beta_s\in \Omega^{\rm odd}(\bR\times S).
\end{align}
As \eqref{eq:intalpha},  $\widetilde{h}_g\(\nabla^{F},g^{ F},g^{\prime F}\)\in \Omega^{\rm even }(S)/d \Omega^{\rm odd}(S)$ is defined  by the class of 
\begin{align}\label{eq:intbeta}
\int_0^1\beta_s ds \in \Omega^{\rm even}( S).
\end{align}
By \cite[Theorem 1.11]{BLott} and \cite[Theorem 1.11]{BG01}, $\widetilde{h}_g\(\nabla^{F},g^{ F},g^{\prime F}\)$ does not depend on the choice of the smooth family of metrics $(g^F_s)_{s\in \bR}$. Also,  $\widetilde{h}_g\(\nabla^{F},g^{ F},g^{\prime F}\)$ satisfies \eqref{eq:dehgg}.
%Clearly,
%\begin{align}\label{eq:dehgg}
%d^S\widetilde{h}(\nabla^F,g^F,g^{\prime F})=h\(\nabla^{ F},g^{\prime F}\)-h\(\nabla^F,g^F\).
%\end{align}

\section{Topology of orbifolds}\label{Sec:Top}
The purpose of this section is to introduce some basic definitions and related constructions for orbifolds. We show Theorem \ref{thm:1}, which claims a bijection  between the isomorphism classes of proper flat  orbifold  vector bundles and the equivalent classes of representations of the orbifold fundamental group.

This section is organized as follows. In subsection \ref{sec:Orb},  we recall the definition of orbifolds and the associated groupoid $\cG$.

In subsection \ref{sec:sing},  we introduce  the resolution for the 
singular set of an orbifold.

In subsection \ref{sec:vec}, we recall the definition of   orbifold vector bundles. 

In subsection \ref{sec:cov}, the orbifold fundamental group and the universal covering orbifold are constructed  
using the $\cG$-path theory of Haefliger \cite{Haefliger_orb,Bridson_Haefliger}. %We recall some facts on  the orbifold covering theory, and describe a construction of  orbifold universal cover.

Finally, in subsection \ref{sec:hol}, we define  the holonomy representation for a  proper flat  orbifold vector bundle. We  restate and show Theorem \ref{thm:1}.

\subsection{Definition of orbifolds}\label{sec:Orb}In this subsection, we recall the definition of orbifolds %, which is also called $V$-manifolds,  
following \cite[Section 1]{SatakeGaussB} and \cite[Section 1.1]{Ruan_Orbifold}.
Let $Z$ be a topological space, and let $U\subset Z$ be a connected open subset of $Z$. Take $m\in \bN$.
\begin{defin}\label{def:chart}
  An $m$-dimensional orbifold chart for  $U$ is given by a triple  $(\widetilde{U}, G_U, \pi_U)$, where
  \begin{itemize}
    \item  $\widetilde{U}\subset \mathbf{R}^m$ is a connected open subset of $\mathbf{R}^m$;
    \item $G_U$ is a finite group acting smoothly and effectively on the left on  $\widetilde{U}$;
    \item  $\pi_U:   \widetilde{U}\to  U$ is a $G_U$-invariant continuous  map 
    which induces a homeomorphism of topological spaces   
\begin{align}
 G_U\backslash\widetilde{U}\simeq U.
\end{align}
  \end{itemize}
\end{defin}

\begin{re}In \cite[Section 1]{SatakeGaussB}, it is assumed that the codimension of the fixed point set of $G_U$ in $\widetilde{U}$ is  bigger than or equal to $2$. In this article,  we do not make this assumption.  
\end{re}

 Let $U\hookrightarrow V$ be an embedding of connected open subsets of $Z$, and let $(\widetilde{U}, G_U, \pi_U)$ and $(\widetilde{V}, G_V, \pi_V)$ be respectively orbifold charts for $U$ and $V$.

\begin{defin}
  An embedding of orbifold charts is a smooth embedding $\phi_{VU}: \widetilde{U}\to \widetilde{V}$ such that the diagram
  \begin{align}
  \begin{aligned}
        \xymatrix{
    \widetilde{U}\ar[r]^{\phi_{VU}} \ar[d]^{\pi_U}  & \widetilde{V}\ar[d]^{\pi_V}\\
     U\ar@{^{(}->}[r]& V
    }
  \end{aligned}
  \end{align}
   commutes.
\end{defin}

We recall the following  proposition. The proof was given by   Satake \cite[Lemmas 1.1, 1.2]{SatakeGaussB} under the assumption that  the codimension of the fixed point set is  bigger than or equal to $2$. For general cases,  see \cite[Appendix]{MoPr_orbifold} for example.

\begin{prop}\label{prop:lift}
Let $\phi_{VU}:(\widetilde{U}, G_U, \pi_U)\hookrightarrow (\widetilde{V}, G_V, \pi_V)$ be an embedding of  orbifold charts. The following statements hold:
\begin{enumerate}
\item if $g\in  G_V$, then $x\in \widetilde{U}\to g\phi_{VU}(x)\in \widetilde{V}$ is another embedding of orbifold charts. Conversely, any embedding of orbifold charts $(\widetilde{U}, G_U, \pi_U)\hookrightarrow (\widetilde{V}, G_V, \pi_V)$ is of  such form;
\item there exists a unique injective morphism $\lambda_{VU}:G_U\to G_V$ of groups such that 
$\phi_{VU}$ is $\lambda_{VU}$-equivariant;
\item if $g\in G_V$ such that $\phi_{VU}(\widetilde{U})\cap g\phi_{VU}(\widetilde{U})\neq \varnothing$, then $g$ is in the image of $\lambda_{VU}$, and so
 $\phi_{VU}(\widetilde{U})= g \phi_{VU}(\widetilde{U})$.
\end{enumerate}
\end{prop}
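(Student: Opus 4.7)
The plan is to establish (1) first and then deduce (2) and (3). The forward direction of (1) is immediate: for $g \in G_V$, the $G_V$-invariance of $\pi_V$ yields $\pi_V \circ (g\phi_{VU}) = \pi_V \circ \phi_{VU}$, so $g\phi_{VU}$ fits in the required commutative diagram, and it is a smooth embedding as the composition of the diffeomorphism given by $g$ with $\phi_{VU}$.

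The converse of (1) is the heart of the argument. Given two embeddings of orbifold charts $\phi_0,\phi_1\colon \widetilde U \to \widetilde V$, the identity $\pi_V\circ\phi_0=\pi_V\circ\phi_1$ forces $\phi_0(x)$ and $\phi_1(x)$ to lie in a common $G_V$-orbit for every $x$. I would set $A_g=\{x\in \widetilde U : \phi_1(x)=g\phi_0(x)\}$ for each $g\in G_V$; each $A_g$ is closed and $\widetilde U=\bigcup_g A_g$. The crux is to verify that each $A_g$ is also open, so that by connectedness of $\widetilde U$ a single $g$ works. For this I would equip $\widetilde V$ with a $G_V$-invariant Riemannian metric; around any $x_1\in A_g$, normal coordinates centered at $\phi_0(x_1)$ linearize the action of the stabilizer $H=\mathrm{Stab}_{G_V}(\phi_0(x_1))$, and a Taylor-expansion argument, combined with smoothness of both embeddings, forces them to remain $g$-related on a whole neighborhood of $x_1$.

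Part (2) follows by applying the converse of (1) to the pair $\phi_{VU}$ and $\phi_{VU}\circ g$ for each $g\in G_U$: the result is an element $\lambda_{VU}(g)\in G_V$ with $\phi_{VU}\circ g=\lambda_{VU}(g)\circ \phi_{VU}$. Uniqueness of $\lambda_{VU}(g)$ and the morphism property both rest on the rigidity observation that any $h\in G_V$ fixing the nonempty open set $\phi_{VU}(\widetilde U)$ pointwise must be trivial: as a smooth isometry of the connected manifold $\widetilde V$ fixing a nonempty open subset, it is the identity transformation, and effectiveness of the $G_V$-action then forces $h=1$. Injectivity of $\lambda_{VU}$ follows directly from effectiveness of the $G_U$-action on $\widetilde U$. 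For (3), given $g\in G_V$ with $\phi_{VU}(\widetilde U)\cap g\phi_{VU}(\widetilde U)\neq\varnothing$, set $Y=\{x\in \widetilde U : g\phi_{VU}(x)\in \phi_{VU}(\widetilde U)\}$ and define $\psi\colon Y\to \widetilde U$ by $\phi_{VU}\circ\psi=g\phi_{VU}|_Y$. The set $Y$ is open and nonempty by hypothesis; to see it is also closed, if $x_n\to x$ with $x_n\in Y$, then since $G_U$ is finite a subsequence satisfies $\psi(x_{n_k})=hx_{n_k}$ for a single $h\in G_U$, and passing to the limit gives $\phi_{VU}(hx)=g\phi_{VU}(x)$, so $x\in Y$. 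Connectedness of $\widetilde U$ then gives $Y=\widetilde U$, and $\psi$ becomes a diffeomorphism lifting the identity of $U$; arguing as in (2), one obtains $\psi\in G_U$ and $g=\lambda_{VU}(\psi)$, which yields both $g\in \mathrm{Image}(\lambda_{VU})$ and $g\phi_{VU}(\widetilde U)=\phi_{VU}(\widetilde U)$.

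The main obstacle, in my view, is the local smooth rigidity step in the converse of (1). Because we do not assume that fixed point sets have codimension at least two, reflection strata can disconnect the regular locus, so one cannot simply appeal to unique lifting on the free quotient over a connected dense open subset; the slice-based Taylor argument must be carried across codimension-one singular strata, which is precisely where the weakening of Satake's original hypothesis (noted in the Remark after Definition \ref{def:chart}) makes the proof more delicate.
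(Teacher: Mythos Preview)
The paper does not give its own proof of this proposition: it simply cites Satake \cite[Lemmas 1.1, 1.2]{SatakeGaussB} for the codimension~$\geq 2$ case and the appendix of Moerdijk--Pronk \cite{MoPr_orbifold} for the general case. So there is no in-paper argument to compare against; your outline is in fact precisely the Moerdijk--Pronk approach that the paper defers to.

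Your reductions of (2) and (3) to the converse of (1) are correct and cleanly written. In (3) the closedness argument via pigeonholing on $G_U$ is fine; note you also need $g^{-1}\phi_{VU}(\widetilde U)\subset\phi_{VU}(\widetilde U)$ to get that $\psi$ is surjective, but this follows by the symmetric argument you implicitly invoke, and then ``$\psi\in G_U$'' is exactly the converse of (1) with $U=V$, $\phi_{VU}=\mathrm{id}$.

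The one place your write-up is still a sketch is the openness of $A_g$ in the converse of (1), and you flag this honestly. The ``Taylor-expansion'' hint points in the right direction, but to make it go through across codimension-one strata you will want the following two-step refinement. After reducing to $\phi_1(x_1)=\phi_0(x_1)$ and linearizing the stabilizer $H$ in normal coordinates, first show that the differential $d(\phi_0^{-1}\phi_1)_{x_1}$ is itself an element $h_0\in H$: for any $v$ with trivial $H$-stabilizer the relation $\phi_1(tv)\in H\cdot\phi_0(tv)$ and finiteness of $H$ force a single $h_0$ along a ray, and linearity then propagates $h_0$ from an open cone to all of $T_{x_1}\widetilde U$. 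Second, replacing $\phi_1$ by $h_0^{-1}\phi_1$, you have a map with identity $1$-jet at $x_1$ still sending each point to its $H$-orbit; if it were not the identity near $x_1$, a sequence $v_n\to 0$ with $\phi_0^{-1}\phi_1(v_n)=k\,v_n$ for some fixed $k\neq 1$ would give $(k-1)w=0$ for every accumulation direction $w$ of $v_n/|v_n|$, forcing $w$ into the fixed hyperplane of $k$; iterating the differential argument at points on that hyperplane (where the effective stabilizer is strictly smaller) finishes by induction on $|H|$. This is the content hidden in the Moerdijk--Pronk appendix, and completing it would make your proof self-contained.
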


Let $U_1, U_2\subset Z$ be  two connected open subsets of $Z$ with orbifold charts $(\widetilde{U}_1, G_{U_1}, \pi_{U_1})$ and $(\widetilde{U}_2, G_{U_2}, \pi_{U_2})$.

\begin{defin}
  The orbifold charts  $(\widetilde{U}_1, G_{U_1}, \pi_{U_1})$ and $(\widetilde{U}_2, G_{U_2}, \pi_{U_2})$ are said to be compatible if for any $z\in U_1\cap U_2$, there is an open connected neighborhood $U_0\subset U_1\cap U_2$ of $z$  with orbifold chart $(\widetilde{U}_0, G_{U_0}, \pi_{U_0})$ such that there exist two embeddings   of orbifold charts
  \begin{align}
    &\phi_{U_iU_0}: (\widetilde{U}_0, G_{U_0}, \pi_{U_0})\hookrightarrow (\widetilde{U}_i, G_{U_i}, \pi_{U_i}),   &\hbox{for } i=1,2.
  \end{align}
  The diffeomorphism  $\phi_{U_2U_0}\phi^{-1}_{U_1U_0}:\phi_{U_1U_0}(\widetilde{U}_0)\to \phi_{U_2U_0}(\widetilde{U}_0)$ is  called a coordinate transformation.
\end{defin}

\begin{defin}  An orbifold  atlas on $Z$ consists  of an open connected  cover $\cU=\{U\}$ of $Z$ and   compatible orbifold charts $\widetilde{\cU}=\{(\widetilde{U}, G_{U}, \pi_{U})\}_{U\in \cU}$. 

An orbifold atlas $(\cV,\widetilde{\cV})$ is called a refinement of $(\cU,\widetilde{\cU})$, if $\mathcal{V}$ is a refinement of $\cU$ and if every orbifold chart in $\widetilde{\mathcal{V}}$ has an embedding into some orbifold chart in 
$\widetilde{\cU}$.

Two orbifold  atlases are said to be equivalent if they have a common refinement.

The equivalent class of an orbifold atlas is called an orbifold structure on $Z$. 
\end{defin}

\begin{defin}
  An orbifold is a second countable Hausdorff space  equipped with an 
  orbifold structure.  It said to have dimension $m$,   if all the orbifold charts which define the orbifold structure are of dimension $m$.
\end{defin}

\begin{re}\label{re:exichart}
Let $U, V$ be  two connected open subsets of an orbifold with respectively orbifold charts
$(\widetilde{U}, G_U, \pi_U)$ and $(\widetilde{V}, G_V, \pi_V)$, which  are compatible with the orbifold structure. If $U\subset V$, and if $\widetilde{U}$ is simply connected, then there exists an embedding of orbifold charts $(\widetilde{U}, G_U, \pi_U)\hookrightarrow (\widetilde{V}, G_V, \pi_V)$. 
\end{re}

\begin{re}\label{re:lch}
For any point $z$ of an orbifold, there exists an open connected 
neighborhood  $U_z\subset Z$ of $z$ with a compatible orbifold chart 
$(\widetilde{U}_z,G_z,\pi_z)$ such that $\pi_z^{-1}(z)$ contains only 
one point $x\in \widetilde{U}_z$. Such a chart is called to be  centered at $x$. Clearly,  $x$ is a fixed point of $G_z$. The isomorphism class of the group $G_z$ does not depend on the different choices  of centered orbifold charts, and is called the local group at $z$. 

Moreover,  we can choose $(\widetilde{U}_z,G_z,\pi_z)$ to be a linear chart centered at $0$, which means 
\begin{align}
&\widetilde{U}_z=\bR^m, &x=0\in \bR^m,&&G_z\subset {\rm O}(m).
\end{align}
\end{re}

In the sequel, let $Z$ be an orbifold with orbifold atlas 
$(\cU,\widetilde{\cU})$. We assume that  $\cU$ is countable and  that 
each $\widetilde{U}\in \widetilde{\cU}$ is simply connected.  When we talk of an orbifold chart, we mean the one which is compatible with   $\widetilde{\cU}$.
%
%The following Proposition will be useful for constructing orbifold. 
%
%\begin{prop}\label{prop:GGZ}
%Let $Y$ be a set, let $\cU=\{U\}$ be a collection of countable subsets whose union is $Y$. 
%For each $U\in \cU$, there is $(\widetilde{U},G_U,\pi_U)$ as in Definition \ref{def:chart} such that $\pi_U:\widetilde{U}\to U$ induce a bijection of $G_U\backslash \widetilde{U}\simeq U$. We equipped each $U$ the induced topology. Assume that for $U_1,U_2\in \cU$,
%\begin{enumerate}
%\item the set $\{(x,x):x\in U_1\cap U_2\}$ is closed in $U_1\times U_2\ $;
%\item for any $x_1\in \widetilde{U}_1$ and $x_2\in \widetilde{U}_2$ such that 
%$\pi_{U_1}(x_1)=\pi_{U_2}(x_2)$,  there exist a small connected $G_{U_i,x_i}$-invariant neighborhood $\widetilde{W}_i\subset\widetilde{U}_i$ of $x_i$  and a diffeomorphism $\phi:\widetilde{W}_1\to \widetilde{W}_2$ such that 
%$\pi_{U_2}\phi=\pi_{U_1}$. Here, $G_{U_i,x_i}\subset G_{U_i}$ is the stabilizer of $x_i$ in $G_{U_i}$. 
%\end{enumerate}
%Then there is a canonical orbifold structure on $Y$ such that $(\cU,\widetilde{\cU})$ is an orbifold atlas.
%\end{prop}
%\begin{proof}
%We equip  $Y$ the gluing topology. The condition (1) implies $Y$ is Hausdorff. Clearly, $Y$ is separable. Moreover,
%each $U\in \cU$ is an open connected subset of $Y$ with orbifold charts $(\widetilde{U},G_U,\pi_U)$.  The condition (2) is implies the orbifold charts are compatible. 
%
%The proof of  Proposition \ref{prop:GGZ} is completed.  
%\end{proof}

Let us introduce a groupoid $\cG$ associated to the orbifold $Z$ with orbifold atlas $(\cU,\widetilde{\cU})$. Recall that a groupoid is a category whose morphisms, which are called arrows,  are isomorphisms. We define  $\cG_0$,  the objects of $\cG$, to be the countable disjoint union  of smooth manifold 
\begin{align}\label{eq:Lie}
\cG_0=\coprod_{U\in \cU} \widetilde{U}.
\end{align}
An arrow  $g$ from $x_1\in \cG_0$ to $x_2\in \cG_0$, denoted by $g:x_1\to x_2$, is a  germ of coordinate transformation $g$ defined near $x_1$ such that $g(x_1)=x_2$. We denote by $\cG_1$ the set of  arrows.  This way defines a groupoid $\cG=(\cG_0,\cG_1)$. By \cite[Section 1.4]{Ruan_Orbifold}, $\cG_1$ is equipped with a topology such that $\cG$ is a proper, effective, \'etale Lie groupoid.

For $x_1,x_2\in \cG_0$, we call $x_1$ and $x_2$ in the same orbit if there is an arrow  $g\in \cG_1$ from $x_1$ to $x_2$. 
We denote by $\cG_0/\cG_1$ the orbit space equipped with the quotient  topology. The projection $\pi_U:\widetilde{U}\to U$ induces a homeomorphism of topological spaces
\begin{align}\label{eq:Z=GG}
   \cG_0/{{\cG_1}}\simeq Z.
\end{align}

Let $Y$ and $Z$ be two orbifolds. Following \cite[p. 
361]{Satake_gene_mfd}, we introduce:
\begin{defin}\label{def:fYZ}
  A continuous map $f:Y\to Z$ between orbifolds  is called smooth if for any $y\in Y$, there exist
  \begin{itemize}
\item an open connected neighborhood $U\subset Y$ of $y$,  an open connected neighborhood $V\subset Z$ of $f(y)$ such that $f(U)\subset V$,
\item orbifold charts $(\widetilde{U},G_U,\pi_U)$ and $(\widetilde{V},G_V,\pi_V)$ for $U$ and $V$,
\item a smooth map $\widetilde{f}_U:\widetilde{U}\to \widetilde{V}$
   such that the following diagram
 \begin{align}
  \begin{aligned}
        \xymatrix{
    \widetilde{U}\ar[r]^{\widetilde{f}_U} \ar[d]^{\pi_U}  & \widetilde{V}\ar[d]^{\pi_V}\\
     U\ar[r]^{f|_U}& V
    }
  \end{aligned}
  \end{align}
  commutes.
  \end{itemize}
We denote by $C^\infty(Y,Z)$  the space of smooth maps from $Y$ to $Z$. 
\end{defin}

%
%\begin{exa}
%If $M$ is a manifold, and $K$ is a compact Lie group acting locally free on $M$, then $M/K$ is an orbifold. 
%\end{exa}

Two orbifolds $Y$ and $Z$ are called isomorphic if there are smooth 
maps $f:Y\to Z$ and $f':Z\to Y$ such that $ff'=\mathrm{id}$ and $f'f=\mathrm{id}$. Clearly, this is the case if $f:Y\to Z$ is a smooth homeomorphism such that each lifting $\widetilde{f}_U$ is a diffeomorphism. Moreover, in this case, by Proposition \ref{prop:lift},  there is an isomorphism of group $\rho_U:G_U\to G_V$ such that $\widetilde{f}_U$ is $\rho_U$-equivariant. Also, any possible lifting has the form $g\widetilde{f}_U$, $g\in G_U$. 

\begin{defin}\label{def:Gact}
An action of Lie group  $L$ on $Z$ is said to be  smooth, if  the action  $L\times Z\to Z$
is smooth.
\end{defin}
%
%\begin{prop}
%If $L$ acts smoothly and effectively on $Z$, 
%\end{prop}
%\begin{proof}
%For $l\in L_U$, let $\widetilde{l}$ be the lift of $l$ to $\widetilde{U}$. By Proposition \ref{prop:lift}, all the lifts can be written as $g\widetilde{l}$ for some $g\in G_U$.  Now take $V\subset \widetilde{U}$ an open subset of the non fixed points of $\widetilde{U}$ such that $\pi_U:V\to \pi_U(V)$ is a diffeomorhism. 
%\end{proof}
%
%
%
%
%
%\begin{re}
%If $L$ acts on $Z$ effectively and smoothly, for $l\in L$,  the local homeomorphism $l:U\to V$ has a unique the lifts $\widetilde{l}:\widetilde{U}\to \widetilde{V}$, since the action of $\widetilde{l}$ on the non fixed points of $\widetilde{U}$ is uniquely determined by $l:U\to V$.
%\end{re}
%
%\begin{re}
%By Proposition \ref{prop:lift}, there is a morphism $L_z\to \mathrm{Aut}(G_z)$ such that $G_z\rtimes L_z$ acts on $\widetilde{U}$. 
%\end{re}

The following proposition is an extension  of \cite[Proposition 
13.2.1]{Thurston_geo_3_maniflod}.  We include a detailed proof  since some  constructions in the proof will be useful to show Theorem \ref{thm:1}.

\begin{prop}\label{prop:X/G}
Let $\Gamma$ be a discrete group acting smoothly and properly discontinuously  on the left on an orbifold $X$. 
Then  $\Gamma\backslash X$ has a canonical orbifold structure induced from $X$.
\end{prop}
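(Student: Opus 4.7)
I would construct the orbifold atlas on $\Gamma\backslash X$ chart by chart, starting from charts of $X$ and enlarging their uniformizing groups by the stabilizers. Fix $[x]\in\Gamma\backslash X$, pick a representative $x\in X$, and by Remark~\ref{re:lch} select a centered linear chart $(\widetilde{U}_x,G_x,\pi_x)$ of $X$ near $x$. Because $\Gamma$ acts properly discontinuously, the stabilizer $\Gamma_x=\{\gamma\in\Gamma:\gamma x=x\}$ is finite, and after shrinking $U_x$ I can arrange that $\gamma U_x\cap U_x\neq\varnothing$ if and only if $\gamma\in\Gamma_x$, in which case $\gamma U_x=U_x$.

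The next step is to lift the $\Gamma_x$-action to $\widetilde{U}_x$: each $\gamma\in\Gamma_x$ is a smooth self-isomorphism of the orbifold $U_x$, hence by the discussion following Definition~\ref{def:fYZ} it lifts to a diffeomorphism $\widetilde\gamma$ of $\widetilde{U}_x$, unique up to composition with an element of $G_x$. Let $\widetilde{\Gamma}_x$ be the subgroup of $\mathrm{Diff}(\widetilde{U}_x)$ consisting of all such lifts; it fits in a short exact sequence
\begin{equation*}
1\to G_x\to\widetilde{\Gamma}_x\to\Gamma_x\to 1
\end{equation*}
and is therefore finite. The map $\widetilde{U}_x\to\Gamma_x\backslash U_x$, obtained by composing $\pi_x$ with the quotient, descends to a homeomorphism $\widetilde{\Gamma}_x\backslash\widetilde{U}_x\simeq\Gamma_x\backslash U_x$ onto an open neighborhood of $[x]$ in $\Gamma\backslash X$, producing a candidate orbifold chart. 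For compatibility, given $[z]$ in the overlap of two such charts at $[x_1]$ and $[x_2]$, I would choose lifts $z_1,z_2\in X$ with $\gamma z_1=z_2$ for some $\gamma\in\Gamma$, select a chart of $X$ at $z_1$ contained in $\widetilde{U}_{x_1}\cap\gamma^{-1}\widetilde{U}_{x_2}$, and invoke Proposition~\ref{prop:lift} together with the compatibility of the atlas of $X$ to embed this chart equivariantly into both quotient charts.

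The main obstacle I foresee is the coherent handling of lifts. The group $\widetilde{\Gamma}_x$ and its action are a priori only canonical up to the ambiguity in lifting individual elements of $\Gamma_x$, and the embeddings needed in the compatibility step require choosing lifts of elements of $\Gamma$ compatibly with the charts at both endpoints; verifying that these choices assemble into a bona fide orbifold atlas requires careful use of Proposition~\ref{prop:lift}. A secondary technical point, not ruled out by the hypotheses, is that $\widetilde{\Gamma}_x$ may fail to act effectively when the $\Gamma$-action on $X$ itself is not effective; the linear form of the chart afforded by Remark~\ref{re:lch} combined with proper discontinuity ensures that the kernel of the action is finite, so quotienting by it restores effectiveness without affecting the resulting orbifold structure.
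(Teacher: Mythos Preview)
Your approach is essentially the same as the paper's: build charts for $\Gamma\backslash X$ by taking a centered chart $(\widetilde{U}_x,G_x,\pi_x)$ of $X$, shrink so that only $\Gamma_x$ meets the neighborhood, lift the $\Gamma_x$-action to $\widetilde{U}_x$, and take the group generated by these lifts together with $G_x$ as the new uniformizing group; compatibility is then checked in two steps (first the case $x_2=\gamma x_1$, then the general overlap via a small chart at a common point), exactly as you outline. One small imprecision: your short exact sequence $1\to G_x\to\widetilde\Gamma_x\to\Gamma_x\to 1$ presupposes that the map $\widetilde\Gamma_x\to\Gamma_x$ is well defined, which already uses effectiveness of the $\Gamma$-action on $U_x$; without it one only has a surjection $\Gamma_x\twoheadrightarrow\widetilde\Gamma_x/G_x$ (this is precisely the content of the paper's Remark~\ref{re:GHR}), but you correctly note at the end that passing to the effective quotient fixes this without changing the orbifold structure.
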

\begin{proof}
Let $p:X\to \Gamma\backslash  X$ be the natural projection. We equip $\Gamma\backslash X$ with the quotient topology.  Since $X$ is Hausdorff and second countable, and since the $\Gamma$-action is properly discontinuous, then $\Gamma\backslash X$ is also Hausdorff and second countable. 

 Take $z\in \Gamma\backslash X$. We choose $x\in X$ such that $p(x)=z$. Set
\begin{align}
\Gamma_{x}=\{\gamma\in \Gamma: \gamma x=x\}.
\end{align}
As the $\Gamma$-action is properly discontinuous, $\Gamma_x$ is a finite group, and there exists an open connected $\Gamma_x$-invariant neighborhood $V_x\subset X$ of $x$ such that for $\gamma\in \Gamma- \Gamma_x$,  
\begin{align}\label{eq:ruv}
\gamma V_x\cap V_x=\varnothing.
\end{align}
Then, $p(V_x)\subset \Gamma\backslash X$  is an open connected neighborhood of $z$. Also, we have 
\begin{align}\label{eq:ruv2}
\Gamma_x\backslash V_x\simeq\Gamma\backslash \Gamma V_x=p(V_x).
\end{align}

By taking $V_x$ small enough, there is an orbifold chart  
$(\widetilde{V}_x, H_x,\pi_x)$  for $V_x$ centered at  
$\widetilde{x}\in \widetilde{V}_x$ (see Remark \ref{re:lch}).  As $\Gamma$ acts smoothly on $X$, we can assume that  homeomorphism of $V_x$ defined by $\gamma_x\in \Gamma_x$ lifts to a local diffeomorphism $
\widetilde{\gamma}_x$ defined near $\widetilde{x}$ such that $\pi_{x}\widetilde{\gamma}_x=\gamma_x\pi_x$ holds near $\widetilde{x}$. 

By Proposition \ref{prop:lift}, the lifting $\widetilde{\gamma}_x$ is not unique, and all possible liftings can be written as $h_x\widetilde{\gamma}_x$ for some $h_x\in H_x$. Let $G_x$ be the group of local diffeomorphism defined near $x$ generated by $\{\widetilde{\gamma}_x\}_{\gamma_x\in \Gamma_x}$ and $ H_x$. Then $G_x$ is a finite group. By choosing $V_x$ small enough and by \eqref{eq:ruv2}, $G_x$ acts on $\widetilde{V}_x$ such that 
%on $U_x$ lifts to $\widetilde{\gamma}_x$ of $\widetilde{U}_x$. But in general $\{\widetilde{\gamma}_x:\}$
%
%
%
%Moreover, there is an automorphism $r_x:\Gamma_x\to \mathrm{Aut}(G_x)$. Then group 
%  $G_x\rtimes  \Gamma_x$ acts on $\widetilde{U}_x$.
%Take 
%\begin{align}
%H_x=G_x\rtimes \Gamma_x\big/\ker \(G_x\rtimes \Gamma_x\to {\rm Diffeo} (\widetilde{U}_x)\).
%\end{align}
%Thus $H_x$ acts effectively on $\widetilde{U}_x$ such that 
\begin{align}\label{eq:Heff}
G_x\backslash \widetilde{V}_x\simeq p(V_x).
\end{align}
Since the $G_x$-action on $ \widetilde{V}_x$ is effective, $(\widetilde{V}_x, G_x,p \circ \pi_x)$ is an orbifold chart of $Z$ for $p(V_x)$. 

The family of open sets $\{p(V_x)\}$ covers $\Gamma\backslash X$. It remains to show that two such  orbifold charts $(\widetilde{V}_{x_1}, G_{x_1},p \circ \pi_{x_1})$ and 
$(\widetilde{V}_{x_2}, G_{x_2},p \circ \pi_{x_2})$
are compatible. Its proof consists of two steps.

In the fist step, we consider the case $x_2=\gamma x_1$ for some $\gamma\in \Gamma$. We can assume that $V_{x_2}=\gamma V_{x_1}$, and that $\gamma|_{V_{x_1}}$ lifts to $\widetilde{\gamma}_{x_1}: \widetilde{V}_{x_1}\to \widetilde{V}_{x_2}$. Then $\widetilde{\gamma}_{x_1}$ defines an isomorphism between  the orbifold charts $(\widetilde{V}_{x_1}, G_{x_1},p \circ \pi_{x_1})$ and 
$(\widetilde{V}_{x_2}, G_{x_2},p \circ \pi_{x_2})$.

In the second step, we consider general $x_1,x_2\in X$ such that $p(V_{x_1})\cap p(V_{x_2})\neq\varnothing$.
Because of the first step, we can assume that $V_{x_1}\cap V_{x_2}\neq \varnothing$. For $x_0\in V_{x_1}\cap V_{x_2}$, take an open connected neighborhood $V_{x_0}\subset V_{x_1}\cap V_{x_2}$ of $x_0$ and an orbifold chart $(\widetilde{V}_{x_0},H_{x_0},\pi_{x_0})$ of $X$   as before. We can  assume that there exist  two embeddings  $\phi_{V_{x_i}V_{x_0}}:(\widetilde{V}_{x_0},H_{x_0},\pi_{x_0})\hookrightarrow (\widetilde{V}_{x_i},H_{x_i},\pi_{x_i})$, for $i=1,2$, of orbifold  charts  of $X$. Then, $\phi_{V_{x_i}V_{x_0}}$ also define  two  embeddings of orbifold  charts of $Z$,
\begin{align}
(\widetilde{V}_{x_0},G_{x_0},p \circ \pi_{x_0})\hookrightarrow (\widetilde{V}_{x_i},G_{x_i},p \circ \pi_{x_i}).
\end{align}

The proof our proposition is completed.
\end{proof}

\begin{re}\label{re:GHR}
By the construction, $H_x$ is a normal subgroup of $G_x$, and $\gamma_x\to \widetilde{\gamma}_x$ induces a surjective morphism of groups
\begin{align}\label{eq:rHG}
\Gamma_x\to G_x/H_x.
\end{align}
 If the action of $\Gamma$ on $X$ is effective,  then \eqref{eq:rHG} 
 is an  isomorphism of groups. Thus, the following  sequence of groups
 \begin{align}\label{eq:GHR}
 1\to H_x\to G_x\to \Gamma_x\to1
 \end{align}
 is exact. 
 \end{re}

\subsection{Singular set of orbifolds}\label{sec:sing}
Let $Z$ be an orbifold with orbifold atlas $(\cU,\widetilde{\cU})$. Put
\begin{align}
&Z_{\rm reg}=\{z\in Z: G_z=\{1\}\},&Z_{\rm sing}=\{z\in Z: G_z\neq\{1\}\}.
\end{align}
Then $Z=Z_{\rm reg}\cup Z_{\rm sing}$. Clearly, $Z_{\rm reg}$ is a smooth manifold. However, $Z_{\rm sing}$ is not necessarily  an orbifold.  Following \cite[Section 1]{Kawasaki_Orb_sign},  we will introduce the orbifold resolution $\Sigma Z$ for 
$Z_{\rm sing}$.

 Let $[G_z]$ be the set of conjugacy classes of $G_z$. Set
\begin{align}
\Sigma Z=\{(z,[g]): z\in Z, [g]\in [G_z]- \{1\}\}.
\end{align}
By \cite[Section 1]{Kawasaki_Orb_sign}, $\Sigma Z$ possess a natural 
orbifold structure. Indeed, take $U\in \cU$ and $(\widetilde{U}, 
\pi_U,G_U)\in \widetilde{\cU}$. For $g\in G_U$, denote by 
$\widetilde{U}^g\subset \widetilde{U}$ the set of fixed points of $g$ 
in $ \widetilde{U}$, and by $Z_{G_U}(g)\subset G_U$ the centralizer 
of $g$ in $G_U$. Clearly, $Z_{G_U}(g)$ acts on $\widetilde{U}^g$, and 
the quotient $Z_{G_U}(g)\backslash \widetilde{U}^g$ depends only on 
the conjugacy class  $[g]\in [G_U]$.  The map $x\in \widetilde{U}^g\to (\pi_U(x),[g])\in \Sigma U$ induces an identification 
\begin{align}\label{eq:idsig}
 \coprod_{[g]\in [G_U]\backslash \{1\}} Z_{G_U}(g)\backslash \widetilde{U}^g\simeq \Sigma U. 
\end{align}
By \eqref{eq:idsig}, we  equip $\Sigma U$ with  the induced topology  and  orbifold structure. The topology and the orbifold structure on $\Sigma Z$ is obtained by gluing $\Sigma U$. We omit the detail. 

We decompose  $\Sigma Z=\coprod_{i=1}^{l_0}Z_i$ following its connected components. If $(z,[g])\in Z_i$, set
\begin{align}\label{eq:mi25}
m_i=\left|\ker\big(Z_{G_U}(g)\to {\rm Diffeo}(\widetilde{U}^g)\big) 
\right|\in \bN^{*}.
\end{align}
By definition, $m_i$ is locally constant, and is called the multiplicity of $Z_i$. 
In the sequel, we write
\begin{align}\label{eq:Z0}
&Z_0=Z,&m_0=1.
\end{align}

\subsection{Orbifold vector bundle}\label{sec:vec}
We recall the definition of  orbifold vector bundles. % In this article, we will only consider orbifold vector bundles, which are proper. For this reason, we omit the terminology "proper".

%Generally speaking, an orbifold vector bundle is an orbifold, whose orbifold charts are equivariant vector bundles  and whose embeddings are equivariant bundle maps.

\begin{defin}\label{def:orbbundle}
A complex  orbifold vector bundle $E$ of rank $r$ on $Z$ consists of an orbifold $\cE$, called the total space, and a smooth map $\pi:\cE\to Z$, such that 
\begin{enumerate}
\item there is an orbifold atlas $(\cU,\widetilde{\cU})$ of $Z$ such that  for any $U\in \cU$ and $(\widetilde{U},G_U,\pi_U)\in \widetilde{\cU}$, there exist a finite group $G_U^E$ acting smoothly on $\widetilde{U}$ which induces a surjective morphism of groups $G_U^E\to G_U$, 
  a representation $\rho^E_U: G^E_U\to \GL_r(\bC)$, and a $G^E_U$-invariant continuous map
$\pi^{E}_{U}:\widetilde{U}\times\bC^r \to \pi^{-1}(U)$ 
which induces a homomorphism of topological spaces
\begin{align}\label{eq:loctri}
\widetilde{U}\tensor[_{G^E_{U}}]{\times}{}\bC^r \simeq \pi^{-1}(U);
\end{align}

\item the triple $(\widetilde{U}\times \bC^r, G^E_U,\pi^E_U)$ is a (compatible)  orbifold chart on $\cE$;
 \item if  $U_1,U_2\in \cU$ such that   $U_1\cap U_2\neq 
 \varnothing$, and for any $z\in U_1\cap U_2$, there exist a 
 connected open neighborhood  $U_0\subset U_1\cap U_2$ of $z$ with a 
 simply connected orbifold chart 
 $(\widetilde{U}_0,G_{U_0},\pi_{U_0})$ and the triple   $(G_{U_0}^E, 
 \rho^{E}_{U_0}, \pi^{E}_{U_0})$ such that  (1) and (2) hold, and that 
 the  embeddings of orbifold charts of $\cE$
\begin{align}
&\phi^E_{U_iU_0}: \(\widetilde{U}_0\times \bC^r, 
G^E_{U_0},\pi^E_{U_0}\)\hookrightarrow \(\widetilde{U}_i\times \bC^r, G^E_{U_i},\pi^E_{U_i}\), &\hbox{ for } i=1,2,
\end{align}
  have the following  form 
 \begin{align}
 \label{eq:phiE}
&\phi^E_{U_iU_0}(x,v)=\big(\phi_{U_iU_0}(x),g^E_{U_iU_0}(x) v\big), 
&\hbox{for } (x,v)\in \widetilde{U}_0\times \bC^r,
\end{align}
where $\phi_{U_iU_0}:(\widetilde{U}_0,G_{U_0},\pi_{U_0})\hookrightarrow (\widetilde{U}_i,G_{U_i},\pi_{U_i})$ is an embedding of orbifold charts of $Z$, and  $g^E_{U_iU_0}\in C^\infty\big(\widetilde{U}_0,\GL_r(\bC)\big)$. 
%such that for $h\in G_{U_0}$,
%\begin{align}
%g_{U_iU_0}(hx)=\rho_{U_i}(\lambda_{U_iU_0}(h))g_{U_iU_0}(x)\rho_{U_0}(h)^{-1}.
%\end{align}

% 
% \item for any two connected open set $U\subset V$ such that  (1) (2) hold and  that $\widetilde{U}$ is simply connected,  if $\phi_{VU}:(\widetilde{U}, G_U, \pi_U)\hookrightarrow (\widetilde{V}, G_V, \pi_V)$ is an embedding of  orbifold charts,  then there exists an  embedding $\phi^E_{VU}$ of  orbifold charts of $\cE$ has the following form
% \begin{align}
%\phi^E_{VU}:(x,v)\in\widetilde{U}\times \bC^r\to \big(\phi_{VU}(x),g_{VU}(x) v\big)\in \widetilde{V}\times \bC^r,
%\end{align}
%where $g_{VU}(hx)=\rho_V(\lambda_{VU}(h))g_{VU}(x)\rho_U(h)^{-1}$ for $h\in G_{U}$.
%
%$(\widetilde{U}\times \bC^r, G_U)$ is an orbifold chart $(\widetilde{\cE}_U,G^E_{U})$ for $\cE_U=p^{-1}(U)$, that we have the homeomorphism
%\begin{align}
%\widetilde{U}_{G_U}\times \bC^r \simeq
%\end{align}
%
%
%
%
%
%such that $p|_{U}$ lifts to $p^E_{U}$, which means
%\begin{align}
%\begin{aligned}
%    \xymatrix{
%\widetilde{\cE}_U\ar[r]^{p^E_U} \ar[d]  & \widetilde{U}\ar[d]\\
% \cE_U\ar[r]^{p|_U}& U
%}
%\end{aligned}
%\end{align}
%commuts;
\end{enumerate}
The vector bundle $E$ is called proper if the surjective morphism  $G_U^E\to G_U$ is an isomorphism, and is called flat if  $g^E_{U_iU_0}$ can be chosen to be constant. 
%
%  and such that there is a surjective morphism $\lambda^E_U: G^E_{U}\to G_U$ of group such that  $p^E_{U}:\widetilde{\cE}_U\to \widetilde{U}$ is a $\lambda^E_U$-equivariant vector bundle. Also, if $\phi_{VU}$ is an orbifold embedding of $Z$, the orbifold embedding  $\phi^E_{VU}$ is a bundle maps of from $\widetilde{\cE}_{U}$ to  $\widetilde{\cE}_{V}$.
%  
% When all the $\lambda^E_U$ is an isomorphism, we call $E$ is a proper vector bundle. 
\end{defin}

\begin{re}\label{re:ggE}
The embedding $\phi^E_{U_iU_0}$ exists since $\widetilde{U}_0\times \bC^r$ is simply connected. By Proposition \ref{prop:lift}, it is uniquely  determined by the first component $\phi_{U_iU_0}$ when $E$ is proper. %as all the possible $g^E_{U_iU_0}$ coincide on the  non fixed points of $\widetilde{U}_0$.
\end{re}
%
%\begin{defin}
% A bundle map $f$ between two bundles $E$ and $F$ on $Z$ is a orbifold maps $f:\cE\to \cF$ such that 
% \begin{align}
%  \begin{aligned}
%        \xymatrix{
%    \cE\ar[rr]^{f} \ar[rd]_{p_E}&  & \cF\ar[ld]^{p_F}\\
%     &Z& 
%    }
%  \end{aligned}
%  \end{align}
%For small $U\subset Z$, the restriction $f|_{p^{-1}(E)}$ lifts to 
% \begin{align}
%  \begin{aligned}
%        \xymatrix{
%    \widetilde{U}\, {}_{G_U}\times\!\bC^r \ar[rr]^{f} \ar[rd]_{p_E}&  & \widetilde{U}\, {}_{G_U}\times\! \bC^r\ar[ld]^{p_F}\\
%     &\widetilde{U}& 
%    }
%  \end{aligned}
%  \end{align}    
%\end{defin}

%
%\begin{re}\label{re:gg*}
%By \eqref{eq:phiE}, the coordinate  transformation $\phi^E_{U_2U_0}\phi^{E,-1}_{U_1U_0}$ of $\cE$ has the following form
%\begin{align}\label{eq:locvec}
%(x,v)\in \phi_{U_1U_0}\big(\widetilde{U}_0\big)\times \bC^r\to \(\phi_{U_2U_0}\phi^{-1}_{U_1U_0}(x), g^E_{U_2U_0}(x)g^{E,-1}_{U_1U_0}(x)v\)\in \phi_{U_2U_0}\big(\widetilde{U}_0\big)\times \bC^r.
%\end{align} 
%As before, if $E$ is proper, the function $g^E_{U_2U_0}g^{E,-1}_{U_1U_0}$ is uniquely determined by the coordinate transformation $\phi_{U_2U_0}\phi^{-1}_{U_1U_0}$ of $Z$.  Also, for $g\in \cG_1$ defined near $x$, if $g$ is represented by $\phi_{U_2U_0}\phi^{-1}_{U_1U_0}$,
%we denote by  $g^E_*$ the germ of function defined  near $x$ with values in $\GL_r(\bC)$ represented by $g^E_{U_2U_0}g^{E,-1}_{U_1U_0}$.
%\end{re}

\begin{re}
 We can define the real orbifold vector bundle in an obvious way. 
\end{re}

In the sequel, for $U\in \cU$, we will denote by $\widetilde{E}_U$ the trivial vector bundle of rank $r$ on $\widetilde{U}$, and by $E_U$ the restriction of $E$ to $U$. Their total spaces are given respectively  by 
\begin{align}\label{eq:triloc1}
&\widetilde{\cE}_U=\widetilde{U}\times \bC^r, & \cE_U=\widetilde{U}\tensor[_{G_U}]{\times}{} \bC^r.
\end{align}

Let us identify  the associated groupoid $\cG^E=(\cG^E_0,\cG^E_1)$ for the total space  $\cE$ of a proper orbifold vector bundle $E$. By \eqref{eq:Lie}, the object of $\cG^E$ is given by 
\begin{align}\label{eq:GE0}
\cG^E_0=\coprod_{U\in \cU}\widetilde{\cE}_U=\cG_0\times \bC^r.
\end{align}
 If $g\in \cG_1$ is represented by the germ of the transformation  $\phi_{U_2U_0}\phi_{U_1U_0}^{-1}$, denote by $g^E_*$  the germ of transformation $g^E_{U_2U_0}g^{E,-1}_{U_1U_0}$. 
 By Remark \ref{re:ggE},  $g^E_*$ is uniquely determined by $g$. 
 Thus, if $g$ is an arrow form $x$, and if $v\in \bC^r$, $(g,v)$ 
 defines an arrow from $(x,v)$ to $(gx,g^E_*v)$. This way gives  an identification 
\begin{align}\label{eq:GE1}
\cG^E_1=\cG_1\times\bC^r.
\end{align}

We give some examples of orbifold vector bundles. 

%
%We have the homeomorphism of topological space 
%\begin{align}
%\cE\simeq (\cG_0\times \bC^r)/_{\sim},
%\end{align}
%where $(x,v)\simeq (y,w)$ if $g(x)=y$ and $g_*^E(x)v=w$.

% 
%Let $U_1,U_2\subset Z$ be such that $U_1\cap U_2\neq\varnothing$. If $x_1\in \widetilde{U}_1$ and $x_2\in \widetilde{U}_2$ such that $z=\pi_{U_1}(x_1)=\pi_{U_2}(x_2)$, and if $g\in \cG_1$ a germ of  local transformation identify $x_1$ and $x_2$, then there is $g^{E}$ such that 
%defined near $x_1$ and $G_z$-invariant with values in $\GL_r(\bC)$ such that 
%\begin{align}
%(x,v)\to (gx,g^E(x)v)
%\end{align}
%the local transformation defined near ${x_1}\times \bC^r$.
%

%
%If $x_i\in \widetilde{U}_i$, for $i=1,2,3$, and $\phi_{21}$ is a local transformation connecting $x_1$ and $x_2$ and $\phi_{32}$ is connected $x_2$ and $x_3$. Take $\phi_{31}=\phi_{32}\phi_{21}$.  Let $g_{21}$, $g_{32}$ and $g_{31}$
%be the corresponding local transformation on fiber, then 
%\begin{align}\label{eq:local}
%g_{31}(x)=g_{32}(\phi_{21}(x))g_{21}(x).
%\end{align}
%As in the manifold case, the vector bundle can be constructed from local transformation such that \eqref{eq:local}. 

%
%\begin{defin}
%An orbifold vector bundle is called flat if all the local transformation $g_{ij}$ are constant.  
%\end{defin}
%

\begin{exa}
The tangent bundle $TZ$ of an orbifold $Z$ is a real proper orbifold vector bundle locally defined by $\{(T\widetilde{U},G_U)\}_{U\in \cU}$. 
\end{exa}

\begin{exa}Assume $Z$ is covered by linear charts $\{(\widetilde{U}, 
	G_U,\pi_U)\}_{U\in \cU}$ (see Remark \ref{re:lch}). The orientation line $o(TZ)$ is  a real proper orbifold  line bundle on $Z$, locally defined by $(\widetilde{U}\times \bR, G_U)$ where the action of $g\in G_U$ is given by 
\begin{align}
g:(x,v)\in \widetilde{U}\times \bR\to (gx,({\rm sign}\det(g))v)\in \widetilde{U}\times \bR.
\end{align}
Clearly, $o(TZ)$ is flat. If $o(TZ)$ is trivial, $Z$ is called orientable.
\end{exa}

\begin{exa}
If $E,F$ are orbifold vector bundles on $Z$, then  $E^*$, $\ol{E}$, 
$\Lambda^\cdot (E)$, $\mathscr{T}(E)=\oplus_{k\in \bN} E^{\otimes k}$ and $E\otimes F$ are  defined in an obvious way.
\end{exa}

\begin{exa}
Let $E$ be an orbifold vector bundle on $Z$. For $U\in \cU$, let 
$V_U\subset \bC^r$ be subspace of $\bC^{r}$ of the fixed points of $\ker(G_U^E\to 
G_U)$. Then $G_U$ acts on $V_U$, and $\{(\widetilde{U}\times V_U, G_U)\}_{U\in \cU}$ defines a proper orbifold vector bundle $E^{\rm pr}$ on $Z$. Clearly, if $E$ is flat, then $E^{\rm pr}$ is also flat. 
\end{exa}

A smooth section of $E$ is defined by a smooth  map $s: Z\to E$ in 
the sense of  
Definition \ref{def:fYZ} such that $\pi\circ s=\mathrm{id}$ and that 
each local lift $\widetilde{s}_{U}$ 
of $s|_{U}$ is $G^{E}_{U}$-invariant. 
	The space of smooth  sections of $E$ is denote  by
$C^\infty(Z,E)$.  
The space of differential forms with values in $E$ is defined by 
$\Omega^{\cdot}(Z,E)=C^{\infty}(Z,\Lambda^{\cdot}(T^{*}Z)\otimes_{\bR} E).
$ For $k\in \bN$, we define $C^{k}(Z,E)$ in a similar way. Also, the space of distributions $\cD'(Z,E)$ of $E$ is defined by the topological 
dual of  $C^\infty(Z,E^{*})$. By definition, we have 
\begin{align}\label{eq:Epr}
C^\infty(Z,E)=C^\infty(Z,E^{\rm pr}).
\end{align}
For this reason, most of results in this 
paper can be extended to non proper flat vector bundles. 

Assume now $E$ is proper. 
By \eqref{eq:Z=GG}, $s\in C^\infty(Z,E)$ can be represented by  $\{s_U\in C^\infty(\widetilde{U},\widetilde{E}_U)^{G_U}\}_{U\in \cU}$ a family of $G_U$-invariant sections such that for any $x_1\in U_1,x_2\in U_2$ and $g\in \cG_1$ from $x_1$ to $x_2$, near $x_1$ we have
\begin{align}\label{eq:gss}
g^*s_{U_2}=s_{U_1}. 
\end{align}
% Similarly, for $k\in \bN$, the space of $C^k(Z,E)$ is formed by the continuous maps $s:Z\to \cE$ such that 
% $\pi\circ s=\rm id$, and the restriction of $s$ to $U$ has a 
% $C^k$-lifting. In other words, there is a family $\{s_U\in 
% C^k(\widetilde{U},\widetilde{E}_U)^{G_U}\}_{U\in \cU}$ of 
% $G_U$-invariant sections, such that \eqref{eq:gss} holds. 
We have the 
similar description for elements of $ C^\infty(Z,E)$ and $D'(Z,E)$.

We call $g^E$  a Hermitian metric on $E$, if $g^{E}$ is a section in 
$C^\infty\big(Z,E^{*}\otimes \ol{E}^{*}\big)$ such that $g^E$ is represented by a family 
$\{g^{\widetilde{E}_{U}}\}_{U\in \widetilde{U}}$ 
of $G_U$-invariant metrics on $\widetilde{E}_U$ such that  
\eqref{eq:gss} holds. If $E$ is the real orbifold  vector bundle $TZ$, $g^{TZ}$ is called a Riemannian 
metric on $Z$.

Two orbifold vector bundles $E$ and $F$ are called isomorphic if 
there is $f\in C^\infty(Z,E^*\otimes F)$ and $g\in 
C^\infty(Z,F^*\otimes E)$ such that $fg=\mathrm{id}$ and 
$gf=\mathrm{id}$.

Let $\Gamma$ be a discrete group acting smoothly and properly discontinuously on an orbifold $X$. Let  $\rho:\Gamma\to \GL_r(\bC)$ be a representation of $\Gamma$. By Proposition \ref{prop:X/G},
$\Gamma\backslash X$ and 
\begin{align}
\cF=X\tensor[_\Gamma]{\times }{}\bC^r
\end{align}
have canonical  orbifold structures. 
The  projection $X\times \bC^r\to X$ descends to a smooth  map 
of orbifolds
\begin{align}\label{eq:F=XC/G}
\pi:\cF\to \Gamma\backslash X.
\end{align}

\begin{prop}\label{prop:flatg}Assume that the action of $\Gamma$ on $X$ is smooth, properly discontinuous and effective. Then \eqref{eq:F=XC/G} defines canonically  a proper flat vector bundle $F$ on $\Gamma\backslash X$. 
\end{prop}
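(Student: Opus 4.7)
The plan is to produce $F$ by invoking Proposition \ref{prop:X/G} for the diagonal action on $X \times \bC^r$ and then comparing the resulting orbifold charts trivialization-by-trivialization with those on $\Gamma \backslash X$. First I verify that $\gamma \cdot (x,v) = (\gamma x, \rho(\gamma) v)$ defines a smooth, properly discontinuous and effective action of $\Gamma$ on $X \times \bC^r$. Smoothness is immediate from the linearity of $\rho$ and from Definition \ref{def:Gact} applied to $X$; if $V_x \subset X$ is any neighborhood of $x$ with $\{\gamma : \gamma V_x \cap V_x \neq \varnothing\}$ finite, and $W \subset \bC^r$ is any neighborhood of $v$, then $\{\gamma : \gamma(V_x \times W) \cap (V_x \times W) \neq \varnothing\}$ sits inside that finite set, giving proper discontinuity; effectiveness holds because acting trivially on the first factor already forces $\gamma = 1$. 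Proposition \ref{prop:X/G} then equips $\cF$ with a canonical orbifold structure, and the $\Gamma$-equivariant smooth projection $X \times \bC^r \to X$ descends to the smooth map $\pi$ of \eqref{eq:F=XC/G}.

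Next I produce the trivializations required by Definition \ref{def:orbbundle}. For $z \in \Gamma \backslash X$ and $x \in p^{-1}(z)$, take $V_x$, $(\widetilde{V}_x, H_x, \pi_x)$ and $G_x$ as in the proof of Proposition \ref{prop:X/G}, giving the chart $(\widetilde{V}_x, G_x, p \circ \pi_x)$ on $\Gamma \backslash X$. Apply the same recipe to $V_x \times \bC^r \subset X \times \bC^r$, starting from the chart $(\widetilde{V}_x \times \bC^r, H_x, \pi_x \times \mathrm{id})$ where $H_x$ acts trivially on the $\bC^r$ factor. The resulting chart on $\cF$ has the form $(\widetilde{V}_x \times \bC^r, G_x^F, \pi_x^F)$, where $G_x^F$ is generated by $H_x \times \{1\}$ together with the lifts $(\widetilde{\gamma}_x, \rho(\gamma_x))$ of the elements $\gamma_x \in \Gamma_x$. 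By construction, the $G_x^F$-action on $\widetilde{V}_x \times \bC^r$ is diagonal and linear on the second factor, which is exactly the local form prescribed by \eqref{eq:loctri}, and one obtains the canonical identification $\pi^{-1}(p(V_x)) \simeq \widetilde{V}_x \times_{G_x^F} \bC^r$.

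The first-factor projection is a surjective homomorphism $G_x^F \to G_x$. By Remark \ref{re:GHR}, effectiveness of the $\Gamma$-action on $X$ gives the exact sequence $1 \to H_x \to G_x \to \Gamma_x \to 1$; the parallel argument for the (also effective) diagonal action on $X \times \bC^r$, together with the observation that the elements of $G_x^F$ projecting to $1 \in G_x$ lie in $H_x \times \{1\}$, yields $1 \to H_x \to G_x^F \to \Gamma_x \to 1$. Since the projection is the identity on both the kernel $H_x$ and the quotient $\Gamma_x$, it is an isomorphism, so $F$ is proper. For flatness, when two such charts with $x_2 = \gamma x_1$ overlap, the embedding of orbifold charts \eqref{eq:phiE} is realized by $(\widetilde{\gamma}, \rho(\gamma))$, so the transition function $g^F_{U_2 U_0}$ is the constant $\rho(\gamma) \in \GL_r(\bC)$; hence $F$ is flat.

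The main obstacle will be the bookkeeping in step three: one must check that distinct lifts $\widetilde{\gamma}_x \in G_x$, which differ by an element of $H_x$, correspond precisely to lifts $(\widetilde{\gamma}_x, \rho(\gamma_x)) \in G_x^F$ differing by $H_x \times \{1\}$, so that the second component is determined by the first up to this trivial ambiguity. This compatibility, which is exactly where the effectiveness hypothesis enters, is what makes the natural map $G_x^F \to G_x$ not merely surjective but bijective, and it is also what guarantees that different local choices patch together into a globally well-defined proper flat orbifold bundle in the sense of Definition \ref{def:orbbundle}.
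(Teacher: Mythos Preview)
Your proof is correct and follows essentially the same approach as the paper: apply Proposition~\ref{prop:X/G} to the diagonal $\Gamma$-action on $X\times\bC^r$, identify the resulting local group $G_x^F$ with $G_x$ via the exact sequence of Remark~\ref{re:GHR} (this is where effectiveness enters), and read off properness and flatness from the form of the transition maps. The only point you leave implicit is the second compatibility case from the proof of Proposition~\ref{prop:X/G}, namely when $V_{x_1}\cap V_{x_2}\neq\varnothing$ with $x_1,x_2$ not $\Gamma$-related; there the embedding is $(\phi_{V_{x_i}V_{x_0}},\mathrm{id})$, so the transition on the fiber is the identity (cf.\ the paper's \eqref{eq:xrx2}), which together with your $\rho(\gamma)$ case completes the verification of \eqref{eq:phiE} with constant $g^E$.
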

\begin{proof}Recall that $p:X\to \Gamma\backslash X$ is the projection. 
For $x\in X$, we use the same notations $\Gamma_x$, $V_x$, $(\widetilde{V}_x, H_x)$ and $G_x$  as in the proof of Proposition \ref{prop:X/G}. Then, $\Gamma\backslash X$ is covered by 
\begin{align}
p(V_x)\simeq \Gamma_x\backslash V_x\simeq G_x\backslash \widetilde{V}_x.
\end{align}

The stabilizer subgroup  of $\Gamma$ at $(x,0)\in X\times \bC^r$ is $\Gamma_x$. By \eqref{eq:ruv},  if $\gamma\in \Gamma-\Gamma_x$,
\begin{align}
\gamma (V_x\times \bC^r)\cap (V_x\times \bC^r)= \varnothing.
\end{align}
 As in \eqref{eq:ruv2}, we have 
\begin{align}\label{eq:sruv}
\pi^{-1}\big(p(V_x)\big)=\Gamma\backslash \Gamma(V_x\times \bC^r)\simeq V_x \,{}_{\Gamma_x}\!\!\times \bC^r.
\end{align}
Since the action of $\Gamma$ on $X$ is effective, by Remark \ref{re:GHR}, we have a morphism of groups $G_x\to \Gamma_x$.  The group $G_x$ acts on $\bC^r$ via the composition of $G_x\to \Gamma_x$ and $\rho|_{\Gamma_x}:\Gamma_x\to \GL_{r}(\bC)$. Thus,  $G_x$ acts on $\widetilde{V}_x\times \bC^r$ effectively such that\begin{align}
V_x \tensor[_{\Gamma_x}]{\times}{} \bC^r\simeq \widetilde{V}_x \tensor[_{G_x}]{\times}{} \bC^r.
\end{align}
By Proposition \ref{prop:X/G}, $(\widetilde{V}_x\times \bC^r,G_x)$ is an orbifold chart of $\cF$ for $\pi^{-1}(p(V_x))$. 

Take two $(\widetilde{V}_{x_1}\times \bC^r, G_{x_1})$ and $(\widetilde{V}_{x_2}\times \bC^r, G_{x_2})$ orbifold charts of $\cF$. It remains to show the compatibility condition \eqref{eq:phiE}.  We proceed as in the proof of Proposition \ref{prop:X/G}.
If $x_2=\gamma x_1$, then 
\begin{align}\label{eq:xrx}
(x,v)\in \widetilde{V}_{x_1}\times \bC^r\to (\widetilde{\gamma}_{x_1} x,\rho(\gamma)v)\in \widetilde{V}_{x_2}\times \bC^r
\end{align}
defines an isomorphism of orbifold  charts on $\cF$. 

For general $x_1, x_2\in X$, we can assume  that $V_{x_1}\cap V_{x_2}\neq\varnothing$. For $x_0\in V_{x_1}\cap V_{x_2}$, take  $V_{x_0}$,  $\widetilde{V}_{x_0}$ and $\phi_{V_{x_i}V_{x_0}}$ as in the proof of Proposition \ref{prop:X/G}. 
Then
\begin{align}\label{eq:xrx2}
(x,v)\in \widetilde{V}_{x_0}\times \bC^r\to (\phi_{V_{x_i}V_{x_0}}(x),v)\in \widetilde{V}_{x_i}\times \bC^r
\end{align}
define  two embeddings of orbifold charts of $\cF$.  From 
\eqref{eq:xrx} and \eqref{eq:xrx2}, we deduce that \eqref{eq:F=XC/G} 
defines  a flat orbifold vector bundle on $\Gamma\backslash X$. The 
properness is clear from the construction.  The proof of  our proposition  is completed.
\end{proof}

\begin{re}\label{re:repiso}
 Take $A\in \GL_r(\bC)$. Let $\rho_A:\gamma\in \Gamma\to A\rho(\gamma) A^{-1}\in \GL_r(\bC)$ be another representation of $\Gamma$. Then 
 \begin{align}
(x,v)\in  X \times \bC^r\to (x,Av) \in X \times \bC^r  
 \end{align}
descends to an isomorphism between  flat orbifold vector bundles $X \tensor[_{\rho}]{\times}{} \bC^r$ and  $X \tensor[_{\rho_A}]{\times}{} \bC^r$.
\end{re}

\subsection{Orbifold fundamental groups and universal  covering orbifold}\label{sec:cov}
In this subsection, following \cite{Haefliger_orb},  \cite[Section III.$\cG$.3]{Bridson_Haefliger}, we recall the constructions of the orbifold fundamental group and the universal covering orbifold. We assume that the orbifold $Z$ is connected. Let $\cG$ be the groupoid associated with  some orbifold atlas $(\cU, \widetilde{\cU})$.

\begin{defin}\label{def:Gpath}
A continuous $\cG$-path ${c}=({b}_1, \ldots, {b}_k; 
g_0,\ldots,g_{k})$ starting at $x\in \cG_0$ and ending at $y\in \cG_0$ parametrized by $[0,1]$ is given by
\begin{enumerate}
  \item a partition $0=t_0<t_1<\cdots<t_k=1$ of $[0,1]$;
  \item continuous  paths ${b}_i: [t_{i-1},t_{i}] \to \cG_0$, for $1\le i\le k$;
  \item arrows $g_i\in \cG_1$ such that $g_0:x\to{b}_1(0)$, $g_i:{b}_i(t_{i})\to {b}_{i+1}(t_i)$, for $1\le i\le k-1$, and $g_k:{b}_k(1)\to y$.
\end{enumerate}
If $x=y$, we call that ${c}$ is a $\cG$-loop based at $x$.
\end{defin}

 %Also, the starting point with $x\in \widetilde{U}$ only depends its image in $\pi_{U}(x)\in Z$. 

%We call an orbifold curve on $Z$ starting $z_0\in Z$ and ending at $z_1$, for a $\cG$-path starting $x_0$ and ending $x_1$ module the equivalence relation. 

 Two  $\cG$-paths 
\begin{align}
&{c}=({b}_1, \cdots, {b}_k; g_0,\cdots,g_{k}),&{c}'=({b}'_1, \cdots, {b}'_{k'}; g'_0,\cdots,g'_{k'}),
\end{align}
such that 
${c}$ ending at $y$ and ${c}'$ starting at $y$ can be composed into a 
$\cG$-path (with a suitable reparametrization  \cite[Section III.$\cG$.3.4]{Bridson_Haefliger}),  
\begin{align}
{c}{c}'=({b}_1, \cdots, {b}_k,{b}'_1, \cdots, {b}'_{k'};g_0,\cdots,g'_0g_{k},\cdots,g'_{k'}).
\end{align}
Also, we can define the inverse of a $\cG$-path in an obvious way. %In particluar, the space of $\cG$-loop  forms a group. 

\begin{defin}\label{defeqG}
We define an equivalence relation on $\cG$-paths generated by
\begin{enumerate}
  \item subdivision of the partition and adjunction by identity elements of $\cG_1$ on new partition points.
  \item for some $1\le i_0\le k$, replacement of the triple 
  $({b}_{i_0}, g_{i_0-1}, g_{i_0})$ by  the triple $(h{b}_{i_0}, h 
  g_{i_0-1}, g_{i_0} h^{-1})$,  where $h\in \cG_1$ is well-defined 
  near the path ${b}_{i_0}([t_{i_0-1},t_{i_0}])$.
  \end{enumerate}
  The equivalent class of $\cG$-paths is called the path on the 
  orbifold $Z$. 
\end{defin}
%For two compatible atlas, we can  always work in their refinement. Thus, up to the equivalence relation, the $\cG$-paths does not depend on the choice of orbifold charts.

\begin{re}\label{rel0}
 If $Z$ is equipped with a Riemannian metric, then the length of a 
 path  on $Z$  represented by the 
	$\cG$-path $c=(b_{1},\ldots,b_{k};g_{0},\ldots,g_{k})$ is defined 
	by the sum 
	of the lengths of $b_{i}$. Clearly, this definition does not 
	depend 	on the choice of the representative $c$. The set of paths 
	on $Z$ 	with length $0$ is just the orbifold $Z\coprod \Sigma Z$. 
\end{re}

\begin{re}\label{rergeoo}
	Following \cite[Section 2.4.2]{GuruprasadHaefiger06}, 	if $Z$ is equipped with a Riemannian metric, a $\cG$-path 
	$c=(b_{1},\ldots,b_{k};g_{0},\ldots,g_{k})$ is called a $\cG$-geodesic, 
	if for all $1\le i\le k$,  $b_{i}$ is a geodesic and if for all $1\le i\le k-1$, 	$g_{i,*}\dot{b}_{i}(t_{i})=\dot{b}_{i+1}(t_{i})$. 
	The geodesic on $Z$ is defined by the equivalence class of the 
	$\cG$-geodesics.	 Similarly, we can define the 	closed 
	geodesic on $Z$ by the equivalent class of the closed geodesic 	
	$\cG$-paths, i.e., a $\cG$-geodesic	
	starting and ending at the same point such that  	
	$g_{0,*}g_{k,*}\dot{b}_{k}(1)=\dot{b}_{0}(0)$.
\end{re}

\begin{defin}
An elementary homotopy between two $\cG$-paths ${c}$ and ${c}'$ is  a 
family, parametrized by $s \in [0,1]$, of $\cG$-paths  ${c}^s = 
({b}^s_1,\cdots,  {b}^s_k; g_0^s,\cdots, g_k^s)$, over the 
subdivisions $0 = t_0^s \le  t_1^s \le  \cdots \le t_k^s = 1$, where 
$t_i^s , {b}^s_i$ and $g_i^s$ depend continuously on the parameter $s$, the elements $g_0^s$ and $g_k^s$ are independent of $s$ and ${c}^0 = {c}, {c}^1 = {c}'.$

\end{defin}

\begin{defin}
Two $\cG$-paths are said to be homotopic (with fixed extremities) if one can pass from the first to the second by equivalences of $\cG$-paths and elementary homotopies.
The homotopy class of a $\cG$-path ${c}$ will be denoted by $[{c}]$. 
\end{defin}

As ordinary paths in topological spaces, the composition and inverse operations of $\cG$-paths
are well-defined for their  homotopy classes.

 \begin{defin}
 Take $x_0\in \cG_0$.    With the operations of composition and inverse of $\cG$-paths, the homotopy classes of $\cG$-loops based at $x_0$ form a group $\pi_1^{\rm orb}(Z,x_0)$ called the orbifold fundamental group. 
 \end{defin}

As $Z$ is connected, any two points of $\cG_0$ can be connected by a $\cG$-path. Thus,
the isomorphic class of the group $\pi_1^{\rm orb}(Z,x_0)$  does not depend on the choice of $x_0$. Also, it depends only on the orbifold structure of $Z$. In the sequel, for simplicity, we denote by $\Gamma=\pi^{\rm orb}_1(Z,x_0)$.

\begin{re}
As a fundamental group of a manifold, $\Gamma$ is countable. 
\end{re}

% 
% \begin{re}
% By the construction,  $\pi_1^{\rm orb}(Z,x_0)$ is countable. 
% \end{re}
%
%
%\begin{exa}
% If $Z=\Gamma\backslash M$ is a global quotient, then the $\cG$-path is given by a triple $(\gamma,g_0,g_1)$ where $g_0,g_1\in \Gamma$ and $\gamma:[0,1]\to M$ is a piecewise smooth path in $M$.  Also, the paths $(\gamma,g_0,g_1)$ and $(g_0\gamma, 1, g_1g^{-1}_0)$ are considered to the the same path. 
% 
%Moreover, if $M$ is simply connected, then the maps 
%\begin{align}
%(\gamma,g_0,g_1)\to g_1g^{-1}_0\in \Gamma
%\end{align}
%induces an isomorphism of $\pi_1^{\rm orb}(Z)$ with $\Gamma$.
%\end{exa}
%
%Let $f:Y\to Z$ be an orbifold smooth maps. If $\widetilde{f}_U:\widetilde{U}\to \widetilde{V}$ is a lift of $f:U\to V$, and if $y_0\in \widetilde{U}$ and $z_0\in \widetilde{V}$ and 
% such that $\widetilde{f}(y_0)=z_0$, then 
%\begin{align}
%f_*:\pi_1^{\rm orb}(Y,y)\to \pi_1^{\rm orb}(Z,z)
%\end{align}
%is a morphism of groups. 
%

In the rest of this subsection, following  \cite[Section 
III.$\cG$.3.20]{Bridson_Haefliger}, we will construct the universal 
covering orbifold $X$ of $Z$. %The terminology of universal covering orbifold  will be explained later in Remark \ref{re:cover}. 
Let us begin with introducing a groupoid $\widehat{\cG}$.  Assume that  $\cU=\{U_z\}$ and $\widetilde{\cU}=\{(\widetilde{U}_z,G_z,\pi_z)\}$ where all the $\widetilde{U}_z$ are simply connected and are centered at $x\in \widetilde{U}_z$ as in Remark \ref{re:lch}.  Fix $x_0\in \cG_0$ as before.

 Let $\widehat{\cG}_0$ be the space of homotopy classes of  $\cG$-paths starting at $x_0$.  The group $\Gamma$ acts naturally on $\widehat{\cG}_0$ by composition  at the starting point $x_0$.
We denote by 
\begin{align}\label{eq:wp}
\widehat{p}:\widehat{\cG}_0\to \cG_0
\end{align}
the projection sending $[c]\in \widehat{\cG}_0$ to its ending point. Clearly, $\widehat{p}$ is $\Gamma$-invariant. 

 Define a topology and manifold structure on $\widehat{\cG}_0$ as follows.
 For $x_1,x_2\in \widetilde{U}_z$, we denote by 
 $c_{x_1x_2}=(b_{x_1x_2};\mathrm{id},\mathrm{id})$ a $\cG$-path  starting at $x_1$ and ending at  $x_2$, where ${b}_{x_1x_2}$ is a path  in $\widetilde{U}_z$ connecting $x_1$ and $x_2$. Note that since $\widetilde{U}_z$ is simply connected, the homotopy class $[c_{x_1x_2}]$ does not depend on the choice of ${b}_{x_1x_2}$. For each $U_z\in \cU$, we fix a $\cG$-path ${c}_z$ starting at $x_0$ and  ending at $x\in \widetilde{U}_z$. For $a\in {\Gamma}$, set
\begin{align}\label{eq:tVza}
\widetilde{V}_{z,a}=\left\{[{c}]\in \widehat{p}^{-1}(\widetilde{U}_z):\[{c} c_{\widehat{p}([{c}])x}{c}_z^{-1}\]=a\right\}.
\end{align}
By \eqref{eq:wp} and \eqref{eq:tVza}, we have 
\begin{align}\label{eq:pngc}
& \widehat{p}^{-1}(\widetilde{U}_z)=\coprod_{a\in {\Gamma}}\widetilde{V}_{z,a}, &\widehat{\cG}_0=\coprod_{U_z\in \cU,a\in{\Gamma} }\widetilde{V}_{z,a}.
\end{align}
Also,
\begin{align}\label{eq:toma}
\widehat{p}:\widetilde{V}_{z,a}\to \widetilde{U}_z
\end{align}
is a bijection. We equip $\widetilde{V}_{z,a}$ with a topology and a manifold structure  via \eqref{eq:toma}. Clearly, the choice of $c_z$ is irrelevant. 
By \eqref{eq:pngc}, $\widehat{\cG}_0$ is a countable disjoint union of smooth manifolds such that \eqref{eq:wp} is a Galois covering with deck transformation group $\Gamma$. 

If $y\in \cG_0$ and if $g\in \cG_1$ is defined near  $y$, we denote by  
\begin{align}\label{eq:cyg}
c_{y,g}=(b_{y};\mathrm{id}, g)
\end{align}
 the $\cG$-path,  where $b_{y}$ is the constant path at $y$.  Set
\begin{align}
\widehat{\cG}_1=\big\{([c], g)\in \widehat{\cG}_0\times \cG_1: g \hbox{ is defined near } \widehat{p}([c])\in \cG_0 \big\}.
\end{align}
Then, $([c], g)\in \widehat{\cG}_1$ represents an arrow from $[c]$ to $[c][c_{\widehat{p}[c],g}]$.
This  defines a groupoid 
 $\widehat{\cG}=(\widehat{\cG}_0,\widehat{\cG}_1)$. 
 
 Let
 \begin{align}\label{eq:Xuc}
X=\widehat{\cG}_0/{{\widehat{\cG}_1}}
\end{align}
 be the orbit space of $\widehat{\cG}$ equipped with the quotient topology. 
The action of $\Gamma$ on $\widehat{\cG}_0$ descends to an effective and continuous action on $X$.  The projection $\widehat{p}$ descends to  a $\Gamma$-invariant continuous map
\begin{align}\label{eq:wp2}
p:X\to Z.
\end{align}

\begin{thm}\label{thm:uX}Assume that $Z$ is a connected orbifold. Then, 
	the topological space $X$ defined in \eqref{eq:Xuc} is connected and has a canonical orbifold structure such that $\Gamma$ acts smoothly, effectively and  properly discontinuously on $X$. Moreover, \eqref{eq:wp2} induces an isomorphism of orbifolds
\begin{align}\label{eq:Z=X/G}
\Gamma\backslash X\to  Z.
\end{align}
\end{thm}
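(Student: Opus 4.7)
The plan is to explicitly construct orbifold charts on $X$ from the data of the groupoid $\widehat{\cG}$, verify compatibility, establish the three properties of the $\Gamma$-action, and finally identify $\Gamma\backslash X$ with $Z$ in local charts.

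First, for each $z\in Z$ with centered chart $(\widetilde{U}_z,G_z,\pi_z)$, I would define a homomorphism $\iota_z:G_z\to\Gamma$ by $\iota_z(g)=[c_z\cdot c_{x,g}\cdot c_z^{-1}]$, where $c_{x,g}$ is the $\cG$-loop at $x$ through the arrow $g$ as in \eqref{eq:cyg}, and set $H_z=\ker\iota_z$. For each $a\in\Gamma$, the set $\widetilde{V}_{z,a}$ from \eqref{eq:tVza} is diffeomorphic to $\widetilde{U}_z$ via $\widehat{p}$; the groupoid arrows coming from elements of $H_z$ preserve the label $a$, yielding a smooth effective action of $H_z$ on $\widetilde{V}_{z,a}$ that corresponds under $\widehat{p}$ to the restricted subgroup action of $H_z\subset G_z$ on $\widetilde{U}_z$. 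The triple $(\widetilde{V}_{z,a},H_z,\widehat{\pi}_{z,a})$, where $\widehat{\pi}_{z,a}:\widetilde{V}_{z,a}\hookrightarrow\widehat{\cG}_0\to X$, will serve as an orbifold chart for a neighborhood of its image.

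To check compatibility between two such charts with overlapping images, I would unwind any arrow in $\widehat{\cG}_1$ as coming from a germ in $\cG_1$ between the endpoints of the underlying $\cG$-paths, then use compatibility of the original atlas on $Z$ together with Proposition \ref{prop:lift} to shrink to a common refinement $(\widetilde{U}_0,G_0,\pi_0)$ with simply connected $\widetilde{U}_0$. Fixing a $\cG$-path $c_0$ from $x_0$ to the center of $\widetilde{U}_0$ and lifting the chosen coordinate transformation produces embeddings $(\widetilde{V}_{0,a_0},H_0,\widehat{\pi}_{0,a_0})\hookrightarrow(\widetilde{V}_{z_i,a_i},H_{z_i},\widehat{\pi}_{z_i,a_i})$, assembling the charts into an orbifold atlas on $X$.

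Next, the $\Gamma$-action induced by composition of $\cG$-paths on the left permutes the components $\{\widetilde{V}_{z,a}\}_{a\in\Gamma}$ by $a\mapsto\gamma a$ and restricts to the identity in the $\widehat{p}$-coordinate, so it is smooth in the sense of Definition \ref{def:Gact}. Only finitely many $a$ lie in the $\iota_z(G_z)$-orbit of a fixed element, yielding proper discontinuity. For effectiveness, taking $z$ in the open dense regular locus $Z_{\rm reg}$ forces $G_z=\{1\}$, so distinct labels give distinct images in $X$, and $\gamma$ fixing such points implies $\gamma=1$. Finally, since $\widehat{p}$ is $\Gamma$-invariant and $\widehat{\cG}_1$-equivariant, it descends to a continuous map $\bar{p}:\Gamma\backslash X\to Z$ which, in local charts, identifies $H_z\backslash\widetilde{V}_{z,a}$ with a piece of $U_z$ and, after combining the $\Gamma$-identifications on the various components with $H_z$, reproduces the full $G_z\backslash\widetilde{U}_z\simeq U_z$, giving an isomorphism of orbifolds.

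The hardest part will be the compatibility step: one must carefully track how equivalences and elementary homotopies of $\cG$-paths translate into genuine embeddings between the $H_z$-quotients. The definition of $\iota_z$ depends on the auxiliary choice of $c_z$, and changing it conjugates $\iota_z$ by an element of $\Gamma$; verifying compatibility therefore secretly involves reconciling these conjugations with the groupoid identifications in $\widehat{\cG}_1$, a bookkeeping task that must be executed with care before the atlas is genuinely coherent.
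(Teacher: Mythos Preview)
Your proposal is correct and follows essentially the same route as the paper: your homomorphism $\iota_z$ is the paper's $r_z$ of \eqref{eq:rz}, your kernel $H_z$ and charts $(\widetilde{V}_{z,a},H_z,\widehat{\pi}_{z,a})$ match \eqref{eq:Huc}--\eqref{eq:49}, and your arguments for smoothness, proper discontinuity, and the quotient identification mirror \eqref{eq:51} and the surrounding discussion. The only items you should add to a full write-up are the verification that $X$ is Hausdorff and second countable (the paper does this by a short case analysis on whether $p(y_1)=p(y_2)$), and that each $V_{z,a}$ is open in $X$ (which the paper extracts from the disjoint decomposition \eqref{eq:piUuc}).
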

\begin{proof}Let us begin with showing that the topological space $X$ 
	is connected. Take any $\cG$-path ${c}=({b}_1, \cdots, {b}_k; 
	g_0,\cdots,g_{k})$ starting at $x_{0}$. By our construction of 
	$\widehat{\cG_{1}}$, the images  in $X$ of the $\cG$-paths $c$ and  $({b}_1, \cdots, {b}_k; 
	g_0,\cdots,g_{k-1},\mathrm{id})$ are in the same connected 
	component of $X$. The same holds true for  the $\cG$-paths $({b}_1, 
	\cdots, {b}_{k-1}; 
	g_0,\cdots,g_{k-1})$ and  $({b}_1, 
	\cdots, {b}_{k}; 
	g_0,\cdots,g_{k-1}, \mathrm{id})$. By induction argument, the images in $X$ of $c$ and the constant 
	$\cG$-path at $x_{0}$ are in the same connected component of $X$. So $X$ is connected.

	Let us construct an orbifold atlas on $X$. For $a\in \Gamma$, let  $\pi_{z,a}$ be the composition of continuous maps $\widetilde{V}_{z,a}\hookrightarrow \widehat{\cG}_0\to X.$ Set
\begin{align}
V_{z,a}=\pi_{z,a}(\widetilde{V}_{z,a})\subset X.
\end{align}
 By \eqref{eq:pngc},  
\begin{align}\label{eq:toma2}
p^{-1}(U_z)=\bigcup_{a\in \Gamma} V_{z,a}.
\end{align}
%Also, the group $\Gamma$ acts on $p^{-1}(U_z)$.
Recall that $x\in \widetilde{U}_z$ and  $\pi_z(x)=z$. By \eqref{eq:cyg}, for $g\in G_z$, $c_{x,g}$ is a $\cG$-loop based at $x$. Set
\begin{align}\label{eq:rz}
r_z:g\in G_z\to [{c}_z c_{x,g} {c}_z^{-1}]\in \Gamma.
\end{align}
Then, $r_z$ is a  morphism of groups. By  \eqref{eq:Xuc} and \eqref{eq:toma2}, 
\begin{align}\label{eq:piUuc}
p^{-1}(U_z)=\coprod_{[a]\in  \Gamma/ \im(r_z)} V_{z,a}. 
\end{align}
Using the fact that $p^{-1}(U_z)$ is open in $X$, we can deduce  that $V_{z,a}$ is open in $X$. 

Put 
\begin{align}\label{eq:Huc}
H_{z}=\ker r_z.
\end{align}
By \eqref{eq:rz} and \eqref{eq:Huc}, $H_z$ acts on $\widetilde{V}_{z,a}$ by 
\begin{align}\label{eq:Hauc}
(g,[c])\in H_z\times \widetilde{V}_{z,a}\to [c][c_{\widehat{p}([c]),g^{-1}}]\in \widetilde{V}_{z,a}.
\end{align}
Then $\pi_{z,a}$ induces a homeomorphism  of topological spaces
\begin{align}\label{eq:49}
 H_{z}\backslash \widetilde{V}_{z,a}\simeq V_{z,a}.
\end{align}
As the $H_z$-action on $\widetilde{V}_{z,a}$ is effective, $(\widetilde{V}_{z,a},H_{z},\pi_{z,a})$ is an orbifold chart for $V_{z,a}$. Moreover, the compatibility of each charts is a consequence of \eqref{eq:toma} and the compatibility of charts in $\widetilde{\cU}$. Hence, $\{(\widetilde{V}_{z,a},H_{z},\pi_{z,a})\}_{U_z\in \cU,a\in \Gamma}$ forms an orbifold atlas on $X$.

As $\Gamma$ is countable, $X$ is second countable. We will show that $X$ is Hausdorff. Indeed, take $y_1,y_2\in X$ and $y_1\neq y_2$.  If $p(y_1)\neq p(y_2)$, as $Z$ is Hausdorff, take respectively open neighborhoods $U_1$ and $U_2$ of $p(y_1)$ and of $p(y_2)$ such that $U_1\cap U_2=\varnothing$. Then $p^{-1}(U_1)\cap p^{-1}(U_2)=\varnothing$.  
Assume $p(y_1)=p(y_2)$. By adding charts in $\cU$, we can assume that there is $U_z\in \cU$ such that 
$p(y_1)=p(y_2)=z$ with orbifold charts $(\widetilde{U}_{z},G_{z},\pi_{z})$ centered at $x$.
Assume $y_1$, $y_2$ are represented by $\cG$-paths $c_1$ and $c_2$ starting at $x_0$ and ending at $x$. For $i=1,2$, set
\begin{align}
a_i=[c_i][c^{-1}_{z_0}]\in \Gamma. 
\end{align}
As $y_1\neq  y_2$, then $[a_1]\neq [a_2]\in \Gamma/\im(r_z)$. Thus,
\begin{align}
&y_1\in V_{z,a_1},&y_2\in V_{z,a_2},&& V_{z,a_1}\cap V_{z,a_2}=\varnothing.
\end{align}

In summary, we have shown that $X$ is an orbifold.

Note that $\gamma V_{z,a}=V_{z,\gamma a}$. By \eqref{eq:piUuc}, the set
\begin{align}\label{eq:51}
\{\gamma\in \Gamma: \gamma V_{z,a}\cap V_{z,a}\neq\varnothing\}=a\im(r_z)a^{-1}\subset \Gamma
\end{align}
is finite. Then the $\Gamma$-action on $X$ is properly discontinuous. As $\Gamma$ acts on $\widehat{\cG}_0$,  the $\Gamma$-action is smooth.

We claim that \eqref{eq:Z=X/G} is homeomorphism of topological space. Indeed, by the construction, 
\eqref{eq:Z=X/G} is injective. It is surjective as  $Z$ is connected. The continuity of the inverse \eqref{eq:Z=X/G}  is a consequence of  \eqref{eq:piUuc}.

The isomorphism of orbifolds between $\Gamma\backslash X$ and $Z$ is a consequence of Proposition \ref{prop:X/G} and 
\eqref{eq:toma}, \eqref{eq:49} and \eqref{eq:51}. The proof our theorem is completed. 
\end{proof}
%
%\begin{re}
%The exact sequence 
%\begin{align}
%1\to H_z\to G_z\to \Gamma_x\to 1
%\end{align}
%is just the exact sequence of \eqref{GHR}. 
%\end{re}
%

\begin{re}\label{re:cover}By \eqref{eq:piUuc}, \eqref{eq:49}, and 
by the covering orbifold  theory of Thurston \cite[Definition 
13.2.2]{Thurston_geo_3_maniflod}, $p:X\to Z$ is a covering orbifold 
of $Z$. Moreover, we can show that for any  connected covering orbifold $p':Y\to Z$, there exists a covering orbifold   $p'':X\to Y$ such that 
the diagram
\begin{align}
\begin{aligned}
        \xymatrix{
    X\ar[dd]_p \ar[dr]^{p''} & \\
    &Y\ar[dl]^{p'}\\
     Z& 
    }
  \end{aligned}\end{align}
commutes. 
For this reason, $X$ is called a universal covering orbifold of $Z$. 
As in the case of the classical covering theory of topological 
spaces, the universal covering orbifold  is unique up to covering isomorphism. Also, $\Gamma$ is isomorphic to  the orbifold deck transformation group of $X$. 
\end{re}

\begin{re}\label{re231}
	If a connected covering orbifold $X'$ of $Z$ has a trivial orbifold 
	fundamental group, then $X'$ is a universal covering orbifold of 
	$Z$. In particular, if $Z$ has a  covering orbifold $X'$, which is 
	a connected simply connected manifold, then $X'$ is a universal covering orbifold of 
	$Z$.
\end{re}

\begin{exa}
	The teardrop  $Z_{n}$ with 
	$n\g2$ (see Figure \ref{F1}) is an example 
	of an orbifold with a trivial orbifold 	fundamental group which 
	is not a manifold (c.f. \cite[p. 304]{Thurston_geo_3_maniflod}). Its underlying topological space is 	a 
	$2$-sphere	$\mathbb{S}^2$, and its singular set   consists of a single 
	point, whose neighbourhood is modelled on 
	$\mathbf{R}^2/(\mathbf{Z}/{n\mathbf{Z}})$, where the cyclic group $\mathbf{Z}/{n\mathbf{Z}}$ acts by rotations.
	\begin{figure}[htbp]
   \includegraphics[width=6cm]{Tn.pdf}
      \caption{\label{F1}Teardrop $Z_{n}$.}
\end{figure}
\end{exa}

\subsection{Flat vector bundles and holonomy}\label{sec:hol}In this subsection,
we still assume that $Z$ is a connected orbifold. Let $F$ be a proper flat orbifold  vector bundle on $Z$. 
Let $(\cU,\widetilde{\cU})$ be an orbifold atlas as in Definition \ref{def:orbbundle}. Let  $\cG$ be  the associated groupoid. We fix $x_0\in \cG$.

For a $\cG$-path $c=(b_1,\cdots,b_k;g_0,\cdots,g_k)$, the parallel transport $\tau_{c}$ of $F$ along $c$ is defined by 
\begin{align}\label{eq:parat}
\tau_{c}= g_{k,*}^F\cdots g^F_{0,*}\in \GL_r(\bC).
\end{align}
It  depends only  on the homotopy class of $c$. In particular, it defines a representation, called holonomy representation of $F$,  
\begin{align}\label{eq:Ftor}
\rho: {\Gamma}\to \GL_r(\bC).
\end{align}
The isomorphic  class of the representation $\rho$ is independent of the choice of orbifold atlas on $Z$, of the local  trivialization of $F$,  and  of the choice of $x_0$. Moreover, it does not depend on the isomorphic class of $F$.

%This class is called the holomony of $F$. 

Let $\Hom(\Gamma,\GL_r(\bC))/_\sim$ be the set of equivalent classes of complex representations of $\Gamma$ of dimension  $r$, and 
let $\cM^{\rm pr}_r(Z)$ be the set of isomorphic classes of proper complex  flat orbifold vector bundles of rank $r$ on $Z$. 
By Proposition \ref{prop:flatg} and Remark \ref{re:repiso}, the map
\begin{align}\label{eq:GR}
\rho\in  \Hom(\Gamma,\GL_r(\bC))/_\sim \to X \,{}_{\rho}\times \bC^r\in \cM^{\rm pr}_r(Z)
\end{align}
is well-defined.

\begin{thm}\label{Thm:1}
The map \eqref{eq:GR} is one-one and onto, whose inverse is given by the holonomy representation \eqref{eq:Ftor}. 
\end{thm}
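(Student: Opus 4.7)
The plan is to prove the theorem in two steps: first verify that the holonomy representation of $X \times_\rho \mathbb{C}^r$ is $\rho$ itself up to equivalence, which simultaneously shows injectivity of \eqref{eq:GR} and that its inverse on the image is the holonomy map; then establish surjectivity by showing that an arbitrary $F \in \mathcal{M}^{\mathrm{pr}}_r(Z)$ with holonomy $\rho$ is isomorphic to $X \times_\rho \mathbb{C}^r$.

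For the first direction, fix a $\mathcal{G}$-loop $c$ based at $x_0$ representing $[c] \in \Gamma$. Using the orbifold charts of $X \times_\rho \mathbb{C}^r$ constructed in the proof of Proposition \ref{prop:flatg}, one verifies that the bundle transitions $g^E_{U_i U_0}$ between these charts are constant, and that along an arrow associated to $\gamma \in \Gamma$ coming from \eqref{eq:xrx} the transition is exactly $\rho(\gamma)$. Plugging this into \eqref{eq:parat} yields $\tau_c = \rho([c])$, so the composition $\rho \mapsto X \times_\rho \mathbb{C}^r \mapsto (\text{holonomy})$ is the identity of $\mathrm{Hom}(\Gamma,\GL_r(\bC))/_\sim$.

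For surjectivity, the idea is to run Haefliger's construction from subsection \ref{sec:cov} directly on the total space $\mathcal{F}$ of $F$, whose associated groupoid is $\mathcal{G}^F = (\mathcal{G}_0 \times \mathbb{C}^r, \mathcal{G}_1 \times \mathbb{C}^r)$ as identified in \eqref{eq:GE0}--\eqref{eq:GE1}. Flatness ensures that for each arrow $g \in \mathcal{G}_1$ the associated transition $g^F_*$ is a genuine constant element of $\GL_r(\bC)$, and properness combined with Remark \ref{re:ggE} guarantees that $g^F_*$ is uniquely determined by $g$. Hence every $\mathcal{G}^F$-path based at $(x_0,v_0)$ decomposes canonically into the data of an underlying $\mathcal{G}$-path $c$ together with a terminal vector that is pinned down by $c$, $v_0$, and the parallel transport $\tau_c$. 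A chart-by-chart comparison of the elementary moves then shows that two $\mathcal{G}^F$-paths are $\mathcal{G}^F$-homotopic if and only if their underlying $\mathcal{G}$-paths are $\mathcal{G}$-homotopic, producing a natural homeomorphism between $\widehat{\mathcal{G}^F}_0/\widehat{\mathcal{G}^F}_1$ and $X \times \mathbb{C}^r$. Reading off the deck transformation action from \eqref{eq:Xuc} and \eqref{eq:parat} identifies it with the $\Gamma$-action $\gamma \cdot (x,v) = (\gamma x, \rho(\gamma) v)$. Combining Theorem \ref{thm:uX} and Proposition \ref{prop:flatg} then gives an isomorphism $F \simeq X \times_\rho \mathbb{C}^r$ of proper flat orbifold vector bundles over $Z$.

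The main obstacle will be the last homotopy-class computation: one must check that the subdivision/adjunction moves and the elementary homotopies for $\mathcal{G}^F$-paths descend consistently to the product decomposition, so that the quotient yields exactly $X \times \mathbb{C}^r$ rather than a larger or smaller space, and that the resulting isomorphism is in fact an isomorphism of \emph{flat} orbifold vector bundles rather than only of orbifolds over $Z$. Flatness (making $g^F_*$ constant) and properness (making $g^F_*$ unique) are both essential and enter at different points of this reduction.
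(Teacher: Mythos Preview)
Your proposal is correct and follows essentially the same two-step strategy as the paper: first computing the holonomy of $X\times_\rho\bC^r$ directly from the charts of Proposition~\ref{prop:flatg} (the paper does this by tracking a representative loop through the charts $\widetilde{V}_{z_i,1}$ and showing all intermediate $g_{i,*}=1$ while the final one is $\rho(\gamma)$), and second applying Haefliger's universal-cover construction to the total space $\cF$ to identify it with $\Gamma\backslash(X\times\bC^r)$.

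One point worth sharpening in your Step~2: your phrase ``a terminal vector that is pinned down by $c$, $v_0$, and $\tau_c$'' is slightly misleading, since at the level of $\cG^F$-paths the $\bC^r$-component is an arbitrary continuous path, not determined by $c$ and the initial vector. The paper handles this by the explicit change of variable $w(t)=g_{0,*}^{F,-1}\cdots g_{i-1,*}^{F,-1}v_i(t)$, which converts the fibre data into a genuine path $w:[0,1]\to\bC^r$ with $w(0)=0$; contractibility of $\bC^r$ then reduces the homotopy class to the single datum $w(1)=\tau_c^{-1}u$, giving the identification $\widehat{\cG}^F_0\simeq\widehat{\cG}_0\times\bC^r$. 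This is exactly the ``chart-by-chart comparison'' you allude to, and with it your argument goes through as in the paper.
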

\begin{proof}
\ul{Step 1. The holonomy representation of 
$X\,{}_{\rho}\!\times\bC^r$ is isomorphic to $\rho$.}  

Assume that the orbifold $Z$ is covered by $\{U_z\}$ with simply connected orbifold charts $\{\widetilde{U}_z\}$ centered at $x\in \widetilde{U}_z$ and $X$ is covered by $\{V_{z,a}\}$ as \eqref{eq:piUuc} such that $p(V_{z,a})= U_z.$

Take $\gamma\in \Gamma$.
Let $c=(b_1,\cdots,b_k;g_0,\cdots,g_k)$ be a $\cG$-loop based at $x_0$ which represents $\gamma$. It is enough to show the parallel transport along $c$ is $\rho(\gamma)$. For $1\le i\le k$, take 
\begin{align}
x_i=b_i(t_{i-1}). 
\end{align}
Up to equivalence relation of $c$ and up to adding charts into the orbifold atlas of $Z$, we can assume that there are orbifold charts $\widetilde{U}_{z_i}$ of $Z$ centered at $x_i$ such that 
$b_i:[t_{i-1},t_{i}]\to \widetilde{U}_{z_i}.$
Also, we assume that $\widetilde{U}_{z_0}$ is an orbifold chart of $Z$ centered at $x_0$.

Let $c_{z_1}=c_{x_0,g_0}$ as in \eqref{eq:cyg}. For $2\le i\le k$, set 
\begin{align}
c_{z_i}=(b_1,\cdots,b_{i-1};g_0,\cdots,g_{i-1}).
\end{align}
By \eqref{eq:tVza}, for $1\le i\le k$, $c_{z_i}$ is a $\cG$-path starting at $x_0$ and ending at $x_i$ such that 
\begin{align}
[c_{z_i}]\in \widetilde{V}_{z_i,1}. 
\end{align}
We claim that for $1\le i\le k$, 
\begin{align}\label{eq:yu}
&V_{z_{i-1},1}\cap V_{z_{i},1}\neq \varnothing,& V_{z_{k},1}\cap V_{z_{0},\gamma}\neq\varnothing.
\end{align}
Indeed,  $c'_{z_i}=(b_1,\cdots,b_{i-1};g_0,\cdots,g_{i-2},\mathrm{id})$ projects to the same element of $X$ as $c_{z_i}$, and 
$[c'_{z_i}]\in \widetilde{V}_{z_{i-1},1}$. Also, $c'_{z_{k+1}}$ projects to the same element of $X$ as $c$. 

%
%Also, we have
%\begin{align}
%[c]=[c_{z_{k+1}}]\in \widetilde{V}_{z_{0},\gamma}=\gamma \widetilde{V}_{z_{0},1}.
%\end{align}

%
%
%Assume that for $1\l i\l k-1$, there are orbifold charts $\widetilde{U}_{z_i}$ centered at $x_i$ such that 
%\begin{align}
% b_i:[t_i,t_{i+1}]\to \widetilde{U}_{x_i},
%\end{align}
%and that 
%
%
%balbala
%
%
%
%Choose $U_x\subset X$ with orbifold charts $(\widetilde{U}_{x},G_x)$ centered at $\widetilde{x}\in \widetilde{U}_x$
%as in the proof of Proposition \ref{prop:X/G}. Then  $\{p(U_x)\}$ forms a covering of $Z$ with orbifold charts $\{(\widetilde{U}_x,H_x)\}$ with associated groupoid $\cG$. 
%
%By adding charts in the orbifold atlas of $X$, we can assume that there is $(\widetilde{U}_{x_0},G_{x_0})$ such that  $x_0$ smooth (so then $G_{x_0}=1$). Without loose generality, we can assume that $g_0=1$, $g_k=1$, and 
%
%with $U_{x_i}\cap U_{x_{i+1}}\neq\varnothing$. We assume also that 
%\begin{align}
%\gamma_k(1)=\widetilde{x}_k\in \widetilde{U}_{x_k}.
%\end{align}
%
%As $x_0$ is smooth, there exits 
%

Recall that $\gamma V_{z_0,1}=V_{z_0,\gamma}$.
By \eqref{eq:xrx}, \eqref{eq:xrx2} and \eqref{eq:yu}, for $1\le i\le k-1$, we have
\begin{align}
& g_{i,*}=1,&g_{k,*}=\rho(\gamma).
\end{align} 
By \eqref{eq:parat},  the parallel transport along $c$ is $\rho(\gamma)$.

\ul{Step 2. If $F$ has holonomy $\rho$, then $F$ is isomorphic to $X\,{}_{\rho}\!\times\bC^r$.} 

We will construct the bundle isomorphism. By \eqref{eq:GE0} and \eqref{eq:GE1}, the groupoid of the total space $\cF$ is given by $\cG^F=\(\cG_0\times \bC^r,\cG_1\times \bC^r\).$
Let us  construct a universal covering orbifold of $\cF$ by determining  its groupoid $\widehat{\cG}^F$.

Take $(x_0,0),(x_1,u)\in \cG^F_0$.
Let $(c,v)$ be a $\cG^F$-path starting at $(x_0,0)$ and ending at $(x_1,u)$. 
Then there is a partition of $[0,1]$ given by $0=t_0<\cdots<t_k=1$ such that  $c=(b_1,\cdots,b_k;g_0,\cdots,g_k)$
as in Definition \ref{def:Gpath}. Also, $v=(v_1,\cdots,v_k)$, where $v_i:[t_{i-1},t_i]\to \bC^r$ is a continuous path such that $v_1(0)=0$,  $g^F_{k,*}v_{k}(1)=u$ and for $1\le i\le k-1$,
\begin{align}
g^F_{i,*}v_i(t_i)=v_{i+1}(t_i).
\end{align}

Put $w:[0,1]\to \bC^r$ a continuous path such that for $t\in [t_{i-1},t_i]$,
\begin{align}\label{eq:vtow}
w(t)=g^{F,-1}_{0,*}\cdots g^{F,-1}_{i-1,*} v_i(t).
\end{align}
% Then,
% \begin{align}
% &w(0)=0,&w(1)=\tau^{-1}_c u.
% \end{align}

We identify $(c,v)$ with $(c,w)$ via \eqref{eq:vtow}. Then, $(c,v)$ is homotopic to $(c',v')$ if and only if $c,c'$ are homotopic as $\cG$-path and $w,w'$ are  homotopic as ordinary continuous paths in $\bC^r$. 
Since any continuous path $w:[0,1]\to \bC^r$ such that $w(0)=0$ is homotopic to the path $t\in [0,1]\to tw(1)$, we have the identification
\begin{align}\label{eq:pathF}
[c,v]\in \widehat{\cG}_0^F\to ([c],w(1))=([c],\tau^{-1}_cu)\in \widehat{\cG}_0\times \bC^r. 
\end{align}
In particular,  we have an isomorphism of groups
\begin{align}
[c]\in \Gamma\to [(c,0)]\in \pi^{\rm orb}_1(\cF,(x_0,0)),
\end{align}
where $0$ is the constant loop at $0\in \bC^r$.  

In the same way, we identify 
\begin{align}
([c,v],g)\in \widehat{\cG}_1^F\to \big(([c],g), w(1)\big) \in \widehat{\cG}_1\times \bC^r.
\end{align}
We deduce  that $\big(([c],g), w(1)\big)$ represents an arrow from 
$\big([c],w(1)\big)\in \widehat{\cG}^F_0$ to 
$\big([cc_g],w(1)\big)\in \widehat{\cG}_0^F$. Therefore,  the orbit 
space of $\widehat{\cG}^F$, which is also the universal covering orbifold of $\cF$, is given by 
$X\times \bC^r.$

By the identification \eqref{eq:pathF}, the projection \eqref{eq:wp} is given by 
\begin{align}\label{eq:iso}
\widehat{p}_\rho:\big([c],w(1)\big)\in \widehat{\cG}_0\times \bC^r\to \big(\widehat{p}([c]),w(1)\big)\in \cG_0\times \bC^r.
\end{align}
The group $\Gamma$ acts on the left on $\widehat{\cG}_0$, and on the left on $\bC^r$ by $\rho$. As in \eqref{eq:wp}, 
the projection \eqref{eq:iso} is a Galois covering with deck transformation group $\Gamma$. And $\widehat{p}_\rho$ descends to a $\Gamma$-invariant continuous map
\begin{align}
p_\rho:X\times \bC^r\to \cF.
\end{align}

By Theorem \ref{thm:uX} , $p_{\rho}$ induces an isomorphism of orbifolds 
\begin{align}\label{eq:XCrR}
X \tensor[_\rho]{\times}{} \bC^r\simeq \cF.
\end{align}
Using the fact that \eqref{eq:iso} is linear on the $\bC^r$,  we can deduce that \eqref{eq:XCrR} is an isomorphism of  orbifold vector bundles. 
%By definition of $\cF$, we have
%\begin{align}\label{eq:FX1}
%\cF\simeq \cG_0^F/{{\cG^F_1}}= \(\cG_0\times \bC^r\)/{{\cG^F_1}}.
%\end{align}
%Under the identification \eqref{eq:FX4}, we have
%\begin{align}\label{eq:gaga}
%\cF\simeq \Gamma\backslash\( \widehat{\cG}_0/{{\cG_1}}\times \bC^r\)\simeq   X\tensor[_\rho]{\times}{} \bC^r.\end{align}
%Indeed this is a covering map, then it is smooth 
%
%
%Fix $x_0\in X$. For $x\in X$ a $\cG$-path connecting $x_0$ and $x$. We have
%\begin{align}\label{eq:wtb}
% (\gamma,v)\in \widehat{\cG}_0 \times \widetilde{F}_{x_0}\to \tau_x^{x_0}v\in \pi^{-1}(x)\subset \cF.
%\end{align}
%Recall the homeomorphism of topological space $(\widehat{\cG}_0 \times \widetilde{F}_{x_0})/_{\simeq_{\cG_1}}\simeq X\tensor[_{\rho}]{\times}{}\bC^r$. In this way, we a homeomorphism of topological space 
%\begin{align}
%f:X\tensor[_{\rho}]{\times}{}\bC^r\simeq \cF.
%\end{align}
%It is remains  to show that $f$ is a smooth orbifold map. This is evident as the local lifting of $f$ is given precisely by \eqref{eq:wtb}.
%
%
%
%Let $\pi:\cF\to Z$ be the canonical projection. We claim that $\cF$ is vector bundle on $Z$. Indeed,  by definition, 
%\begin{align}
%\cF_U=\pi^{-1}(U)=p^{-1}(U){\,}_\Gamma\!\times\bC^r.
%\end{align}
%Let $p^{-1}(U)=\coprod_{i} V_i$. The group $\Gamma$ permutes the connected components of $p^{-1}(U)$. Recall that we morphism $G_{U}\to \Gamma$. In particular, $G_{U}$ acts on $\bC^r$ via $\rho$. We get
%\begin{align}
%\cF_U=\widetilde{U}{\,}_{G_{U}}\!\!\times\bC^r.
%\end{align}
%Then $\pi:\cF\to Z$ is a vector bundle on $Z$. 
%
The proof of our theorem is completed. 
\end{proof}

\begin{re}
The properness condition is necessary. Indeed, Theorem \ref{Thm:1} implies that the proper flat vector bundle is trivial on the universal cover. Consider a non trivial
 finite group $H$ acting effectively on $\bC^r$. Then  
$H\backslash \bC^r$ is a non proper orbifold vector bundle over a point. Clearly, it is not trivial. 
\end{re}

\begin{re}
By \eqref{eq:Epr} and Theorem \ref{Thm:1}, we get Corollary \ref{cor:1}. 
\end{re}

\section{Differential calculus on orbifolds}\label{Sec:diff}
The purpose of this section is to explain briefly  how to extend the usual differential 
calculus to orbifolds. To simplify our presentation, we assume that 
the underlying  orbifold is compact. We assume also that  the 
orbifold vector bundles are proper. By \eqref{eq:Epr}, all the 
constructions in this section extend trivially to non proper orbifold vector bundles.

This section is organized as follows.
In subsections \ref{sec:diff}-\ref{sec:dis}, we introduce 
differential operators, integration of differential forms, integral operators and 
Sobolev space on orbifolds.

%, and introduce the de Rham complex and de Rham cohomology for orbifolds. %We show the orbifold version of Stokes formula.

In subsection \ref{sec:chara},  we explain   Chern-Weil theory for 
the
orbifold vector bundles. The Euler form, odd Chern character form, their Chern-Simons classes, and their canonical extensions to $Z\coprod \Sigma Z$ are constructed in detail.

%In subsection \ref{sec:V}, we introduce and study  the $V$-invariant of \cite{BGdeRham} for compact orbifold with  compact Lie group actions. 

\subsection{Differential operators on orbifolds}\label{sec:diff}
Let $Z$ be a compact orbifold with atlas $(\cU,\widetilde{\cU})$.
Let  $E$ be a proper orbifold vector bundle on $Z$ such that \eqref{eq:loctri} holds.

A differential operator $D$ of order $p$ is a family 
$\{\widetilde{D}_U:C^\infty(\widetilde{U},\widetilde{E}_{U})\to 
C^\infty(\widetilde{U},\widetilde{E}_{U})\}_{U\in \cU}$ of 
$G_U$-invariant differential operators  of order $p$ such that  if 
$g\in \cG_1$ is an arrow  from $x_1 \in \widetilde{U}_{1}$ to $x_2\in \widetilde{U}_{2}$, then near $x_1$, we have
\begin{align}\label{eq:gdd}
g^* \widetilde{D}_{U_{2}}=\widetilde{D}_{U_{1}}.
\end{align}
If each $\widetilde{D}_U$ is elliptic, then $D$ is called elliptic.

If $s\in C^\infty(Z,E)$ is represented by the family $\{s_U\in C^\infty(\widetilde{U},\widetilde{E}_{U})^{G_U}\}$ such that \eqref{eq:gss} holds. By \eqref{eq:gss} and \eqref{eq:gdd}, $\{\widetilde{D}_U\widetilde{s}_U\in C^\infty(\widetilde{U},\widetilde{E}_{U})^{G_U}\}_{U\in \cU}$ defines a section of $E$, which is denoted by $Ds$. Clearly, $D:C^\infty(Z,E)\to C^\infty(Z,E)$ is a linear operator such that 
\begin{align}
\Supp(Ds)\subset \Supp(s). 
\end{align}
As in the manifold case, 
the differential operator acts naturally on distributions. 

% Also, if $D$ is elliptic, then for any $s\in \cD'(Z, E)$ such that $Ds\in C^\infty(Z,E)$, we have 
% \begin{align}
% s\in C^\infty(Z,E). 
% \end{align}

% \begin{exa}
% A differential operator of order $0$ is a section of $\Hom(E_1,E_2)$. 
% \end{exa}

% \begin{exa}
% If $V\in C^\infty(Z,TZ)$ is a vector field on $Z$ defined by 
% the family of 
% $\{\widetilde{V}_{U}\in C^\infty(\widetilde{U},T\widetilde{U})^{G_U}\}_{U\in \cU}$ of vector fields on $\widetilde{U}$, then the Lie derivation $L_V$ 
% is  defined  by the family of  Lie derivations $\{L_{\widetilde{V}_{U}}\}_{U\in \cU}$ on $\widetilde{U}$.
% \end{exa}

%
%
%
%Let
%$D:C^\infty(Z,E)\to C^\infty(Z,F)$ be a linear operator.
%\begin{defin}
%We call $D$ a differential operator of order $m$, if 
%\begin{itemize}
%  \item for any $u\in C^\infty(Z,E)$, we have  $ \Supp(Du)\subset \Supp (u);$
%  \item for $U\in \cU$, as an operator $C^\infty_c(U,E_U)\to C^\infty_c(U,F_U)$, $D$ is represented by  
%   a $G_U$-invariant differential operator $\widetilde{D}_U:C_c^\infty(\widetilde{U},\bC^r)\to C_c^\infty(\widetilde{U},\bC^{r'})$   of order $m$.
%   \end{itemize}
%A differential operator $D$ is called elliptic, if every $\widetilde{D}_U$ is elliptic. 
%\end{defin}

\begin{exa}
	  A connection $\nabla^E$ on $E$ is a first  order differential operator from $C^\infty(Z,E)$ to $\Omega^1(Z,E)$ such that
 $\nabla^E$ is represented by  a family $\{\nabla^{\widetilde{E}_U}\}_{U\in \widetilde{U}}$ of $G_U$-invariant connections on $\widetilde{E}_U$ such that \eqref{eq:gdd} holds. 
The curvature $R^E=(\nabla^E)^2$ is defined as usual. It is a section 
of $\Lambda^2(T^{*}Z)\otimes_{\bR} \End(E)$. As usual, $\nabla^{E}$ 
is called metric with respect to a Hermitian metric $g^{E}$ if
$\nabla^{E}g^{E}=0$. 
\end{exa}

\begin{exa}
	Let $(Z,g^{TZ})$ be a Riemannian orbifold. 
If $g^{TZ}$ is  defined by the family  $\{g^{T\widetilde{U}}\}_{U\in \cU}$ of Riemannian metrics, then the family of  Levi-civita connections on $(\widetilde{U},g^{T\widetilde{U}})$ defines the Levi-civita connection $\nabla^{TZ}$ on  $(Z,g^{TZ})$. 
\end{exa}

\begin{exa}
	Let $F$ be a  flat orbifold  vector bundle on $Z$. The de Rham operator $d^Z: \Omega^\cdot(Z,F)\to \Omega^{\cdot+1}(Z,F)$ is 
a first order differential operator  represented   by the family of de Rham 
operators
\begin{align}
\left\{d^{\widetilde{U}}:\Omega^{\cdot}(\widetilde{U},\bC^r)\to 
\Omega^{\cdot+1}(\widetilde{U},\bC^r)\right\}_{U\in \cU}.
\end{align}
Clearly,  $(d^Z)^2=0$. The  complex  $\(\Omega^{\cdot}(Z,F),d^Z\)$ is 
called the orbifold de Rham complex with values in $F$.  Denote by 
$H^\cdot(Z,F)$ the corresponding cohomology. When $F=\bR$ is the trivial bundle, we denote simply by $\Omega^{\cdot}(Z)$ and $H^\cdot(Z)$.\footnote{
By Satake \cite{Satake_gene_mfd}, $H^\cdot(Z)$ coincides with the 
singular cohomology of the underlying topological space $Z$. In 
general, $H^\cdot(Z,F)$ coincides with the cohomology of the sheaf of 
locally constant sections of $F$.} Clearly, $\nabla^{F}=d^{Z}|_{C^{\infty}(Z,F)}$ defines a 
connection on $F$ with vanishing curvature. As in manifold case, such 
a
connection will be called flat.   We say $F$ is unitarily flat, if 
there exists a Hermitian metric $g^F$ on $F$ such that $\nabla^F 
g^F=0$. Clearly, this is equivalent to say the holonomy representation $\rho$ is unitary. 
\end{exa}

\subsection{Integral operators  on orbifolds}\label{sec:dR}
% If $F$ is a flat orbifold  vector bundle,  then the twist de Rham operator 
% $d^{Z}:\Omega^\cdot(Z,F)\to \Omega^{\cdot+1}(Z,F)$ is defined similarly by the family $\{d^{\widetilde{U}}:\Omega^{\cdot}(\widetilde{U},\bC^r)\to \Omega^{\cdot+1}(\widetilde{U},\bC^r)\}_{U\in \cU}$.  Then, $(\Omega^\cdot(Z,F),d^Z)$ forms a complex,  and its  cohomology  $H^\cdot(Z,F)$ coincides with the cohomology of the sheaf of locally constant sections of $F$.
%Let us introduce the integration of differential forms. 
%Recall that $Z$ is a compact orbifold. 
Since $Z$ is Hausdorff and compact, there exists a (finite) partition of unity subordinate to $\cU$. That
means 
there is a (finite) family  of smooth functions $\{\phi_i\in 
C^\infty_c(Z,[0,1])\}_{i\in I}$  on $Z$  such that  the support 
$\Supp \phi_i$ is contained in some $U_i\in \cU$, %and that  the family of compact set $\{\Supp \phi_i\}_{i\in I}$ is locally finite, 
and that  
\begin{align}\label{eq:sum=1}
\sum_{i\in I} \phi_i=1.
\end{align}
Denote by 
\begin{align}
\widetilde{\phi}_i=\pi^*_{U_i} \big(\phi_i\big)\in C^\infty_c(\widetilde{U}_i)^{G_{U_i}}.
\end{align}

Following \cite[p. 474]{SatakeGaussB}, for $\alpha\in 
\Omega^\cdot\big(Z,o(TZ)\big)$ which is represented  by the invariant 
forms $\{\alpha_{U_{i}}\in 
\Omega^{\cdot}(\widetilde{U}_{i},o(T\widetilde{U}_{i}))^{G_{U_{i}}}\}_{i\in I}$, define
\begin{align}\label{eq:intZ}
\int_{Z}\alpha=\sum_{i\in I}\frac{1}{|G_{U_i}|}\int_{\widetilde{U_i}}\widetilde{\phi}_i\alpha_{U_i}.
\end{align}
By \eqref{eq:sum=1} and \eqref{eq:intZ}, we get:
\begin{prop}
If $\alpha\in \Omega^\cdot\big(Z,o(TZ)\big)$, then $\alpha$ is integrable on $Z_{\rm reg}$ such that
\begin{align}\label{eq:azzs}
\int_{Z}\alpha=\int_{Z_{\rm reg}}\alpha.
\end{align}
\end{prop}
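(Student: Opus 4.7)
The plan is to reduce the defining formula \eqref{eq:intZ} to an integral over $Z_{\rm reg}$ chart by chart, using the fact that on each orbifold chart the projection $\pi_U$ restricts to a Galois covering map on the regular part.

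First I would establish that, in every orbifold chart $(\widetilde{U},G_U,\pi_U)$, the singular locus
\begin{align*}
\widetilde{U}_{\rm sing}=\bigcup_{g\in G_U\setminus\{1\}}\widetilde{U}^g
\end{align*}
has Lebesgue measure zero. Since the $G_U$-action is smooth and effective and $G_U$ is finite, around any $\widetilde{x}\in \widetilde{U}$ the stabilizer $({G_U})_{\widetilde{x}}$ can be linearized via an invariant Riemannian metric; by the centered chart description in Remark \ref{re:lch}, each fixed set $\widetilde{U}^g$ is locally a proper linear subspace and hence globally a closed submanifold of positive codimension. A finite union of such sets is negligible, so $\widetilde{U}_{\rm reg}:=\widetilde{U}\setminus\widetilde{U}_{\rm sing}$ is open, dense, and of full measure. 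Its image is $U_{\rm reg}$, and since $G_U$ acts freely on $\widetilde{U}_{\rm reg}$, the restricted map $\pi_U\colon\widetilde{U}_{\rm reg}\to U_{\rm reg}$ is a Galois covering of degree $|G_U|$.

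Next, because $\widetilde{\phi}_i\widetilde{\alpha}_{U_i}$ is a smooth, compactly supported, $G_{U_i}$-invariant form on $\widetilde{U}_i$ with values in the pulled-back orientation line, the measure-zero result above gives
\begin{align*}
\int_{\widetilde{U}_i}\widetilde{\phi}_i\widetilde{\alpha}_{U_i}
=\int_{\widetilde{U}_{i,\rm reg}}\widetilde{\phi}_i\widetilde{\alpha}_{U_i}
=|G_{U_i}|\int_{U_{i,\rm reg}}\phi_i\alpha,
\end{align*}
the last identity being the usual change-of-variables for a finite cover combined with $G_{U_i}$-invariance and the local identification $o(T\widetilde{U}_i)=\pi_{U_i}^{*}o(TZ)|_{U_{i,\rm reg}}$. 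The integrability of each $\phi_i\alpha$ on $U_{i,\rm reg}$ is automatic since $\widetilde{\phi}_i\widetilde{\alpha}_{U_i}$ is smooth and compactly supported, and the left-hand integral is finite.

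Substituting into the defining formula \eqref{eq:intZ} and summing over $i\in I$ yields
\begin{align*}
\int_Z\alpha=\sum_{i\in I}\int_{U_{i,\rm reg}}\phi_i\alpha
=\int_{Z_{\rm reg}}\Bigl(\sum_{i\in I}\phi_i\Bigr)\alpha
=\int_{Z_{\rm reg}}\alpha,
\end{align*}
where in the last step I use \eqref{eq:sum=1} and that $\{\phi_i|_{Z_{\rm reg}}\}$ is a smooth partition of unity on the honest manifold $Z_{\rm reg}$. The only genuinely non-routine step is the measure-zero statement for $\widetilde{U}_{\rm sing}$, which is where the effectiveness of the local $G_U$-action is essential; everything else is bookkeeping with partitions of unity and the covering-space change-of-variables.
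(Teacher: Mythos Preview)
Your proof is correct and follows the same route the paper implicitly takes: the paper simply records ``By \eqref{eq:sum=1} and \eqref{eq:intZ}, we get'' before the statement, and your argument is exactly the natural unpacking of that sentence---measure-zero singular locus from effectiveness, the $|G_U|$-fold covering over $U_{\rm reg}$, and then summing against the partition of unity. There is no alternative strategy here, only the level of detail.
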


%Consider the embedding 
%\begin{align}\label{eq:Ginem}
%\Omega_c^\cdot\big(\widetilde{U}, o(T\widetilde{U})\big)^{G_U}\simeq \Omega^\cdot_c\big(U,o(TU)\big)\hookrightarrow \Omega_c^\cdot\big(Z,o(TZ)\big).
%\end{align}
%If $\alpha\in \Omega_c^m\big(Z,o(TZ)\big)$, we can write 
%\begin{align}
%\alpha=\sum_{i\in I} \widetilde{\phi}_{i}\widetilde{\alpha}_{U_i}.
%\end{align}
%Define
%
%By \eqref{eq:intZ}, we see that $\alpha$ is integrable on $Z_{\rm reg}$ such that
%\begin{align}\label{eq:azzs}
%\int_{Z}\alpha=\int_{Z_{\rm reg}}\alpha.
%\end{align}
From \eqref{eq:azzs}, we see that the definition \eqref{eq:intZ} does 
not depend on the choice of orbifold atlas and the partition of 
unity. Also, we have the orbifold Stokes formula. 
% Also, when $\Supp(\alpha)\subset U$, we have 
% \begin{align}\label{eq:intx}
% \int_{Z}\alpha=\frac{1}{|G_U|}\int_{\widetilde{U}}\widetilde{\alpha}.
% \end{align}

\begin{thm}\label{thm:stokes}The following identity holds: for $\alpha\in \Omega^{\cdot}\big(Z,o(TZ)\big)$,
\begin{align}\label{eq:stokes}
\int_{Z}d^Z\alpha=0. 
\end{align}
\end{thm}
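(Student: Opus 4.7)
The plan is to reduce the orbifold Stokes formula to ordinary Stokes on an open subset of $\bR^m$ via the partition of unity. First, I would use the finite partition of unity $\{\phi_i\}_{i\in I}$ from \eqref{eq:intZ} to decompose $\alpha=\sum_i\phi_i\alpha$, so that by linearity it suffices to prove $\int_Z d^Z(\phi_i\alpha)=0$ for each $i$; crucially, each $\phi_i\alpha$ has support contained in the single chart $U_i$.

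The key step is a localization lemma: for any $\beta\in\Omega^{\cdot}(Z,o(TZ))$ with $\Supp(\beta)\subset U_i$, one has
\begin{equation*}
\int_Z\beta=\frac{1}{|G_{U_i}|}\int_{\widetilde{U}_i}\widetilde{\beta}_{U_i}.
\end{equation*}
I would prove this via \eqref{eq:azzs}: since $\int_Z\beta=\int_{Z_{\mathrm{reg}}}\beta=\int_{U_i\cap Z_{\mathrm{reg}}}\beta$, and since $\pi_{U_i}$ restricts to a $|G_{U_i}|$-fold covering of smooth manifolds $\widetilde{U}_{i,\mathrm{reg}}:=\pi_{U_i}^{-1}(U_i\cap Z_{\mathrm{reg}})\to U_i\cap Z_{\mathrm{reg}}$ (because by Remark \ref{re:lch} a point of $U_i$ is regular exactly when the $G_{U_i}$-stabilizer of any of its lifts is trivial), the standard pullback formula gives $\int_{U_i\cap Z_{\mathrm{reg}}}\beta=\frac{1}{|G_{U_i}|}\int_{\widetilde{U}_{i,\mathrm{reg}}}\widetilde{\beta}_{U_i}$. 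Finally, the complement $\widetilde{U}_i\setminus\widetilde{U}_{i,\mathrm{reg}}$ equals $\bigcup_{g\neq 1}\widetilde{U}_i^g$; by effectiveness each nontrivial $g\in G_{U_i}$ acts nontrivially and has finite order, so Bochner linearization at any fixed point exhibits $\widetilde{U}_i^g$ as a proper smooth submanifold of $\widetilde{U}_i\subset\bR^m$, hence of Lebesgue measure zero. One may therefore replace $\widetilde{U}_{i,\mathrm{reg}}$ by $\widetilde{U}_i$ in the integral.

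With the lemma in hand, the chart-wise definition of $d^Z$ gives that the lift of $d^Z(\phi_i\alpha)$ over $\widetilde{U}_i$ is $d(\widetilde{\phi}_i\widetilde{\alpha}_{U_i})$, whence
\begin{equation*}
\int_Z d^Z(\phi_i\alpha)=\frac{1}{|G_{U_i}|}\int_{\widetilde{U}_i}d\bigl(\widetilde{\phi}_i\widetilde{\alpha}_{U_i}\bigr).
\end{equation*}
Since $\widetilde{\phi}_i\widetilde{\alpha}_{U_i}$ has compact support inside the open set $\widetilde{U}_i\subset\bR^m$, extending by zero and applying the classical Stokes theorem on $\bR^m$ shows that this integral vanishes. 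Summing over $i$ concludes the argument.

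The main obstacle is the measure-zero step in the localization lemma: one must verify that the singular loci in each chart are negligible for the Lebesgue measure, which in turn rests on the effectiveness of each $G_{U_i}$-action combined with the linearizability of finite-order diffeomorphisms near their fixed points. Once this is granted, everything else is a routine unravelling of definitions and a single application of ordinary Stokes on Euclidean space.
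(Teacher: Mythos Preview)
Your argument is correct. The paper itself states Theorem~\ref{thm:stokes} without proof, treating it as a routine consequence of the definitions and of~\eqref{eq:azzs}; your proposal supplies exactly the standard details one would expect. The reduction via partition of unity, the localization lemma (which you derive cleanly from~\eqref{eq:azzs} together with the observation that $\pi_{U_i}$ restricts to a $|G_{U_i}|$-fold cover over the regular part), and the final application of classical Stokes on $\bR^m$ to the compactly supported lift $\widetilde{\phi}_i\widetilde{\alpha}_{U_i}$ are all sound. The measure-zero step is fine: since each $G_{U_i}$ is finite and acts effectively, an invariant Riemannian metric linearizes the action near any fixed point, so each $\widetilde{U}_i^g$ with $g\neq 1$ is a proper submanifold. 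One small remark: properness of the quotient map $\pi_{U_i}$ (a finite-group quotient) is what guarantees $\widetilde{\phi}_i$ has compact support in $\widetilde{U}_i$, which you use implicitly in the last step.
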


Let us introduce  integral  operators. Let $(E,g^{E})$ be a Euclidean orbifold vector 
bundle on $Z$. Fix a  volume 
form $dv_Z\in \Omega^\cdot(Z,o(TZ))$ of $Z$.  Then, we can define the 
space $L^{2}(Z,E)$ of $L^{2}$-sections in an 
obviously way. By \eqref{eq:azzs}, we have
\begin{align}\label{eq:L2reg}
L^2(Z,E)=L^2(Z_{\rm reg},E_{\rm reg}).
\end{align}
As in manifold case, with the help of $dv_{Z}$, we have the natural embedding 
$C^{\infty}(Z,E)\to \mathcal{D}'(Z,E)$.

By our local description of smooth sections and 
distributions \eqref{eq:gss}, the 
Schwartz kernel theorem still holds for orbifolds. 
That means  for any continuous linear map
$A:C^\infty(Z,E)\to \cD'(Z,E)$,  there exists a unique $p\in \cD'(Z\times Z,E\boxtimes E^*)$ such that for $s_1\in C^\infty(Z,E)$ and $s_2\in C^\infty(Z,E^*)$, we have
\begin{align}\label{eq:schker}
\<As_1,s_2\>=\<p,s_2\otimes s_1\>. 
\end{align}
%  We define the $L^2$-metric on $C^\infty(Z,E)$ as follows: for $s_1,s_2\in C^\infty(Z,E)$, 
% \begin{align}\label{eq:hil}
% \<s_1, s_2\>_{C^\infty(Z,E)}=\int_Z \<s_1(z), s_2(z)\>_{E}dv_Z.
% \end{align}
% Note that  $ \<s_1(z), s_2(z)\>_{E}$ is considered as a smooth function on $Z$. Thus, the integral \eqref{eq:hil} does make sense. 
% Let $L^2(Z,E)$ be the Hilbert completion of $C^\infty(Z,E)$ with respect to $\<\cdot, \cdot\>_{C^\infty(Z,E)}$. By \eqref{eq:azzs} and 
% \eqref{eq:hil}, we have
% \begin{align}\label{eq:L2reg}
% L^2(Z,E)=L^2(Z_{\rm reg},E_{\rm reg}).
% \end{align}

Assume that $p$ is of class $C^k$ for some $k\in \bN$. Then $A$ is called integral operator.  The restriction of $p$ to the regular part defines a bounded section $p_{\rm reg}\in C^k(Z_{\rm reg}\times Z_{\rm reg},E_{\rm reg}\boxtimes E^*_{\rm reg})$ such that  for $s\in C^\infty(Z,E)$ and $z\in Z_{\rm reg}$, 
\begin{align}
As(z)= \int_{z'\in Z_{\rm reg}} p_{{\rm reg}}(z,z')s(z') dv_{Z_{\rm reg}}.
\end{align}
Using \eqref{eq:L2reg}, $A$ extends uniquely to a bounded operator on 
$L^2(Z,E)$. Moreover, since $p_{{\rm reg}}(z,z')$ is bounded, 
\begin{align}
\int_{(z,z')\in Z_{\rm reg}\times Z_{\rm reg}} |p_{{\rm reg}}(z,z')|^2dv_{Z_{\rm reg}\times Z_{\rm reg}}<\infty.
\end{align}
Then $A$ is in the Hilbert-Schmidt class.  If $A$ is in the trace 
class, then 
\begin{align}\label{eq:TrA}
\Tr\[A\]=\int_{z\in Z_{\rm reg}} \Tr^{E}\[p_{\rm 
reg}(z,z)\]dv_{Z_{\rm reg}}=\int_{z\in Z} \Tr^{E}\[p(z,z)\]dv_{Z}.
\end{align}

Now we give another description of  integral operators.   For any local chart $\widetilde{U}$, there 
is a $G_{U}$-invariant integral operator 
$\widetilde{A}_{U}:C^{\infty}(\widetilde{U},\widetilde{E}_{U})\to 
C^{\infty}(\widetilde{U},\widetilde{E}_{U})$ with integral kernel 
$\widetilde{p}_{U}\in C^{k}(\widetilde{U}\times \widetilde{U}, 
\widetilde{E}_{U}\boxtimes \widetilde{E}_{U}^{*})$ such that if $s\in 
C^{\infty}_{c}(U,E|_{U})$, then $As|_{U}$ is defined by the invariant 
section 
\begin{align}\label{eqloclift19}
	\widetilde{A}_{U}s_{U}(x)=\int_{x'\in \widetilde{U}} 
	\widetilde{p}_{U}(x,x')s_{U}(x')dv_{\widetilde{U}}. 
\end{align}
Then the restriction of the integral kernel $p$ on $U\times U$ is 
represented by the invariant section (see \cite[(2.2)]{Ma_Orbifold_immersion})
\begin{align}\label{eq:res}
\sum_{g\in G_U} g\widetilde{p}_{U}(g^{-1}x,x')\in 
C^k\big(\widetilde{U}\times \widetilde{U},\widetilde{E}_{U}\boxtimes 
\widetilde{E}_{U}^{*}\big)^{G_{U}\times G_{U}}.
\end{align}
If $A$ is in trace class, we have 
\begin{align}\label{eq:TrA19}
	\Tr[A]=\sum_{i\in 
	I}\frac{1}{|G_{U_{i}}|}\sum_{g\in 
	G_{U_{i}}}\int_{x\in \widetilde{U}_{i}} \widetilde{\phi}_{i}(x)
	\Tr\[g\widetilde{p}_{U_{i}}(g^{-1}x,x)\]dv_{\widetilde{U}_{i}}. 
\end{align}

\subsection{Sobolev space on orbifolds}\label{sec:dis}
% For  an open subset $\widetilde{U}\subset \bR^m$,  if $q\in \bR$, we denote  by $\cH^q(\widetilde{U})$ the $q$-th Sobolev space. %The space  $\cH_c^q(\widetilde{U})$ and $\cH_{\rm loc}^q(\widetilde{U})$ are defined as usual. 
 %and by $\cH^q_0(\widetilde{U})$ the dual of $\cH^{-q}(\widetilde{U})$. Then if $q\in \bN$, $\cH^q_0(\widetilde{U})$ is the Hilbert completion of $C_c^\infty(\widetilde{U})$ under the $q$-th Sobolev norm. 
% For a Hermitian  vector bundle $\widetilde{E}_U$ on $\widetilde{U}$, equipped with the connection,  we use the notations  $\cD'(\widetilde{U},\widetilde{E}_U)$ and $\cH^q(\widetilde{U}, \widetilde{E}_U)$ for the distribution  space 
% and $q$-th Sobolev space with coefficients in $\widetilde{E}_U$. 
% 

Let $(Z,g^{TZ})$ be a compact Riemannian  orbifold of dimension $m$. Let $\nabla^{TZ}$ be the Levi-civita connection on $TZ$, and let 
$R^{TZ}$ be the corresponding curvature.  Let $(E,g^E)$ be a proper  Hermitian  orbifold vector bundle with connection $\nabla^E$. 
When necessary, we identity $E$ with $\ol{E}^*$ via $g^E$.

% 
% Let $\cD'(Z, E)$ be the topological dual of $C^\infty(Z, E)$. Then, $\alpha \in \cD'(Z, E)$ can be considered as a family $\{\alpha_U\in \cD'(\widetilde{U},\widetilde{E}_U)\}_{U\in \cU}$ of $G_U$-invariant distributions such that \eqref{eq:gss} holds. As in the case of manifolds,  we identify $\alpha\in C^\infty(Z,E)$ with the distribution $\<\alpha,\cdot\>_{C^\infty(Z,E)}$. This  gives a continuous embedding
% \begin{align}
% C^\infty(Z,E)\to \cD'(Z, E).
% \end{align}
% Moreover, any differential operator $D:C^\infty(Z, E)\to C^\infty(Z, E)$ extends naturally to $
% D:\cD'(Z, E)\to \cD'(Z, E).$
% Also, if $D$ is elliptic, then for any $s\in \cD'(Z, E)$ such that $Ds\in C^\infty(Z,E)$, we have 
% \begin{align}
% s\in C^\infty(Z,E). 
% \end{align}

Denote still by $\nabla^{\mathscr{T}(T^*Z)\otimes_{\bR} E}$ the connection $\mathscr{T}(T^*Z)\otimes_{\bR} E$
induced by $\nabla^{TZ}$ and $\nabla^E$. For $q\in \bN$, take $\cH^q(Z,E)$ to be the Hilbert completion of $C^\infty(Z,E)$ under the norm defined by 
\begin{align}
\|s\|^2_q=\sum_{j=0}^q\int_{Z}\left|\(\nabla^{\mathscr{T}(T^*Z)\otimes_{\bR} E} \)^js(z)\right|^2dv_Z.
\end{align}
 Let $\cH^{-q}(Z,E)$ be the dual of $\cH^q(Z,E)$.  %and $s\in\cH^{-j}(Z,E)$ can be represented by $s_U \in\cH_0^{-j}(\widetilde{U},\widetilde{E}_U)$ such that \eqref{eq:gss} holds. 
 If $q\in \bR$, we can define $\cH^q(Z,E)$ by interpolation. As in the case of smooth sections, $s\in\cH^q(Z,E)$ can be represented by the family $\{s_U \in\cH^q(\widetilde{U},\widetilde{E}_U)^{G_U}\}_{U\in \cU}$
 of $G_U$-invariant sections  such that \eqref{eq:gss} holds. %and if $q<0$, $s\in\cH^q(Z,E)$ can be represented by $s_U \in\cH^q_0(\widetilde{U},\widetilde{E}_U)$, the dual of $\cH^{-q}(\widetilde{U},\widetilde{E}_U)$, such that \eqref{eq:gss} holds

Using these local descriptions, we have 
\begin{align}\label{eq:HaCD}
&  \bigcap_{q\in \bR} \cH^q(Z,E)=C^\infty(Z,E),& \bigcup_{q\in \bR} \cH^q(Z,E)=\cD'(Z,E).
\end{align}
Moreover, if $q>q'$, we have the compact embedding 
\begin{align}\label{eq:ijj}
\cH^q(Z,E)\hookrightarrow \cH^{q'}(Z,E),
\end{align}
 and if $q\in \bN$ and $q>m/2$, we have the continuous embedding
\begin{align}\label{eq:ijjc}
\cH^q(Z,E)\hookrightarrow C^{q-[m/2]}(Z,E).
\end{align}

% which is locally defined by   volume forms $dv_{\widetilde{U}}$ of $\widetilde{U}$.

\subsection{Characteristic forms on orbifolds}\label{sec:chara}
Assume now $(E,g^{E})$ is a  real Euclidean proper orbifold vector bundle of rank $r$  with a metric connection $\nabla^E$. The Euler form $e(E,\nabla^E)\in \Omega^r\big(Z,o(E)\big)$ is defined by the family of closed forms
$\Big\{e\big(\widetilde{E}_{U},\nabla^{\widetilde{E}_{U}}\big)
% \in 
% \Omega^{r}\big(\widetilde{U},o(\widetilde{E}_{U})\big)
\Big\}_{U\in 
\cU}.$
Following \cite[Section 3.3]{SatakeGaussB}, the 
orbifold Euler characteristic number is defined by   
\begin{align}\label{eq:chiorb}
\chi_{\rm orb}(Z)=\int_Z e\(TZ,\nabla^{TZ}\).
\end{align}
If $\nabla^{E'}$ is another metric connection,  the  class of Chern-Simons form 
\begin{align}
\widetilde{e}(E,\nabla^{E}, \nabla^{\prime E})\in \Omega^{r-1}\big(Z,o(E)\big)/d\Omega^{r-2}\big(Z,o(E)\big)
\end{align}
is defined by the family 
$\Big\{\widetilde{e}\big(\widetilde{E}_{U},\nabla^{\widetilde{E}_{U}},\nabla^{\prime\widetilde{E}_{U}}\big)
%\in \Omega^{r-1}\big(\widetilde{U},o(\widetilde{E}_{U})\big)/d \Omega^{r-2}\big(\widetilde{U},o(\widetilde{E}_{U})\big)
\Big\}_{U\in \cU}.
$ Clearly, \eqref{eq:wie} still holds true.

Let $(F,\nabla^F)$ be a proper orbifold  flat vector bundle on $Z$ 
with a Hermitian metric $g^F$. %Put 
%\begin{align}\label{eq:omeF}
%\omega\(\nabla^{\widetilde{F}_U},g^{\widetilde{F}_U}\)=g^{\widetilde{F}_U,-1}\nabla^{\widetilde{F}_U}g^{\widetilde{F}_U}.
%\end{align}
%Then, $\omega(\nabla^{\widetilde{F}_U},g^{\widetilde{F}_U})$ is a $1$-form on $\widetilde{U}$ with values in symmetric endomorphisms of $\widetilde{F}_U$. For $x\in \bC$, set 
%\begin{align}\label{eq:h}
%h(x)=xe^{x^2}.
%\end{align}
The odd Chern character $h\(\nabla^F,g^F\)\in \Omega^{\rm odd}(Z)$ of $(F,\nabla^F)$ is defined by the family of closed odd forms 
$
\left\{h\big(\nabla^{\widetilde{F}_U},g^{\widetilde{F}_U}\big)%\in \Omega^{\rm odd}\(\widetilde{U}\)
\right\}_{U\in \cU}.
$
If  $g^{\prime F}$ is another Hermitian metric on $F$, the class of 
 Chern-Simons form  
 \begin{align}
\widetilde{h}(\nabla^F,g^F,g^{\prime F})\in\Omega^{\rm even }(Z)/d \Omega^{\rm odd}(Z)
\end{align}
is defined by the family 
$\left\{h\big(\nabla^{\widetilde{F}_U},g^{\widetilde{F}_U},g^{\prime 
\widetilde{F}_U}\big)
% \in \Omega^{\rm odd}\(\widetilde{U}\)/ d^{\widetilde{U}}\Omega^{\rm odd}\(\widetilde{U}\)
\right\}_{U\in \cU}.
$ As before, \eqref{eq:dehgg} still holds true. 
%
%
% such that 
%\begin{align}\label{eq:dehgg}
%d^Z\widetilde{h}(\nabla^F,g^F,g^{\prime F})=h\(\nabla^{ F},g^{\prime F}\)-h\(\nabla^F,g^F\).
%\end{align}
%More precisely, choose a smooth family of metrics $(g^F_s)_{s\in \bR}$ such that 
%\begin{align}
%&g_0^F=g^F,& g_1^F=g^{\prime F}.
%\end{align}
%Consider the projection $\pi $ defined in \eqref{eq:pi}, and equip $\pi^*F$  with the pull back flat connection
%\begin{align}\label{eq:pucon}
%\nabla^{\pi^*F}=d^{\bR}+\nabla^F,
%\end{align}
% and with the Hermitian metric $g^{\pi^* F}$ such that 
%\begin{align}\label{eq:pugf}
%g^{\pi^* F}\Big|_{\{s\}\times Z}=g^F_s.
%\end{align}
%By \eqref{eq:omeF}, \eqref{eq:pucon} and \eqref{eq:pugf}, 
%\begin{align}\label{eq:wpi}
%\omega^{\pi^*F}=\omega^{F}+ds\wedge g_s^{F,-1} \frac{\p}{\p s}g_s^{F}.
%\end{align}
%Write
%\begin{align}\label{eq:bbss}
%h\(\nabla^{\pi^* F},g^{\pi^* F}\)=h\(\nabla^{F},g^{ F}_s\)+ds \wedge \beta_s\in \Omega^{\rm odd}(\bR\times Z).
%\end{align}
%As \eqref{eq:intalpha},  $\widetilde{h}\(\nabla^{F},g^{ F},g^{\prime F}\)\in \Omega^{\rm even }(Z)/d \Omega^{\rm odd}(Z)$ can be defined  by the class of 
%\begin{align}\label{eq:intbeta}
%\int_0^1\beta_s ds \in \Omega^{\rm even}( Z).
%\end{align}

The degree $1$-part of $h(\nabla^{F},g^{ F})$ and the degree $0$-part of $\widetilde{h}(\nabla^{F},g^{ F},g^{\prime F})$  will be especially important in the formulation of Theorem \ref{thm:2}. We denote by 
\begin{align}\label{eq:hd=0}
	\begin{aligned}
&\theta\(\nabla^F,g^F\)=2h\(\nabla^{F},g^{ F}\)^{[1]},\\& \widetilde{\theta}\(\nabla^F,g^F,g^{\prime F}\)=2\widetilde{h}\(\nabla^{F},g^{ F},g^{\prime F}\)^{[0]}.
\end{aligned}
\end{align}
By \eqref{eq:omeF}-\eqref{eq:dehgg} and \eqref{eq:hd=0}, we have
\begin{align}\label{eqtran}
	\begin{aligned}
	&\theta\(\nabla^F,g^F\)=\Tr\[\(g^{F}\)^{-1}\nabla^Fg^F\],\\
&d^Z\widetilde{\theta}\(\nabla^F,g^F,g^{\prime F}\)=\theta\(\nabla^F,g^{\prime F}\)-\theta\(\nabla^F,g^{ F}\).
\end{aligned}
\end{align}
Let $ \|\cdot\|_{\det 
F}$ and $ \|\cdot\|^{\prime}_{\det 
F}$ be the metrics on the line bundle $\det F$ induced by the metrics 
$g^{F}$ and $g^{\prime F}$. By \cite[(4.12)]{BZ92}, we have 
\begin{align}
\widetilde{\theta}\(\nabla^F,g^F,g^{\prime 
F}\)=\log\(\frac{ \|\cdot\|^{\prime}_{\det F}}{ \|\cdot\|_{\det 
F}}\)^{2}. 
\end{align}

The odd Chern character  form $h(\nabla^{F},g^{ F})$ and the Chern-Simons class $\widetilde{h}(\nabla^{F},g^{ F},g^{\prime F})$ can be extended   to $Z\coprod\Sigma Z$.  Recall that for $U\in \cU$ and $g\in G_U$, 
$\widetilde{U}^g$ is an  orbifold chart of $Z\coprod\Sigma Z$. The 
restriction of  $(\widetilde{F}_{U},\nabla^{\widetilde{F}_{U}})$ to 
$\widetilde{U}^g$ is a flat vector bundle. The element  $g$ acts 
fiberwisely on $\widetilde{F}_{U}|_{\widetilde{U}^{g}}$ and preserves $\nabla^{\widetilde{F}_{U}}$ and $g^{\widetilde{F}_U}$. The family  
\begin{align}\label{eq:defhi}
\left\{h_g\(\nabla^{\widetilde{F}_U},g^{\widetilde{F}_U}\)\in \Omega^{\rm odd}(\widetilde{U}^g)\right\}_{U\in \cU, g\in G_U}
\end{align}
defines a closed differential form $h_\Sigma(\nabla^F,g^F)\in \Omega^{\rm odd}\(Z\coprod \Sigma Z\)$. 
Denote by $h_i(\nabla^F,g^F)$  the restriction of $h_\Sigma(\nabla^F,g^F)$ to $Z_i\subset Z\coprod\Sigma Z$.
Similarly, we can define 
\begin{align}
	\begin{aligned}
\widetilde{h}_i(\nabla^F,g^F,g^{\prime F})\in \Omega^{\rm even}(Z_i)/ 
\Omega^{\rm odd}(Z_i), \\ \theta_i(\nabla^F,g^F)\in \Omega^{1}(Z_i), 
\\\widetilde{\theta}_i(\nabla^F,g^F,g^{\prime F})\in C^\infty(Z_i).
\end{aligned}
\end{align}

The rank of $F$ can be extended to a locally constant function  on 
$Z\coprod \Sigma Z$ in a similar way. Indeed, the family  
$\{\Tr[\rho^{F}_U(g)]\in C^\infty(\widetilde{U}^g)\}_{U\in \cU,g\in G_U}$ of constant functions defines a locally constant function $\rho$ on $Z\coprod \Sigma Z$. Denote by  $\rho_i$ its value at $Z_i$. Clearly, 
\begin{align}\label{eq:rhoi}
\rho_0=\rk[F].
\end{align}

\section{Ray-Singer metric of orbifolds}\label{Sec:Tor}
In this section,  given metrics $g^{TZ}$ and $g^{F}$ on $TZ$ and $F$, we introduce  the Ray-Singer metric on the determinant of the de Rham cohomology $H^\cdot(Z,F)$. We establish  the anomaly formula for the Ray-Singer metric. In particular, 
when $ Z$ is of odd dimension and orientable, the Ray-Singer metric is a topological invariant. 

%In subsection \ref{sec:dis}, we recall some results about the distributions and Sobolev spaces on compact orbifolds.

In subsection \ref{sec:hodge}, we introduce the Hodge Laplacian 
associated to the metrics $(g^{TZ},g^F)$. We  state Gauss-Bonnet-Chern Theorem for compact orbifolds.

In subsection \ref{sec:torsion}, we construct the analytic torsion 
and  the Ray-Singer metric. We restate the anomaly formula. % in Theorem \ref{Thm:2}. 

In subsection \ref{sec:torsiontran}, following \cite{BLott}, we 
interpret the analytic torsion as a transgression  of  odd Chern 
forms. We state Theorem 
\ref{prop:tran}, which extends the main result of Bismut-Lott, and 
from which  the anomaly formula follows.

In subsection \ref{Sec:estheat}, we prove Gauss-Bonnet-Chern Theorem 
and Theorem 
\ref{prop:tran} in a unified way. Using an argument due to \cite[p. 
2230]{Ma_Orbifold_immersion}, which is based 
on the finite propagation speeds for the solutions of hyperbolic 
equations \cite[Theorem D.2.1]{MaMa}, we can turn  our problem into a 
local one. Since  the orbifold locally is a quotient of a manifold by some finite group, we can then rely on the results of Bismut-Goette \cite{BG01}, where the authors there consider some  similar problems in the equivariant setting.

\subsection{Hodge Laplacian}\label{sec:hodge}
Let $Z$ be a compact orbifold of dimension $m$, and  let 
$F$ be a 
proper flat orbifold vector bundle of rank $r$ with flat connection 
$\nabla^F$.  Put
\begin{align}
	\begin{aligned}
  \chi_{\rm top}(Z,F)=\sum_{i=0}^m (-1)^i\dim_\bC 
  H^i(Z,F),\\\chi'_{\rm top}(Z,F)=\sum_{i=1}^m (-1)^ii\dim_\bC H^i(Z,F).
  \end{aligned}
\end{align}

Take a Riemannian metric $g^{TZ}$ and a Hermitian metric $g^F$ on 
$F$. We apply the construction of subsection \ref{sec:dis} to the  
Hermitian  orbifold vector bundle 
$E={\Lambda^{\cdot}(T^*Z)\otimes_{\bR} F}$ with the Hermitian  metric 
induced by $g^{TZ}$ and $g^F$, and with the connection 
$\nabla^{{\Lambda^{\cdot}(T^*Z)\otimes_{\bR} F}}$ induced by the 
Levi-Civita connection $\nabla^{TZ}$ and the flat connection $\nabla^F$. 
% For simplicity, we write 
% \begin{align}
% &\cH^q=\cH^q(Z,{\Lambda^{\cdot}(T^*Z)\otimes_{\bR} F}),&\cH=\cH^0.
% \end{align}
Let $d^{Z,*}$ be the formal adjoint  of $d^Z$. Put
\begin{align}\label{eq:DiracHodge}
  &D^Z=d^Z+d^{Z,*},&\Box^Z=D^{Z,2}=\[d^Z,d^{Z,*}\].
\end{align}
Then $d^{Z,*}$ is a first order differential operator,  represented  by the family of the formal adjoint $d^{\widetilde{U},*}$  of $d^{\widetilde{U}}$ with respect  to the $L^2$-metric defined by $g^{T\widetilde{U}}$ and $g^{\widetilde{F}_U}$. Also, $\Box^Z$
is  a formally self-adjoint second order elliptic operator acting on  
$\Omega^\cdot(Z,F)$, which is  represented  by the family of Hodge 
Laplacian $\Box^{\widetilde{U}}$ acting on 
$\Omega^\cdot(\widetilde{U},\widetilde{F}_U)$ associated with 
$g^{T\widetilde{U}}$ and $g^{\widetilde{F}_U}$.  Also, the operator 
$\(\Box^Z,\Omega^\cdot(Z,F)\)$ is essentially self-adjoint. And the 
domain of the self-adjoint extension is $\cH^2(Z,{\Lambda^{\cdot}(T^*Z)\otimes_{\bR} F})$.
The following theorem is well-known (e.g., \cite[Proposition 
2.2]{Ma_Orbifold_immersion}, \cite[Proposition 2.1]{Daiyu}). %We include  a proof for completeness. 
\begin{thm}\label{thm:orbhodge}
   The following orthogonal decomposition holds: 
   \begin{align}\label{eq:Hodgeorth}
   \Omega^\cdot(Z,F)=\ker \Box^Z \oplus \im \(d^Z|_{\Omega^\cdot(Z,F)}\) \oplus \im\(d^{Z,*}|_{\Omega^\cdot(Z,F)}\).
   \end{align}
   In particular, we have the canonical identification of the vector spaces
   \begin{align}\label{eq:Hodge}
  \ker \Box^Z\simeq H^\cdot(Z,F).
\end{align}
\end{thm}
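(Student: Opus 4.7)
The plan is to reduce the theorem to a standard application of spectral theory for the self-adjoint elliptic operator $\Box^Z$ on the compact orbifold $Z$, and to extract the cohomology identification from the orthogonal decomposition by the usual Hodge-theoretic argument. The main technical point to verify is that the elliptic theory developed on manifolds lifts to orbifolds through the local $G_U$-invariant descriptions introduced in subsection \ref{sec:dis}.

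First I would establish that $\Box^Z$, with domain $\cH^2(Z,\Lambda^\cdot(T^*Z)\otimes_{\bR} F)$, is a self-adjoint operator on $L^2(Z,\Lambda^\cdot(T^*Z)\otimes_{\bR} F)$ with compact resolvent. Locally on each orbifold chart $(\widetilde{U},G_U,\pi_U)$, the operator is represented by the classical Hodge Laplacian $\Box^{\widetilde{U}}$ acting on the $G_U$-invariant subspace of $\Omega^\cdot(\widetilde{U},\widetilde{F}_U)$, so the usual G\aa rding inequality and interior elliptic regularity on manifolds transfer to the $G_U$-invariant setting and then, by gluing through a partition of unity subordinate to $\cU$, to $Z$. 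Combined with the compact embedding \eqref{eq:ijj} from subsection \ref{sec:dis}, this yields compactness of the resolvent $(\Box^Z+1)^{-1}$ on $L^2$.

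From spectral theory I then obtain a discrete spectrum with finite-dimensional eigenspaces; in particular $\cH=\ker\Box^Z$ is finite-dimensional and, by elliptic regularity, consists of smooth sections. Self-adjointness yields the orthogonal $L^2$-decomposition $L^2=\cH\oplus\overline{\im\Box^Z}$, and the restriction of $\Box^Z$ to the orthogonal complement of $\cH$ has a bounded two-sided inverse, the Green operator $G$, which preserves $\Omega^\cdot(Z,F)$ by elliptic regularity. For $\omega\in\Omega^\cdot(Z,F)$, writing $\omega=H\omega+\Box^Z G\omega$ with $H$ the orthogonal projection on $\cH$, and using the definition \eqref{eq:DiracHodge}, gives
\begin{align}
\omega=H\omega+d^Z\bigl(d^{Z,*}G\omega\bigr)+d^{Z,*}\bigl(d^Z G\omega\bigr),
\end{align}
which is the decomposition \eqref{eq:Hodgeorth}. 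Orthogonality of the three summands follows because $\ker\Box^Z=\ker d^Z\cap\ker d^{Z,*}$ (since $\langle\Box^Z\omega,\omega\rangle=\|d^Z\omega\|^2+\|d^{Z,*}\omega\|^2$, which uses the orbifold Stokes formula \eqref{eq:stokes} to justify integration by parts) and because $(d^Z)^2=0$ implies $\im d^Z\perp\im d^{Z,*}$.

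For the identification \eqref{eq:Hodge}, the decomposition shows that any $d^Z$-closed form $\omega$ satisfies $d^{Z,*}G\omega\in\ker d^Z$ (as $d^Z\omega=0$ forces $d^Z d^{Z,*}G\omega=\omega-H\omega-d^{Z,*}d^ZG\omega$ and one checks $d^ZG\omega$ lies in the orthogonal complement of closed forms), so the map $\cH\to H^\cdot(Z,F)$ sending a harmonic form to its cohomology class is surjective; injectivity follows since a harmonic form that equals $d^Z\eta$ is orthogonal to itself by \eqref{eq:stokes}. The only orbifold-specific input throughout is the local reduction to $G_U$-invariant objects together with the integration formula \eqref{eq:intZ} and Stokes' theorem \eqref{eq:stokes}; the hard part is therefore purely the verification of elliptic regularity and compact resolvent in the orbifold setting, which I would carry out by combining the local manifold theory with averaging over $G_U$.
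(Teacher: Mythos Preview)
Your argument is correct and follows the standard Hodge-theoretic route: establish self-adjointness and compact resolvent via local elliptic estimates on the $G_U$-invariant pieces, get the Green operator, and deduce the threefold decomposition and the cohomology identification. The paper itself does not give a proof of this theorem; it simply cites \cite[Proposition 2.2]{Ma_Orbifold_immersion} and \cite[Proposition 2.1]{Daiyu}, which carry out exactly the argument you sketch.

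One small cleanup: your surjectivity step for \eqref{eq:Hodge} is more convoluted than necessary. Since $d^Z$ commutes with $\Box^Z$, it commutes with the Green operator $G$; hence if $d^Z\omega=0$ then $d^ZG\omega=Gd^Z\omega=0$, so the $d^{Z,*}(d^ZG\omega)$ term in your decomposition vanishes directly and $\omega=H\omega+d^Z(d^{Z,*}G\omega)$. This avoids the somewhat opaque phrase ``one checks $d^ZG\omega$ lies in the orthogonal complement of closed forms.''
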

In the sequel, we still denote by $\Box^Z$ the  self-adjoint 
extension of the operator $\(\Box^Z,\Omega^\cdot(Z,F)\)$. %As $\Box^Z$ is non negative, by Proposition \ref{prop:essH}, the operator 
%  $1+\Box^Z:\cH^2\to \cH$ is bounded, invertible and  has bounded inverse. By \eqref{eq:ijj}, $\Box^Z$ has compact resolvent,  so it has discrete spectrum. Moreover, for $k\in \bN$, $(1+\Box^Z)^k:\cH_q\to \cH_{q-2k}$ is an isomorphism of Hilbert spaces. 
 By \eqref{eq:ijjc}, for $k\gg1$, the operator 
$(1+\Box^Z)^{-k}$ has a continuous kernel. In particular, 
$(1+\Box^Z)^{-k}$ is in the Hilbert-Schmidt class, and $(1+\Box^Z)^{-2k}$ is in the trace class.  By the above argument,  if $f$ lies in the Schwartz space $\cS(\bR)$, then $f(\Box^Z)$ has a smooth kernel, and is in the trace class. For $t>0$, the same statement holds true for the heat operator $\exp(-t\Box^Z)$ of $\Box^Z$.  In this way, most of results on compact manifolds, which have been obtained by the functional calculus of the Hodge Laplacian, still hold true for compact orbifolds.

% \begin{proof}Clearly, the three subspaces on right-hand side of \eqref{eq:Hodgeorth} is orthogonal. For the existence of the decomposition, let $P^Z$ be the orthogonal projection of $\cH$ onto $ \ker \Box^Z$. 
% Take $s\in \Omega^\cdot(Z,F)$. By ellipticity, $Pu\in \Omega^\cdot(Z,F)$. Also, $\Box^{Z,-1}(s-Ps)$ is well-defined and smooth. By \eqref{eq:DiracHodge},
% \begin{align}\label{eq:s=psdsdss}
% s=Ps+d^Zd^{Z,*}\Box^{Z,-1}(s-Ps)+d^{Z,*}d^Z\Box^{Z,-1}(s-Ps).
% \end{align}
% From \eqref{eq:s=psdsdss}, we get \eqref{eq:Hodgeorth}.
% %In particular, if $s$ is closed, its cohomology class is  represented by $Ps$. 
% 
% By \eqref{eq:Hodgeorth}, we have 
% \begin{align}\label{eq:kerd}
% \ker \(d^Z|_{\Omega^\cdot(Z,F)}\)=\ker \Box^Z \oplus \im\(d^Z|_{\Omega^\cdot(Z,F)}\).
% \end{align}
% Equation \eqref{eq:Hodge} is a consequce of  \eqref{eq:kerd}. The proof of Theorem \ref{thm:orbhodge} is completed. 
% \end{proof}

Let $N^{\Lambda^{\cdot}(T^*Z)}$ be the number operator on $\Lambda^\cdot(T^*Z)$.
We write $\Trs[\cdot]=\Tr\[(-1)^{N^{\Lambda^{\cdot}(T^*Z)}}\cdot\]$ for the supertrace. 
By the classical argument of Mckean-Singer formula \cite{MckeanSinger}, we get:
\begin{prop}
For $t>0$,  the following identity holds:
  \begin{align}\label{eq:McSinger}
    \chi_{\rm top}(Z,F)=\Trs\[\exp\(-t\Box^Z\)\].
  \end{align}
\end{prop}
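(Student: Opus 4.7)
The plan is to adapt the standard supercommutator argument on manifolds to the orbifold setting, relying on the trace-class property of $\exp(-t\Box^Z)$ for $t>0$, which has already been established via the functional calculus and the continuous Schwartz kernel of $(1+\Box^Z)^{-k}$ for large $k$. First, I would observe that both $d^Z$ and $d^{Z,*}$, as first-order differential operators represented locally by $G_U$-invariant operators on $\widetilde U$, map the smooth domain into itself and satisfy $\Box^Z=[d^Z,d^{Z,*}]$ in the sense of superalgebras, where $d^Z$ is odd with respect to the $\bZ_2$-grading induced by $N^{\Lambda^\cdot(T^*Z)}$.

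Second, I would differentiate $\varphi(t)=\Trs[\exp(-t\Box^Z)]$ in $t>0$. Using that $\exp(-t\Box^Z)$ is trace class, that $\Box^Z$ commutes with $d^Z$ and $d^{Z,*}$, and that the operator $d^{Z,*}\exp(-t\Box^Z)$ is also trace class (since $\exp(-t\Box^Z/2)$ maps $L^2$ into the smooth domain by ellipticity and $\exp(-t\Box^Z)=\exp(-t\Box^Z/2)\exp(-t\Box^Z/2)$), we obtain
\begin{align}
\frac{d\varphi}{dt}=-\Trs\bigl[\Box^Z\exp(-t\Box^Z)\bigr]=-\Trs\bigl[[d^Z,d^{Z,*}\exp(-t\Box^Z)]\bigr]=0,
\end{align}
since the supertrace of a supercommutator of a trace-class operator with a bounded odd operator vanishes. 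Thus $\varphi(t)$ is constant in $t>0$.

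Third, I would compute the limit as $t\to\infty$. By the orbifold Hodge decomposition \eqref{eq:Hodgeorth}, $\Box^Z$ has a spectral gap above $0$ on $(\ker\Box^Z)^{\perp}$. Since $\exp(-t\Box^Z)$ is trace class and the nonzero spectrum is discrete with finite multiplicities and tends to infinity (again from ellipticity and the compact embedding \eqref{eq:ijj}), the operator $\exp(-t\Box^Z)$ converges in trace norm as $t\to\infty$ to the orthogonal projector $P$ onto $\ker\Box^Z$. Hence $\varphi(t)\to\Trs[P]$. Using the Hodge identification \eqref{eq:Hodge}, $\Trs[P]=\sum_i(-1)^i\dim_{\bC}H^i(Z,F)=\chi_{\rm top}(Z,F)$, which combined with the constancy of $\varphi$ yields \eqref{eq:McSinger}.

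The only point requiring orbifold-specific care is the trace-class/supercommutator manipulation: one must check that $d^{Z,*}\exp(-t\Box^Z)$ and $d^Z\exp(-t\Box^Z)$ are well-defined trace-class operators whose supercommutator with the bounded odd operators $d^Z,d^{Z,*}$ has vanishing supertrace. This is not really a new difficulty since the integral-operator formalism of subsection \ref{sec:dR} together with \eqref{eq:res1} allows all the manipulations to be carried out on $Z_{\rm reg}$ after restriction, where ordinary manifold arguments apply; the rest of the argument is formally identical to the smooth case.
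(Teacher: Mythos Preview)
Your proposal is correct and follows exactly the classical McKean--Singer argument that the paper invokes without spelling out; the only wrinkle is that in your supercommutator step you call $d^Z$ a ``bounded odd operator'' while it is unbounded, but this is harmless since one can write $\Box^Z\exp(-t\Box^Z)$ as the supercommutator of the two trace-class odd operators $d^Z\exp(-t\Box^Z/2)$ and $d^{Z,*}\exp(-t\Box^Z/2)$, and your final paragraph already gestures at the needed fix.
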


Recall that  $\chi_{\rm orb}(Z)$ and $\rho_i$  are defined in \eqref{eq:chiorb} and \eqref{eq:rhoi}. 

\begin{thm}\label{thm:GBC}
 When $t\to0$, we have 
  \begin{align}\label{eq:GBC}
     \Trs\[\exp\(-t\Box^Z\)\]\to \sum^{l_0}_{i=0}  
	 \rho_i\frac{\chi_{\rm orb}(Z_{i})}{m_i}.  \end{align}
In particular, 
\begin{align}\label{eq:35}
\chi_{\rm top}(Z,F)= \sum^{l_0}_{i=0}\rho_i \frac{\chi_{\rm orb}(Z_i)}{m_i}.  
\end{align}
\end{thm}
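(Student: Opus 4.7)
\medskip

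\noindent\textbf{Proof proposal.}
The plan is to compute the $t\to 0$ limit of $\Trs[\exp(-t\Box^Z)]$ by localizing the heat kernel on each orbifold chart and then applying the classical equivariant Gauss-Bonnet-Chern theorem on each fixed-point set. Since by McKean-Singer \eqref{eq:McSinger} the supertrace is independent of $t$, equation \eqref{eq:35} will be an immediate consequence of \eqref{eq:GBC}.

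First I would use the formula \eqref{eq:res1} applied to $A = \exp(-t\Box^Z)$: the trace is computed by integrating the pointwise supertrace $\Trs^E[p_t(z,z)]$ against $dv_Z$, where $p_t(z,z)$ is the restricted-to-diagonal kernel described by the averaging formula \eqref{eq:res}. Thus on each orbifold chart $(\widetilde U, G_U, \pi_U)$ with a partition of unity $\{\phi_i\}$, one gets
\begin{align}
\Trs[\exp(-t\Box^Z)] = \sum_{i\in I} \frac{1}{|G_{U_i}|} \sum_{g\in G_{U_i}} \int_{\widetilde U_i} \widetilde\phi_i(x)\, \Trs^{\widetilde E_{U_i}}\bigl[g\,\widetilde p_t^{\widetilde U_i}(g^{-1}x,x)\bigr]\, dv_{\widetilde U_i}(x),
\end{align}
where $\widetilde p_t^{\widetilde U_i}$ is the heat kernel of $\Box^{\widetilde U_i}$ on the local chart (strictly speaking, of the Laplacian on $Z$ read in the chart, but by finite propagation speed the two agree up to negligible error; see below).

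The central technical step is the localization: following the argument of Ma \cite{Ma_Orbifold_immersion} based on finite propagation speed for hyperbolic equations \cite[Section 7.8]{CP81}, for $t$ small one may replace the global heat kernel on $Z$ by the heat kernel of $\Box^{\widetilde U_i}$ on $\widetilde U_i$ up to an exponentially small error, uniformly in $x$. Once this reduction is made, each chart contribution becomes a $G_{U_i}$-equivariant local heat supertrace on an ordinary Riemannian manifold, to which we can apply the equivariant local index theorem of Berline-Getzler-Vergne and Bismut-Goette \cite{BG01}. Concretely, for each $g\in G_{U_i}$, as $t\to 0$ one has
\begin{align}
\Trs^{\widetilde E_{U_i}}\bigl[g\,\widetilde p_t^{\widetilde U_i}(g^{-1}x,x)\bigr]\, dv_{\widetilde U_i}(x) \longrightarrow \bigl[e\bigl(T\widetilde U_i^{\,g},\nabla^{T\widetilde U_i^{\,g}}\bigr)\cdot \Tr[\rho^F_{U_i}(g)]\bigr]_{\widetilde U_i^{\,g}}
\end{align}
as a current on $\widetilde U_i$, supported on the fixed-point set $\widetilde U_i^{\,g}$; here the multiplicative $\widehat A$-factors of the normal bundle cancel against the equivariant contribution of the exterior algebra, exactly as in the classical equivariant Gauss-Bonnet theorem, and the only surviving contribution on $F$ is the trace of $\rho^F_{U_i}(g)$ since $F$ is flat and $\nabla^F$-invariant.

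Finally, I would reassemble the local contributions into integrals over the strata. Using the identification \eqref{eq:idsig} between $\coprod_{[g]} Z_{G_{U_i}}(g)\backslash \widetilde U_i^{\,g}$ and the singular stratification $\Sigma U_i$, together with the definition \eqref{eq:mi25} of the multiplicity $m_i$, the double sum over $g\in G_{U_i}$ with the weight $1/|G_{U_i}|$ collapses into a sum over the strata $Z_i$ with weight $1/m_i$. The constant function $\Tr[\rho^F_{U_i}(g)]$ on $\widetilde U_i^{\,g}$ descends to the locally constant function $\rho_i$ on $Z_i$, and the orbifold-integrated Euler form on $Z_i$ is $\chi_{\rm orb}(Z_i)$ by \eqref{eq:chiorb}. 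Summing the partition of unity and using \eqref{eq:intZ} yields \eqref{eq:GBC}. The main obstacle is the localization step: one must verify that finite propagation speed, which was established in \cite{Ma_Orbifold_immersion} for orbifold Laplacians, indeed reduces the computation of the small-$t$ asymptotics to the equivariant computation on each local uniformizer, with all error terms uniformly controlled; the rest is then a careful bookkeeping of conjugacy classes, centralizers, and multiplicities.
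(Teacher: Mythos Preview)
Your proposal is correct and follows essentially the same strategy as the paper: localize via a partition of unity and the averaging formula \eqref{eq:res}--\eqref{eq:res1}, use finite propagation speed to reduce to an equivariant local computation on each uniformizer, invoke the equivariant local index results of \cite{BG01}, and then reassemble over conjugacy classes into the strata $Z_i$. The paper implements the localization step you flag as the ``main obstacle'' by an explicit decomposition $\exp(-t\Box^Z)=\mathscr{F}_t(t\Box^Z)+\mathscr{G}_t(t\Box^Z)$ via a frequency cutoff (so that $\mathscr{F}_t$ has finite propagation and $\mathscr{G}_t$ is exponentially small in trace norm), and then embeds each $\widetilde U_i$ isometrically into a compact manifold $X_i$ in order to apply \cite[Theorems 13.13, 13.15]{BG01} directly; this is exactly the careful version of what you describe.
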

\begin{proof}
Equation \eqref{eq:35} is a consequence of \eqref{eq:McSinger} and 
\eqref{eq:GBC}.  The proof of \eqref{eq:GBC} will be given in  
subsection \ref{sec:GBC}. %which is based on an argument of finite propagation speeds for solutions of hyperbolic equations.
\end{proof}

\subsection{Analytic torsion and its anomaly}\label{sec:torsion}
By \eqref{eq:Hodgeorth},  let $P^Z$ be the orthogonal projection onto  $\ker \Box^Z$.   
By the  short time asymptotic expansions of the heat trace \cite[Proposition 2.1]{Ma_Orbifold_immersion}, proceeding as  in \cite[Proposition 9.35]{BGV}, the function
\begin{align}\label{eq:deftheta}
  \theta(s)=-\frac{1}{\Gamma(s)}\int_0^\infty \Trs\[N^{\Lambda^\cdot(T^*Z)}\exp\(-t\, \Box^Z\)\(1-P^Z\)\]t^{s-1}dt
\end{align}
defined on the region $\{s\in\bC: \Re(s)>m/2\}$ is holomorphic, and  has a meromorphic extension to $\bC$ which is holomorphic at $s=0$.

\begin{defin}
  The analytic torsion of $F$ is defined by
  \begin{align}
    T\(F,g^{TZ},g^F\)=\exp\big(\theta'(0)/2\big)>0.
  \end{align}
\end{defin}

\begin{re}\label{re:redet}
The formalism of Voros \cite{Voros} on the regularized determinant of the resolvent of Laplacian  extends to orbifolds trivially, as the  proof  relies only on the  short time asymptotic expansions of the heat trace and on the functional calculus. Thus the weighted product of  zeta regularized determinants 
 \begin{align}
\sigma\to \prod_{i=1}^m \det\(\sigma+\Box^Z|_{\Omega^i(Z,F)}\)^{(-1)^ii}
 \end{align}
 is a  meromorphic function on $\bC$ such that  when $\sigma\to 0$, we have 
\begin{multline}\label{eq:TTT2}
 \prod_{i=1}^m \det\(\sigma+\Box^Z|_{\Omega^i(Z,F)}\)^{(-1)^ii}\\
 =T\(Z,g^{TZ},g^F\)^2\sigma^{\chi'_{\rm top}(Z,F)}+\cO(\sigma^{\chi'_{\rm top}(Z,F)+1}).
\end{multline}
\end{re}

We have a generalization of \cite[Theorem 2.3]{RSTorsion}.
\begin{prop}\label{prop:48}
 If $Z$ is an orientable  even dimensional compact  orbifold and if $F$ is a  unitarily  flat orbifold vector bundle, then for any Riemannian metric  $g^{TZ}$ and any flat Hermitian metric $g^F$,  
\begin{align}\label{eq:nn2=0}
\Trs\[\(N^{\Lambda^\cdot(T^*Z)}-\frac{m}{2}\)\exp\(-t\Box^Z\)\]=0.
\end{align}
In particular, 
 \begin{align}\label{eq:evenT=1}
 T\(F,g^{TZ},g^F\)=1.
 \end{align}
\end{prop}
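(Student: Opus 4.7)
I would adapt the classical Ray--Singer argument \cite{RSTorsion} based on the Hodge star operator. Since $Z$ is oriented and Riemannian, in each chart $(\widetilde{U},G_U,\pi_U)$ the group $G_U$ acts by orientation-preserving isometries, so the local Hodge star on $\Lambda^\cdot(T^*\widetilde{U})$ is $G_U$-equivariant and compatible with coordinate transformations. Combined with the identification $F\simeq F^{*}$ provided by the flat Hermitian metric $g^F$, this would assemble into a global orbifold isomorphism $*\colon \Lambda^\cdot(T^*Z)\otimes_{\bR} F \to \Lambda^\cdot(T^*Z)\otimes_{\bR} F$. The three properties I will need are: (i) $*\Box^Z = \Box^Z *$, which uses $\nabla^F g^F = 0$ to obtain $d^{Z,*}=\pm\, *\, d^Z *$; (ii) $*N=(m-N)*$, where $N=N^{\Lambda^\cdot(T^*Z)}$; and (iii) $*\tau=\tau *$, valid because $m$ is even, with $\tau=(-1)^N$.

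Granting these properties, (ii) gives $*(N-m/2)*^{-1}=-(N-m/2)$, and (i) and (iii) imply that $*$ commutes with $\tau\,e^{-t\Box^Z}$. Since $\exp(-t\Box^Z)$ is in the trace class, cyclicity of the trace then yields
\begin{align*}
\Trs\bigl[(N-m/2)e^{-t\Box^Z}\bigr]
&=\Tr\bigl[*\,\tau(N-m/2)e^{-t\Box^Z}*^{-1}\bigr]\\
&=\Tr\bigl[\tau\,*(N-m/2)*^{-1}e^{-t\Box^Z}\bigr]=-\Trs\bigl[(N-m/2)e^{-t\Box^Z}\bigr],
\end{align*}
which proves \eqref{eq:nn2=0}.

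To derive \eqref{eq:evenT=1}, I would split $N=(N-m/2)+m/2$ inside the integrand of \eqref{eq:deftheta}. The $m/2$-piece contributes a multiple of $\Trs[e^{-t\Box^Z}(1-P^Z)]=\chi_{\rm top}(Z,F)-\chi_{\rm top}(Z,F)=0$ by the McKean--Singer identity \eqref{eq:McSinger} together with the Hodge isomorphism \eqref{eq:Hodge}. The $(N-m/2)$-piece equals $\Trs[(N-m/2)e^{-t\Box^Z}]-\Trs[(N-m/2)P^Z]$; the first term vanishes by \eqref{eq:nn2=0}, and since $*$ preserves the finite-dimensional space $\ker\Box^Z$, the very same trace manipulation on $\ker\Box^Z$ kills the second term. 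Hence $\theta(s)\equiv 0$ for $\Re(s)>m/2$, its meromorphic continuation is identically zero, $\theta'(0)=0$, and $T(F,g^{TZ},g^F)=\exp(\theta'(0)/2)=1$.

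The only point where the orbifold setting demands attention is the descent of $*$ to a globally defined operator on $Z$: one needs the local structure groups to lie in $\mathrm{SO}(m)$ rather than $\mathrm{O}(m)$, which is exactly what orientability provides, and the coordinate transformations must preserve both the Riemannian metric and the orientation, which follows from compatibility of the atlas with $g^{TZ}$ and the triviality of $o(TZ)$. Once this is established, the operator identities above are pointwise consequences of the smooth manifold case, so no further analysis is required.
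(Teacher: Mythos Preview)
Your proof is correct and follows essentially the same approach as the paper: both use the Hodge star operator $\star^Z$, establish that it commutes with $\Box^Z$ (using flatness of $g^F$) and anticommutes with $N-m/2$, and exploit that $\star^Z$ is even when $m$ is even to conclude \eqref{eq:nn2=0}. Your derivation of \eqref{eq:evenT=1} from \eqref{eq:nn2=0} is spelled out in more detail than the paper's one-line remark, but the content is the same.
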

\begin{proof}
Let $*^Z:\Lambda^\cdot(T^*Z)\to  \Lambda^{m-\cdot}(T^*Z)\otimes o(TZ)$ be the 
Hodge star operator associated to $g^{TZ}$, which is  locally defined 
by the family 
$\{*^{\widetilde{U}}:\Lambda^\cdot(T^*\widetilde{U})\to 
\Lambda^{m-\cdot}(T^*\widetilde{U})\otimes_\bR 
o(T\widetilde{U})\}_{U\in \cU}$. 
Note that $Z$ is orientable, so $o(TZ)$ is trivial. Write   
$\star^Z=*^{Z}\otimes_{\bR} \mathrm{id}_{F}:\Lambda^\cdot(T^*Z)\otimes_{\bR} F\to  \Lambda^{m-\cdot}(T^*Z)\otimes_{\bR} F$. 
Clearly,  we have 
\begin{align}\label{eq:Hodgest}
\star^Z\(N^{\Lambda^\cdot(T^*Z)}-\frac{m}{2}\)\star^{Z,-1}=-\(N^{\Lambda^\cdot(T^*Z)}-\frac{m}{2}\).\end{align}
Since $g^F$ is flat, we have 
\begin{align}\label{eq:Hodgest2}
\star^Z \Box^Z\star^{Z,-1}=\Box^Z.
\end{align}
Note that when $m$ is even, $\star^Z$ is an even isomorphism of 
$\Omega^\cdot(Z,F)$. Hence, by \eqref{eq:Hodgest} and  \eqref{eq:Hodgest2}, we get \eqref{eq:nn2=0}. Equation \eqref{eq:evenT=1} is a consequence of \eqref{eq:nn2=0}. %The proof of Proposition \eqref{prop:48} is completed. 
\end{proof}

Set
\begin{align}\label{eq:r=detH}
\lambda=\bigotimes_{i=0}^m  \(\det H^i(Z,F)\)^{(-1)^i}.
\end{align}
Then $\lambda$ is a complex line. Let $|\cdot|^{\rm RS,2}_{\lambda}$ be the $L^2$-metric on $\lambda$ induced via \eqref{eq:Hodge}.

\begin{defin}
 The Ray-Singer metric on $\lambda$ is defined by 
 \begin{align}\label{eq:RS}
 \|\cdot\|^{\rm RS}_{\lambda}=T\(F,g^{TZ},g^F\)|\cdot|^{\rm RS}_{\lambda}.
 \end{align}
\end{defin}

Let $(g^{TX},g^F)$ and $(g^{\prime TX},g^{\prime F})$ be two pairs of metrics on $TX$ and $F$. Let $\|\cdot\|^{\rm RS, 2}_\lambda$ and $\|\cdot\|^{\prime {\rm RS}, 2}_\lambda$ be the corresponding Ray-Singer metrics on $\lambda$. We restate Theorem \ref{thm:2}.

\begin{thm}\label{Thm:2}
 The following identity holds:
\begin{multline}\label{eq:anomaly}
\log\(\frac{\|\cdot \|^{\prime \rm RS,2}_{\lambda}}{\|\cdot \|^{\rm RS,2}_{\lambda}}\)
=
\sum_{i=0}^{l_0}
\frac{1}{m_i}
\int_{Z_i}\Big( \widetilde{\theta}_i\(\nabla^F,g^F,g^{\prime F}\)e\(TZ_i,\nabla^{TZ_i}\)\\
-\theta_i\(\nabla^F,g^F\)\widetilde{e}\(TZ_i,\nabla^{TZ_i},\nabla^{\prime TZ_i}\)
\Big).
\end{multline}
\end{thm}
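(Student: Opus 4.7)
\medskip

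\noindent\textbf{Proof proposal.} The plan is to reduce the anomaly formula to the unified transgression result (Theorem \ref{prop:tran} alluded to in Subsection \ref{sec:torsiontran}) obtained by adapting the Bismut-Lott superconnection machinery to the orbifold setting. First, I would choose a smooth path $(g^{TZ}_s, g^F_s)_{s\in[0,1]}$ of metrics interpolating between $(g^{TZ},g^F)$ and $(g^{\prime TZ},g^{\prime F})$, and pull everything back to $\mathbb{R}\times Z$ through the projection $\pi$ as in \eqref{eq:pi}, \eqref{eq:pugE}, \eqref{eq:pucon}. It suffices to compute
\[
\frac{\partial}{\partial s}\log\bigl(\|\cdot\|^{\rm RS,2}_{\lambda,s}\bigr),
\]
since integrating this derivative over $s\in[0,1]$ and recognizing the result as the $s$-integral of the forms $\alpha_s,\beta_s$ appearing in \eqref{eq:aass} and \eqref{eq:bbss} will exactly reproduce the right-hand side of \eqref{eq:anomaly} after summation over the strata $Z_i$.

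Second, I would differentiate the Ray-Singer torsion via \eqref{eq:deftheta}. Writing the $s$-derivative of $\Box^Z_s$ in terms of the variations of $g^{TZ}_s$ and $g^F_s$, standard manipulations with the Mellin transform give the derivative of $\theta'(0)/2$ as the finite part at $s=0$ of a Mellin transform of the supertrace
\[
t\mapsto \Trs\!\Bigl[\bigl(N^{\Lambda^\cdot(T^*Z)}-\tfrac{m}{2}\bigr)\,\dot{\mathcal{A}}_s\,\exp\!\bigl(-t\Box^Z_s\bigr)\bigr],
\]
where $\dot{\mathcal{A}}_s$ encodes the infinitesimal variation of the metric data. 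Combining this with the $s$-derivative of $|\cdot|^{\rm RS,2}_{\lambda,s}$ (which requires \eqref{eq:Hodgeorth}, \eqref{eq:Hodge} and the Hodge theoretic identification of $\ker\Box^Z_s$ with $H^\cdot(Z,F)$), one arrives at a purely heat-kernel expression for $\partial_s\log\|\cdot\|^{\rm RS,2}_{\lambda,s}$. At this stage the problem reduces to computing the $t\to 0$ asymptotics of such supertraces, exactly in the spirit of \cite[Theorem 3.24]{BLott}.

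Third, and this is the main obstacle, one must extract the correct contributions from the singular strata $Z_i$, $i\geq 1$. On each orbifold chart $(\widetilde U, G_U,\pi_U)$ the operator $\Box^Z$ lifts to the usual Hodge Laplacian $\Box^{\widetilde U}$, and by the projection formula \eqref{eq:res}--\eqref{eq:res1} the supertrace is a weighted sum of equivariant supertraces $\Trs[g\,e^{-t\Box^{\widetilde U}}\cdot]$ for $g\in G_U$. To localize these as $t\to 0$ I would invoke the finite propagation speed technique of Ma \cite{Ma_Orbifold_immersion}, which replaces $\exp(-t\Box^{\widetilde U})$ by an operator whose kernel is supported in an $\varepsilon$-neighborhood of the diagonal; this allows one to work inside a single chart and thus to apply the equivariant local index computations of Bismut-Goette \cite{BG01}. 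The resulting fixed-point contributions, written in terms of the equivariant Euler and odd Chern forms \eqref{eq:Agpm1}, \eqref{eq:defh}, \eqref{eq:defhi}, assemble exactly into the canonical extensions $e(Z_i,\nabla^{TZ_i})$, $\theta_i(\nabla^F,g^F)$ of Subsection \ref{sec:chara}, weighted by $1/m_i$ coming from \eqref{eq:mi25} and \eqref{eq:res}.

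Finally, I would verify that the transgression-form identity obtained in the previous step --- an orbifold analogue of the Bismut-Lott identity on $\mathbb{R}\times Z$, involving $e(\pi^*TZ,\nabla^{\pi^*TZ})$ and $h(\nabla^{\pi^*F},g^{\pi^*F})$ together with their canonical extensions to $\mathbb{R}\times(Z\coprod\Sigma Z)$ --- integrates over $s\in[0,1]$ to yield \eqref{eq:anomaly}. The clean form of the final answer, with only the degree-$0$ and degree-$1$ parts $\widetilde\theta_i$ and $\theta_i$ appearing (cf.\ \eqref{eq:hd=0}, \eqref{eqtran}), then follows by extracting the top-degree component on each $Z_i$ after integration. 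The even-dimensional vanishing in Proposition \ref{prop:48} and the $t\to 0$ limit of Theorem \ref{thm:GBC} are consequences of the same computation, which is why this approach handles both results simultaneously.
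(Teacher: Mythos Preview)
Your proposal is correct and follows essentially the same route as the paper: interpolate by a path of metrics, reduce the anomaly to the transgression statement (Theorem~\ref{prop:tran}/Corollary~\ref{cordT}) in the Bismut--Lott superconnection framework, localize the small-time heat kernel via finite propagation speed, and feed the resulting chart-wise equivariant problem into the Bismut--Goette computations \cite{BG01} to pick up the stratum contributions with weights $1/m_i$; the final extraction of $\theta_i,\widetilde\theta_i$ and integration over $s$ is exactly \eqref{eq:dtau2}--\eqref{eq:744}. The only organizational difference is that your second paragraph describes obtaining $\partial_s\log\|\cdot\|^{\rm RS,2}_{\lambda,s}$ by directly differentiating the Mellin integral \eqref{eq:deftheta}, whereas the paper packages this step through the forms $u_t,v_t$ of \eqref{eq:utvt} and the identity $u_0-u_\infty=d^{\bR}\log T$ of \eqref{eq:dtau}; both lead to the same heat-kernel asymptotics problem.
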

\begin{proof}
	The proof of our theorem will be given in Remark \ref{reproof}. 
\end{proof}

\begin{cor}\label{cor:Ziodd}
If all  the $Z_i$'s are  of  odd dimension, then $\|\cdot\|^{ \mathrm{RS},2}_{\lambda}$ does not depend on $g^{TZ}$ or $g^F$. In particular, this is the case if $Z$ is an orientable odd dimensional orbifold. 
\end{cor}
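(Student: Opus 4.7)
The plan is to read the corollary directly off Theorem \ref{Thm:2}. First I would inspect the right-hand side of the anomaly formula \eqref{eq:anomaly}: it is a sum over $i$ of integrals on $Z_i$ whose integrands are products of a $F$-characteristic form (either $\widetilde{\theta}_i(\nabla^F,g^F,g^{\prime F})$ or $\theta_i(\nabla^F,g^{\prime F})$) with one of the two $TZ_i$-characteristic forms $e(TZ_i,\nabla^{TZ_i})$ and $\widetilde{e}(TZ_i,\nabla^{TZ_i},\nabla^{\prime TZ_i})$. So it suffices to prove that these two $TZ_i$-forms vanish identically as soon as $\dim Z_i$ is odd.

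Next I would invoke the following two facts already recorded in subsection \ref{sec:chm}. The Euler form is defined in \eqref{eq:eE} by a Pfaffian, which vanishes on antisymmetric endomorphisms of an odd-rank bundle; since $TZ_i$ has rank $\dim Z_i$, we obtain $e(TZ_i,\nabla^{TZ_i})=0$ whenever $\dim Z_i$ is odd. By the parenthetical remark immediately after \eqref{eq:intalpha} (which carries over verbatim to orbifolds via the family construction of subsection \ref{sec:chara}), one also has $\widetilde{e}(TZ_i,\nabla^{TZ_i},\nabla^{\prime TZ_i})=0$ under the same hypothesis. Hence, under the assumption that every $Z_i$ has odd dimension, each summand of \eqref{eq:anomaly} vanishes, so $\|\cdot\|^{\mathrm{RS},2}_{\lambda}$ is independent of $(g^{TZ},g^F)$.

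For the "in particular" assertion I must verify that when $Z$ is orientable and $\dim Z=m$ is odd, every stratum component $Z_i$ has odd dimension. For $Z_0=Z$ this is the hypothesis. For $i\geq1$, using the linear-chart description of Remark \ref{re:lch} and the orientability of $Z$, one may take the local groups to satisfy $G_U\subset \mathrm{SO}(m)$; by \eqref{eq:idsig} the local structure of $Z_i$ is $Z_{G_U}(g)\backslash \widetilde{U}^g$ for some nontrivial $g\in G_U\subset \mathrm{SO}(m)$. Now any element $g\in \mathrm{SO}(m)$ decomposes $\mathbb{R}^m$ into the $+1$-eigenspace $\widetilde{U}^g=\ker(g-I)$, a collection of two-dimensional rotation planes, and the $-1$-eigenspace; the rotation planes contribute an even dimension, and the $-1$-eigenspace must be even-dimensional in order that $\det g=+1$. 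Consequently $\dim \widetilde{U}^g\equiv m\pmod{2}$, so $\dim Z_i$ is odd, and the first part of the corollary applies.

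The main obstacle is this last parity observation, though it amounts only to a short piece of linear algebra in $\mathrm{SO}(m)$; all the analysis has been absorbed into Theorem \ref{Thm:2} and the vanishing of Pfaffians and their transgressions in odd rank.
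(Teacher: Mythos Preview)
Your proof is correct and follows essentially the same approach as the paper: both derive the first assertion immediately from Theorem~\ref{Thm:2} together with the vanishing of $e(TZ_i,\nabla^{TZ_i})$ and $\widetilde{e}(TZ_i,\nabla^{TZ_i},\nabla^{\prime TZ_i})$ in odd dimension. Your treatment is in fact slightly more complete, since you supply the linear-algebra argument (that $\dim\widetilde{U}^g\equiv m\pmod 2$ for $g\in\mathrm{SO}(m)$) justifying the ``in particular'' clause, which the paper only asserts.
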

\begin{proof}
When  $\dim Z_i$ is  odd, $e\(TZ_i,\nabla^{TZ_i}\)=0$ and $\widetilde{e}\(TZ_i,\nabla^{TZ_i},\nabla^{\prime TZ_i}\)=0$. By Theorem \ref{Thm:2}, we get Corollary \ref{cor:Ziodd}.
\end{proof}
%
%
%We say that $F$ is unimodular, if there is a metric $g^F$ such that
%\begin{align}\label{eq:ana-1}
%\theta\(\nabla^F,g^F\)=0,
%\end{align}
%and we say that $F$ is unimodular on $Z\coprod \Sigma Z$,  if there is a metric $g^F$ such that
%for all $0\l i \l l$,
%\begin{align}\label{eq:ana1}
%\theta_i\(\nabla^F,g^F\)=0,
%\end{align}
%Clearly, if $F$ is unitary flat, then $F$ is unimodular on $Z\coprod \Sigma Z$.

 %and that $F$ is unimodular, if $\det(F)$ is  unitarily flat.

\begin{cor}\label{cor:ana}If   for $0\le i\le l_{0}$, 
\begin{align}\label{eq:asum} 
\chi_{\rm orb}(Z_i)=0,
\end{align}
and if $F$ is  unitarily flat, then $\|\cdot\|^{ RS,2}_{\lambda}$ does not depend on $g^{TZ}$ or on  the flat Hermitian metric $g^F$. %which satisfies \eqref{eq:ana1}. 
%It became a topological invariant.  %In particular,  moreover if $F$ is acyclic, the analytic torsion is a topological invariant.
\end{cor}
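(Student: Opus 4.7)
The plan is to apply the anomaly formula \eqref{eq:anomaly} of Theorem \ref{Thm:2} and verify that both integrands on the right-hand side vanish under the given hypotheses. Since by hypothesis we are free to vary $g^{TZ}$ arbitrarily and $g^F$ only among flat Hermitian metrics, both $g^F$ and $g^{\prime F}$ in the comparison satisfy $\nabla^F g^F=0$ and $\nabla^F g^{\prime F}=0$. In particular, the $1$-form $\omega(\nabla^F,g^F)=g^{F,-1}\nabla^F g^F$ of \eqref{eq:omeF} vanishes, and similarly for $g^{\prime F}$.

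Since $h(x)=xe^{x^2}$ satisfies $h(0)=0$, the equivariant odd Chern character form \eqref{eq:defh} is identically zero on every local chart for every group element, and hence by the family construction \eqref{eq:defhi} the global forms $h_i(\nabla^F,g^F)$ and $h_i(\nabla^F,g^{\prime F})$ vanish on each stratum $Z_i$. In particular $\theta_i(\nabla^F,g^F)=0$ on $Z_i$, which kills the second summand on the right-hand side of \eqref{eq:anomaly}.

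For the first summand, the transgression identity \eqref{eq:dehgg} applied stratum-wise gives
$$d\,\widetilde{h}_i(\nabla^F,g^F,g^{\prime F})=h_i(\nabla^F,g^{\prime F})-h_i(\nabla^F,g^F)=0,$$
whose degree-zero part yields $d\,\widetilde{\theta}_i(\nabla^F,g^F,g^{\prime F})=0$. Hence $\widetilde{\theta}_i\in C^\infty(Z_i)$ is a closed $0$-form on the connected orbifold $Z_i$, so it is a constant $c_i\in\bC$. By the definition \eqref{eq:chiorb} of the orbifold Euler characteristic,
$$\int_{Z_i}\widetilde{\theta}_i(\nabla^F,g^F,g^{\prime F})\,e(TZ_i,\nabla^{TZ_i})=c_i\int_{Z_i}e(TZ_i,\nabla^{TZ_i})=c_i\,\chi_{\rm orb}(Z_i)=0$$
by the assumption \eqref{eq:asum}. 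Combined with the vanishing of the second summand, this shows $\log(\|\cdot\|^{\prime\rm RS,2}_\lambda/\|\cdot\|^{\rm RS,2}_\lambda)=0$, proving Corollary \ref{cor:ana}.

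The argument is essentially a direct consequence of the anomaly formula together with the two elementary observations that (i) $h$ vanishes at the origin and (ii) a closed function on a connected space is constant. The only mildly subtle point is ensuring that the transgression \eqref{eq:dehgg} descends properly to the singular strata $Z_i$, but this is already built into the $Z_{G_U}(g)$-equivariant construction of $h_i$ and $\widetilde{h}_i$ from subsection \ref{sec:chara}, so no additional work is required.
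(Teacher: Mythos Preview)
Your proof is correct and follows essentially the same approach as the paper's: apply the anomaly formula \eqref{eq:anomaly}, use flatness of $g^F$ and $g^{\prime F}$ to make $\theta_i$ vanish (killing the second summand), and then use closedness of $\widetilde{\theta}_i$ on the connected $Z_i$ together with $\chi_{\rm orb}(Z_i)=0$ to kill the first. Your explanation of why $\theta_i=0$ via $h(0)=0$ is a bit more explicit than the paper's, but the argument is the same.
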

\begin{proof}
Take $(g^{TX},g^{F})$ and $(g^{\prime TX}, g^{\prime F})$ two pairs 
of  metrics on $TX$ and $F$ such that $\nabla^Fg^F=0$ and 
$\nabla^Fg^{\prime F}=0$. By \eqref{eq:defhi}, for $0\le i\le l_{0}$, 
\begin{align}\label{eq:ana111}
\theta_i\(\nabla^F,g^F\)=\theta_i\(\nabla^F,g^{\prime F}\)=0.
\end{align}By  \eqref{eqtran} and \eqref{eq:ana111}, $\widetilde{\theta}_i(\nabla^F, g^F, g^{\prime F})$ is closed. It becomes a constant $c_i\in \bC$ as $Z_i$ is connected. Using \eqref{eq:asum}, we get 
\begin{align}\label{eq:ana2}
\int_{Z_i} \widetilde{\theta}_i\(F,g^F,g^{\prime F}\)e\(TZ_i,\nabla^{TZ_i}\)=c_i\int_{Z_i}e\(TZ_i,\nabla^{TZ_i}\)=0.
\end{align}
By \eqref{eq:anomaly},  \eqref{eq:ana111},  and \eqref{eq:ana2}, we get $\|\cdot\|^{\rm{ RS},2}_{\lambda}=\|\cdot\|^{\prime \rm{RS},2}_{\lambda}.$
\end{proof}

%\begin{re}
% Under the hypotheses of Corollary \ref{cor:Ziodd} or Corollary \ref{cor:ana}, the Ray-Singer metric is a topological invariant. 
%%Moreover, if $F$ is acyclic, which  means $H^\cdot(Z,F)=0$, then the analytic torsion becomes a topological invariant. In those cases, we denote by $T_Z(F)=T_Z(F,g^{TZ},g^F)$.
%\end{re}

\begin{re}
 If $F$ is not proper, we can define the analytic torsion and 
 Ray-Singer metric in the same way. Indeed, we have 
 \begin{align}
	 &  H^{\cdot}(X,F)=H^{\cdot}(X,F^{\rm pr}),
&T\(F,g^{TZ},g^{F}\)=T\(F^{\rm pr},g^{TZ},g^{F^{\rm pr}}\).
 \end{align}
 Also, the Ray-Singer metrics of $F$ and $F^{\rm pr}$ coincides. For 
 this reason, all  the result in this section holds true for non proper flat orbifold  vector bundle. 
 \end{re}

%
%\begin{re}
%   Now, we assume that the orbifold $Z$ is global quotient $M/H$, where $M$ is closed smooth manifold and $H$ is a finit group acting on $M$. Also, we assume that $E$ is a flat vector bundle on $M$ such that the action of $H$ lifts on $E$. Take $F=E/G$. Let $g^E$ be a hermitian metric on $E$ which is preserve by $H$.  Let $g^F$ be the metric on $F$ induced from $g^E$. 
%
%By   \cite[Theorem 0.1]{BZ94}, we have
%\begin{multline}\label{eq:ano1}
%\log\(\frac{\|\cdot\|^{\prime RS,2}_{\lambda}}{\|\cdot\|^{ RS,2}_{\lambda}}\)=\frac{1}{|H|}\sum_{h\in H}\bigg(\int_{M_h}\log\(\frac{\|\cdot\|^{\prime,2}}{\|\cdot\|^{2}_{\lambda}}\)e\(M_h,\nabla^{TM_h}\)\\
%- \int_{M_h}\theta_h(F,g^{\prime E})\widetilde{e}(TM_h,\nabla^{TM_h},\nabla^{\prime TM_h})\bigg).
%\end{multline}
%
%If we regroup the sum, we get 
%\begin{multline}\label{eq:ano2}
%\log\(\frac{\|\cdot\|^{\prime RS,2}_{\lambda}}{\|\cdot\|^{ RS,2}_{\lambda}}\)=
%\\
%\frac{1}{|H|}\bigg\{\int_{M}\log\(\frac{\|\cdot\|^{\prime,2}}{\|\cdot\|^{2}_{\lambda}}\)e\(M,\nabla^{TM}\)
%- \int_{M}\theta_h\(E,g^{\prime E}\)\widetilde{e}\(TM,\nabla^{TM},\nabla^{\prime TM}\)\bigg\}\\
%+\sum_{[h]\in [H]\backslash \{1\}}\frac{|Z_H(h)|}{|H|}\bigg\{\int_{M_h}\log\(\frac{\|\cdot\|^{\prime,2}}{\|\cdot\|^{2}_{\lambda}}\)e\(M_h,\nabla^{TM_h}\)
%- \int_{M_h}\theta_h(F,g^{\prime E})\widetilde{e}(TM_h,\nabla^{TM_h},\nabla^{\prime TM_h})\bigg\}.
%\end{multline}
%\end{re}
%

\subsection{Analytic torsion as a transgression}\label{sec:torsiontran}
%Let us show Theorem \ref{Thm:2}.  
Let $ (g_s^{TZ},g_s^F)_{s\in \bR}$ be a smooth family of metrics on $TZ$ and $F$ such that 
\begin{align}
&(g^{TZ}_s,g^F_s)|_{s=0}=(g^{TZ},g^F),&(g^{TZ}_s,g^F_s)|_{s=1}=(g^{\prime TZ},g^{\prime F}).
\end{align}
Then, $s\in \bR\to \log T(F,g^{TZ}_s,g^{F}_s)$ is a smooth function. 
Following \cite[Section III]{BLott}, we will interpret the analytic torsion 
function $\log 
T(F,g^{TZ}_\cdot,g^{F}_\cdot)$ as a transgression for some odd Chern 
forms associated to certain flat superconnections on $\bR$. 
%Assume that for $s\l 0$, 
%\begin{align}
% (g_s^{TZ},g_s^F)= (g_0^{TZ},g_0^F),
%\end{align}
%and for $s\g 1$, 
%\begin{align}
% (g_s^{TZ},g_s^F)= (g_1^{TZ},g_1^F).
%\end{align}

Recall that $\pi$ is defined in \eqref{eq:pi}, and $g^{\pi^* (TZ)}$, 
$g^{\pi^*F}$ are defined in \eqref{eq:pugE}  with $E=TZ$ 
or $F$. 
Consider now a trivial infinite dimensional  vector bundle $W$ on $\bR $ defined by 
\begin{align}
\bR  \times \Omega^\cdot(Z,F)\to \bR.
\end{align}
Let $g^W$ be a Hermitian 
metric on $W$ such that  $g^W_s $ is the $L^2$-metric  on 
$\Omega^{\cdot}(Z,F)$ induced by  $(g^{TZ}_s,g^{F}_s)$.  Put
\begin{align}\label{eq:Ap}
A'=d^{\bR}+d^Z.
\end{align}
Then $A'$ is a flat superconnection on $W$.  Let $A''$ be the adjoint 
of $A'$ with respect to $g^W$.  For $s\in \bR$, denote by  
$d^{Z,*}_s,\Box^{Z}_{s}$  the corresponding objects for 
$(g^{TZ}_{s},g^{F}_{s})$, and  by  $*^{Z}_s$  the Hodge star operator with respect to $g^{TZ}_s$. Thus,
\begin{align}\label{eq:A''}
A''=d^{\bR }+d^{Z,*}_s+ds\wedge \(g^{F,-1}_s\frac{\p g^F_s}{\p 
s}+*^{Z,-1}_s\frac{\p *^{Z}_s}{\p s}\).
\end{align}
 Set
\begin{align}\label{eq:AB}
&A=\frac{1}{2}(A''+A'),&B=\frac{1}{2}(A''-A').
\end{align}
Then $A$ is a superconnection on $W$, and $B$ is a fibrewise first order elliptic differential operator. The curvature of $A$ is given by 
\begin{align}
A^2=-B^2.
\end{align}
It is a fibrewise second order elliptic differential operator.  

Following \cite[Definition 2.7]{BLott}, we  introduce a deformation of $g^W$. For $t>0$, set 
\begin{align}
g_t^W=t^{N^{\Lambda^\cdot(T^*Z)}}g^W.
\end{align}
%Note that $g^W_t$ is the $g^W$ when the family $(g^{TZ}_s,g_s^{F})_{s\in \bR}$ is replaced by 
%$(g^{TZ}_s/t,g_s^{F})_{s\in \bR}$. 
Let $A''_t$ be the adjoint of $A'$ with respect to $g_t^W$. Clearly,
\begin{align}\label{eq:546}
A''_t=t^{-N^{\Lambda^\cdot(T^*Z)}} A'' t^{N^{\Lambda^\cdot(T^*Z)}}.
\end{align}
We define $A_t$ and $B_t$ as in \eqref{eq:AB}, i.e.,
\begin{align}\label{eq:547}
&A_t=\frac{1}{2}(A''_t+A'),&B_t=\frac{1}{2}(A''_t-A').
\end{align}

\begin{thm}\label{thm:GBCf}
For $t>0$, we have
\begin{align}\label{eq:GBCf}
\Trs\[\exp\(B_t^2\)\]=\chi_{\rm top}(Z,F).
\end{align}
\end{thm}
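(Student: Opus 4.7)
My strategy is to reduce \eqref{eq:GBCf} to the classical McKean–Singer identity \eqref{eq:McSinger} by analyzing the $ds$-expansion of $B_t^2$, and then exploiting the fact that the supertrace of a $\bZ_2$-odd operator vanishes. No deep heat-kernel analysis is needed beyond what \eqref{eq:McSinger} already provides.

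\textbf{Step 1: Decompose $B_t$ along $\Omega^\cdot(\bR)$.} From \eqref{eq:A''}, \eqref{eq:AB} and \eqref{eq:547} I write
\begin{align*}
B_t = B_t^{(0)} + ds\wedge B_t^{(1)},\qquad B_t^{(0)}=\tfrac{1}{2}\(t\,d^{Z,*}_s-d^Z\),\ \ B_t^{(1)}=\tfrac{1}{2}P_s,
\end{align*}
where $P_s=g^{F,-1}_s\partial_s g^F_s+*^{Z,-1}_s\partial_s *^Z_s$. With respect to the fibrewise $\bZ_2$-grading $(-1)^{N^{\Lambda^\cdot(T^*Z)}}$, the operator $B_t^{(0)}$ is odd while $B_t^{(1)}$ (and hence $P_s$) is even, since $P_s$ preserves form-degree.

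\textbf{Step 2: Compute $B_t^2$ modulo $(ds)^2=0$.} Using $(d^Z)^2=(d^{Z,*}_s)^2=0$ and \eqref{eq:DiracHodge}, one has $(B_t^{(0)})^2=-\tfrac{t}{4}\Box^Z_s$. Since $ds\wedge B_t^{(1)}$ squares to zero and the super-anticommutator of $B_t^{(0)}$ with $ds\wedge B_t^{(1)}$ yields $ds\wedge [B_t^{(1)},B_t^{(0)}]$ (ordinary commutator because $B_t^{(1)}$ is fibrewise even), I obtain
\begin{align*}
B_t^2=-\tfrac{t}{4}\Box^Z_s+ds\wedge C_s,\qquad C_s:=\[B_t^{(1)},B_t^{(0)}\].
\end{align*}
Crucially, $C_s$ is a product of an even and an odd operator, hence fibrewise $\bZ_2$-odd.

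\textbf{Step 3: Expand the exponential and take supertrace.} Because $ds\wedge C_s$ is nilpotent and commutes (in the super sense) with $\Box^Z_s$ up to terms killed by $(ds)^2$, Duhamel's formula gives
\begin{align*}
\exp(B_t^2)=\exp\(-\tfrac{t}{4}\Box^Z_s\)+ds\wedge \int_0^1 \exp\(-\tfrac{(1-r)t}{4}\Box^Z_s\)C_s\exp\(-\tfrac{rt}{4}\Box^Z_s\)\,dr.
\end{align*}
Since $\exp(-u\Box^Z_s)$ is of trace class for $u>0$ (as explained after Theorem \ref{thm:orbhodge}), I may apply the supertrace and use its cyclicity together with the commutativity of $\Box^Z_s$ with itself. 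The $ds^0$-piece is $\Trs[\exp(-\tfrac{t}{4}\Box^Z_s)]=\chi_{\rm top}(Z,F)$ by \eqref{eq:McSinger}. The $ds$-piece reduces to $ds\wedge\Trs[C_s\exp(-\tfrac{t}{4}\Box^Z_s)]$; but $C_s$ is $\bZ_2$-odd and $\exp(-\tfrac{t}{4}\Box^Z_s)$ is $\bZ_2$-even, so the composition is $\bZ_2$-odd and its supertrace vanishes identically.

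\textbf{Main obstacle.} The computation itself is essentially algebraic and none of the steps involve heavy analysis: McKean–Singer and the trace-class property of $\exp(-u\Box^Z_s)$ are already in hand, and the orbifold structure enters only through these already-established ingredients. The only delicate point is book-keeping of signs in the super-tensor-product algebra $\Omega^\cdot(\bR)\hat\otimes\End(\Omega^\cdot(Z,F))$, in particular verifying that the parity of $C_s$ is odd, which is what forces the $ds$-component of $\Trs[\exp(B_t^2)]$ to vanish.
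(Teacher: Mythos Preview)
Your proof is correct and follows essentially the same route as the paper. The paper's argument is simply a terser version of yours: it observes that $\Trs[\exp(B_t^2)]$ is an even form on the one-dimensional base $\bR$, hence automatically a function (the $ds$-component must vanish), and then identifies that function with $\Trs[\exp(-t\Box^Z_s/4)]$, to which \eqref{eq:McSinger} applies. Your Duhamel expansion and explicit parity check on $C_s$ is precisely the content of the sentence ``even form, hence a function'' once one unwinds the superconnection formalism; both arguments hinge on the same observation that the $\Omega^1(\bR)$-part of $\exp(B_t^2)$ carries a fibrewise odd operator whose supertrace vanishes.
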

\begin{proof}
 Theorem \ref{thm:GBCf} can be proved using the technique of 
 the local  family  index theory as in \cite[Theorem 3.15]{BLott}. 
 Since 
 here the parameter space  $\bR$ is of dimension $1$, we give a short 
 proof.  By  construction, $\Trs\[\exp\(B_t^2\)\]$ is an even form 
 on $\bR$, thus it is a function. By \eqref{eq:Ap}, \eqref{eq:A''}, \eqref{eq:546} and \eqref{eq:547}, we have
 \begin{align}\label{eq:GBCf2}
\Trs\[\exp\(B_t^2\)\]=\Trs\[\exp\(-t\Box^Z_s/4\)\].
 \end{align}
 %where $\Box^Z_s$ is the Hodge Laplacian associated to  $(g^{TX}_s,g^F_s)$. 
 By  \eqref{eq:McSinger} and \eqref{eq:GBCf2}, we get 
 \eqref{eq:GBCf}.
\end{proof}

%we defin $u_t$ is defined similarly, i.e.,

Recall that $h$ is defined in \eqref{eq:h}.
Following \cite[(2.22) and (2.23)]{BLott}, for $t>0$, set
\begin{align}\label{eq:utvt}
&u_t=\Trs\[h(B_t)\]\in \Omega^1(\bR),
&v_t=\Trs\[\frac{N^{\Lambda^\cdot(T^*Z)}}{2}h'(B_t)\]\in C^\infty(\bR).
\end{align}
The subspace  $\ker(\Box^Z_s)\subset \Omega^{\cdot}(Z,F)$ defines a 
finite dimensional  subbundle $W_0\subset W$ on $\bR$. By Theorem 
\ref{thm:orbhodge}, the  fiber of $W_0$ is $H^\cdot(Z,F)$. As 
in \cite[Section III.f]{BLott}, we equip $W_0$ with the restricted metric $g^{W_0}$ and the induced connection $\nabla^{W_0}$. Put
\begin{align}
	\begin{aligned}
	&u_{0}=\sum_{i=0}^{l_0}\frac{1}{m_i}\int_{Z_i}e\(\pi^*(TZ_i),\nabla^{\pi^*(TZ_i)}\) h_i\(\nabla^{\pi^*F},g^{\pi^* F}\),\\
	&u_{\infty}=h\(\nabla^{W_0},g^{W_0}\),
	\end{aligned}
\end{align}
and
\begin{align}
	&v_{0}=\frac{m}{4}\chi_{\rm top}(Z,F),
	&v_{\infty}=\frac{1}{2}\chi'_{\rm top}(Z,F).
\end{align}

For a smooth family $\{\alpha_t\}_{t>0}$ of differential forms on 
$\bR$,  we say $\alpha_t=\cO(t)$ if for all the compact $K\subset 
\bR$, and for all  $k\in \bN$, there is $C>0$ such that $\|\alpha_{t}\|_{C^k(K)}\le Ct$.

\begin{thm}\label{prop:tran}
For $t>0$,  $u_t$ is a closed $1$-form on $\bR$ such that  its cohomology class does not 
depend on $t>0$ and that 
the following identity of $1$-forms holds:
\begin{align}\label{eq:tran}
\frac{\p }{\p t} u_{t}=d^{\bR}\(\frac{v_{t}}{t}\).
\end{align}
As $t\to 0$, we have 
\begin{align}\label{eq:at1}
&u_t=u_{0}+\cO\(\sqrt{t}\),
&v_{t}=v_{0}+\cO\(\sqrt{t}\),
\end{align}
as $t\to \infty$,
\begin{align}\label{eq:at2}
&u_t=u_{\infty}+\cO(1/\sqrt{t}), &v_{t}=v_{\infty}+\cO(1/\sqrt{t}).
\end{align}
\end{thm}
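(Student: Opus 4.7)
\emph{Proof proposal.} The plan is to follow closely the superconnection formalism of Bismut-Lott \cite{BLott}, splitting the statement into its algebraic content (closedness and the transgression formula \eqref{eq:tran}) and its two asymptotic halves, the short-time limit being the only point that really requires new ideas in the orbifold setting. Since $\bR$ is one-dimensional, closedness of $u_{t}$ is automatic and its cohomology class vanishes for trivial reasons, so the first part of the statement reduces to \eqref{eq:tran}. Because $A'$ is flat and all the operators in play have smooth Schwartz kernels on $Z\times Z$ (by the Sobolev embedding \eqref{eq:ijjc} applied to $\exp(-t\Box^{Z}_{s})$ and its $B_{t}$-analogues), the supertrace identity $\Trs[[A_{t},\alpha]]=0$ is valid on every fibre of $W$ via the averaged diagonal formula \eqref{eq:res1}. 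Differentiating $A''_{t}=t^{-N^{\Lambda^{\cdot}(T^{*}Z)}}A''t^{N^{\Lambda^{\cdot}(T^{*}Z)}}$ in $t$ and running verbatim the manipulation of \cite[Theorem 2.9]{BLott} then yields \eqref{eq:tran}.

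For the large-$t$ behaviour I would use that $\Box^{Z}_{s}$ has a spectral gap above $0$, which follows from the compactness of its resolvent via \eqref{eq:ijj} together with the orbifold Hodge theorem \ref{thm:orbhodge}. Consequently, $B_{t}^{2}$ concentrates on $W_{0}$ as $t\to\infty$, and the perturbation expansion of $B_{t}$ relative to the $L^{2}$-orthogonal projection onto $W_{0}$, carried out exactly as in \cite[Section III.f]{BLott}, produces $u_{t}=h(\nabla^{W_{0}},g^{W_{0}})+\cO(1/\sqrt{t})$ and $v_{t}=\tfrac{1}{2}\chi'_{\mathrm{top}}(Z,F)+\cO(1/\sqrt{t})$.

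The main obstacle is the short-time asymptotics \eqref{eq:at1}, which requires an orbifold local family index theorem. Following the strategy of Ma \cite{Ma_Orbifold_immersion}, the finite propagation speed of the wave equation associated with $B_{t}$ localises the short-time expansion of $\Trs[h(B_{t})]$ to an $\cO(\sqrt{t})$-neighbourhood of the diagonal, so that one may work inside a single orbifold chart $(\widetilde{U},G_{U},\pi_{U})$. The averaged-kernel formula \eqref{eq:res} then decomposes the fibrewise supertrace as a sum over $g\in G_{U}$ of heat-kernel integrals localised near the fixed-point sets $\widetilde{U}^{g}$; on each of these I would invoke the equivariant Getzler rescaling together with the local equivariant family index theorem of Bismut-Goette \cite{BG01} to identify the leading term as the equivariant Euler form on $\widetilde{U}^{g}$ paired with the equivariant odd Chern character of $(\nabla^{F},g^{F})$.

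Reassembling via the identification \eqref{eq:idsig} of $\coprod_{[g]}Z_{G_{U}}(g)\backslash \widetilde{U}^{g}$ with $\Sigma U$, and inserting the multiplicities $m_{i}$ from \eqref{eq:mi25}, one recovers exactly the expression claimed for $u_{0}$. The analogous computation for $v_{t}$ is parallel but simpler: after the same localisation, it reduces to the orbifold Gauss-Bonnet-Chern formula \eqref{eq:35}, producing $v_{0}=(m/4)\chi_{\mathrm{top}}(Z,F)$. The convergence rate $\cO(\sqrt{t})$ in \eqref{eq:at1} comes from the next term in the Getzler expansion together with a dimension-parity argument that kills the fibre integrals of odd-degree contributions on each $\widetilde{U}^{g}$. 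The delicate point to check is that the $B_{t}$-rescaled operator (rather than the pure heat operator on forms) can still be treated by the Bismut-Goette machinery uniformly in the parameter $s\in\bR$; as in \cite[Theorem 3.21]{BLott} this boils down to uniform heat-kernel estimates on the orbifold $\widetilde{U}/G_{U}$, which I expect to carry out in the companion Section~\ref{Sec:estheat} by adapting Ma's off-diagonal estimates \cite{Ma_Orbifold_immersion}.
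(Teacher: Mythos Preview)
Your proposal is correct and follows essentially the same route as the paper: algebraic identities from \cite{BLott}, large-$t$ via functional calculus on the spectral gap, and small-$t$ via finite propagation speed localisation to charts followed by the equivariant local index computations of \cite{BG01}. Two small points of divergence are worth flagging. First, to make the finite propagation speed argument apply cleanly to $u_{t}=\Trs[h(B_{t})]$ (which is not literally a heat trace), the paper introduces an auxiliary odd Grassmann variable $\mathbf{z}$ and rewrites $u_{t}=\Tr^{\mathbf{z}}_{s}[\exp(-A_{t}^{2}+\mathbf{z}B_{t})]$, so that the $\sF_{t}/\sG_{t}$ splitting of \eqref{eq:13.20} applies directly to a genuine exponential; you should do the same rather than attempt to localise $h(B_{t})$ as written. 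Second, the paper does \emph{not} compute the small-$t$ limit of $v_{t}$ directly via Gauss--Bonnet--Chern as you suggest; instead it deduces both second equations of \eqref{eq:at1} and \eqref{eq:at2} from the first ones by the trick of \cite[Theorem~3.21]{BLott}, which requires redoing the $u_{t}$ asymptotics over the enlarged base $\bR\times(0,\infty)$. Your direct approach to $v_{t}$ would also work and is arguably more transparent, but be aware that this is not how the paper organises the argument.
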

\begin{proof}
	By \cite[Theorems 1.8 and 3.20]{BLott}, $u_t$ is  closed and \eqref{eq:tran} holds.  The first 
	equation of \eqref{eq:at1} will be proved in subsection \ref{sec:thmat}. 
The first equation of \eqref{eq:at2} can be proved as   \cite[Theorem 
3.16]{BLott}, whose proof is based on functional calculus. 
Proceeding as \cite[Theorem 3.21]{BLott}, we can show 
the second equations of \eqref{eq:at1} and 
\eqref{eq:at2} as consequence of the first 
equations\footnote{More precisely, we need the corresponding results 
on a larger parametrized space $\bR\times (0,\infty)$. We leave the 
details  to readers. } of \eqref{eq:at1} and 
\eqref{eq:at2}. 
	\end{proof}

\begin{cor}\label{cordT}
 The following identities in $C^{\infty}(\bR)$ and $\Omega^{1}(\bR)$ 
 hold:
\begin{align}\label{eq:tautran}
\log 
T\(F,g^{TZ}_\cdot,g^{F}_\cdot\)=-\int_0^\infty\bigg\{v_{t}-v_{\infty}h'(0)
-\(v_{0}-v_{\infty}\)h'\(\frac{i\sqrt{t}}{2}\)\bigg\}\frac{dt}{t},
\end{align}
and 
\begin{align}\label{eq:dtau}
 u_{0}-u_{\infty}=d^{\bR} \log T\(F,g^{TZ}_\cdot,g^{F}_\cdot\).
\end{align}
\end{cor}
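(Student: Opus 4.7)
The plan is to derive \eqref{eq:tautran} by Mellin-transforming the integrand against $t^{s-1}\,dt$ and matching it to the $\zeta$-function definition of $T$, and then to obtain \eqref{eq:dtau} by applying $d^{\bR}$ to the identity \eqref{eq:tautran} and using the transgression equation \eqref{eq:tran}. First I would rewrite $v_t$ as a heat trace: since $B_t^2$ acts fiberwise as $-t\Box^Z_s/4$ (as observed in the proof of Theorem \ref{thm:GBCf}) and $h'(x)=(1+2x^2)e^{x^2}$, one gets $v_t=\tfrac12\psi(t)+t\psi'(t)$ with $\psi(t)=\Trs[N^{\Lambda^\cdot(T^*Z)}e^{-t\Box^Z/4}]$. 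Noting $h'(0)=1$ and $h'(i\sqrt t/2)=(1-t/2)e^{-t/4}$, set $\tilde\phi(t)=\psi(t)-2v_\infty$ and
\[
g(t)\;=\;\tilde\phi(t)-2(v_0-v_\infty)e^{-t/4}.
\]
A direct algebraic check that I would carry out gives
\[
v_t-v_\infty h'(0)-(v_0-v_\infty)h'(i\sqrt t/2)\;=\;\tfrac12 g(t)+tg'(t),
\]
so the subtraction in the integrand of \eqref{eq:tautran} is precisely engineered to turn the integrand into the operator $\tfrac12(\cdot)+t(\cdot)'$ applied to a function $g$ that vanishes at $t=0$. By the first equations of \eqref{eq:at1} and \eqref{eq:at2} and the spectral gap of $\Box^Z$ on $(\ker\Box^Z)^\perp$, we have $g(t)=O(\sqrt t)$ as $t\to 0^+$ and $g(t)=O(e^{-ct})$ as $t\to\infty$, so both the integral in \eqref{eq:tautran} and its Mellin transform converge.

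Integration by parts for $\Re s>-\tfrac12$ yields
\[
I(s):=\int_0^\infty \left[\tfrac12 g(t)+tg'(t)\right]t^{s-1}dt \;=\;(\tfrac12-s)M(s),\qquad M(s):=\int_0^\infty g(t)\,t^{s-1}dt,
\]
the boundary terms vanishing by the decay estimates just noted. Performing the change of variable $t\mapsto t/4$ in \eqref{eq:deftheta} and using $\int_0^\infty e^{-t/4}t^{s-1}dt=4^s\Gamma(s)$, I would rewrite
\[
\theta(s)\;=\;-\frac{4^{-s}}{\Gamma(s)}M(s)-2(v_0-v_\infty),
\]
valid initially for $\Re s\gg 1$ and then, by analytic continuation of $M$, for $\Re s>-\tfrac12$. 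Since $1/\Gamma(s)=s+\gamma s^2+O(s^3)$ and $4^{-s}=1+O(s)$ near $s=0$, and since $M$ is holomorphic there, this gives $\theta'(0)=-M(0)$, hence
\[
\log T\(F,g^{TZ}_\cdot,g^F_\cdot\)\;=\;\tfrac12\theta'(0)\;=\;-\tfrac12 M(0)\;=\;-I(0),
\]
which is exactly \eqref{eq:tautran}.

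To obtain \eqref{eq:dtau}, I would differentiate the identity \eqref{eq:tautran} along $\bR$. The quantities $v_0=\tfrac{m}{4}\chi_{\rm top}(Z,F)$, $v_\infty=\tfrac12\chi'_{\rm top}(Z,F)$ and the scalar function $h'(i\sqrt t/2)$ are independent of $s\in\bR$, so $d^{\bR}$ annihilates all subtraction terms, leaving
\[
d^{\bR}\log T\;=\;-\int_0^\infty \frac{d^{\bR}v_t}{t}\,dt.
\]
The exchange of $d^{\bR}$ and the integral is legitimate because the asymptotics \eqref{eq:at1} and \eqref{eq:at2} are uniform on compact subsets of $\bR$ (a standard consequence of their proof via functional calculus and local index methods). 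By the transgression formula \eqref{eq:tran}, $(d^{\bR}v_t)/t=d^{\bR}(v_t/t)=\partial u_t/\partial t$, and the limits $u_t\to u_0,\,u_\infty$ from Theorem \ref{prop:tran} give, by the fundamental theorem of calculus, $\int_0^\infty (\partial u_t/\partial t)\,dt=u_\infty-u_0$. Hence $d^{\bR}\log T=u_0-u_\infty$, which is \eqref{eq:dtau}. The only genuine technicality in this plan is controlling the boundary terms in the integration-by-parts step and the exchange of $d^{\bR}$ with the integral; both are handled by the $O(\sqrt t)$ and exponential-decay estimates already furnished by Theorem \ref{prop:tran}, so no substantial obstacle remains beyond bookkeeping.
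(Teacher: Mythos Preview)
Your approach is correct and is essentially the one the paper invokes by citing \cite[Theorems 3.23 and 3.29]{BLott}: a Mellin-transform computation for \eqref{eq:tautran} followed by differentiation and the transgression identity \eqref{eq:tran} for \eqref{eq:dtau}.

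There is, however, one inaccuracy in the bookkeeping. From $v_t=\tfrac12\psi(t)+t\psi'(t)$ and the second equation of \eqref{eq:at1} (you cite the first, but you need the $v_t$-estimate), one only deduces that the coefficients of $t^{k}$ in the short-time expansion of $\psi$ vanish for $k<0$ \emph{except} $k=-\tfrac12$, since the operator $\tfrac12(\cdot)+t(\cdot)'$ annihilates $t^{-1/2}$. Thus in general $g(t)=c_{-1/2}\,t^{-1/2}+O(\sqrt{t})$, not $O(\sqrt{t})$; consequently $M(s)$ converges only for $\Re s>\tfrac12$, and the boundary term $g(t)t^s|_{t=0}$ in your integration by parts does not vanish on the region $\Re s>-\tfrac12$ you claim. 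The fix is immediate: perform the integration by parts for $\Re s>\tfrac12$, obtain $I(s)=(\tfrac12-s)M(s)$ there, and then analytically continue. The only pole of $M$ in $\Re s>-\tfrac12$ is at $s=\tfrac12$ (coming from $c_{-1/2}t^{-1/2}$) and it is cancelled by the factor $(\tfrac12-s)$, so $M$ is holomorphic at $s=0$ and your computation $\theta'(0)=-M(0)$, hence $\log T=-I(0)$, goes through unchanged. The integral $I(0)$ itself is the honest convergent integral, since the integrand of \eqref{eq:tautran} is $O(\sqrt{t})$ at $0$ by \eqref{eq:at1}. With this adjustment, your argument is complete.
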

\begin{proof}By Theorem \ref{prop:tran}, proceeding as  \cite[Theorems 3.23 and 3.29]{BLott}, we 
	get	Corollary \ref{cordT}.\end{proof}

\begin{re}\label{reproof}Theorem \ref{Thm:2} is just \eqref{eq:dtau}. Indeed, if 
	$\|\cdot \|^{\rm RS,2}_{\lambda,s}$  denotes the Ray-Singer metric 
 on $\lambda$ associated to  $(g^{TZ}_s,g^F_s)$. By \eqref{eq:RS}, 
 \eqref{eq:dtau}, 
 and using the fact that our base manifold $\bR$ is of dimension $1$, we have
 \begin{multline}\label{eq:dtau2}
ds\wedge \frac{\p}{\p s}\left\{\log\(\|\cdot \|^{\rm 
RS,2}_{\lambda,s}\)\right\}\\
=
 \sum_{i=0}^{l_0}\frac{1}{m_i}\int_{Z_i} e\(\pi^*(TZ_i),\nabla^{\pi^*(TZ_i)}\)\theta_i\(\nabla^{\pi^*F},g^{\pi^*F}\).
\end{multline}
By \eqref{eq:aass} and \eqref{eq:bbss} with $\alpha_{i,s}\in 
\Omega^{\dim Z_i-1}(Z_i,o(TZ_i))$, $\beta_{i,s}^{[0]}\in C^\infty(Z_i)$ defined in an obvious way, we have
\begin{align}\label{eq:74}
\begin{aligned}
e\(\pi^*(TZ_i),\nabla^{\pi^*(TZ_i)}\)&=e\(TZ_i,\nabla^{TZ_i}\)+d^Z\int_0^s\alpha_{i,s} ds+ ds\wedge \alpha_{i,s},\\
\theta_i\(\nabla^{\pi^*F},g^{\pi^*F}\)&=\theta_i\(\nabla^F,g^{F}\)+2d^Z\int_0^s\beta^{[0]}_{i,s} ds+ 2ds \wedge \beta^{[0]}_{i,s}.
\end{aligned}
\end{align}
By \eqref{eq:74},  we have
\begin{multline}\label{eq:744}
\int_{Z_i} e\(\pi^*(TZ_i),\nabla^{\pi^*(TZ_i)}\)\theta_i\(\nabla^{\pi^*F},g^{\pi^*F}\)\\
=ds \int_{Z_i}\( 2 \beta^{[0]}_{i,s}\ e\(TZ_i,\nabla^{\pi^*(TZ)}\)- \theta_i\(\nabla^F,g^{F}\)\wedge\alpha_{i,s}\).
\end{multline}
By integrating \eqref{eq:744} with respect to the variable $s$ form $0$ to $1$, and by \eqref{eq:dtau2}, we get \eqref{eq:anomaly}. 
\end{re}

\subsection{Estimates on heat kernels}\label{Sec:estheat}
% In subsection \ref{sec:GBC},  we  explain how to reduce the analysis  on orbifolds to manifolds.  We show Theorem \ref{thm:GBC}. In subsections \ref{sec:thmat}, we show Theorem \ref{thm:at} in the same way.

\subsubsection{Proof of Theorem \ref{thm:GBC}}\label{sec:GBC}
 We follow \cite[Section 13.2]{BG01}. 
%  If $a\in \bC$, we have
% \begin{align}\label{eq:Gauss}
% \exp(-a^2)=\int_{\bR} e^{2i s a}e^{-s^2}\frac{ds}{\sqrt{\pi }}.
% \end{align}
Take $\alpha_0>0$. Let $f,g:\bR\to  [0,1]$ be  smooth even functions such that 
\begin{align}\label{eq:fg}
&f(s)=\left\{
\begin{array}{cr}
1, &|s|\le \alpha_0/2;\\
 0, &|s|\ge \alpha_0,
\end{array}
\right.&g(s)=1-f(s).
\end{align}

\begin{defin}
For $t>0$ and $a\in \bC$, set 
\begin{align}\label{eq:FG}
 &F_t(a)=\int_{\bR}e^{2isa}e^{-s^2}f\(\sqrt{t}s\)\frac{ds}{\sqrt{\pi}},&
 G_t(a)=\int_{\bR}e^{2isa}e^{-s^2}g\(\sqrt{t}s\)\frac{ds}{\sqrt{\pi}}.
\end{align}
\end{defin}
By \eqref{eq:fg} and \eqref{eq:FG},  we get
\begin{align}\label{eq:F+G}
\exp\(-a^2\)=F_t(a)+G_t(a).
\end{align}
Moreover, $F_t$ and $G_t$ are even holomorphic functions, whose restriction to $\bR$ lies in $\cS(\bR)$. By \eqref{eq:FG}, we find that given $m,m'\in \bN$, $c>0$, there exist  $C>0, C'>0$ such that if $t\in (0,1]$, $a\in \bC$, $|\im(a)|\le c$, 
\begin{align}\label{eq:GG}
|a|^m \left|G_t^{(m')}(a)\right|\le C\exp(-C'/t).
\end{align}
%Put 
%\begin{align}
%I_t(a)= \int_{\bR}e^{isa/t}e^{-s^2/2t}g(s)\frac{ds}{\sqrt{\pi t}}.
%\end{align}
%Then 
%\begin{align}
%I_t(a)=G_t(a/\sqrt{t}).
%\end{align}
%We find that given $m,m'\in \bN$, $c>0$, there exit $C>0$, $C'>0$ such that if $t\in [0,1]$, $a\in \bC$, $|\im(a)|\l c$, 
%\begin{align}
%|a|^m \left|I_t^{(m')}(a)\right|\l C\exp(-C'/t).
%\end{align}
There exist uniquely well-defined holomorphic functions $\mathscr{F}_t(a)$ and $\mathscr{G}_t(a)$ such that 
\begin{align}\label{eq:FGT}
&F_t(a)=\mathscr{F}_t(a^2),&G_t(a)=\mathscr{G}_t(a^2). %&&I_t(a)=\widetilde{I}_t(a^2).
\end{align}
By \eqref{eq:F+G} and \eqref{eq:FGT}, we have
\begin{align}\label{eq:13.20}
\exp(-a)=\sF_t(a)+\sG_t(a).
\end{align}

By \eqref{eq:13.20}, we get
\begin{align}\label{eq:cet-1}
\exp\(-t\Box^Z\)=\sF_t\(t\Box^Z\)+\sG_t\(t\Box^Z\).
\end{align}
%Let $\left\{\widetilde{\sG}_t\(t\Box^Z\)_{U,U'}(x,x')\right\}_{U,U'\in\cU}$ be the smooth kernel of
If $A$ is a bounded operator, let $\|A\|$ be its operator norm. If 
$A$ is in the trace class, let $\|A\|_1=\Tr\[\sqrt{A^{*}A}\]$ be its trace norm. 

\begin{prop}\label{prop:cet}
There exist $c>0$ and $C>0$ such that for $t\in (0,1]$, 
\begin{align}\label{eq:cet}
\left\|\sG_t\(t\Box^Z\)\right\|_1\le Ce^{-c/t}.
\end{align}
In particular, as $t\to 0$, we have
\begin{align}\label{eq:617}
\Trs\[\exp\(-t\Box^Z\)\]=\Trs\[\sF_t\(t\Box^Z\)\]+\cO(e^{-c/t}).
\end{align}
%Also, for $U,U'\in \cU$ and  $x\in \widetilde{U}$, $x'\in \widetilde{U'}$, 
%\begin{align}
%\left|\widetilde{\sG}_t\(t\Box^Z\)_{U,U'}(x,x')\right|\l Ce^{-c/t}.
%\end{align}
\end{prop}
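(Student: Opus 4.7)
The plan is to bound $\sG_{t}(t\Box^{Z})$ in trace norm using pure functional calculus, exploiting the Gaussian-tail estimate \eqref{eq:GG} on the symbol $G_{t}$. In contrast with the forthcoming analysis of $\sF_{t}(t\Box^{Z})$, no finite propagation speed argument is needed here; the exponential smallness of $G_{t}$ alone suffices.

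First, from \eqref{eq:GG} with $m'=0$ and $m$ arbitrary, one gets, after absorbing the polynomial factor on the left,
\begin{align}\label{eq:plan1}
|G_{t}(a)|\l C_{m}\,e^{-c/t}\,(1+|a|)^{-m},\qquad a\in \bR,\ t\in (0,1],
\end{align}
for every $m\in\bN$. Combining \eqref{eq:plan1} with the identity $\sG_{t}(t\lambda)=G_{t}(\sqrt{t\lambda})$, and using the elementary inequality $(1+\sqrt{t\lambda})^{2}\g 1+t\lambda$ to bridge the mismatch between $(1+\lambda)^{N}$ and the decay in $|a|=\sqrt{t\lambda}$, I would deduce
\begin{align}\label{eq:plan2}
\sup_{\lambda\g 0}(1+\lambda)^{N}\,|\sG_{t}(t\lambda)|\l C_{N}'\,e^{-c/t},\qquad t\in (0,1],
\end{align}
after possibly shrinking $c>0$ to absorb a polynomial factor $t^{-N}$ into the exponential. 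By the spectral theorem this lifts to an operator-norm bound $\bigl\|(1+\Box^{Z})^{N}\sG_{t}(t\Box^{Z})\bigr\|\l C_{N}'\,e^{-c/t}$.

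For $N$ chosen large enough relative to $\dim Z$, the operator $(1+\Box^{Z})^{-N}$ lies in the trace class; this is the only orbifold-specific input and follows from the Sobolev framework of subsection \ref{sec:dis}, in particular the compact embeddings \eqref{eq:ijj} and \eqref{eq:ijjc} together with the standard Weyl-type iteration. Factoring
\begin{align}
\sG_{t}(t\Box^{Z})=(1+\Box^{Z})^{-N}\cdot(1+\Box^{Z})^{N}\sG_{t}(t\Box^{Z})
\end{align}
and applying H\"older for Schatten norms yields \eqref{eq:cet}. The asymptotic \eqref{eq:617} then follows from the decomposition \eqref{eq:cet-1} and the bound $|\Trs[A]|\l \|A\|_{1}$. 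There is no real obstacle peculiar to orbifolds: the proof is the same as on a smooth compact manifold, once the trace-class statement for $(1+\Box^{Z})^{-N}$ is in hand. The finite propagation speed technique of Ma, mentioned in the opening of this subsection, is reserved for the complementary localization of $\sF_{t}(t\Box^{Z})$.
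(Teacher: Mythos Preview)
Your proposal is correct and follows essentially the same approach as the paper: bound $\|(1+\Box^{Z})^{N}\sG_{t}(t\Box^{Z})\|$ in operator norm via the Gaussian-tail estimate \eqref{eq:GG}, invoke the trace-class property of $(1+\Box^{Z})^{-N}$ for $N$ large, and factor. You are slightly more explicit than the paper about the passage from the $G_{t}$ bound to the $\sG_{t}(t\lambda)$ bound and the absorption of the polynomial $t^{-N}$ into the exponential, but the argument is the same.
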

\begin{proof}
By \eqref{eq:GG} and \eqref{eq:FGT}, for any $k\in \bN$, the operator 
$\(1+\Box^Z\)^k \sG_t\(t\Box^Z\)$ is a bounded operator   such that there exist  $C>0$ and $C'>0$,
\begin{align}\label{eq:cet1}
\left\|\(1+\Box^Z\)^k \sG_t\(t\Box^Z\)\right\|\le C\exp\({-C'/t}\).
\end{align}
Take  $k\in \bN$ big enough such that $(1+\Box^Z)^{-k}$ is of trace class. Then 
\begin{align}\label{eq:cet2}
\left\|\sG_t\(t\Box^Z\)\right\|_{1}\le \left\|(1+\Box^Z)^{-k}\right\|_1\left\|(1+\Box^Z)^k \sG_t\(t\Box^Z\)\right\|.
\end{align}
By \eqref{eq:cet1} and \eqref{eq:cet2}, we get \eqref{eq:cet}.
%Also, for $s_1,s_2\in \cH$, we have
%\begin{align}
%\left|\left\<(1+\Box^Z)^k \widetilde{G}_t\(t\Box^Z\)(1+\Box^Z)^{k'} s_1,s_2\right\>\right|\l Ce^{-C'/t}\|s_1\|_{\cH}\|s_2\|_{\cH}.
%\end{align}
%Then the kernel of $(1+\Box^Z)^k \widetilde{G}_t\(t\Box^Z\)(1+\Box^Z)^{k'}$ is of class $L^2$, whose $L^2$-norm is dominated by $Ce^{-C'/t}$. By the Sobolev embedding, the $C^0$-norm of $\widetilde{G}_t\(t\Box^Z\)$ is bounded by 
%$Ce^{-C'/t}$.
By \eqref{eq:cet-1} and \eqref{eq:cet}, we get \eqref{eq:617}. 
\end{proof}

Assume that $Z$ is covered by a finite family $\cU=\{U_i\}_{i\in I}$ of  connected open sets  with orbifold atlas $\widetilde{\cU}=\{(\widetilde{U}_i,G_{U_i}, \pi_{U_i})\}_{i\in I}$. Let  $\{\phi_i\}_{i\in I}$ be a partition of unity subordinate to $\{U_i\}_{i\in I}$. 
Let $\widetilde{\sF_t}\(t\Box^Z\)_{U_{i}}(z,z')$ be 
% the smooth kernel of 
% $\sF_t\(t\Box^Z\)$ with respect to $dv_Z$, which is represented  by  
% the family  
% $\left\{\widetilde{\sF}_t\(t\Box^Z\)_{U,U'}(x,x')\right\}_{U,U'\in 
% \cU}$ of $G_U\times G_{U'}$-invariant sections. 
the smooth kernel in the sense \eqref{eqloclift19}. 
By \eqref{eq:TrA19}, we have
\begin{multline}\label{eq:618}
\Trs\[\sF_t\(t\Box^Z\)\]\\
=\sum_{i\in I} \frac{1}{|G_{U_i}|}\sum_{g\in G_{U_i}}\int_{\widetilde{U}_i} \widetilde{\phi}_i(x)\Trs\[g\widetilde{\sF}_t\(t\Box^Z\)_{U_i}(g^{-1}x,x)\]dv_{\widetilde{U}_i}.
\end{multline}

For $i\in I$, and $g\in G_{U_i}$,  let $N_{\widetilde{U}_i^g/\widetilde{U}_i}$ be the normal bundle of $\widetilde{U}_i^g$ in $\widetilde{U}_i$. We identity 
$N_{\widetilde{U}_i^g/\widetilde{U}_i}$ with the orthogonal bundle of $T\widetilde{U}_i^g$ in $T\widetilde{U}_i|_{\widetilde{U}_i^g}$. For $\e_0>0$, set 
\begin{align}
N_{\widetilde{U}_i^g/\widetilde{U}_i,\e_0}=\left\{(y,Y)\in N_{\widetilde{U}_i^g/\widetilde{U}_i}: {\rm dist}\(y, \Supp(\widetilde{\phi}_i)\)<\e_0, |Y|<\e_0\right\}.
\end{align}
Take $\e_0>0$ small enough such that for all  $i\in I$,
\begin{align}
\ol{\{x\in \widetilde{U}_i: {\rm 
dist}(x,\Supp(\widetilde{\phi}_i))< \e_0\} }\subset \widetilde{U}_i,
\end{align}
and such that all  $i\in I$, $g\in G_{U_i}$, the exponential map $(y,Y)\in N_{\widetilde{U}_i^g/\widetilde{U}_i,\e_0}\to \exp_y(Y)\in \widetilde{U}_i$ defines a diffeomorphism from $N_{\widetilde{U}_i^g/\widetilde{U}_i,\e_0}$ onto its image $\widetilde{U}_{i,\e_0,g}\subset \widetilde{U}_i$. Also, there exists $\delta_0>0$ such that for all $i\in I$, $g\in G_{U_i}$ if $x\in \Supp(\widetilde{\phi}_i)$ and ${\rm dist}(g^{-1} x,x)< \delta_0$, then 
\begin{align}\label{eq:64}
x\in \widetilde{U}_{i,\e_0,g}.
\end{align}
Let $dv_{\widetilde{U}^g}$ be the induced Riemannian volume of $\widetilde{U}^g$, and let $dY$ be the induced Lebesgue volume on the fiber of $N_{\widetilde{U}_i^g/\widetilde{U}_i}$. Let $k_i:N_{\widetilde{U}_i^g/\widetilde{U}_i,\e_0}\to \bR^*_+$ be a smooth function such that on $ \widetilde{U}_{i,\e_0,g}$ we have 
\begin{align}\label{eq:dvk}
dv_{\widetilde{U}_i}=k_i(y,Y)dv_{\widetilde{U}^g} dY.
\end{align}
Clearly,  $
k_i(y,0)=1.$

For $x\in \Supp (\widetilde{\phi}_i)$ and $r\in (0,\e_0)$, let $B_x^{\widetilde{U}_i}(r)$ be the geodesic ball of center $x$
and radius $r$. 
% As \eqref{eq:PU}, 
% $\widetilde{\sF}_t\(t\Box^Z\)_{U_i}(x,z)$ is a smooth section on 
% $\widetilde{U}_i\times Z$. 
Using the result of the finite propagation 
speeds for the solutions of hyperbolic equations on orbifolds 
\cite[Section 2.3]{Ma_Orbifold_immersion} (see also \cite[Theorem D.2.1]{MaMa}),  by taking $\alpha_0<\frac{1}{4}\min \{\delta_0, \e_0\}$, for  $x\in \Supp (\widetilde{\phi}_i)$, we find the support of 
$\widetilde{\sF}_t\(t\Box^Z\)_{U_i}(x,\cdot)$ 
% in $U_i$, and the 
%  support of  $\widetilde{\sF}_t\(t\Box^Z\)_{U_i,U_i}(x,\cdot)$  
 in $B^{\widetilde{U}_i}_x(4\alpha_0)$. Moreover, 
$\widetilde{\sF}_t\(t\Box^Z\)_{U_i}(x,\cdot)$ depends only on  the 
Hodge Laplacian $\Box^{\widetilde{U}_i}$ acting on 
$\Omega^\cdot(\widetilde{U}_{i},\widetilde{F}_{U_{i}})$.  Using \eqref{eq:64} and \eqref{eq:dvk}, we get
\begin{multline}\label{eq:619}
\int_{\widetilde{U}_i} 
\widetilde{\phi}_i(x)\Trs\[g\widetilde{\sF}_t\(t\Box^Z\)_{U_i}(g^{-1}x,x)\]dv_Z\\=\int_{y\in \widetilde{U}^g_i} dv_{\widetilde{U}^g_i}\int_{Y\in N_{\widetilde{U}^g_i/\widetilde{U}_i,y}, |Y|<\e_0}\widetilde{\phi}_i(y,Y)\\
\Trs\[g\widetilde{\sF}_t\(t\Box^Z\)_{U_i}\big(g^{-1}(y,Y),(y,Y)\big)\]k_i(y,Y)dY.
\end{multline}

Consider an isometric embedding of 
$(\widetilde{U}_i,g^{T\widetilde{U}_i})$  into a compact manifold 
$(X_i,g^{TX_i})$. We extend  the trivial  Hermitian vector bundle 
$(\widetilde{F}_{U_{i}}, g^{\widetilde{F}_{U_i}})$ to a trivial  Hermitian vector bundle $(F_i,g^{F_i})$ on $X_i$. Thus, when restricted on $\widetilde{U}_i$, 
\begin{align}\label{eq:or=man}
\Box^{\widetilde{U}_i}=\Box^{X_i}.
\end{align}
Using the results of the finite propagation speeds for the solutions 
of hyperbolic equations on orbifolds \cite[Section 
2.3]{Ma_Orbifold_immersion} and on manifolds \cite[Theorem D.2.1]{MaMa}, for $x,y\in \widetilde{U}_i$, we have 
\begin{align}\label{eq:wisf}
\widetilde{\phi}_i(x)\widetilde{\sF}_t\(t\Box^Z\)_{U_i}(x,y)=\widetilde{\phi}_i(x)\sF_t\(t\Box^{X_i}\)(x,y).
\end{align}
Recall that for $x\in \widetilde{U}_i$ and $g\in G_{U_i}$, $g:F_{g^{-1}x}\to F_{x}$ is a linear map. In particular,
$g\sF_t\(t\Box^{X_i}\)(g^{-1}x,x)$ is well-defined on $\widetilde{U}_i$.  By \eqref{eq:wisf},  for $y\in \widetilde{U}^g_i$, we have
\begin{multline}\label{eq:623}
\int_{Y\in N_{\widetilde{U}^g_i/\widetilde{U}_i,y}, 
|Y|<\e_0}\!\!\!\!\!\widetilde{\phi}_i(y,Y)\Trs\!\!\[g\widetilde{\sF}_t\(t\Box^Z\)_{U_i}\big(\!g^{-1}(y,Y),(y,Y)\big)\]\!k_i(y,Y)dY
\\
=\int_{Y\in N_{\widetilde{U}^g_i/\widetilde{U}_i,y}, |Y|<\e_0}\!\!\!\!\!\widetilde{\phi}_i(y,Y)\Trs\[g\sF_t\(t\Box^{X_i}\)\big(g^{-1}(y,Y),(y,Y)\big)\]k_i(y,Y)dY\\
=t^{\frac{1}{2}\dim{N_{\widetilde{U}^g_i/\widetilde{U}_i}}}\int_{Y\in N_{\widetilde{U}^g_i/\widetilde{U}_i,y}, \sqrt{t}|Y|<\e_0}\phi_i\(y,\sqrt{t}Y\)\\
\Trs\[g\sF_t\(t\Box^{X_i}\)\big(g^{-1}(y,\sqrt{t}Y),(y,\sqrt{t}Y)\big)\]k_i\(y,\sqrt{t}Y\)dY.
\end{multline}

Let $\[e\(T\widetilde{U}^g,\nabla^{T\widetilde{U}^g}\)\]^{\max}$ be the function defined  on $\widetilde{U}^g$ such that 
\begin{align}\label{eq:emax}
e\(T\widetilde{U}^g,\nabla^{T\widetilde{U}^g}\)=\[e\(T\widetilde{U}^g,\nabla^{T\widetilde{U}^g}\)\]^{\max}dv_{\widetilde{U}^g}.
\end{align}

\begin{thm}\label{thm:BG}
There exist  $c>0$ and $C>0$ such that for any $i\in I$, $g\in G_{U_i}$ and $(y,\sqrt{t}Y)\in N_{\widetilde{U}^g_i/\widetilde{U}_i,\e_0}$, we have
\begin{multline}
t^{\frac{1}{2}\dim{N_{\widetilde{U}^g_i/\widetilde{U}_i}}}\bigg|\widetilde{\phi}_i\(y,\sqrt{t}Y\)k_i\(y,\sqrt{t}Y\)\\
\Trs\[g\sF_t\(t\Box^{X_i}\)\big(g^{-1}(y,\sqrt{t}Y),(y,\sqrt{t}Y)\big)\]
\bigg|
\le C\exp(-c|Y|^2).
\end{multline}
As $t\to 0$, we have
\begin{multline}\label{eq:625}
t^{\frac{1}{2}\dim{N_{\widetilde{U}^g_i/\widetilde{U}_i}}}\int_{Y\in 
N_{\widetilde{U}^g_i/\widetilde{U}_i,y},\sqrt{t}|Y|< 
\e_{0}}\bigg\{\widetilde{\phi}_i(y,\sqrt{t}Y)k_i\(y,\sqrt{t}Y\)\\
\Trs\[g\sF_t\(t\Box^{X_i}\)\big(g^{-1}(y,\sqrt{t}Y),(y,\sqrt{t}Y)\big)\]
dY\bigg\}\\
\to \widetilde{\phi}_i(y,0)\Tr[\rho^{F}_{U_i}(g)]\[e\(T\widetilde{U}^g,\nabla^{T\widetilde{U}^g}\)\]^{\max}. 
\end{multline}
\end{thm}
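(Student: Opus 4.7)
The proof proceeds in two steps: a uniform Gaussian bound controlling the integrand, followed by a Getzler-style rescaling argument identifying the pointwise limit.

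For the uniform bound, by estimate \eqref{eq:GG} applied to $\mathscr{G}_t$, the operator $\mathscr{G}_t(t\Box^{X_i})$ has smooth kernel with sup-norm bounded by $Ce^{-c/t}$, so it is enough to estimate the true heat kernel $\exp(-t\Box^{X_i})(x,y)$. Standard Gaussian upper bounds for bundle-valued heat kernels on the compact manifold $X_i$ give
\begin{align*}
\left|\exp(-t\Box^{X_i})(x,x')\right| \l C t^{-m/2} \exp\!\(-c\, d_{X_i}(x,x')^2/t\).
\end{align*}
At a point $x=(y,\sqrt{t}Y)$ in Fermi normal coordinates around $\widetilde{U}^g_i$, since $g$ fixes $y$ and acts orthogonally on $N_{\widetilde{U}^g_i/\widetilde{U}_i,y}$ without non-zero fixed vectors, one has the lower bound $d_{X_i}(g^{-1}x,x)\g c'\sqrt{t}\,|Y|$. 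Multiplying by the Jacobian factor $t^{\frac{1}{2}\dim N}$ and the compactly supported $\widetilde{\phi}_i$ then yields the claimed Gaussian bound $C\exp(-c|Y|^2)$.

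For the pointwise limit in $Y$, the plan is to apply Getzler rescaling along the fibers of $N=N_{\widetilde{U}^g_i/\widetilde{U}_i}$. In Fermi normal coordinates around $\widetilde{U}^g_i$, I would rescale the normal variable by $Y\mapsto \sqrt{t}Y$ and rescale the exterior/interior Clifford variables associated with the normal bundle by the standard Getzler conventions (while leaving the tangential Clifford variables untouched). Under this rescaling, the operator $t\Box^{X_i}$ converges as $t\to 0$ to a generalized harmonic oscillator on the fiber $N_y$, coupled to $R^{T\widetilde{U}^g_i}$ on the tangential part and twisted by the action of $g$ on $N_y$. Since $F$ is flat, the connection $\nabla^F$ contributes no curvature in the limit and all $Y$-dependence of the $F$-component collapses to the scalar action $\rho^F_{U_i}(g)$, which factors out as $\Tr[\rho^F_{U_i}(g)]$. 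Mehler's formula produces an explicit Gaussian kernel for the limit operator; combining it with the Berezin integral implementing $\Trs$ and the equivariant Mathai-Quillen identity shows that the normal bundle factors cancel against the Jacobian of the Gaussian integration in $Y$, leaving exactly the top-degree component $[e(T\widetilde{U}^g,\nabla^{T\widetilde{U}^g})]^{\max}$ on the fixed point submanifold. The dominated convergence theorem, applied with the bound from the first step, passes the limit inside the integral over $Y$ and gives \eqref{eq:625}.

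The principal difficulty is the Getzler rescaling in the equivariant case, where one must verify that the rotation of $g$ on $N$ interacts compatibly with both the Clifford rescaling and the geometric asymptotics, and that the supertrace of the resulting harmonic oscillator kernel collapses to the Euler form of $T\widetilde{U}^g$ rather than producing the $\widehat{A}_g$-genus factors from \eqref{eq:Agpm1}. This is precisely what is carried out in full generality in \cite[Chapter 13]{BG01} in the equivariant local index theorem; specializing their computation to the de Rham complex twisted by a flat bundle, so that the $\widehat{A}$-genus factors collapse to $1$ and the equivariant Chern character of $F$ collapses to the constant $\Tr[\rho^F_{U_i}(g)]$, while the Mathai-Quillen representative of the equivariant Thom class produces the Euler form on the fixed set, yields Theorem \ref{thm:BG}.
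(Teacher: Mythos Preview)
The paper's proof is a one-line citation to \cite[Theorems 13.13 and 13.15]{BG01}, and you ultimately arrive at the same reference, so the overall strategy agrees.

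However, your independent argument for the uniform Gaussian bound has a genuine gap. The scalar heat kernel estimate $\left|\exp(-t\Box^{X_i})(x,x')\right|\l Ct^{-m/2}\exp\big(-c\,d_{X_i}(x,x')^2/t\big)$, combined with $d_{X_i}(g^{-1}x,x)\g c'\sqrt{t}\,|Y|$ and the Jacobian factor $t^{\frac{1}{2}\dim N}$, only yields
\[
t^{\frac{1}{2}\dim N}\cdot t^{-m/2}\exp(-c|Y|^2)=t^{-\frac{1}{2}\dim\widetilde{U}^g_i}\exp(-c|Y|^2),
\]
which blows up as $t\to 0$ whenever the fixed-point set $\widetilde{U}^g_i$ has positive dimension. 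The missing factor $t^{\frac{1}{2}\dim\widetilde{U}^g_i}$ cannot come from the pointwise size of the kernel; it comes from the cancellations in the \emph{supertrace} over $\Lambda^\cdot(T^*X_i)$. In other words, the Getzler rescaling of the Clifford variables is required already for the uniform bound, not only for the identification of the limit. This is precisely what \cite[Theorem 13.13]{BG01} establishes: the rescaled kernel (with both spatial and Clifford rescaling built in) is shown to be uniformly bounded with Gaussian decay, and the supertrace of the original kernel is extracted from it. Your sketch of the second step and the final invocation of \cite[Chapter 13]{BG01} are fine, but the first step as written does not stand on its own.
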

\begin{proof}
Theorem \ref{thm:BG} is a consequence of \cite[Theorems 13.13 and 13.15]{BG01}.
\end{proof}

\begin{proof}[The end of the proof of Theorem \ref{thm:GBC}]
By \eqref{eq:617}, \eqref{eq:618}, \eqref{eq:619}, \eqref{eq:623}-\eqref{eq:625}, and the  dominated convergence Theorem, we get 
\begin{multline}\label{eq:egbc}
\lim_{t\to 0}\Trs\[\exp\(-t\Box^Z\)\]\\
=\sum_{i\in I} 
\frac{1}{|G_{U_i}|}\sum_{g\in G_{U_i}}\Tr[\rho^{F}_{U_i}(g)]\int_{\widetilde{U}^g_i}\widetilde{\phi}_i(y,0)e\(T\widetilde{U}_i^g,\nabla^{T\widetilde{U}_i^g}\).
\end{multline}
As $\widetilde{\phi}_{i}$ is $G_{U_i}$-invariant,  the integral on the right-hand side of \eqref{eq:egbc}  depends only on the conjugation class of $G_{U_i}$. Thus,
 \begin{multline}\label{eq:egbc1}
 \sum_{i\in I} \frac{1}{|G_{U_i}|}\sum_{g\in 
 G_{U_i}}\Tr[\rho^{F}_{U_i}(g)]\int_{\widetilde{U}^g_i}\widetilde{\phi}_i(y,0)e\(T\widetilde{U}_{i}^g,\nabla^{T\widetilde{U}^g_{i}}\)\\
 =\sum_{i\in I} \sum_{[g]\in 
 [G_{U_i}]}\frac{\Tr[\rho^{F}_{U_i}(g)]}{|Z_{G_{U_i}}(g)|}\int_{\widetilde{U}^g_i}\widetilde{\phi}_i(y,0)e\(T\widetilde{U}_{i}^g,\nabla^{T\widetilde{U}^g_{i}}\)
 =\sum_{i=0}^{l_{0}}\rho_i\frac{\chi_{\rm orb}(Z_i)}{m_i}.
 \end{multline}
By \eqref{eq:egbc} and \eqref{eq:egbc1}, we get \eqref{eq:GBC}. 
\end{proof}

\subsubsection{The end of the proof of Theorem 
\ref{prop:tran}}\label{sec:thmat}It remains to show the first 
identity of \eqref{eq:at1}. 
Following \cite[p. 68]{BG01},  we introduce a new Grassmann variable 
$\bf z$ which is anticommute  with $ds$. For  two operators $P,Q$ in 
the  trace class, set 
\begin{align}\label{eq:trz}
\Tr^{\bf z}[P+{\bf z}Q]=\Tr[Q].
\end{align}
By \eqref{eq:h}, \eqref{eq:utvt}, \eqref{eq:13.20} and \eqref{eq:trz}, we have
\begin{multline}\label{eq:ufgab}
u_{t}=\Tr^{\bf z}_{\rm s}\[\exp\(-A^2_t+{\bf z} B_t\)\]\\
=\Tr_{\rm s}^{\bf 
z}\[\sF_t\(A^2_t-{\bf z} B_t\)\]+\Tr_{\rm s}^{\bf z}\[\sG_t\(A^2_t-{\bf z} B_t\)\].
\end{multline}

We follow the same strategy used in the proof of Theorem \ref{thm:GBC}.
As in \eqref{eq:cet}, proceeding as in \cite[Theorem 13.6]{BG01}, when $t\to 0$, we have 
\begin{align}\label{eq:ufgab1}
\Tr_{\rm s}^{\bf z}\[\sG_t\(A^2_t-{\bf z} B_t\)\]=\cO(e^{-c/t}).
\end{align}
%By \eqref{eq:ufgab} and \eqref{eq:ufgab1}, we have
%\begin{align}\label{eq:haha0}
%u_{t}=\Trs^{\bf z}\[\sF_t\(A^2_t-{\bf z} B_t\)\]+\cO(e^{-c/t}).
%\end{align}
Moreover, the principal  symbol of  the lifting of $A^2_t-{\bf z} 
B_t$ on $\widetilde{U}_i$  is scalar, and is equal to $t|\xi|^2/4$ 
for $\xi\in T^*\widetilde{U}_i$.  Take $\alpha_0<\min\{\delta_0, 
\e_0\}$. As in the case of $\widetilde{\sF_t}\(t\Box^Z\)$,  for $x\in 
\widetilde{U}_{i}$ and $x\in \Supp(\widetilde{\phi}_i)$, the support of  $\widetilde{\sF_t}\(A^2_t-{\bf z} B_t\)_{U_{i}}(x,\cdot)$
is $B^{\widetilde{U}_i}_x(\alpha_0)$ and its value depends only on the restriction of $A^2_t-{\bf z} B_t$ on $U_i$. Also,
\begin{multline}
\Tr_{\rm s}^{\bf z}\[\sF_t\(A^2_t-{\bf z} B_t\)\]
=\sum_{i\in I} \frac{1}{|G_{U_i}|}\sum_{g\in G_{U_i}}\int_{y\in \widetilde{U}^g_i} \bigg\{\int_{Y\in N_{\widetilde{U}^g/\widetilde{U}_i,y}, |Y|<\e_0}
\widetilde{\phi}_i(y,Y)
\\\Tr_{\rm s}^{\bf z}\[g\widetilde{\sF}_t\(A^2_t-{\bf z} B_t\)_{U_i}\(g^{-1}(y,Y),(y,Y)\)\]k_i(y,Y)dY\bigg\}dv_{\widetilde{U}^g_i}
\end{multline}
As in \eqref{eq:or=man}, we can replace $A^2_t-{\bf z} B_t$ by the corresponding operator on manifolds. 
Recall that $h_g\(\nabla^{\pi^* \widetilde{F}_{U_i}}, g^{\pi^* \widetilde{F}_{U_i}}\)\in \Omega^{\rm odd}(\widetilde{U}_i^g)$ is defined in \eqref{eq:defhi}. Proceeding  as \cite[Theorem 3.24]{BG01}, as $t\to 0$, we have
\begin{multline}\label{eq:haha}
\int_{y\in \widetilde{U}^g_i} \bigg\{\int_{Y\in N_{\widetilde{U}^g/\widetilde{U}_i,y}, |Y|<\e_0}
\widetilde{\phi}_i(y,Y)k_i(y,Y)\\
\Tr_{\rm s}^{\bf z}\[g\widetilde{\sF}_t\(A^2_t-{\bf z} B_t\)_{U_i}\(g^{-1}(y,Y),(y,Y)\)\]dY\bigg\}dv_{\widetilde{U}^g_i}
\\= \int_{\widetilde{U}^g_i} \widetilde{\phi}_i(y,0) e\(\pi^*(T\widetilde{U}^g_i), \nabla^{\pi^*(T\widetilde{U}^g_i)}\) h_g\(\nabla^{\pi^* \widetilde{F}_{U_i}}, g^{\pi^* \widetilde{F}_{U_i}}\)+\cO(\sqrt{t}).
\end{multline}
Proceeding now as in \eqref{eq:egbc1}, by 
\eqref{eq:ufgab1}-\eqref{eq:haha}, we get the first identity of \eqref{eq:at1}.   \qed

%depends only $$

%
%\subsection{Proof of (\ref{eq:GBC})}
%\begin{proof}[Proof of \eqref{eq:GBC}]
%We have 
%\begin{align}
%\Trs\[\exp\(-t\Box^Z\)\]=\int_Z \Trs[p^Z_t(z,z)]dv_Z=\sum_{i\in I} \int_{U_i} \phi_i(z)\Trs[p^Z_t(z,z)]dv_Z.
%\end{align}
%Now we will evaluate the limit of 
%\begin{align}
%\int_{U_i} \phi_i(z)\Trs[p_t(z,z)]dv_Z
%\end{align}
%when $t\to 0$. 
% 
%We fix $z_0\in U_{z_0}$ and $\widetilde{U}_{x_0}$ centered at $x_0$. For $x\in \widetilde{U}_{z_0}$ with $\pi_{z_0}(x)=z$, we have 
%\begin{align}
%p_t(z,z)=\frac{1}{|G_{z_0}|}\sum_{g\in G_{z_0}} gp_{t,U,U}(g^{-1}x,x). 
%\end{align}
%Then, 
%\begin{align}
%\Trs[p_t(z,z)]=\frac{1}{|G_{z_0}|}\sum_{g\in G_{z_0}}\Trs\[ gp^{\widetilde{U}_{z_0}}_{t}(g^{-1}x,x)\]+\cO(e^{-ct}). 
%\end{align}
%Then 
%\begin{align}
%\int_{U_i} \phi_i(z)\Trs[p_t(z,z)]dv_Z=\frac{1}{|G_{z_0}|}\sum_{g\in G_{z_0}}\int_{U_i}\phi(x)\Trs[ gp^{\widetilde{U}_{z_0}}_{t}(g^{-1}x,x)]+\cO(e^{-ct}).
%\end{align}
%
%
%
%
%We have 
%\begin{align}
%\Trs[ p^{\widetilde{U}_{z_0}}_{t}(x,x)]\to e(T\widetilde{U},\nabla^{T\widetilde{U}}).
%\end{align}
%And 
%\begin{align}
%\Trs[ gp^{\widetilde{U}_{z_0}}_{t}(g^{-1}x,x)]\to \delta_{\widetilde{U}^g} \Tr[g]e(T\widetilde{U}^g,\nabla^{T\widetilde{U}^g}).
%\end{align}
%
%The proof of Theorem \ref{thm:GBC} is completed. 
%\end{proof}
%
%

\section{Analytic torsion on compact locally symmetric space}\label{Sec:Fried}
Let $G$ be a linear connected real  reductive group with maximal compact  
subgroup $K\subset G$, and let $\Gamma\subset G$ be a discrete 
cocompact subgroup  of $G$. The corresponding  locally symmetric 
space $Z=\Gamma\backslash G/K$ is a compact orientable orbifold. The 
purpose of this section is to show Theorem \ref{thm:3} which claims 
an equality between the analytic torsion of an acyclic unitarily flat 
orbifold vector bundle $F$ on $Z$ and the zero value of  the dynamical zeta function associated to the holonomy of $F$.  

This section is organized as follows.
In subsections \ref{sec:redu} and \ref{sec:sym}, we  recall  some facts on  reductive groups and the associated  symmetric spaces.

In subsections \ref{sec:semi} and \ref{sec:orb}, we recall the 
definition of  semisimple elements and the semisimple orbital 
integrals. We recall the Bismut formula for semisimple orbital 
integrals \cite[Theorem 6.1.1]{B09}. 

In subsection \ref{sec:loc}, we introduce the discrete cocompact subgroup $\Gamma$ and the associated  locally symmetric spaces. We recall the Selberg trace formula.

In subsection \ref{sec:dyn}, we introduce a Ruelle  dynamical 
zeta function associated to the holonomy of  a unitarily flat 
orbifold vector bundle on $Z$. We restate Theorem \ref{thm:3}. When 
the fundamental rank $\delta(G)\in \bN$ of $G$ does not equal to $1$ 
or when $G$ has noncompact center, we show Theorem \ref{thm:3}.

Subsections \ref{sec:d=1}-\ref{sec:proof} are devoted to  the case 
where $G$ has compact center and  $\delta(G)=1$. In subsection \ref{sec:d=1}, we recall some 
notation and results proved in  \cite[Sections 6A and 6B]{Shfried}.
In subsection \ref{sec:repK}, we introduce a class of representations 
of $K$. In subsection \ref{sec:suan}, using the Bismut formula,  we evaluate  the orbital integrals for the heat operators of the Casimir    associated to the $K$-representations constructed in subsection \ref{sec:repK}. %The evaluation of the orbital integral is based on Bismut's formula \cite[Theorem 6.1.1]{B09}. 
In subsection \ref{sec:selzeta},  we introduce the  Selberg  zeta functions, which are shown to be meromorphic on $\bC$ and satisfy certain functional equations. 
In subsection \ref{sec:proof},  we show that  the dynamical zeta function equals   an alternating product of certain Selberg  zeta functions. %We identify the dominant term in the Laurant development  of the dynamical zeta function at  point of zero. 
We show Theorem \ref{thm:3}.

\subsection{Reductive  groups}\label{sec:redu}
Let $G$ be a linear connected real reductive group \cite[p. 3]{Knappsemi}, let $\theta\in \mathrm{Aut}(G)$ be the Cartan involution.  
That means $G$ is a closed connected group of real matrices that is stable under transpose, and $\theta$ is the composition of transpose and inverse of matrices. 
Let $K\subset G$ 
be the subgroup of $G$ fixed by $\theta$, so that $K$ is a maximal compact subgroup of $G$.

Let $\fg$ and $\fk$ be  the Lie algebras of $G$ and $K$. 
The Cartan involution $\theta$ acts naturally as Lie algebra automorphism of $\fg$. Then $\fk$ is the eigenspace of $\theta$ associated with the eigenvalue $1$. Let $\fp\subset \fg$ be the eigenspace with the eigenvalue $-1$, so that
\begin{align}\label{eq:cartan1}
  \fg=\fp\oplus\fk.
\end{align}
%Also, we have
%\begin{align}\label{eq:kpkp}
%  &[\fk,\fk]\subset \fk, &[\fp,\fp]\subset \fk,&& [\fk,\fp]\subset \fp.
%\end{align}
By \cite[Proposition 1.2]{Knappsemi}, we have the diffeomorphism
\begin{align}\label{eq:cartan2}
 (Y,k)\in \fp\times K\to  e^Y k\in G.
\end{align}
Set
\begin{align}
  &m=\dim\fp,&n=\dim\fk.
\end{align}

Let $B$ be a real-valued  non degenerate bilinear symmetric form on $\fg$ which is invariant under the adjoint action of $G$, and also under $\theta$. Then \eqref{eq:cartan1} is an orthogonal splitting  with respect to $B$. We assume $B$ to be positive on $\fp$, and negative on $\fk$. The form $\<\cdot,,\cdot\>=-B(\cdot,\theta\cdot)$ defines an $\Ad(K)$-invariant scalar product on $\fg$ such that the splitting \eqref{eq:cartan1} is still orthogonal. We denote by $|\cdot|$ the corresponding norm.

%
%Let $Z_G\subset G$ be the center of $G$ with Lie algebra $\fz_\fg\subset \fg$.
%Set
%\begin{align}
%&  \fz_{\fp}=\fz_{\fg}\cap \fp,&\fz_{\fk}=\fz_{\fg}\cap \fk.
%\end{align}
%By \cite[Corollary 1.3]{Knappsemi}, $Z_G$ is reductive. As in \eqref{eq:cartan1} and \eqref{eq:cartan2}, we have the Cartan decomposition
%\begin{align}\label{eq:ZG}
%&\fz_\fg=\fz_{\fp}\oplus \fz_\fk,& Z_G=\exp (\fz_\fp) (Z_G\cap K).
%\end{align}

Let $\fg_\bC=\fg\otimes_\bR\bC$ be the complexification of $\fg$ and let $\fu=\sqrt{-1}\fp\oplus \fk$
be the compact form of $\fg$. Let $G_\bC$ and $U$ be the connected group of complex matrices associated to the Lie algebras $\fg_\bC$ and $\fu$.
By \cite[Propositions 5.3 and 5.6]{Knappsemi}, if $G$ has a compact center, $G_\bC$ is a linear connected complex reductive group with maximal compact subgroup $U$.

%In the sequel, we denote by
%\begin{align}
%c_\fg=-B(\kappa^\fg,\kappa^\fg)\g0.
%\end{align}

Let $\mathscr{U}(\fg)$ be the enveloping algebra of $\fg$. We  identify $\mathscr{U}(\fg)$ with the algebra of left-invariant differential operators on $G$.
Let $C^\fg\in \mathscr{U}(\fg)$ be the Casimir element. If 
$e_1,\cdots,e_m$ is an orthonormal  basis of $\fp$, if 
$e_{m+1},\cdots,e_{m+n}$ is an orthonormal  basis of $\fk$, then 
\begin{align}\label{eq:Cg}
  C^\fg=-\sum_{i=1}^{m}e^2_i+\sum_{i=m+1}^{n+m}e_i^{2}.
\end{align}
Classically, $C^\fg$ is in the center of $\mathscr{U}(\fg)$.

We define $C^{\fk}$ similarly. Let $\tau$ be a finite dimensional representation of $K$ on $V$. We denote by $C^{\fk,V}$ or $C^{\fk,\tau}\in \End(V)$ the corresponding Casimir operator acting on $V$, so that 
\begin{align}\label{eq:Ckvkt}
C^{\fk,V} = C^{k,\tau} = \sum_{i=m+1}^{m+n}\tau(e_i)^2.
\end{align}

%Take
%\begin{align}\label{eq:CGHV}
%  &C^{\fg,H}=-\sum_{i=1}^{m}e^2_i,&C^{\fk}=\sum_{i=m+1}^{n+m}e_i^{2}.
%\end{align}
%As the notation indicates, $C^\fk$ is the Casimir of $\fk$ that is associated with $B|_{\fk}$.
%By \eqref{eq:Cg} and \eqref{eq:CGHV}, we have
%\begin{align}\label{eq:Cg=CH+CV}
%  C^\fg=C^{\fg,H}+C^{\fk}.
%\end{align}
% By \cite[Equation (2.5.7)]{B09}, we have
%\begin{align}
%   \[C^{\fg,H},C^{\fk}\]=0.
%\end{align}
%The bilinear form $B$ extends to $\fg_\bC$ by $\bC$-linearity. Its restriction to $\fu$ is negative. Let
%$C^\fu$ be the Casimir element of $\fu$ that is associated with $B|_{\fu}$. %By \eqref{eq:Cg}, we have
%%\begin{align}
%%C^\fg=C^\fu.
%%\end{align}
%
%Let  $\ka^\fg\in \Lambda^3(\fg^*)$ be such that for $U,V,W\in \fg$,  $\ka^\fg(U,V,W)=B([U,V],W)$.
%Let $B^*$ be the bilinear form on $\Lambda^\cdot(\fg^*)$ induced by $B$. Denote by  $B^*(\kappa^\fg,\kappa^\fg)$  its norm with respect to $B$.  %By  \cite[Equation (2.6.10)]{B09},  
%%\begin{align}
%%&B^*(\kappa^\fg,\kappa^\fg)\l 0.
%%\end{align}
%Similarly, we can define $\ka^\fk$ and $\ka^\fu$.

Let $\delta(G)\in \bN$ be the fundamental rank of $G$, that is 
defined by  the difference between the complex ranks of $G$ and $K$. If  $T\subset K$ is a maximal torus of $K$ with Lie algebra of $\ft\subset \fk$, set
\begin{align}
\fb=\{Y\in \fp: [Y,\ft]=0\}.
\end{align}
Put
\begin{align}
&\fh=\fb\oplus \ft,&H=\exp(\fb)T.
\end{align}
By \cite[Theorem 5.22]{Knappsemi}, $\fh\subset \fg$ (resp.~$H\subset G$) is a $\theta$-invariant Cartan subalgebra (resp. subgroup). Therefore, 
\begin{align}
\delta(G)=\dim \fb.
\end{align}
Moreover, up to conjugation, $\fh\subset \fg$ (resp.~$H\subset G$) is the unique  Cartan subalgebra (resp. subgroup) with minimal noncompact dimension.

\subsection{Symmetric space}\label{sec:sym}
Let $\omega^\fg$ be the canonical left-invariant $1$-form on $G$ with values in $\fg$, and let $\omega^\fp, \omega^\fk$ be
its components in $\fp, \fk$, so that
\begin{align}\label{eq:wgkp}
  \omega^\fg=\omega^\fp+\omega^\fk.
\end{align}
Set $X=G/K$. Then $p:G\to X=G/K$
is a $K$-principle bundle equipped with the connection form $\omega^\fk$. %By \eqref{eq:kpkp} and \eqref{eq:wgkp}, the curvature of $\omega^\fk$ is given by
%\begin{align}\label{eq:Ok}
%  \Omega^\fk=-\frac{1}{2}\[\omega^\fp,\omega^\fp\].
%\end{align}

Let $\tau$ be a finite dimensional orthogonal representation of $K$ on the real Euclidean space $E_\tau$.
Let $\cE_\tau$ be the associated Euclidean vector bundle with total space 
$G\times_{K} E_\tau$. It is equipped a Euclidean connection $\nabla^{\cE_\tau}$ induced by $\omega^{\fk}$. 
We identify  $C^\infty({X},\cE_\tau)$ with the $K$-invariant subspace $C^\infty(G,E_\tau)^K$ of smooth $E_\tau$-valued functions on $G$. Let $C^{\fg,X,\tau}$ be the Casimir element  of $G$ acting on $C^\infty(X,\cE_\tau)$. 

Observe that $K$ acts isometrically on $\fp$ by adjoint action. Using the above construction, the total space of the tangent bundle $TX$ is given by 
\begin{align}\label{eq:TX}
G\times_K\fp.
\end{align}
 It is equipped with a Euclidean metric $g^{TX}$ and a Euclidean connection $\nabla^{TX}$, which coincides with the Levi-Civita connection of the Riemannian manifold  $(X,g^{TX})$. %By \eqref{eq:Ok} and \eqref{eq:TX}, for $U,V,W\in  TX$, we have
%\begin{align}\label{eq:RTX}
%  R^{T{X}}(U,V)W=-[[U,V],W].
%\end{align}
%Form \eqref{eq:RTX}, we deduce that  
Classically, $(X,g^{TX})$ has  non positive sectional curvature.
%For $x,y\in X$, we denote by $d_X(x,y)$ the Riemannian distance between $x$ and $y$. 

If $E_\tau=\Lambda^\cdot(\fp^*)$ is  equipped with the $K$-action induced by the  adjoint action, then $C^\infty(X,\cE_\tau)=\Omega^\cdot(X)$. In this case, we write $C^{\fg,X}=C^{\fg,X,\tau}$. By \cite[Proposition 7.8.1]{B09}, $C^{\fg,X}$ coincides with the Hodge Laplacian acting on
$\Omega^\cdot(X)$. 

Let $dv_X$ be the Riemannian volume of $(X,g^{TX})$. Define 
$[e(TX,\nabla^{TX})]^{\max}$ as in \eqref{eq:emax}. Since both $dv_X$ 
and $e(TX,\nabla^{TX})$ are $G$-invariant, we see that $[e(TX,\nabla^{TX})]^{\max}\in\bR$ is a constant.
Note that $\delta(G)$ and $\dim X$ have the same parity. 
 By \cite[Proposition 4.1]{Shfried}, if $\delta(G)\neq0$, then
\begin{align}\label{eq:ee0}
\[e\(TX,\nabla^{TX}\)\]^{\max}=0.
\end{align}
If $\delta(G)=0$, $G$ has a compact center. Then $U$ is a compact 
group with maximal torus $T$. Denote by $W(T,U)$ (resp.~$W(T,K)$) the Weyl group of $U$ (resp.~$K$) with respect to $T$,  and by $\vol(U/K)$ the volume of $U/K$ induced by $-B$. Then, \cite[Proposition 4.1]{Shfried} asserts 
\begin{align}\label{eq:en0}
\[e\(TX,\nabla^{TX}\)\]^{\max}=(-1)^{m/2}\frac{|W(T,U)|/|W(T,K)|}{\vol(U/K)}.
\end{align}

\subsection{Semisimple elements}\label{sec:semi}
If $\gamma\in G$, we denote by $Z(\gamma)\subset G$ the centralizer of $\gamma$ in $G$, and by $\fz(\gamma)\subset \fg$ its Lie algebra. If $a\in \fg$, let $Z(a)\subset G$ be the stabilizer of $a$ in $G$, and let $\fz(a)\subset \fg$ be its Lie algebra.

Following \cite[Section 3.1]{B09}, $\gamma\in G$ is said to be semisimple if and only if there is $g_\gamma\in G$, such that $\gamma=g_\gamma e^ak^{-1}g_\gamma^{-1}$ with
 \begin{align}\label{eq:asr}
&a\in \fp, &k\in K,&&\Ad(k)a=a.
 \end{align}
%  such that
%\begin{align}\label{eq:rg}
%\gamma=ge^ak^{-1}g^{-1}.
%\end{align}
%

%
%if the displacement function $d_\gamma:x\in X\to d_X(x,\gamma x)$ has a minimum point in $X$. In this case, denote by  $l_\gamma\g 0$ the minimal value of $d_\gamma$, and by  $X(\gamma)\subset X$ the closed subset of minimum points in $X$.
%Clearly, $Z(\gamma)$ acts on $X(\gamma)$. By \cite[Theorem 3.1.2]{B09}, if $g\in G$ and $x=pg\in X$, then $x\in X(\gamma)$ if and only if
% there are 
% \begin{align}\label{eq:asr}
% &a\in \fp, &k\in K, &&\Ad(k)a=a,
% \end{align}
%  such that
%\begin{align}\label{eq:rg}
%\gamma=ge^ak^{-1}g^{-1}.
%\end{align}
%Moreover, we have $l_\gamma=|a|$. %Also,
%\begin{align}\label{eq:Zr}
%&Z(\gamma)=g\big(Z(a)\cap Z(k)\big) g^{-1},&\fz(\gamma)=\Ad(g_\gamma) \big(\fz(a)\cap \fz(k)\big).
%\end{align}

%Fix one $g_\gamma\in G$ such that \eqref{eq:asr} holds with $g=g_\gamma$, and  put $x_\gamma=pg_\gamma\in X(\gamma)$.
 Set
 \begin{align}\label{eq:akr}
 &a_\gamma=\Ad(g_\gamma)a,& k_\gamma=g_\gamma k g_\gamma^{-1}.
 \end{align}
 Therefore, $ \gamma=e^{a_\gamma}k_\gamma^{-1}$. Moreover, this decomposition does not depend on the choice of $g_\gamma$. By \cite[(3.3.3)]{B09}, we have 
 \begin{align}\label{eq:Zr}
&Z(\gamma)=Z(a_\gamma)\cap Z(k_\gamma),&\fz(\gamma)=\fz(a_\gamma)\cap \fz(k_\gamma).
\end{align}
 
 %Let $K(\gamma)\subset Z(\gamma)$  be the stabilizer of $x_\gamma$ in $Z(\gamma)$.
By \cite[Proposition 7.25]{KnappLie}, $Z(\gamma)$ is reductive. The corresponding Cartan evolution and 
bilinear form are given by 
\begin{align}
& \theta_{g_\gamma}=g_\gamma \theta g^{-1}_\gamma, &B_{g_\gamma}(\cdot,\cdot)=B\(\Ad(g^{-1}_\gamma)\cdot, \Ad(g^{-1}_\gamma)\cdot\).
\end{align}
Let $K(\gamma)\subset Z(\gamma)$ be the fixed point of $\theta_{g_\gamma}$, so  $K(\gamma)$ is a maximal compact subgroup $Z(\gamma)$. 
Let $\fk(\gamma)\subset \fz(\gamma)$ be the Lie algebra of $K(\gamma)$. Let 
\begin{align}\label{eqZrPrKr}
\fz(\gamma)=\fp(\gamma)\oplus \fk(\gamma)
\end{align}
be the Cartan decomposition of $\fz(\gamma)$.  Let 
%
%Clearly, $a_\gamma\in \fp(\gamma)$ and $k_\gamma\in K(\gamma)$. By \cite[Theorem 3.3.1]{B09}, the action of  $Z(\gamma)$ on $X(\gamma)$ is transitive, and $g\in Z(\gamma)\to g\cdot x_\gamma\in X$ induces the identification of $Z(\gamma)$-manifolds,
\begin{align}\label{eq:Xr1}
  X(\gamma)= Z(\gamma)/K(\gamma)
\end{align}
be the associated symmetric space. 

% 
% 
% Set
%\begin{align}
%&  \fp_{g_\gamma}=\Ad(g_\gamma)\fp,&\fk_{g_\gamma}=\Ad(g_\gamma)\fk.
%\end{align}
%Then $\fg=\fp_{g_\gamma}\oplus \fk_{g_\gamma}$ is a Cartan decomposition of $\fg$ with respect to the Cartan evolution $\Ad(g_\gamma)\theta$. The bilinear form becomes $B_\gamma(\cdot, \cdot)=B(\Ad(g^{-1}_\gamma)\cdot, \Ad(g^{-1}_\gamma)\cdot)$.
% Set $x_\gamma=pg_\gamma$. Take $K(\gamma)\subset Z(\gamma)$ to be the stabilizer of $x_\gamma$ in $Z(\gamma)$.
%By \cite[Proposition 7.25]{KnappLie}, $Z(\gamma)$ is reductive  with maximal compact subgroup $K(\gamma)$. Let 
%\begin{align}
%\fz(\gamma)=\fp(\gamma)\oplus \fk(\gamma)
%\end{align}
%be the Cartan decomposition $\fz(\gamma)$.  Clearly, $a_\gamma\in \fp(\gamma)$ and $k_\gamma\in K(\gamma)$. By \cite[Theorem 3.3.1]{B09}, the action of  $Z(\gamma)$ on $X(\gamma)$ is transitive, and $g\in Z(\gamma)\to g\cdot x_\gamma\in X$ induces the identification of $Z(\gamma)$-manifolds,
%\begin{align}\label{eq:Xr1}
%  X(\gamma)\simeq Z(\gamma)/K(\gamma).
%\end{align}

Let $Z^0(\gamma)$ be the connected component of the identity in $Z(\gamma)$. Similarly,  $Z^0(\gamma)$ is reductive  with maximal compact subgroup $Z^0(\gamma)\cap K(\gamma)$. Also, $ Z^0(\gamma)\cap K(\gamma)$ coincides with $K^0(\gamma)$,  the connected component of the identity in $K(\gamma)$. Clearly, we have 
\begin{align}\label{eq:Xr}
  X(\gamma)=Z^0(\gamma)/K^0(\gamma).
\end{align}

The semisimple element $\gamma$ is called elliptic if $a_\gamma=0$.  Assume now $\gamma$ is semisimple and nonelliptic. Then $a_\gamma\neq0$. Let $\fz^{a,\bot}(\gamma) $ (resp.~$\fp^{a,\bot}(\gamma)$) be the orthogonal spaces to $a_\gamma$ in $\fz(\gamma)$ (resp.~$\fp(\gamma)$) with respect to $B_{g_\gamma}$. Thus,
\begin{align}\label{eq:zabot}
  \fz^{a,\bot}(\gamma)=\fp^{a,\bot}(\gamma)\oplus \fk(\gamma).
\end{align}
Moreover, $ \fz^{a,\bot}(\gamma)$ is a Lie algebra. Let $Z^{a,\bot,0}(\gamma)$ be the connected subgroup of $Z^0(\gamma)$ that is associated with the Lie algebra $\fz^{a,\bot}(\gamma)$.
By \cite[ (3.3.11)]{B09}, $Z^{a,\bot,0}(\gamma)$ is reductive with maximal compact subgroup $K^0(\gamma)$ with Cartan decomposition \eqref{eq:zabot}, and
\begin{align}\label{eq:Z0a}
  Z^0(\gamma)=\mathbf{R}\times Z^{a,\bot,0}(\gamma),
\end{align}
so that $e^{ta_\gamma}\in Z^0(\gamma)$ maps into $t|a|\in \bR$.
%By \cite[Equation (3.3.19)]{B09}, the map $g\in Z^{a,\bot,0}(\gamma)\to g\cdot x_\gamma\in X$ surjects onto a submanifold $X^{a,\bot}(\gamma) $ of $X(\gamma)$, so that
Set
\begin{align}\label{eq:Xa0}
X^{a,\bot}(\gamma)=  Z^{a,\bot,0}(\gamma)/K^0(\gamma).
\end{align}
By \eqref{eq:Xr}, \eqref{eq:Z0a}, and \eqref{eq:Xa0}, we have
\begin{align}\label{eq:ZXM}
  X(\gamma)=\mathbf{R}\times X^{a,\bot}(\gamma),
\end{align}
so that the action $e^{ta_\gamma}$ on $X(\gamma)$ is just the translation by $t|a|$ on $\bR$.

\subsection{Semisimple orbital integral}\label{sec:orb}
Recall that $\tau$ is a finite dimensional orthogonal representation of $K$ on the real Euclidean space $E_\tau$. 
Let $p_t^{X,\tau}(x,x')$ be the smooth kernel of the heat operator $\exp(-tC^{\fg,X,\tau}/2)$ with respect to the Riemannian volume $dv_X$. As in \cite[(4.1.6)]{B09}, let $p_t^{X,\tau}(g)$ be the equivariant representation of the 
section $p_t^{X,\tau}(p1,\cdot)$. Then $p_t^{X,\tau}(g)$ is a 
$K\times K$-invariant  function in $ C^\infty(G, \End (E_\tau))$.

Let $dv_G$ be the left-invariant Riemannian volume on $G$ induced by 
the metric $-B(\cdot, \theta \cdot)$.
For a semisimple element $\gamma\in G$, denote by $dv_{Z^0(\gamma)}$ the  left-invariant Riemannian volume on $Z^0(\gamma)$ induced by $-B_{g_\gamma}(\cdot,\theta_{g_\gamma}\cdot)$.  Clearly, the choice of  $g_\gamma$ is irrelevant. Let 
$dv_{Z^0(\gamma)\backslash G}$ be the Riemannian volume on $Z^0(\gamma)\backslash G$ such that $dv_G=dv_{Z^0(\gamma)}dv_{Z^0(\gamma)\backslash G}$. By \cite[Definition 4.2.2, Proposition 4.4.2]{B09},  the  orbital integral 
\begin{multline}\label{eq:TRr}
\Tr^{[\gamma]}\[\exp\(-tC^{\fg,X,\tau}/2\)\]\\
=\frac{1}{\vol(K^0(\gamma)\backslash K)}\int_{Z^0(\gamma)\backslash G}\Tr^{E_\tau}\[p^{X,\tau}_t(g)\]dv_{Z^0(\gamma)\backslash G}
\end{multline}
 is well-defined.

%
%
%\begin{defin}For a semisimple element $\gamma\in G$, the orbital integral of $\exp(-tC^{\fg,X,\tau}/2)$ is defined by
%  \begin{align}\label{eq:TRr}
%  \Tr^{[\gamma]}\[\exp\(-tC^{\fg,X,\tau}/2\)\]=\int_{\fp^\bot(\gamma)}\Tr^{E_\tau}\[\gamma p_t^{X,\tau}\(pe^f,p\gamma e^f\)\]r(f)df.
%  \end{align}
%\end{defin}
%\begin{re}
%By \eqref{eq:drbig}, \eqref{eq:rf} and \eqref{eq:Qt}, the integral on the right-hand side of \eqref{eq:TRr} converges.
%\end{re}

\begin{re}\label{rer1}
In \cite[Definition 4.2.2]{B09},  the volume
$\vol(K_0(\gamma)\backslash K) $ are normalized to be $1$. By 
\cite[(3.3.18)]{B09}, in the definition of the orbital integral \eqref{eq:TRr}, we can 
replace $K^{0}(\gamma),Z^{0}(\gamma)$ by  $K(\gamma),Z(\gamma)$. 
\end{re}

\begin{re}
As the notation $\Tr^{[\gamma]}$ indicates, the orbital integral only depends on the conjugacy class of $\gamma$ in $G$.
However, the notation $[\gamma]$ will be used later for the conjugacy class of a discrete group $\Gamma$. Here, we consider $\Tr^{[\gamma]}$ as an abstract symbol.
\end{re}

We will also consider the case where $E_\tau=E_\tau^+\oplus E_\tau^-$ is a $\bZ_2$-graded representation of $K$. In this case, We will use the  notation $\Trs^{[\gamma]}\[\exp(-tC^{\fg,X,\tau}/2)\]$ when the trace on the right-hand side of \eqref{eq:TRr} is replaced by the supertrace on $E_\tau.$

In \cite[Theorem 6.1.1]{B09}, for any semisimple element $\gamma\in G$, Bismut gave an explicit formula for $\Tr^{[\gamma]}\[\exp\(-tC^{\fg,X,\tau}/2\)\]$. 
For the later use, let us recall the formula when $\gamma$ is elliptic. 

Assume now $\gamma\in K$. By \eqref{eq:akr}, we can take $g_\gamma=1$. Then $\fp(\gamma)\subset \fp$, $\fk(\gamma)\subset \fk$.
Let $\fp^\bot(\gamma)\subset \fp$, $\fk^\bot(\gamma)\subset \fk$ be the orthogonal space of $\fp(\gamma)$, $\fk(\gamma)$. Take $\fz^\bot(\gamma)=\fp^\bot(\gamma)\oplus\fk^\bot(\gamma)$.
Recall that $\widehat{A}$ is defined in \eqref{eq:defAhat}. Following \cite[Theorem 5.5.1]{B09}, for $Y\in \fk(\gamma)$, put
\begin{multline}\label{eq:J}
  J_{\gamma}({Y})=\frac{\widehat{A}\big(i\ad({Y})|_{\fp(\gamma)}\big)}{\widehat{A}\big(i\ad(Y)|_{\fk(\gamma)}\big)}\\
  \[\frac{1}{\det\big(1-\Ad(\gamma)\big)|_{\fz^\bot(\gamma)}}\frac{\det\big(1-\exp(-i\ad(Y))\Ad(\gamma)\big)|_{\fk^\bot(\gamma)}}{\det\big(1-\exp(-i\ad(Y))\Ad(\gamma)\big)|_{\fp^\bot(\gamma)}}\]^{1/2}\!\!\!\!\!.
\end{multline}
Note that by \cite[Section 5.5]{B09}, the square root in \eqref{eq:J} is well-defined, and its sign is chosen such that 
\begin{align}
J_{\gamma}(0)=\Big({\rm\det}\big(1-\Ad(\gamma)\big)|_{\fp^\bot(\gamma)}\Big)^{-1}.
\end{align}
Moreover, $J_{\gamma}$ is an  $\Ad\big(K^0(\gamma)\big)$-invariant analytic function on $\fk(\gamma)$ such that  there exist $c_\gamma>0, C_\gamma>0$,  for $Y\in \fk(\gamma)$, 
\begin{align}\label{eq:Jrexp}
   |J_{\gamma}(Y)|\le C_\gamma \exp{(c_\gamma|Y|)}.
\end{align}

Denote by $dY$ be the Lebesgue measure on $\fk(\gamma)$ induced by $-B$. Recall that $C^{\fk,\fp}, C^{\fk,\fp}$ are defined in \eqref{eq:Ckvkt}. 
By \cite[Theorem 6.1.1]{B09}, for $t>0$, we have
\begin{multline}\label{eq:trrJr}
\Tr^{[\gamma]}\[\exp\(-tC^{\fg,X,\tau}/2\)\]\\
=\frac{1}{(2\pi t)^{\dim \fz(\gamma)/2}}\exp\(\frac{t}{16}\Tr^{\fp}\[C^{\fk,\fp}\]+\frac{t}{48}\Tr^{\fk}\[C^{\fk,\fk}\]\)\\
  \int_{Y\in \fk(\gamma)}J_\gamma(Y)
  \Tr^{E_\tau}\[\tau\(\gamma\)\exp(-i{\tau(Y)})\]\exp\(-|{Y}|^2/2t\)dY.
\end{multline}

\subsection{Locally symmetric spaces}\label{sec:loc}
Let $\Gamma\subset G$ be a discrete cocompact subgroup of $G$. By 
\cite[Lemma 1]{Selberg60},  the elements of $\Gamma$ are semisimple. 
Let $\Gamma_e\subset \Gamma$ be the subset of elliptic elements in 
$\Gamma$. Set $\Gamma_+=\Gamma-\Gamma_e$. Let $[\Gamma]$ be the set 
of conjugacy classes of $\Gamma$, and 
let $[\Gamma_e]\subset [\Gamma]$ and $[\Gamma_+]\subset [\Gamma]$ be 
respectively the subsets of $[\Gamma]$ formed by the conjugacy classes of elements in $\Gamma_e$ and $\Gamma_+$. Clearly, $[\Gamma_e]$ is a finite set.

The group
$\Gamma$ acts properly  discontinuously and isometrically  on the 
left on $X$. Take $Z=\Gamma\backslash X$ to be the corresponding 
locally symmetric space. By Proposition \ref{prop:X/G} and Theorem \ref{thm:uX}, $Z$ is a compact 
orbifold. Note that by \eqref{eq:cartan2}, $X$ is a contractible manifold. By 
 Remark \ref{re231}, $X$ is the universal covering orbifold of $Z$. The Riemannian metric $g^{TX}$ on $X$ induces a Riemannian metric $g^{TZ}$ on $Z$. Clearly, $(Z,g^{TZ})$ has nonpositive curvature.

Let $\Delta_{\Gamma}\subset \Gamma$ be the subgroup of the elements 
in $\Gamma$ that act like the identity on $X$. Clearly, 
$\Delta_{\Gamma}$ is a finite group given by  
 \begin{align}\label{eqDg}
 	\Delta_{\Gamma}=\Gamma\cap K\cap Z(\fp),
 \end{align}
 where $Z(\fp)\subset G$ is the stabiliser of $\fp$ in $G$. 
  Thus, the orbifold fundamental group of $Z$ is $\Gamma/ \Delta_{\Gamma}.$

Let $F$ be a (possibly non proper) flat vector bundle on $Z$ with 
holonomy $\rho':\Gamma/\Delta_{\Gamma}\to \GL_r(\bC)$ such that 
\begin{align}
C^\infty(Z,F)=C^\infty(X,\bC^r)^{\Gamma/\Delta_{\Gamma}}.
\end{align}
Take $\rho$ to be the composition of the projection $\Gamma\to 
\Gamma/\Delta_{\Gamma}$ and $\rho'$. Then
\begin{align}\label{eq:733}
C^\infty(Z,F)=C^\infty(X,\bC^r)^{\Gamma}.
\end{align}
By abuse of notation, we still call  $\rho:\Gamma\to \GL_r(\bC)$ the 
holonomy of $F$. In the rest of this section, we assume $F$ is 
unitarily flat, or equivalently $\rho$ is unitary. Let $g^F$ be the 
associate flat Hermitian metric on $F$. % induced by the canonical Hermitian metric on $\bC^r$ via \eqref{eq:733}.  
Since $g^{TZ}$ and $g^{F}$ are fixed in the whole section, we write
\begin{align}
T(F)=T\(F,g^{TZ},g^F\).
\end{align}

%As in this case, $g^{TZ}$ and $g^F$ are canonical, we denote simply 
%\begin{align}
%T(F)=T()
%\end{align}?

The group $\Gamma$ acts on the Euclidean  vector bundles like $\cE_\tau$, and preserves the corresponding connections $\nabla^{\cE_\tau}$. 
The vector bundle  $\cE_\tau$ descends to a (possibly  non proper) orbifold  vector bundle  $\cF_\tau$  on $Z$. The total space of $\cF_\tau$ is given by $\Gamma\backslash G \times_{K} E_{\tau}$, and we have the identification of vector spaces
\begin{align}\label{eq:734}
C^\infty(Z,\cF_\tau)\simeq C^\infty(\Gamma\backslash G, E_{\tau})^{K}.
\end{align}
%If $\tau$ is a representation of $K$ which decends to $K'$. Then 
%Take
%\begin{align}
%K'=K/ \ker\big(K\to {\rm Diffeo}(\Gamma\backslash G)\big).
%\end{align} 
%Take $E_0$ be the fixed points of $\ker\big(K\to {\rm Diffeo}(\Gamma\backslash G)\big)$. Then, $E_0$ is a $K'$-representation such that 
%\begin{align}
%C^\infty(\Gamma\backslash G, E)^{K}=C^\infty(\Gamma\backslash G, E_0)^{K'}.
%\end{align}
%
%
%\begin{re}
%  There is a finite index normal subgroup $\Gamma_0\subset \Gamma$ of $\Gamma$ such that $\Gamma_0$ is torsion free. Put $Y=\Gamma_0\backslash X$. Then $Z$ is a  quotient of $Y$ by a finite group $\Gamma/\Gamma_0$.
%\end{re}
By \eqref{eq:733} and \eqref{eq:734}, we identify $C^\infty(Z, \cF_\tau \otimes_\bR F)$  with the $\Gamma$-invariant subspace of  $C^\infty(X, \cE_\tau \otimes_\bR \bC^r)$.
 Let $C^{\fg,Z,\tau,\rho}$ be the Casimir operator of $G$ acting on $C^\infty(Z, \cF_\tau \otimes_\bR F)$.  
 As we see in subsection \ref{sec:sym}, when $E_\tau=\Lambda^\cdot(\fp^*)$, 
 \begin{align}
 \Omega^\cdot(Z,F)\simeq C^\infty(Z, \cF_\tau \otimes_\bR F),
 \end{align}
 and the Hodge Laplacian acting on $\Omega^\cdot(Z,F)$ is given by 
\begin{align}\label{eq:BoxZ}
\Box^{Z}=C^{\fg,Z,\tau,\rho}.
\end{align}

% 
% 
% We identify 
% the smooth kernel of $\exp(-t C^{\fg,Z,\tau,\rho}/2)$  with a $\Gamma\times\Gamma$-invariant sections of $C^\infty\big(X\times X,(\bC^r\otimes_\bR \cE_\tau)\boxtimes (\bC^r\otimes_\bR \cE_\tau)^*\big)$.
%By  \cite[Proposition 4.10]{Shfried}, it equals to 
%\begin{align}\label{eq:rre}
%\frac{1}{\left|\ker\big(\Gamma\to {\rm Diffeo}(X)\big)\right|}\sum_{\gamma\in \Gamma} \rho(\gamma) p^{X,\tau}(x,\gamma x'),
%\end{align}
%where the sum is   absolutely convergent.
%Also,  there is $c>0$, $C>0$ such that for $x\in X$, 
%\begin{align}\label{eq:rre1}
%\sum_{\gamma\in \Gamma_+} \left|p^{X,\tau}(x,\gamma x)\right|\l C\exp\(-\frac{c}{t}+Ct\).
%\end{align}
%Note that in \cite[Section 4.8]{Shfried}, it is  assumed that $\Gamma$ is torsion free. But except the presence of the denominator 
%$\left|\ker\big(\Gamma\to {\rm Diffeo}(X)\big)\right|$,
%the proof of \cite[Proposition 4.10]{Shfried} can be extended directly. 
%

 For $\gamma\in \Gamma$, set $\Gamma(\gamma)=Z(\gamma)\cap \Gamma$. 
 By \cite[Lemma 2]{Selberg60} (see also \cite[Proposition 4.9]{Shfried}), $\Gamma(\gamma)$ is cocompact in $Z(\gamma)$. Then $\Gamma(\gamma)\backslash X(\gamma)$ is a compact locally symmetric orbifold. Clearly, it depends only on the conjugacy class of $\gamma$ in $\Gamma$. Denote by $\vol(\Gamma(\gamma)\backslash X(\gamma))$ the Riemannian volume of $\Gamma(\gamma)\backslash X(\gamma)$ induced by $B_{g_\gamma}$.
 
 The group $K(\gamma)$ acts on the right on $\Gamma(\gamma)\backslash 
 Z(\gamma)$. For $h\in \Gamma(\gamma)\backslash 
 Z(\gamma)$, let $K(\gamma)_{h}$ be the  stabilizer of $h$ in 
 $K(\gamma)$.  Since $\Gamma(\gamma)\backslash X(\gamma)$ is 
 connected, the cardinal of a generic stabilizer is well defined 
 and  depends only on 
 the conjugacy class of $\gamma$ in $\Gamma$. We denote it by $n_{[\gamma]}$.  
%  
%  For $x\in \Gamma(\gamma)\backslash 
%  Z(\gamma)$, let $K(\gamma)_{x}\subset K(\gamma)$ the stabilizer of 
%  $x$ in $K(\gamma)$. The generic 
%  
%  
%  Take  a connected component $(\Gamma(\gamma)\backslash Z(\gamma))_{0}$ of $\Gamma(\gamma)\backslash Z(\gamma)$.  Let 
%  $\delta(\gamma)$ be  the subgroup of the elements in $K(\gamma)$ that act on the right like the identity on  $(\Gamma(\gamma)\backslash Z(\gamma))_{0}$.  Since 
%  $\Gamma(\gamma)\backslash X(\gamma)$ is connected, we can deduce 
%  that the cardinal $|\delta(\gamma)|$ does not depend on the choice of the 
%  connected component $(\Gamma(\gamma)\backslash Z(\gamma))_{0}$, and that
 Then, we have 
 \begin{align}\label{eqgzR1}
	 \frac{\vol(\Gamma(\gamma)\backslash 
	 Z(\gamma))}{\vol(K(\gamma))}=\frac{\vol(\Gamma(\gamma)\backslash 
	 X(\gamma))}{n_{[\gamma]}}. 
\end{align}
Let us note that even if $K(\gamma)$ acts effectively on 
$\Gamma(\gamma)\backslash Z(\gamma)$, $n_{[\gamma]}$ is not necessarily equal to $1$. 

\begin{prop}
	For $\gamma\in \Gamma$, we have
	\begin{align}\label{eqng119}
		n_{[\gamma]}=\big|K\cap \Gamma(\gamma)\cap Z(\fp(\gamma))\big|. 
	\end{align}
In particular, if $\gamma=e$, we have
\begin{align}\label{eqRne}
n_{[e]}=	|\Delta_{\Gamma}|,
\end{align}
and if $\gamma\in \Delta_{\Gamma}$, we have
\begin{align}\label{eqRne19}
	n_{[\gamma]}=	|\Gamma(\gamma)\cap \Delta_{\Gamma}|.
\end{align}
\end{prop}
\begin{proof}
	For a generic element  $g=e^{f}h$ in $Z(\gamma)$ with $f\in 	
	\fp(\gamma)$ and $h\in K(\gamma)$, the stabliser of 
	$\Gamma(\gamma)g\in \Gamma(\gamma)\backslash Z(\gamma)$ in $K(\gamma)$ is given by 
	\begin{align}
		K(\gamma)_{\Gamma(\gamma)g}= K(\gamma)\cap  g ^{-1}		
		\Gamma(\gamma)g= g ^{-1}		\big(e^{f}K(\gamma)e^{-f}
\cap 		\Gamma(\gamma)\big)g. 
	\end{align}
	Then,
	\begin{align}
		n_{[\gamma]}=\left|e^{f}K(\gamma)e^{-f}
\cap 		\Gamma(\gamma)\right|. 
	\end{align}
Since   $e^{f}K(\gamma)e^{-f}$ is compact, since 
$ 		\Gamma(\gamma)$ is  discrete, and  since $f$ can vary 
in an open dense set, we can deduce that 
\begin{align}
	n_{[\gamma]}=|K(\gamma)\cap Z(\fp(\gamma)) \cap 		
	\Gamma(\gamma)|,
\end{align}
	from which we get \eqref{eqng119}. By \eqref{eqDg} and 
	\eqref{eqng119}, we get \eqref{eqRne}. If $\gamma\in 
	\Delta_{\Gamma}$, by \eqref{eqDg}, we get $\fp(\gamma)=\fp$. 
	Combining the result with \eqref{eqng119}, we get \eqref{eqRne19}. 
\end{proof}

% using the fact that $G$ is connected and $\Gamma$ is discrete, we have 
% \begin{align}\label{eqdG}
% 	\delta(e)=\Gamma\cap K\cap Z_{G},
% \end{align}
% where $Z_{G}$ is the center of $G$.  By \eqref{eqDg} and 
% \eqref{eqdG}, $\delta(e)$ is a normal subgroup of $\Delta_{\Gamma}$. 

%Clearly, $\vol(\Gamma(\gamma)\backslash X(\gamma))$ and $|\delta(\gamma)|$ depend only on the conjugacy class of $\gamma$ in $\Gamma$. 

%For $\gamma\in \Gamma$, denote by $[\gamma]\in [\Gamma]$ its conjugacy class.
% that is 
%\begin{align}\label{eq:[r]}
%[\gamma]=\{\gamma'\in \Gamma: \hbox{there is }\gamma''\in \Gamma \hbox{ such that } \gamma''\gamma'(\gamma'')^{-1}=\gamma\}.
%\end{align}
%We have
%\begin{align}
%\Sigma Z= \coprod_{[\gamma]\in [\Gamma_e]\backslash \{1\}} \Gamma(\gamma)\backslash X(\gamma).
%\end{align}
%The multiplicity of $\Gamma(\gamma)\backslash X(\gamma)$ is $|\delta(\gamma)|$.

\begin{thm}\label{thm:sel}
There exist
 $c>0$ and $C>0$ such that, for $t>0$, we have
\begin{multline}\label{eq:sel1}
\sum_{[\gamma]\in [\Gamma_+]}\frac{\vol(\Gamma(\gamma)\backslash 
X(\gamma))}{n_{[\gamma]}}\left|\Tr^{[\gamma]}\[\exp\(-tC^{\fg,X,\tau}/2\)\]\right|\\
\le C\exp\(-\frac{c}{t}+Ct\).
\end{multline}
For $t>0$, the following identity holds:
\begin{multline}\label{eq:sel2}
\Tr\[\exp\(-tC^{\fg,Z,\tau,\rho}/2\)\]\\
=\sum_{[\gamma]\in 
[\Gamma]}\Tr[\rho(\gamma)]\frac{\vol\big(\Gamma(\gamma)\backslash 
X(\gamma)\big)}{n_{[\gamma]}}\Tr^{[\gamma]}\[\exp\(-tC^{\fg,X,\tau}/2\)\].
\end{multline}
\end{thm}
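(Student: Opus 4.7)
The plan is to derive \eqref{eq:sel2} as a classical pre-trace formula and then obtain \eqref{eq:sel1} from standard off-diagonal Gaussian bounds for the heat kernel on the non-positively curved symmetric space $X$. First, I would realise the Schwartz kernel of $\exp(-tC^{\fg,Z,\tau,\rho}/2)$ on $Z=\Gamma\backslash X$ as the $\Gamma$-average
\begin{align*}
p_t^Z(z,z')=\sum_{\gamma\in \Gamma}\rho(\gamma)\,p_t^{X,\tau}(\gamma^{-1}x,x'),
\end{align*}
where $x,x'\in X$ are lifts of $z,z'\in Z$. Convergence, smoothness, and trace-class membership follow from the standard Gaussian bound $|p_t^{X,\tau}(x,x')|\le Ct^{-D/2}\exp(-d_X(x,x')^2/(Ct)+Ct)$ on $X$, together with the cocompactness of $\Gamma$ and the elementary fact that $\#\{[\gamma]\in[\Gamma]:\inf_{x\in X}d_X(\gamma x,x)\le R\}$ grows at most exponentially in $R$.

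Next, I would apply the orbifold trace formula \eqref{eq:res1} together with \eqref{eq:azzs} to rewrite the trace as $\int_{Z_{\rm reg}}\Tr[p_t^Z(z,z)]\,dv_{Z_{\rm reg}}$. Since $\Gamma$ acts freely on $X_{\rm reg}$, a measurable fundamental domain $\mathcal{F}\subset X_{\rm reg}$ of full measure exists. Grouping the sum over $\Gamma$ by conjugacy classes (via the bijection $\Gamma(\gamma)\backslash\Gamma\leftrightarrow[\gamma]_\Gamma$) and unfolding $\int_{\mathcal{F}}\sum_{\gamma'\in[\gamma]_\Gamma}=\int_{\Gamma(\gamma)\backslash X}$, I obtain
\begin{align*}
\Tr\bigl[\exp(-tC^{\fg,Z,\tau,\rho}/2)\bigr]=\sum_{[\gamma]\in[\Gamma]}\Tr[\rho(\gamma)]\int_{\Gamma(\gamma)\backslash X}\Tr\bigl[p_t^{X,\tau}(\gamma^{-1}x,x)\bigr]\,dv_X.
\end{align*}

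The crucial step is to identify the last integral with $\vol(\Gamma(\gamma)\backslash X(\gamma))\cdot|\delta(\gamma)|^{-1}\cdot\Tr^{[\gamma]}\bigl[\exp(-tC^{\fg,X,\tau}/2)\bigr]$. I would do this via the fibration of $X$ by translates of the totally geodesic submanifold $X(\gamma)=Z(\gamma)/K(\gamma)$: because the integrand $x\mapsto\Tr[p_t^{X,\tau}(\gamma^{-1}x,x)]$ is $Z(\gamma)$-invariant, Fubini factors the integral as an integral over $\Gamma(\gamma)\backslash X(\gamma)$ (compact, since $\Gamma(\gamma)$ is cocompact in $Z(\gamma)$) times a transversal integral that reproduces the defining expression \eqref{eq:TRr} of the orbital integral. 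The factor $|\delta(\gamma)|^{-1}$ is the orbifold correction: the subgroup $\delta(\gamma)\subset K(\gamma)\cap\Gamma$ acts trivially on $\Gamma(\gamma)\backslash Z(\gamma)$, so the naive Riemannian volume of $\Gamma(\gamma)\backslash X(\gamma)$ overcounts the effective orbifold volume by $|\delta(\gamma)|$, which must be divided out to match the normalisation of \eqref{eq:TRr}. This yields \eqref{eq:sel2}.

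Finally, for \eqref{eq:sel1} I would combine the Gaussian bound with the lower bound $d_X(\gamma x,x)\ge l_{[\gamma]}=|a_\gamma|>0$ valid for every $\gamma\in\Gamma_+$ and every $x\in X$ (a standard consequence of the semisimple decomposition $\gamma=e^{a_\gamma}k_\gamma^{-1}$ and non-positive curvature of $X$), to get the individual estimate
\begin{align*}
\bigl|\Tr^{[\gamma]}\bigl[\exp(-tC^{\fg,X,\tau}/2)\bigr]\bigr|\le C\exp\bigl(-l_{[\gamma]}^2/(Ct)+Ct\bigr).
\end{align*}
Summing against the weights $\vol(\Gamma(\gamma)\backslash X(\gamma))/|\delta(\gamma)|$ and using the exponential counting bound produces the claimed estimate $C\exp(-c/t+Ct)$. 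The main obstacle is the volume/orbital-integral identification in step three: the failure of $\Gamma(\gamma)$ to act freely on $Z(\gamma)$ forces the careful orbifold-theoretic bookkeeping that is encoded in the $|\delta(\gamma)|^{-1}$ factor, and getting that normalisation right is the only genuinely delicate point of the argument.
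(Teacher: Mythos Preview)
Your approach is correct and is essentially the standard Selberg pre-trace argument that the paper defers to (the paper's own proof is a one-line reference to \cite[Theorem 4.10]{Shfried}, where the same unfolding and Gaussian-bound strategy is carried out in the manifold case). One small notational slip: $X=G/K$ is already a manifold, so ``$\Gamma$ acts freely on $X_{\rm reg}$'' should read ``$\Gamma$ acts freely on the complement of a measure-zero set in $X$'' (namely the preimage of $Z_{\rm sing}$); with that correction your fundamental-domain argument goes through unchanged.
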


\begin{proof}
 The proof  is identical to the one given in \cite[Theorem 
 4.10]{Shfried}. One difference is that we need to show an estimate  like  \cite[(4-26)]{Shfried}. This can be deduced from \cite[Lemma 
 8]{Selberg60}, which states that $\Gamma$ possesses a  normal 
 torsion free subgroup of finite index.
 
 Let us explain the reason for which the coefficients $n_{[\gamma]}$ appear in \eqref{eq:sel2}. Indeed, the 
 restriction on the diagonal of the trace of the integral kernel of 
 $\exp(-t C^{\fg,Z,\tau,\rho}/2)$ is given by 
 \begin{align}\label{eqRsel}
	\frac{1}{|\Delta_\Gamma |} \sum_{\gamma\in 
	 \Gamma }\Tr[\rho(\gamma)]\Tr^{\cE_{\tau}}\[\gamma_{x}p^{X,\tau}_{t}(x,\gamma 
	 x)\],
 \end{align}
 where $\gamma_{x}$ denotes the obvious element in $\Hom 
 (\cE_{\tau,x},  \cE_{\tau,\gamma x})$ induced by $\gamma$ (see \cite[p. 
 79]{B09}).  
 By \eqref{eqgzR1}, \eqref{eqRne}, and \eqref{eqRsel},  we have 
 \begin{multline}\label{eqR33}
	 \Tr\[\exp\(-t C^{\fg,Z,\tau,\rho}/2\)\]\\
	 =\frac{1}{\vol(K)}\int_{\Gamma\backslash G}\sum_{\gamma\in 
	 \Gamma }\Tr[\rho(\gamma)]\Tr^{E_{\tau}}\[p_{t}^{X,\tau}(g^{-1}\gamma 
	 g)\]dv_{\Gamma\backslash G},
\end{multline}
where $dv_{\Gamma\backslash G}$ is the volume form on $\Gamma\backslash G$ 
induced by $dv_{G}$. Proceeding as in \cite[(4.8.8)-(4.8.12)]{B09}, by Remark \ref{rer1}, 
 \eqref{eq:TRr}, and 
 \eqref{eqR33}, we get
 \begin{multline}\label{eqgzR2}
	 \Tr\[\exp\(-t 
	 C^{\fg,Z,\tau,\rho}/2\)\]\\
	 =\sum_{[\gamma]\in 
	 [\Gamma] } 
	 \Tr[\rho(\gamma)]\frac{\vol(\Gamma(\gamma)\backslash 
	 Z(\gamma))}{\vol(K(\gamma))}\Tr^{[\gamma]}\[\exp(-tC^{\fg,X,\tau}/2)\].
\end{multline}
 By \eqref{eqgzR1} and \eqref{eqgzR2}, we get \eqref{eq:sel2}.
 % except that we use  \cite[(4.8.20)]{B09} instead of  \cite[(4.8.16)]{B09}.
 \end{proof}

\begin{re}\label{reg0}	
% 	If $\Gamma$ acts effectively on $X$,
% 	then $|\Delta_{\Gamma}|=1$ 
%  	and $|\delta(e)|=1$. So the compact Lie group $K$ acts effectively 
%  	and locally freely on $\Gamma\backslash G$. 
	By \cite[Theorem 
	3.14, Remark 3.15]{Ruan_Orbifold} and \cite[(4.8.22)]{B09}, when 
	counting with multiplicites, we 
	have the identification 
	of the orbifolds, 
	\begin{align}\label{eqmmm}
	Z\coprod \Sigma Z\simeq \coprod_{[\gamma]\in 
	[\Gamma_{e}]}\Gamma(\gamma)\backslash X(\gamma),
    \end{align}
	where the multiplicity of each component $Z_{i}$ of $Z\coprod 
	\Sigma Z$ is  $m_{i}$ (see \ref{eq:mi25}) and the multiplicity of 
	$\Gamma(\gamma)\backslash X(\gamma)$ is $n_{[\gamma]}$. 
	Note also that, by Remark \ref{rel0}, we can consider $\coprod_{[\gamma]\in 
[\Gamma_{e}]}\Gamma(\gamma)\backslash X(\gamma)$ as the space of paths on $Z$ of length $0$. 
\end{re}
 
\begin{re}\label{reGeo}
Assume that $\Gamma$ acts effectively on $X$. Then $Z$ can be 
represented by the action groupoid $\cG $ (see \cite[Example 
1.32]{Ruan_Orbifold}) 
whose object is $X$ and whose arrow is  $\Gamma\times X$. An arrow 
$(\gamma,x)\in \Gamma\times X$ maps $x$ to $\gamma x$. By Remark 
\ref{rergeoo}, any closed geodesic on $Z$ can be represented by the 
closed $\cG$-geodesic $c=(b_{0}; \mathrm{id},\gamma^{-1})$ where $b_{0}:[0,1]\to X$ is a 
geodesic on $X$ such that $\gamma b_{0}=b_{1}$ and $\gamma_{*} 
\dot{b}_{0}(0)=\dot{b}_{0}(1)$. By 
\cite[Theorem 3.1.2]{B09}, the length of $b_{0}$ is given by 
$|a_{\gamma}|$ (see \eqref{eq:akr}). Moreover,  the space of the closed 
$\cG$-geodesics is given 
by $\cup_{\gamma\in \Gamma}X(\gamma)$. Thus, the space of  the closed 
geodesics on $Z$ is given by $\coprod_{[\gamma]\in [\Gamma]} 
\Gamma(\gamma)\backslash X(\gamma)$, whose  component has the  multiplicity 
$n_{[\gamma]}$. 
For general $\Gamma$, as in Remark \ref{reg0}, the same result still 
holds ture. 
\end{re}
 
We extend \cite[Corollary 2.2]{MStorsion} and \cite[Theorem 
7.9.3]{B09} to orbifolds (see also \cite[Corollary 4.3]{Shfried}).

\begin{cor}\label{cor:ms}Let $F$ be a unitarily flat orbifold vector 
	bundle on $Z$. If   $\dim Z$ is odd and $\delta(G)\neq 1$, then for any $t>0$, we have
\begin{align}\label{eq:BMS}
    \Trs\[N^{\Lambda^\cdot(T^*Z)}\exp\(-t\Box^Z/2\)\]=0.
  \end{align}
In particular,
  \begin{align}\label{eq:tau=0}
    T(F)=1.
  \end{align}
\end{cor}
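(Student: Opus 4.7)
The plan is to establish \eqref{eq:BMS} first and then obtain \eqref{eq:tau=0} as a consequence. The approach parallels the proof of \cite[Theorem 7.9.3]{B09} for compact locally symmetric manifolds, now combining the orbifold Selberg trace formula (Theorem \ref{thm:sel}) with Bismut's orbital integral formula \cite[Theorem 6.1.1]{B09}.

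Let $\tau$ be the adjoint representation of $K$ on $E_{\tau}=\Lambda^{\cdot}(\fp^{*})$, equipped with the $\bZ_{2}$-grading from the exterior degree, so that by \eqref{eq:BoxZ}, $\Box^{Z}=C^{\fg,Z,\tau,\rho}$. Under the identification $\Omega^{\cdot}(Z,F)\simeq C^{\infty}(X,\cE_{\tau}\otimes_{\bR}\bC^{r})^{\Gamma}$, the number operator $N^{\Lambda^{\cdot}(T^{*}Z)}$ corresponds to $N^{\Lambda^{\cdot}(\fp^{*})}\in\End(E_{\tau})$ and commutes with $\tau$; hence it can be inserted in Theorem \ref{thm:sel} with no change to its proof, yielding
\[
\Trs\bigl[N^{\Lambda^{\cdot}(T^{*}Z)}e^{-t\Box^{Z}/2}\bigr]=\sum_{[\gamma]\in[\Gamma]}\Tr[\rho(\gamma)]\,\frac{\vol(\Gamma(\gamma)\backslash X(\gamma))}{|\delta(\gamma)|}\,\Trs^{[\gamma]}\bigl[N^{\Lambda^{\cdot}(\fp^{*})}e^{-tC^{\fg,X,\tau}/2}\bigr].
\]
Convergence of the sum is ensured by the analogue of \eqref{eq:sel1}, so it suffices to prove that each orbital supertrace on the right vanishes.

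Since $\dim X=m$ and $\delta(G)$ share parity (the orthogonal of $\fb$ in $\fp$ being a sum of two-dimensional root spaces), $\dim Z$ odd together with $\delta(G)\neq 1$ forces $\delta(G)\geq 3$. For elliptic $\gamma\in K$ one has $\delta(Z(\gamma))=\delta(G)\geq 3$, and formula \eqref{eq:trrJr} (applied with the supertrace and with $N^{\Lambda^{\cdot}(\fp^{*})}$ inserted) expresses the orbital integral as a Gaussian over $\fk(\gamma)$ of the integrand $J_{\gamma}(Y)\cdot\Trs^{\Lambda^{\cdot}(\fp^{*})}[N^{\Lambda^{\cdot}(\fp^{*})}\tau(\gamma)e^{-i\tau(Y)}]$; a direct Berezin/Chern--Weil computation shows this integrand is identically zero once $\delta(Z(\gamma))\geq 2$, by a degree mismatch in the Pfaffian associated with the free $\fb$-directions. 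For nonelliptic $\gamma$, Bismut's general formula \cite[Theorem 6.1.1]{B09} combined with the splitting \eqref{eq:Z0a}--\eqref{eq:ZXM} reduces the orbital supertrace to an analogous integral over $Z^{a,\bot,0}(\gamma)$, where the same Berezin argument again gives zero. This step is essentially the Moscovici--Stanton vanishing \cite{MStorsion} recast via Bismut's formula, and is exactly where $\delta(G)\geq 3$ is needed.

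Granted \eqref{eq:BMS}, the identification \eqref{eq:Hodge} yields $\chi'_{\mathrm{top}}(Z,F)=\lim_{t\to\infty}\Trs\bigl[N^{\Lambda^{\cdot}(T^{*}Z)}e^{-t\Box^{Z}/2}\bigr]=0$, whence $\Trs\bigl[N^{\Lambda^{\cdot}(T^{*}Z)}e^{-t\Box^{Z}}(1-P^{Z})\bigr]\equiv 0$, and substitution into \eqref{eq:deftheta} gives $\theta\equiv 0$, so $T(F)=\exp(\theta'(0)/2)=1$. The main obstacle is the Berezin--Pfaffian vanishing of the orbital integrands for every semisimple conjugacy class, including the elliptic ones that are absent in the smooth manifold setting; checking this uniformly in $\gamma$, and justifying the termwise passage to zero in the Selberg sum, constitutes the technical heart of the proof.
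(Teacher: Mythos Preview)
Your proposal is correct and follows essentially the same route as the paper: reduce via the orbifold Selberg trace formula \eqref{eq:sel2} to the vanishing of $\Trs^{[\gamma]}[N^{\Lambda^\cdot(\fp^*)}\exp(-tC^{\fg,X}/2)]$ for every semisimple $\gamma$, use the parity argument to get $\delta(G)\geq 3$, and then deduce $T(F)=1$ from $\theta\equiv 0$. The only difference is packaging: the paper invokes \cite[Theorem 4.12]{Shfried} as a black box for the uniform vanishing of the orbital supertraces (elliptic and nonelliptic alike), whereas you sketch the Berezin/Pfaffian mechanism behind that theorem; your claim $\delta(Z(\gamma))=\delta(G)$ for elliptic $\gamma$ is not literally needed and not always sharp, but the vanishing you want is exactly the content of that reference.
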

\begin{proof}Since $\dim Z$ is odd, $\delta(G)$ is odd. Since 
	$\delta(G)\neq 1$,  $\delta(G)\g3$.  By \cite[Theorem 
	4.12]{Shfried}, for any $\gamma\in G$ semisimple, we have 
\begin{align}\label{eq:vanishT2}
    \Trs^{[\gamma]}\[N^{\Lambda^\cdot(T^*X)}\exp\(-tC^{\fg,X}/2\)\]=0.
  \end{align}
  By  \eqref{eq:BoxZ}, \eqref{eq:sel2}, and \eqref{eq:vanishT2}, we 
  get  \eqref{eq:BMS}.
\end{proof}

Suppose that $\delta(G)=1$. Let us recall some notation in 
\cite[(4-49)-(4-52), (6-15), (6-16)]{Shfried}. Up to sign, we fix an element  $a_{1}\in \fb$ such 
that $B(a_{1},a_{1})=1$. As in subsection \ref{sec:semi}, set  
\begin{align}\label{eq:MPMKM1}
&M=Z^{a_{1},\bot,0}(e^{a_{1}}),& K_{M}=K^{0}(e^{a_{1}}),
\end{align}
and \index{M@$\fm$} \index{P@$\fp_{\fm}$} \index{K@$\fk_{\fm}$}
\begin{align}\label{eq:mpkd1}
&\fm=\fz^{a_{1},\bot}(e^{a_{1}}), 
&\fp_{\fm}=\fp^{a_{1},\bot}(e^{a_{1}}), &&\fk_{\fm}=\fk(e^{a_{1}}).
\end{align}
%Note the above objects do not depend on the  choice of $a_{1}$ or $-a_{1}$. 
As in subsection \ref{sec:semi}, $M$  
is a connected reductive group such that  $\delta(M)=0$ with Lie algebra $\fm$, with maximal 
compact subgroup $K_{M}$, and  with Cartan decomposition 
$\fm=\fp_{\fm}\oplus\fk_{\fm}.$  Let 
\begin{align}\label{eq:XM}
X_{M}=M/K_{M}
\end{align}
be the corresponding symmetric space. 
For $k\in T$, we have $\delta(Z^{0}(k))=1$. 
Denote by $M^{0}(k)$, $\fm(k)$,  $\fp_\fm(k)$, $\fk_\fm(k)$,  
$X_{M}(k)$ the analogies of $M$, $\fm$, $\fp_{\fm}$, $\fk_{\fm}$, $X_{\fm}$ when $G$ 
is replaced by $Z^{0}(k)$.  

% be the analogues  of $\fp(\gamma)$ and $\fk(\gamma)$ in \eqref{eqZrPrKr}, so that
% \begin{align}\label{eq:mkpk}
%   \fm(k)=\fp_\fm(k)\oplus \fk_\fm(k).
% \end{align}
% Since $k$ is elliptic in $M$,  $M^0(k)$ is a reductive group such 
% that $\delta(M^{0}(k))=0$ with maximal compact subgroup $K_M^0(k)=M^0(k)\cap K$ and with Cartan decomposition \eqref{eq:mkpk}. Let
% \begin{align}
%   X_M(k)=M^0(k)/K^0_M(k)
% \end{align}
%  be the corresponding symmetric space. 
%  

Assume  that $\delta(G)=1$ and that $G$ has  noncompact center.  By 
\cite[4-51]{Shfried},
we have
\begin{align}\label{eq:G=RM}
	&G=\bR\times M, &K=K_{M}, &&X=\bR\times X_{M}. 
\end{align}
Recall that $H=\exp(\fb) T$. Note that if $\gamma=e^{a}k^{-1}\in H$ with 
$a\neq0$, then 
\begin{align}\label{lla}
	X^{a,\bot}(\gamma)=X_{M}(k).
\end{align}

We have an extension  of 
\cite[Proposition 4.14]{Shfried}. 

\begin{prop}
	Let $\gamma\in G$ be a semisimple element. 
	If $\gamma$ can not be conjugated into $H$ by elements of $G$, 
	then for $t>0,$ we have 
\begin{align}\label{eq:dgn00}
 		\Trs^{[\gamma]}\[N^{\Lambda^{\cdot}(T^{*}X)}\exp\(-tC^{\fg, 
		X}/2\)\]=0.
 	\end{align}
	If $\gamma=e^{a}k^{-1}\in H$ with $a\in \fb$ and $k\in T$, then for $t>0,$ we have 
	\begin{multline}\label{eq:dgn1}
		\Tr^{[\gamma]}\[N^{\Lambda^{\cdot}(T^{*}X)}\exp\(-tC^{\fg,X}/2\)\]\\
		=-\frac{1}{\sqrt{2\pi t}}e^{-|a|^{2}/2t}\[e\(TX_{M}(k),\nabla^{TX_{M}(k)}\)\]^{\max}.
	\end{multline}
% 	
% 	If $\gamma=k^{-1}\in T$, we  have 
%  	\begin{align}\label{eq:dgn11}
%  		\Trs^{[\gamma]}\[N^{\Lambda^{\cdot}(T^{*}X)}\exp\(-tC^{\fg, 
% 		X}/2\)\]=-\frac{1}{\sqrt{2\pi 
% 		t}}\[e\(TX_{M}(k),\nabla^{TX_{M}(k)}\)\]^{\max}.
%  	\end{align}
% 	If $\gamma=e^{a}k^{-1}$ with $a\in \fb$, $a\neq0$, and $k\in T$, then 
% 	\begin{align}\label{eq:dgn1}
% 		\Tr^{[\gamma]}\[N^{\Lambda^{\cdot}(T^{*}X)}\exp\(-tC^{\fg,X}/2\)\]=-\frac{1}{\sqrt{2\pi t}}e^{-|a|^{2}/2t}\[e\(TX^{a,\bot}(\gamma),\nabla^{TX^{a,\bot}(\gamma)}\)\]^{\max}.
% 	\end{align}
\end{prop}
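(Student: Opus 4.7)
The plan is to exploit the product decomposition \eqref{eq:G=RM}. Since $\delta(G)=1$ and $G$ has noncompact center, $G=\bR\times M$ with $\delta(M)=0$, $X=\bR\times X_M$, $K=K_M$, and $H=\exp(\fb)T$ with $\fb=\bR$. Under the K\"unneth identification $\Omega^\cdot(X)\cong \Omega^\cdot(\bR)\otimes \Omega^\cdot(X_M)$, the Casimir splits as $C^{\fg,X}=-\partial_t^2+C^{\fm,X_M}$ and the number operator as $N^{\Lambda^\cdot(T^*X)}=N_\bR+N_M$. For a semisimple element $\gamma=(s,m)\in\bR\times M$, one has $Z^0(\gamma)=\bR\times Z^0_M(m)$, so the orbital integral factorizes. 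Using $\Trs_{\Lambda^\cdot(\bR^*)}[\exp(t\partial_t^2/2)]=0$ (the two degrees cancel) and $\Trs_{\Lambda^\cdot(\bR^*)}[N_\bR\exp(t\partial_t^2/2)]=-\frac{1}{\sqrt{2\pi t}}e^{-s^2/2t}$, the cross term disappears and the question reduces to computing $\Trs^{[m]}[\exp(-tC^{\fm,X_M}/2)]$ on $M$.

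For \eqref{eq:dgn00}, the hypothesis that $\gamma$ cannot be conjugated into $H$ forces the semisimple decomposition $m=e^b k^{-1}$ in $M$ to satisfy $b\in\fp_\fm$ with $b\neq 0$: otherwise $m$ is elliptic in $M$, hence conjugate into the maximal torus $T$ of $K_M$, placing $\gamma$ in $\bR\times T=H$. I then apply Bismut's formula \eqref{eq:trrJr} on $M$. The relevant integrand features the supertrace $\Trs_{\Lambda^\cdot(\fp_\fm^*)}[\Lambda^\cdot(\Ad(m)e^{-i\ad(Y)})]=\det(1-\Ad(m)e^{-i\ad(Y)}|_{\fp_\fm})$ for $Y$ ranging over $\fk_\fm(m)$. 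By \eqref{eq:Zr}, every such $Y$ commutes with both $b$ and $k$, so $b$ is a joint fixed vector of $\Ad(m)$ and $e^{-i\ad(Y)}$ on $\fp_\fm$, forcing this determinant to vanish identically. Hence $\Trs^{[m]}[\exp(-tC^{\fm,X_M}/2)]=0$, which gives \eqref{eq:dgn00}.

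For \eqref{eq:dgn1}, $\gamma=e^a k^{-1}$ with $a\in\fb$ and $k\in T$ corresponds to $s=\pm|a|$ and $m=k^{-1}$ elliptic in $M$. Since $\delta(M)=0$, the Gauss--Bonnet--Chern mechanism underlying Theorem \ref{thm:GBC} (compare also \cite[Theorem 7.9.1]{B09}) shows that Bismut's formula \eqref{eq:trrJr} for the elliptic orbital integral of the Hodge Laplacian on $X_M$ is independent of $t$ and equals the top Euler density $[e(TX_M(k),\nabla^{TX_M(k)})]^{\max}$ of the fixed-point locus $X_M(k)=Z^0_M(k)/K^0_M(k)$, which is an even-dimensional symmetric space for its own reductive centralizer in $M$. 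Substituting into the factorization of the first paragraph yields \eqref{eq:dgn1}.

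The main technical obstacle is carrying out the Gauss--Bonnet identification in the second case cleanly at the equivariant level, and verifying that the $\fp_\fm$-fixed-vector argument in the first case does exhaust the contribution of Bismut's formula. Both points closely parallel the approach of \cite[Proposition 4.14]{Shfried} for the compact-center analogue, with the extra $\bR$-factor in $G$ producing the Gaussian prefactor $\frac{1}{\sqrt{2\pi t}}e^{-|a|^2/2t}$ that distinguishes the present statement.
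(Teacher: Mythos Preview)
Your approach is correct and essentially reconstructs the argument that the paper obtains by direct citation to \cite{Shfried}. The paper's own proof simply invokes \cite[(4-45), (4-53), (4-54)]{Shfried} for \eqref{eq:dgn00} and for \eqref{eq:dgn1} when $\gamma=1$ or $a\neq 0$, and \cite[(4-55), (4-56), (4-58)]{Shfried} together with \eqref{lla} for the remaining elliptic case; your product factorization via $G=\bR\times M$, the explicit Gaussian on the $\bR$-factor, the fixed-vector vanishing of $\det\big(1-\Ad(k^{-1})e^{-i\ad(Y)}\big)\big|_{\fp_\fm}$ for non-elliptic $m$, and the equivariant Gauss--Bonnet identification on $X_M$ for elliptic $m$ are precisely the mechanisms underlying those references.
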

\begin{proof}
% 	Let $RO(K)$ be the real representation ring of $K$. Denote by 
% 	$\mathbf{1}$ the trivial representation.  Note that $K$ acts on 
% 	$\fp$ and $\fp_{\fm}$ by the adjoint action. By \eqref{eq:G=RM}, we 
% 	have the identity in $RO(K)$,
% 	\begin{align}
% 		\fp=\mathbf{1}+\fp_{\fm}. 
% 	\end{align}
% 	
% 	We claim that 
% \begin{align}\label{eq:name1}
% 	\sum_{i=1}^{m}(-1)^{i-1}i\Lambda^{i}(\fp^{*})=\sum_{i=0}^{m-1}(-1)^{i}\Lambda^{i}(\fp^{*}_{\fm}).
% \end{align}
% Indeed, 	for a finite dimensional representation $V$ of $K$, we use the multiplication notation introduced by Hirzebruch. Put
% \begin{align}
% \Lambda_y(V)=\sum_i y^i\Lambda^i(V)
% \end{align}
% a polynomial of $y$ with coefficients in $RO(K)$. In particular,
% \begin{align}
% &\Lambda_{-1}(V)=\sum_i (-1)^i\Lambda^i(V),&\Lambda'_{-1}(V)=\sum_i (-1)^{i-1}i\Lambda^i(V).
% \end{align}
% Since $\Lambda_{-1}(\mathbf{1})=0,\Lambda'_{-1}(\mathbf{1})=\mathbf{1}$, we get
% \begin{align}\label{eq:hiz1}
% \Lambda'_{-1}(V\oplus \mathbf{1})=\Lambda_{-1}(V).
% \end{align}
% Applying \eqref{eq:hiz1} to $V=\fp_{\fm}$, we get \eqref{eq:name1}. 
% 
% By \eqref{eq:name1}, 
% we have 
% 	\begin{align}
% 		C^{\fg,X}=C^{\bR,\bR}+C^{\fm,X_{M}}. 
% 	\end{align}
	Equations \eqref{eq:dgn00}, \eqref{eq:dgn1} with $\gamma=1$ or 
	$\gamma=e^{a}k^{-1}$ with $a\neq0$ are just \cite[(4-45), (4-53), (4-54)]{Shfried}. 
	Equation \eqref{eq:dgn1} for general $\gamma\in H$ is a 
	consequence of \cite[(4-55), (4-56), (4-58)]{Shfried} and 
	\eqref{lla}.
\end{proof}

\subsection{Ruelle dynamical zeta functions}\label{sec:dyn}
By Remark \ref{reGeo} (c.f. \cite[Proposition 
5.15]{DuistermaatKolkVaradarajan} for manifold case), the space of the 
closed geodesics on $Z$ of  positive lengths  consists of a disjoint union of smooth connected compact orbifolds
  \begin{align}
 \coprod_{[\gamma]\in [\Gamma_+]} B_{[\gamma]}.
 \end{align} Moreover,   
 $B_{[\gamma]}$ is diffeomorphic to  $ \Gamma(\gamma)\backslash 
 X(\gamma)$ with multiplicity 
 $n_{[\gamma]}$. Also, all the elements in  $B_{[\gamma]}$ have the same length $l_{[\gamma]}=|a_{\gamma}|>0$.

The group  $\bbS^1$ acts locally freely   on $B_{[\gamma]}$ by 
rotation. 
% The geodesic flow induces a locally free $\bbS^1$-action on $B_{[\gamma]}$. 
Then 
$\bbS^1\backslash B_{[\gamma]}$ is still an orbifold.  Set  
\begin{align}\label{eq:mulr}
m_{[\gamma]}=n_{[\gamma]}\left|\ker \Big(\bbS^1\to 
\mathrm{Diffeo}\big(B_{[\gamma]}\big)\Big)\right|\in \mathbf{N}^{*}.
\end{align}
We define $m_{[\gamma]}$ to be the multiplicity of $\bbS^1\backslash 
B_{[\gamma]}$.

\begin{prop}\label{prop:orbv0}
For $\gamma\in \Gamma_+$ such that $\gamma=a_\gamma k_\gamma^{-1}$ as in \eqref{eq:akr}, we have 
\begin{align}\label{eq:V=orbe}
\frac{\chi_{\rm orb}\(\bbS^1\backslash 
B_{[\gamma]}\)}{m_{[\gamma]}}=\frac{\vol\big(\Gamma(\gamma)\backslash 
X(\gamma)\big)}{|a_\gamma|n_{[\gamma]}}\[e\(TX^{a,\bot}(\gamma),\nabla^{TX^{a,\bot}(\gamma)}\)\]^{\max}.
\end{align}
In particular, if $\delta(G)\g2$, then for all $[\gamma]\in [\Gamma_+] $, we have 
\begin{align}\label{eq:orb0}
\chi_{\rm orb}\(\bbS^1\backslash B_{[\gamma]}\)=0.
\end{align}
Also, if $\delta(G)=1$ and if $\gamma$ can not be conjugated into $H$, then \eqref{eq:orb0} still  holds. 
\end{prop}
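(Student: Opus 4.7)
The plan is to prove \eqref{eq:V=orbe} by a direct orbifold volume computation extending the manifold argument of \cite[Proposition 5.15]{DuistermaatKolkVaradarajan}, and then to deduce the vanishing \eqref{eq:orb0} in both remaining cases by combining \eqref{eq:ee0} with a rank inequality for centralizers of semisimple elements.

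For \eqref{eq:V=orbe}, we use the identification $B_{[\gamma]} \cong \Gamma(\gamma)\backslash X(\gamma)$ together with the product decomposition $X(\gamma) = \bR\times X^{a,\bot}(\gamma)$ from \eqref{eq:ZXM}. Since the $\bR$-factor in \eqref{eq:Z0a} is central and the geodesic flow is unit-speed translation there, with $\gamma$ translating by $|a_\gamma|$, the flow descends to an $\bbS^1$-action on $B_{[\gamma]}$ of period $l_{[\gamma]}=|a_\gamma|$, and the quotient $\bbS^1\backslash B_{[\gamma]}$ is an orbifold of dimension $\dim X^{a,\bot}(\gamma)$. The Euler form $e(TX^{a,\bot}(\gamma),\nabla^{TX^{a,\bot}(\gamma)})$ is $Z^{a,\bot,0}(\gamma)$-invariant, hence of constant density $[e(TX^{a,\bot}(\gamma),\nabla^{TX^{a,\bot}(\gamma)})]^{\max}$, so that
\begin{equation*}
\chi_{\rm orb}\(\bbS^1\backslash B_{[\gamma]}\) = \[e\(TX^{a,\bot}(\gamma),\nabla^{TX^{a,\bot}(\gamma)}\)\]^{\max}\cdot\vol_{\rm orb}\(\bbS^1\backslash B_{[\gamma]}\).
\end{equation*}
A Fubini-type computation along the $\bbS^1$-fibers of length $|a_\gamma|$, tracking the generic $\Gamma(\gamma)$-stabilizer of order $|\delta(\gamma)|$ and the finite kernel $\ker\big(\bbS^1\to{\rm Diffeo}(B_{[\gamma]})\big)$ entering the definition \eqref{eq:mulr} of $m_{[\gamma]}$, then relates $\vol_{\rm orb}(\bbS^1\backslash B_{[\gamma]})$ to $\vol(\Gamma(\gamma)\backslash X(\gamma))$ and yields \eqref{eq:V=orbe} after rearrangement.

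For the two vanishing statements, by \eqref{eq:ee0} applied to the reductive group $Z^{a,\bot,0}(\gamma)$ and its symmetric space $X^{a,\bot}(\gamma)$, the density $[e(TX^{a,\bot}(\gamma),\nabla^{TX^{a,\bot}(\gamma)})]^{\max}$ vanishes whenever $\delta(Z^{a,\bot,0}(\gamma))\neq 0$. Using \eqref{eq:Z0a} to compute $\delta(Z^0(\gamma)) = 1 + \delta(Z^{a,\bot,0}(\gamma))$, the required vanishing reduces to the condition $\delta(Z^0(\gamma))\geq 2$. The key structural input is the rank inequality $\delta(Z^0(\gamma))\geq\delta(G)$, with equality if and only if $\gamma$ can be conjugated into $H$: any fundamental Cartan of $\fz(\gamma)$ is a Cartan of $\fg$ whose noncompact dimension is at least $\delta(G)$ (the minimum over all Cartans of $\fg$), and equality forces the fundamental Cartan of $\fz(\gamma)$ to be fundamental for $\fg$, putting $\gamma$ in its centralizer, which is the Cartan itself, and therefore in $[H]$. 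Hence if $\delta(G)\geq 2$ we get $\delta(Z^0(\gamma))\geq 2$ automatically, and if $\delta(G)=1$ with $\gamma\notin[H]$ the inequality is strict, again giving $\delta(Z^0(\gamma))\geq 2$; in both cases \eqref{eq:orb0} follows from \eqref{eq:V=orbe}.

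The main obstacle lies in the volume-multiplicity bookkeeping of the first step: one must check that the interaction among the generic stabilizer $\delta(\gamma)$, the finite kernel of $\bbS^1\to{\rm Diffeo}(B_{[\gamma]})$, and the primitive period of the geodesic flow produces precisely the combinatorial factors appearing in \eqref{eq:V=orbe} and in the definition \eqref{eq:mulr} of $m_{[\gamma]}$, especially when $Z(\gamma)$ may be disconnected. The rank-inequality input needed for the vanishing cases is otherwise standard, but one should confirm the equality characterization indeed singles out conjugacy classes meeting $H$ in the present reductive generality.
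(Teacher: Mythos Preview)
Your proposal is correct and follows essentially the same line as the paper, which simply refers to \cite[Proposition 5.1, Corollary 5.2]{Shfried}; the argument you sketch is precisely the natural one behind that reference. The reduction of the vanishing statements to $\delta\big(Z^{a,\bot,0}(\gamma)\big)\neq 0$ via \eqref{eq:ee0} and \eqref{eq:Z0a}, together with the rank inequality $\delta\big(Z^0(\gamma)\big)\geqslant\delta(G)$ and its equality case, is exactly right; the equality characterization is cleanly justified once you normalize so that $g_\gamma=1$, making the $\theta$-stable fundamental Cartan of $\fz(\gamma)$ a $\theta$-stable Cartan of $\fg$, whence $\gamma$ lies in the corresponding Cartan subgroup. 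The volume--multiplicity bookkeeping you flag is genuine but routine: with $X(\gamma)=\bR\times X^{a,\bot}(\gamma)$ and the $\bbS^1$-orbit length equal to $l_{[\gamma]}/|\ker(\bbS^1\to\mathrm{Diffeo}(B_{[\gamma]}))|$ on generic orbits, the orbifold volume of $\bbS^1\backslash B_{[\gamma]}$ equals $\vol\big(\Gamma(\gamma)\backslash X(\gamma)\big)\cdot|\ker|/|a_\gamma|$, and the factor $|\delta(\gamma)|$ already present in $m_{[\gamma]}$ cancels as it should, yielding \eqref{eq:V=orbe}.
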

\begin{proof}
% By \eqref{eq:chiorb}, we have 
% \begin{align}\label{eq:mama1}
% \chi_{\rm orb}\(\widehat{\Gamma(\gamma)\backslash X(\gamma)}\)=\vol\big(\widehat{\Gamma(\gamma)\backslash X(\gamma)}\big)\[e\(TX^{a,\bot}(\gamma),\nabla^{TX^{a,\bot}(\gamma)}\)\]^{\max}.
% \end{align}
The proof of our proposition  is identical to the one given in 
\cite[Proposition 5.1,  Corollary 5.2]{Shfried}.
% , except \cite[(4.6)]{Shfried} should be replaced by 
% \begin{align}\label{eq:mama2}
% \frac{\vol\big(\bbS^1\backslash B_{[\gamma]}\big)}{\left|\ker \big(\bbS^1\to \mathrm{Diffeo}(B_{[\gamma]})\big)\right|}=\frac{\vol\big(B_{[\gamma]}\big)}{|a_\gamma|}.
% \end{align}
%% By \eqref{eq:mulr}, \eqref{eq:mama1} and \eqref{eq:mama2}, we get \eqref{eq:V=orbe}. 
%% 
%Proceeding as in \cite[, Corollary 4.2]{Shfried}, we get the rest part of Proposition \ref{prop:orbv0}.
\end{proof}

%
%
%More precisely, if  $\gamma\in \Gamma_+$ such that $\gamma=e^{a_\gamma}k_\gamma^{-1}$ as in \eqref{eq:akr}. 
%The group $\bbS^1$ acts  on the left on $\Gamma(\gamma)\backslash X(\gamma)$ by 
%\begin{align}\label{eq:actS1}
%&e^{t a_\gamma} \cdot \Gamma(\gamma) g K(\gamma)= \Gamma(\gamma) e^{t a_\gamma} g K(\gamma), & \hbox{for } t\in\bR, g\in Z(\gamma).
%\end{align}
%Denote by $\widehat{\Gamma(\gamma)\backslash X(\gamma)}$ the quotient orbifold. As in the case of $\Gamma(\gamma)\backslash X(\gamma)$, it depends only on the conjugacy class of $\gamma$ in $\Gamma$. 
%
%
%Recall that if $\gamma\in\Gamma_+$, $l_{\gamma}=|a_\gamma|>0$ depends only on $[\gamma]$.
%Let us denote it by $l_{[\gamma]}$ instead.

Recall that  $\rho:\Gamma\to U(r)$ is a unitary   representation of 
$\Gamma$.

\begin{defin}\label{def:ruelle}
 The Ruelle dynamical zeta function $R_\rho$ is said to be well-defined if 
 \begin{itemize}
\item for $\Re(\sigma)\gg1$, the sum 
\begin{align}
\Xi_{\rho}(\sigma)=\sum_{[\gamma]\in [\Gamma_+]}\Tr[\rho(\gamma)]\frac{\chi_{\rm orb}\(\bbS^1\backslash B_{[\gamma]}\)}{m_{[\gamma]}}e^{-\sigma l_{[\gamma]}}
\end{align}
converges absolutely to a holomorphic function; 
\item the function $R_\rho(\sigma)=\exp(\Xi_{\rho}(\sigma))$ has a meromorphic extension to $\sigma\in \bC$.
\end{itemize}
\end{defin}
%
%In the sequel, when the metrics $g^{TZ}$ and $g^F$ are clear from the context, we write  simply
%\begin{align}\label{eq:tauZF}
%\tau_Z(F)=\tau_Z(F,g^{TZ},g^F).
%\end{align}

By \eqref{eq:orb0}, if $\delta(G)\g2$, the dynamical zeta function $R_\rho$ is well-defined and 
\begin{align}\label{eq:R=1}
R_\rho\equiv1.
\end{align}
%Recall that by Corollary \ref{cor:ana}, if the orbifold $Z$ satisfies   \eqref{eq:asum}, and if $F$ is an acyclic unitarily flat  vector bundle on $Z$, then the analytic torsion is a topological invariant, which 
We restate Theorem \ref{thm:3}, which is the main result of this section.
\begin{thm}\label{Thm:3}
If $\dim Z$ is  odd, then the dynamical zeta function $R_\rho(\sigma)$ is  well-defined. 
There exist explicit constants $C_\rho\in \bR$ with $C_\rho\neq0$ and 
$r_\rho\in \bZ$ (see \eqref{eq:Cr}) such that as $\sigma\to0$, 
\begin{align}\label{eq:masi1}
R_\rho(\sigma)=C_\rho T(F)^2\sigma^{r_\rho}+\cO(\sigma^{r_\rho+1}).
\end{align}
Moreover, if $H^\cdot(Z,F)=0$, we have 
\begin{align}\label{eq:masi2}
&C_\rho=1,&r_\rho=0,
\end{align}
so that 
\begin{align}\label{eq:FFs}
R_\rho(0)=T(F)^2.
\end{align}
\end{thm}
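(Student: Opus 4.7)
The plan is to follow the strategy of \cite{Shfried} for the manifold case, with the essential new input being a careful treatment of the contribution of elliptic conjugacy classes in the Selberg trace formula. First I would reduce to the key case. Since $\dim Z$ is odd, $\delta(G)$ is odd; if $\delta(G)\geq 3$, then by Proposition \ref{prop:orbv0} we have $\chi_{\mathrm{orb}}(\mathbb{S}^1\backslash B_{[\gamma]})=0$ for every $[\gamma]\in[\Gamma_+]$, so $R_\rho\equiv 1$, while Corollary \ref{cor:ms} gives $T(F)=1$ and \eqref{eq:BMS} forces $H^\cdot(Z,F)$ to have vanishing alternating weighted sum, so \eqref{eq:masi1}--\eqref{eq:FFs} hold trivially with $C_\rho=1$ and $r_\rho=0$. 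The substantive case is $\delta(G)=1$; when in addition $G$ has noncompact center, the splitting \eqref{eq:G=RM} reduces the problem to a product situation, and the analysis mirrors that of $\delta(G)=1$ with compact center but is simpler, since only a single Selberg zeta function enters.

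The main case is $\delta(G)=1$ with $G$ of compact center. Using Bismut's formula \eqref{eq:trrJr} together with the decomposition of elements of $H=\exp(\fb)T$, I would first evaluate $\mathrm{Tr}^{[\gamma]}[\exp(-tC^{\fg,X,\tau}/2)]$ for a family of $K$-representations $\tau=\tau_j$ built out of $\Lambda^\cdot(\fp_\fm^*)$ twisted by $\Lambda^j(\fb^*)\simeq\Lambda^j(\bR)$. For such representations, restricted to $H$, the orbital integral concentrates the contribution to elements $\gamma=e^ak^{-1}$ with $a\in\fb$, $k\in T$, and can be written explicitly as a Gaussian in $|a_\gamma|$ times a factor involving $[e(TX_M(k),\nabla^{TX_M(k)})]^{\max}$ and a character of $T$. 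For each such $\tau_j$ and each holonomy $\rho$, following \cite[Section 6]{Shfried}, I would then define the Selberg zeta function $Z_{j,\rho}(\sigma)$ as an infinite product over $[\Gamma_+]$, show via the Selberg trace formula \eqref{eq:sel2} that $\log Z_{j,\rho}$ admits a Mellin-type representation whose integrand decays exponentially (using the estimate \eqref{eq:sel1}), and deduce its meromorphic extension to $\bC$ together with an explicit functional equation relating $Z_{j,\rho}(\sigma)$ and $Z_{j,\rho}(-\sigma)$. The contribution to this functional equation at $\sigma=0$ is precisely where the elliptic orbital integrals intervene.

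Next I would compute the identity
\[
R_\rho(\sigma)=\prod_{j=0}^{m-1}Z_{j,\rho}(\sigma)^{(-1)^j},
\]
(or a minor variant thereof) by comparing the logarithmic derivatives: the Gaussian-in-$|a_\gamma|$ contributions telescope under the alternating sum in $j$, and the Chern--Gauss--Bonnet relation \eqref{eq:V=orbe} converts the remaining factor into the weight $\chi_{\mathrm{orb}}(\mathbb{S}^1\backslash B_{[\gamma]})/m_{[\gamma]}$ appearing in the definition of $R_\rho$. This already establishes that $R_\rho$ is well-defined and meromorphic on $\bC$. To extract the behaviour at $\sigma=0$, I would combine the functional equations of the $Z_{j,\rho}$ with Remark \ref{re:redet}, which expresses $T(F)^2$ as the leading coefficient at $\sigma=0$ of the weighted product of regularised determinants of $\sigma+\Box^Z|_{\Omega^i(Z,F)}$, and with the identity $\Box^Z=C^{\fg,Z,\tau,\rho}$ from \eqref{eq:BoxZ}. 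Tracking the leading-order term, the elliptic orbital integrals (whose values are governed by \eqref{eq:trrJr} and \eqref{eq:en0}) determine $C_\rho$ and the order $r_\rho$ via an explicit closed-form expression, and $r_\rho$ in turn can be read off from the cohomology ranks; when $H^\cdot(Z,F)=0$ the expression collapses to $C_\rho=1$ and $r_\rho=0$.

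The hardest step is the third one: tracking the elliptic contribution. In the manifold case $[\Gamma_e]=\{1\}$ contributes only through the Euler characteristic factor, but in the orbifold setting one must sum over all elliptic classes with the weights $\mathrm{Tr}[\rho(\gamma)]\vol(\Gamma(\gamma)\backslash X(\gamma))/|\delta(\gamma)|$, and verify that the apparent asymmetry (ellipitc classes enter the torsion but not a priori $R_\rho$) is reconciled by the functional equations of the Selberg zeta functions. Concretely, the elliptic terms in the trace formula produce the constants $C_\rho$ and the shift $r_\rho$ via a residue computation at $\sigma=0$ in the functional equation, and identifying these two contributions requires the explicit form of $J_\gamma$ in \eqref{eq:J}. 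Once these elliptic constants are shown to match on both sides, \eqref{eq:masi1}--\eqref{eq:FFs} follow.
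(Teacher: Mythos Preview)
Your overall architecture matches the paper: reduce to $\delta(G)=1$, evaluate orbital integrals via Bismut's formula, build Selberg zeta functions, express $R_\rho$ as an alternating product of them, and read off the behaviour at $\sigma=0$. However, two concrete choices in your proposal are wrong and would make the argument collapse.

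First, the auxiliary $K$-representations are misidentified. You propose $\tau_j$ built from $\Lambda^\cdot(\fp_\fm^*)$ twisted by $\Lambda^j(\fb^*)\simeq\Lambda^j(\bR)$. Since $\delta(G)=1$ means $\dim\fb=1$, this only gives two nontrivial representations ($j=0,1$), which cannot reproduce the identity \eqref{eq:sumsum}. The paper instead uses $\eta_j=\Lambda^j(\fn^*)$, where $\fn$ is the nilpotent piece of dimension $2l$, and then lifts $\sum_i(-1)^i\Lambda^i(\fp_\fm^*)\otimes\Lambda^j(\fn^*)$ from $RO(K_M)$ to $E_{\widehat{\eta}_j}\in RO(K)$; this requires the lifting results of \cite[Corollary 6.12]{Shfried} and Assumption \ref{as:1}, and is the representation-theoretic heart of the argument.

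Second, your product formula $R_\rho(\sigma)=\prod_j Z_{j,\rho}(\sigma)^{(-1)^j}$ omits the shifts. The correct identity is \eqref{eq:RbyZ}, namely $R_\rho(\sigma)=\prod_{j=0}^{2l}Z_{\eta_j,\rho}(\sigma+(j-l)|\alpha|)^{(-1)^{j-1}}$. These shifts are not cosmetic: they come from the Casimir shift $\sigma_{\eta_j}$ and are precisely what produces the explicit constant $C_\rho$ in \eqref{eq:Cr}. Relatedly, your description of where $C_\rho$ and $r_\rho$ come from is off: they are \emph{not} determined by elliptic orbital integrals, but by the multiplicities $r_j=m_{\eta_j,\rho}(0)$ at the zero eigenvalue of $C^{\fg,Z,\widehat{\eta}_j,\rho}$ combined with the shift values $(j-l)|\alpha|$. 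The elliptic orbital integrals enter only through the odd polynomial $P_{\eta,\rho}$ in the functional equation \eqref{eq:funceq}; since $P_{\eta,\rho}(0)=0$, they do not affect the leading behaviour at $\sigma=0$ directly, but they are essential for the meromorphic extension and for the symmetry argument pairing $j$ with $2l-j$ in \cite[(7-76)--(7-78)]{Shfried}.
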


\begin{proof}If  $\delta(G)\neq 1$, Theorem \ref{Thm:3} is a 
	consequence of \eqref{eq:tau=0} and \eqref{eq:R=1}. Assume now	
	$\delta(G)=1$ and $G$ has  noncompact center. 
	Proceeding as \cite[Theorem 5.6]{Shfried}, up to evident 
	modification, we see that the dynamical zeta function $R_{\rho}(\sigma)$ extends 
meromorphically to $\sigma\in \bC$ such that the following identity of 
meromorphic function holds,
\begin{multline}
	\label{eq:R=Tdg=1}
	R_{\rho}(\sigma)=\prod_{i=1}^m 
	\det\(\sigma^{2}+\Box^Z|_{\Omega^i(Z,F)}\)^{(-1)^ii}\\\exp\Bigg(\sigma\!\!\!\sum_{\footnotesize\substack{[\gamma]\in 
	[\Gamma_{e}]\\ \gamma=g_{\gamma}k^{-1}g_{\gamma}^{-1}}}\!\!\!\Tr\[\rho(\gamma)\]
	\frac{\vol\big(\Gamma(\gamma)\backslash 
	X(\gamma)\big)}{n_{[\gamma]}}\[e\(TX_{M}(k),\nabla^{TX_{M}(k)} 
	\)\]^{\max}\Bigg),
\end{multline}
from which we get \eqref{eq:masi1}-\eqref{eq:FFs}. The proof for 
	the case where $\delta(G)=1$ and where $G$ has compact center will be given in subsections \ref{sec:repK}-\ref{sec:proof}.
\end{proof}

\begin{re}
%We remark the similarity of \eqref{eq:FFs}. 
By \eqref{eq:FFs}, we have the formal  identity 
\begin{align}\label{eq:Friedformel}
2\log T(F)=
\sum_{[\gamma]\in [\Gamma_+]}\Tr[\rho(\gamma)]\frac{\chi_{\rm orb}\big(\bbS^1\backslash B_{[\gamma]}\big)}{m_{[\gamma]}}.
\end{align}
We note the similarity between \eqref{eq:35} and \eqref{eq:Friedformel}.
\end{re}
\begin{re}
The formal identity \eqref{eq:Friedformel} can be deduced formally  
using the path integral argument and Bismut-Goette's $V$-invariant 
\cite{BGdeRham} as 
in \cite[Section 1E]{Shfried}. We leave the details to  readers.
%formally by applying the localization formula of $V$-invariant to loop space $LZ$. Write $LZ=\coprod_{[\gamma]\in [\Gamma]}(LZ)_{[\gamma]}$. Then,
%\begin{align}\label{eq:V1}
%-\frac{1}{2}\tau_Z(F)=\sum_{[\gamma]\in [\Gamma]}\Tr[\rho(\gamma)]V_a\big((LZ)_{[\gamma]}\big).
%\end{align}
%If $[\gamma]\in [\Gamma_+]$, we have
%\begin{align}
%V_a\big((LZ)_{[\gamma]}\big)=\frac{1}{|\delta(\gamma)|}V_a(\Gamma(\gamma)\backslash X(\gamma)).
%\end{align}
%Also, if $[\gamma]\in [\Gamma_e]$, we have
%\begin{align}\label{eq:V2}
%V_a\big((LZ)_{[\gamma]}\big)=0.
%\end{align}
%Form \eqref{eq:V1}-\eqref{eq:V2}, we get \eqref{eq:Friedformel}. 
\end{re}

%Take a semisimple element $\gamma\in G$. Up to conjugation, we can assume $\gamma= e^ak^{-1} $ such that \eqref{eq:asr} holds. Let $\ft(\gamma)\subset\fk(\gamma)$ be the Cartan subalgebra  of $\fk(\gamma)$.  Put 
%\begin{align}
%\fb(\gamma)=\{Y\in \fp: [Y,\ft(\gamma)]=0, \Ad(k)Y=Y\}.
%\end{align}
%By \cite{Shfried}, we have 
%\begin{align}
% \dim \fb(\gamma)\g \dim \fb,
%\end{align}
%and the  equality holds  if and only if $\gamma$ can be conjugated into  $H$.
%

\subsection{Reductive group with $\delta(G)=1$ and with compact center}\label{sec:d=1}
From now on, we assume that $\delta(G)=1$ and that $G$ has compact 
center. Let us introduce some 
notation following \cite[Sections 6A and 6B]{Shfried}. 
We use the notation in \eqref{eq:MPMKM1}-\eqref{eq:XM}. Let 
$Z(\fb)\subset G$ 
be the stabilizer of $\fb$ in $G$, and 
let $\fz(\fb)\subset \fg$ be its Lie algebra. 
We define $\fp(\fb)$, $\fk(\fb)$, $\fp^{\bot}(\fb)$, 
$\fk^{\bot}(\fb)$, $\fz^{\bot}(\fb)$ in an obvious way as in 
subsection \ref{sec:semi}, so that 
\begin{align}\label{eq:tao1}
&\fp(\fb)=\fb\oplus\fp_{\fm},&\fk(\fb)=\fk_{\fm},
\end{align}
and
\begin{align}\label{eq:mpk1}
 & \fp=\fb\oplus\fp_\fm\oplus\fp^\bot(\fb),&\fk=\fk_\fm\oplus\fk^\bot(\fb).
\end{align}
% Also,
% \begin{align}\label{eq:m=pm+km}
% \fm=\fp_\fm\oplus\fk_\fm.
% \end{align}
Let $Z^{0}(\fb)$  be the connected component of 
the identity in $Z(\fb)$. By \eqref{eq:Z0a}, we have
\begin{align}\label{eq:Z0UB}
  Z^0(\fb)=\bR\times M.
\end{align}
% Moreover, $M$ is a connected reductive subgroup of $G$ with Lie algebra $\fm$ with maximal compact subgroup $K_M=M\cap K$ \index{K@$K_M$}and with Cartan decomposition  \eqref{eq:m=pm+km}. 

 %Moreover, we have an isomorphism of $K_M$-representations,

%By definition, $\ft$ is  the  Cartan subalgebra of both $\fk_\fm$ and $\fm$. We have $\delta(M)=0$, and
%$T$ is a compact Cartan subgroup of $K_M$ and $M$.
%Also, $\dim X_M$ is even.

%\begin{align}\label{eq:isoada}
%\ad(a):\fp^\bot(\fb)\simeq \fk^\bot(\fb).
%\end{align}

%We fix an orientation on $\fb$. The positive elements of $\fb$

Set
\begin{align}
\fz^\bot(\fb)=\fp^\bot(\fb)\oplus \fk^\bot(\fb).
\end{align}
Recall that we have fixed $a_{1}\in \fb$ such that 
$B(a_{1},a_{1})=1$. The choice of $a_{1}$ fixes an orientation of 
$\fb$.  By  \cite[Proposition 6.2]{Shfried}, there exists unique $\alpha\in 
\fb^*$ such that $\<\alpha,a_{1}\>>0$, and that for any  $a\in \fb$, the action of $\ad(a)$  on 
$\fz^\bot(\fb)$ has only two eigenvalues $\pm\<\alpha,a\>\in \bR$.  
Take $a_0=a_{1}/\<\alpha,a_{1}\>\in \fb$. We have  
\begin{align}\label{eq:655}
\<\alpha,a_0\>=1.
\end{align}
Let $\fn\subset \fz^\bot(\fb)$ (resp. $\ol{\fn}$) be the $+1$ (resp. $-1$) eigenspace of $\ad(a_0)$, so that 
\begin{align}
\fz^\bot(\fb)=\fn\oplus \ol{\fn}.
\end{align}
Clearly, $\ol{\fn}=\theta \fn$, and $M$ acts on $\fn$ and $\ol{\fn}$. 
As explained in \cite[Section 5.1]{Shfried}, $\dim \fn$ is even. Set 
\begin{align}
l=\frac{1}{2}\dim \fn.
\end{align}

Let $\fu(\fb)\subset \fu$ and $\fu_\fm\subset \fu$ be respectively the compact forms of $\fz(\fb)$ and of $\fm$. Then,
\begin{align}
&\fu(\fb)=\sqrt{-1}\fb\oplus \sqrt{-1} \fp_\fm \oplus \fk_\fm,&\fu_\fm= \sqrt{-1} \fp_\fm \oplus \fk_\fm.
\end{align}
Let $\fu^\bot(\fb)\subset \fu$ be the orthogonal space of $\fu(\fb)$, 
so that
\begin{align}\label{eq:ubuu}
\fu=\sqrt{-1}\fb\oplus \fu_\fm\oplus \fu^\bot(\fb).
\end{align}

Let $U(\fb)\subset U$ and $U_M\subset U$ be respectively the 
corresponding connected subgroups of complex 
matrices of groups associated to the Lie algebras $\fu(\fb)$ and 
$\fu_\fm$. By \cite[Section 6B]{Shfried}, $U(\fb)$ and $U_M$ are  
compact such that 
\begin{align}\label{UbbUm}U(\fb)=\exp(\sqrt{-1}\fb)U_M.
\end{align}
Clearly, $U(\fb)$ acts on $\fb, \fu_\fm, \fu^\bot(\fb)$ and preserves 
the splitting \eqref{eq:ubuu}.

Put
\begin{align}
Y_{\fb}=U/U(\fb).
\end{align}
By \cite[Propositions 6.7]{Shfried}, $Y_\fb$ is a Hermitian symmetric 
space of the compact type. Let $\omega^{Y_\fb}\in \Omega^2(Y_\fb)$ be 
the canonical K\"alher form on $Y_\fb$ induced by $B$.  As in 
subsection \ref{sec:sym}, $U\to Y_\fb$ is a $U(\fb)$-principle bundle 
on $Y_\fb$ with canonical connection. Let $(TY_\fb,\nabla^{TY_\fb})$ 
and $(N_\fb,\nabla^{N_\fb})$ the Hermitian vector bundle with 
Hermitian connection induced by the representation of $U(\fb)$ on 
$\fu_\fm$ and $\fu^\bot_\fm$. For a vector space $E$, we still denote 
by $E$ the corresponding trivial bundle  on $Y_\fb$. By \eqref{eq:ubuu}, we have an analogy of \cite[(2.2.1)]{B09},
\begin{align}\label{eq:udada}
\fu=\sqrt{-1}\fb \oplus N_\fb \oplus TY_\fb.
\end{align}

Take  $k\in T$.  Denote by  
$\fn(k)$, $U^0(k)$, $Y_\fb(k)$ and $\omega^{Y_\fb(k)}$ the analogies 
of   $\fn$, $U$, $Y_\fb$ and $\omega^{Y_\fb}$ when $G$ is replaced by $Z^0(k)$. The embedding $U^0(k)\to U$ induces an embedding $Y_\fb(k)\to Y_\fb$. 
 Clearly, $k$ acts on the left on $Y_\fb$, and  $Y_\fb(k)$ is fixed by the action of $k$. 
 Recall that the equivariant $\widehat{A}$-forms 
 $\widehat{A}_{k^{-1}}\(N_\fb|_{Y_\fb(k)}, 
 \nabla^{N_\fb|_{Y_\fb(k)}}\)$ and 
 $\widehat{A}_{k^{-1}}\(TY_\fb|_{Y_\fb(k)}, 
 \nabla^{TY_\fb|_{Y_\fb(k)}}\)$ are defined in \eqref{eq:Agpm1}. Let 
 $\widehat{A}^{\fu_{\fm}}_{k^{-1}}(0)$ and 
 $\widehat{A}^{\fu^{\bot}(\fb)}_{k^{-1}}(0)$ be the components of 
 degree $0$ of the form $\widehat{A}_{k^{-1}}\(N_\fb|_{Y_\fb(k)}, \nabla^{N_\fb|_{Y_\fb(k)}}\)$ and $\widehat{A}_{k^{-1}}\(TY_\fb|_{Y_\fb(k)}, \nabla^{TY_\fb|_{Y_\fb(k)}}\)$. Following \cite[(7.7.3)]{B09},  set 
 \begin{align}
 \widehat{A}_{k^{-1}}(0)= \widehat{A}^{\fu_{\fm}}_{k^{-1}}(0)\widehat{A}^{\fu^{\bot}(\fb)}_{k^{-1}}(0). 
 \end{align}
 By \eqref{eq:udada}, as in  \cite[(7.7.5)]{B09}, the following identity of closed forms on $Y_\fb(k)$ holds:
 \begin{align}\label{eq:AAA}
 \widehat{A}_{k^{-1}}(0)=\widehat{A}_{k^{-1}}\(N_\fb|_{Y_\fb(k)}, 
 \nabla^{N_\fb|_{Y_\fb(k)}}\)\widehat{A}_{k^{-1}}\(TY_\fb|_{Y_\fb(k)}, \nabla^{TY_\fb|_{Y_\fb(k)}}\),
  \end{align}  
  which generalizes \cite[Proposition 6.8]{Shfried}.

%Then $Z^0(k)$ is a connected reductive  group such that $\delta(Z^0(k))=1$. Moreover, $G$ and $Z^0(k)$ share the same $\theta$-invariant Cartan subgroup $H$ with minimal noncompact dimension. Denote by $\fm(k)$, $\fp_\fm(k)$, $\fk_\fm(k)$, $\fn(k)$, $\ol{\fn}(k)$ and $\fu_{\fm}(k)$ the analogy of $\fm$, $\fp_\fm$, $\fk_\fm$, $\fn$, $\ol{\fn}$ and $\fu_{\fm}(k)$ when $G$ is replaced by $Z^0(k)$. 
%Let $\fp_{\fm}^\bot(k), \fk_{\fm}^\bot(k)$ be the analogy of $\fp^\bot(k), \fk^\bot(k)$ when $\fp$, $\fk$ is replaced by $\fp_\fm$, $\fk_\fm$. Then
%\begin{align}
%& \fp_\fm=\fp_{\fm}(k)\oplus \fp_{\fm}^\bot(k),&\fk_{\fm}=\fk_{\fm}(k)\oplus \fk^\bot_{\fm}(k).
%\end{align}
%Similar, we write
%\begin{align}
%& \fp^\bot(\fb)=\fp^\bot_1(\fb) \oplus \fp_2^\bot(\fb),&\fk^\bot(\fb)=\fk_1^\bot(\fb) \oplus \fk_2^\bot(\fb),
%\end{align}
%such that $\fp^\bot_1(\fb)$, $\fk^\bot_1(\fb)$ are the fixed point of $k$, and $\fp^\bot_1(\fb)$, $\fk^\bot_1(\fb)$ are corresponding orthogonal space. Then 
%\begin{align}\label{eq:774}
%&\fp(k)=\fb\oplus \fp_\fm(k)\oplus \fp_1^\bot(\fb), &\fk(k)=\fk_\fm(k)\oplus \fk_1^\bot(\fb),\\
%&\fp^\bot(k)=\fp^\bot_\fm(k)\oplus \fp_2^\bot(\fb), &\fk^\bot(k)=\fk^\bot_\fm(k)\oplus \fk_2^\bot(\fb).\notag
%\end{align}
%For $i=1,2$, set 
%\begin{align}
%&\fz_\fm^\bot(k)=\fp_\fm^\bot(k)\oplus \fk_\fm^\bot(k),&\fz_i^\bot(k)=\fp_i^\bot(k)\oplus \fk_i^\bot(\fb).
%\end{align}
%

\subsection{Auxiliary virtual representations of $K$}\label{sec:repK}
We follow \cite[Sections 6C and 7A]{Shfried}. Denote by $RO(K_M)$ and
$RO(K)$ the real representation rings of $K_M$ and $K$. Since $K_M$ and 
$K$ have the same maximal torus $T$, the restriction $RO(K)\to RO(K_M)$ is injective. 

By \cite[Proposition 6.10]{Shfried},  we have the identity in $RO(K_M)$,
\begin{align}\label{eqkkk}
	\(\sum_{i=1}^{m} (-1)^{i-1}i 
	\Lambda^i(\fp^*)\)|_{K_{M}}=\sum_{i=0}^{\dim 
	\fp_{\fm}}\sum_{j=0}^{2l}(-1)^{i+j}\Lambda^{i}(\fp^{*}_{\fm})\otimes 
	\Lambda^{j}(\fn^{*}).
\end{align}
By  \cite[Corollary 6.12]{Shfried}, each term on the right hand side 
of \eqref{eqkkk} has a lift to $RO(K)$. More precisely, let us recall 
\cite[Assumption 7.1]{Shfried}.

\begin{as}\label{as:1}
Let $\eta$ be a real finite dimensional representation of $M$ on the 
vector space $E_\eta$ such that 
\begin{enumerate}
   \item the restriction $\eta|_{K_M}$ to $K_M$ can be lifted into $RO(K)$;
   \item the action of  the Lie algebra  $\fu_\fm\subset \fm\otimes_\bR\bC$ on
$E_\eta\otimes_\bR\bC$, induced by complexification, can be lifted to an action of Lie group $U_M$;
   \item the Casimir element $C^{\fu_\fm}$ of $\fu_\fm$ acts on $E_\eta\otimes_\bR\bC$ as a scalar $C^{\fu_\fm,\eta}\in \bR$.
 \end{enumerate}
\end{as}

By \cite[Corollary 6.12]{Shfried}, let 
$\widehat{\eta}=\widehat{\eta}^+-\widehat{\eta}^-\in RO(K)$ be a real 
virtual finite dimensional representation of $K$ on
$E_{\widehat{\eta}}=E^+_{\widehat{\eta}}- E^-_{\widehat{\eta}}$
such that the following identity in $RO(K_M)$ holds:
\begin{align}\label{eq:hatvar}
E_{\widehat{\eta}}|_{K_M}=  \sum_{i=0}^{\dim \fp_\fm}(-1)^i\Lambda^i(\fp^*_\fm)\otimes E_{\eta}|_{K_M}.
\end{align}

Note that  $M$ acts on $\fn$ by  adjoint action. By 
\cite[Corollary 6.12 and Proposition 6.13]{Shfried}, for $0\le j\le 2l$, the induced representation $\eta_{j}$ 
of $K_M$ on $\Lambda^j(\fn^*)$ satisfies  Assumption 
\ref{as:1}, such that the following identity in $RO(K)$ holds,
\begin{align}\label{eq:sumsum}
 \sum_{i=1}^{m} (-1)^{i-1}i \Lambda^i(\fp^*)=\sum_{j=0}^{2l}(-1)^jE_{\widehat{\eta}_j}.
\end{align}

\subsection{Evaluation of $\Trs^{[\gamma]}[\exp(-t C^{\fg,X,{\widehat{\eta}}}/2)]$}\label{sec:suan}
In \cite[Theorem 7.3]{Shfried}, we evaluate  the orbital integral 
$\Trs^{[\gamma]}[\exp(-t C^{\fg,X,{\widehat{\eta}}}/2)]$ when 
$\gamma=1$ or when $\gamma$ is a non elliptic semisimple element. 
In this subsection, we evaluate  $\Trs^{[\gamma]}[\exp(-t 
C^{\fg,X,{\widehat{\eta}}}/2)]$ when $\gamma$ is elliptic. To state 
the result, let us introduce some notation \cite[(7-3)-(7-7)]{Shfried}. 

 Recall that  
$T$ is a maximal torus of $K_M$, $K$ and $U_M$. Denote by $W(T,U_M)$ 
and $W(T,K)$ the corresponding Weyl groups, and denote by $\vol(K/K_M)$ and 
 $\vol(U_M/K_M)$ the Riemannian volumes induced by $-B$.
Set
\begin{align}\label{eq:cG}
c_G=(-1)^{(\dim \fp-1)/2}\frac{|W(T,U_M)|}{|W(T,K)|}\frac{\vol(K/K_M)}{\vol(U_M/K_M)}\in \bR.
\end{align}
The constant $c_{Z^{0}(k)}$ is defined in a similar way.

As in \cite[(7-3)]{Shfried}, by (2) of  Assumption \ref{as:1}, 
$U_M$ acts on $E_\eta\otimes_\bR \bC$.  We extend this action  to $U(\fb)$ such that $\exp(\sqrt{-1}\fb)$ acts trivially. 
Denote by $F_{\fb,\eta}$  the Hermitian vector bundle  on $Y_\fb$ with total space $U\times_{U(\fb)} (E_{\eta}\otimes_{\bR}\bC)$ with Hermitian connection $\nabla^{F_{\fb,\eta}}$. 

  Note that  the K\"ahler form $\omega^{Y_\fb(k)}$ defines a volume form $dv_{Y_{\fb}(k)}$ on $Y_\fb(k)$.
  For a $U^0(k)$-invariant differential form $\beta$ on $Y_{\fb}(k)$, 
  as in \cite[(7-7)]{Shfried}, define $\[\beta\]^{\max}\in \bR$ such that
  \begin{align}
  \beta-[\beta]^{\max}dv_{Y_{\fb}(k)}
  \end{align}
has degree smaller than $\dim Y_{\fb}(k)$.   Recall that $a_0\in \fb$ is defined in \eqref{eq:655}.

%
%If $e_1,\cdots,e_m$ is an orthonormal basis of $\fp$, and if $e_{m+1},\cdots,e_{n+m}$ is an  orthonormal basis of $\fk$,
%by \cite[Equation (2.6.10)]{B09}, we have
%\begin{align}
%B^*(\kappa^\fg,\kappa^\fg)=-\frac{1}{2}\sum_{1\l i,j\l m}|[e_i,e_j]|^2-\frac{1}{6}\sum_{m+1\l i,j\l m+n}|[e_i,e_j]|^2.
%\end{align}
%Let $\kappa^\fk$ be the analogue of $\kappa^\fg$ associated to the restriction of $B$ to $\fk$. By \cite[Equation (2.6.5)]{B09}, we have
%\begin{align}
%B^*(\kappa^\fk,\kappa^\fk)=-\frac{1}{6}\sum_{m+1\l i,j\l m+n}|[e_i,e_j]|^2.
%\end{align}
%

\begin{thm}\label{thm:orbint}
Let $\gamma\in G$ be semisimple.  
If $\gamma$ can not be conjugated into $H$ by elements of $G$, then 
for $t>0$, we have 
\begin{align}\label{eq:G1=00}
  \Trs^{[\gamma]}\[\exp\(-t C^{\fg,X,{\widehat{\eta}}}/2\)\]=0.
\end{align}
%If $\gamma=1$, then
%\begin{multline}\label{eq:d11}
%  \Trs^{[1]}\[\exp\(-tC^{\fg,X,\widehat{\eta}}/2\)\]=\frac{c_G}{\sqrt{2\pi t}}\exp\(\frac{t}{8}\big(B^*(\ka^\fg,\ka^\fg)-B^*(\ka^{\fu_\fm},\ka^{\fu_\fm})\big)-\frac{t}{2}C^{\fu_\fm,\eta}\)\\
% \[\exp\(-\frac{\omega^{{Y_\fb},2}}{8\pi^2|a_0|^2 t}\)\widehat{A}\(TY_{\fb},\nabla^{TY_{\fb}}\)\mathrm{ch}\(F_{\fb,\eta},\nabla^{F_{\fb,\eta}}\)\]^{\max}.
%\end{multline}
If $\gamma=k^{-1}\in T$, then for $t>0$, we have 
\begin{multline}\label{eq:ellor}
\Trs^{[\gamma]}\left[\exp\(-tC^{\fg,X,\widehat{\eta}}/2\)\right]\\
=\frac{c_{Z^0(k)}}{\sqrt{2\pi}t} \exp\(\frac{t}{16}\Tr\[C^{\fu(\fb),\fu^{\bot}(\fb)}\]-\frac{t}{2}C^{\fu_\fm,\eta}\)
 \bigg[\exp\(-\frac{\omega^{{Y_\fb(k)},2}}{8\pi^2|a_0|^2 t}\)\\
 \widehat{A}_{k^{-1}}\(TY_{\fb}|_{Y_\fb(k)},\nabla^{TY_{\fb}|_{Y_\fb(k)}}\)\mathrm{ch}_{k^{-1}}\(F_{\fb,\eta}|_{Y_\fb(k)},\nabla^{F_{\fb,\eta}|_{Y_\fb(k)}}\)\bigg]^{\max}.\end{multline}
If $\gamma=e^ak^{-1}\in H$ with $a\neq0$, then for any $t>0$, we have 
\begin{multline}\label{eq:Trg}
  \Trs^{[\gamma]}\[\exp\(-t C^{\fg,X,\widehat{\eta}}/2\)\]
=\frac{1}{{\sqrt{2 \pi t}}}\[e\(TX_{M}(k),\nabla^{TX_{M}(k)}\)\]^{\max}\\
\exp\(-\frac{|a|^2}{2t}+\frac{t}{16}\Tr\[C^{\fu(\fb),\fu^{\bot}(\fb)}\]-\frac{t}{2}C^{\fu_\fm,\eta}\)
 \frac{\Tr^{E_\eta}\[\eta(k^{-1})\]}{\left|\det\big(1-\Ad(\gamma)\big)|_{\fz^\bot_0}\right|^{1/2}}.
\end{multline}
\end{thm}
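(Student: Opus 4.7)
Assertions \eqref{eq:G1=00} (for $\gamma$ not conjugate into $H$) and \eqref{eq:Trg} (for $\gamma = e^a k^{-1}$ with $a \neq 0$) are already contained in \cite[Theorem 7.3]{Shfried}, so the essential new content is the elliptic formula \eqref{eq:ellor}. The plan is to substitute $\tau = \widehat{\eta}$ in Bismut's explicit orbital integral formula \eqref{eq:trrJr} and reorganise the resulting Gaussian integral over $\fk(k^{-1})$ into an equivariant Chern--Weil integral over the fixed-point submanifold $Y_\fb(k) \subset Y_\fb$, exactly paralleling the $k = 1$ computation in \cite[Theorem 7.3]{Shfried}.

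Since $\gamma = k^{-1} \in T \subset K$, take $g_\gamma = 1$, so that $\fb \subset \fp(\gamma)$ and $\ft \subset \fk_\fm \subset \fk(\gamma)$. The key algebraic step is that the decomposition \eqref{eq:hatvar} of $\widehat{\eta}|_{K_M}$ factors the supertrace: for $Y \in \fk(\gamma)$, a standard Koszul-type computation yields
\[
\Trs^{E_{\widehat{\eta}}}\bigl[\widehat{\eta}(k^{-1}) e^{-i\widehat{\eta}(Y)}\bigr]
= \det\bigl(1 - \Ad(k^{-1}) e^{-i\ad(Y)}\bigr)\bigl|_{\fp_\fm}\,\Tr^{E_\eta}\bigl[\eta(k^{-1}) e^{-i\eta(Y)}\bigr].
\]
When this is multiplied by $J_{k^{-1}}(Y)$ as defined in \eqref{eq:J} and the Gaussian weight $\exp(-|Y|^2/2t)$, the $\fp_\fm$- and $\fk_\fm$-contributions recombine into the equivariant $\widehat{A}$-genus of $TY_\fb|_{Y_\fb(k)}$, while the $\fp_\fm^\bot$- and $\fk_\fm^\bot$-contributions assemble into that of $N_\fb|_{Y_\fb(k)}$, in accordance with the factorisation \eqref{eq:AAA}; these together combine with the $\Tr^{E_\eta}$-factor to form $\mathrm{ch}_{k^{-1}}(F_{\fb,\eta}|_{Y_\fb(k)})$.

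The Gaussian integration then splits along $\fb \oplus \fk(\gamma)$. Integration in the one-dimensional $\fb$-direction produces the prefactor $1/\sqrt{2\pi}\,t$ and, after the $a_0$-rescaling \eqref{eq:655}, the exponential $\exp(-\omega^{Y_\fb(k),2}/(8\pi^2|a_0|^2 t))$, by the same Duhamel mechanism that generates the K\"ahler form in the $k = 1$ case. Integration over $\fk(\gamma) = \fk_\fm(k) \oplus \fk_\fm^\bot(k)$ converts polynomial dependence on $Y$ into equivariant characteristic forms on $Y_\fb(k)$, the scalar $c_{Z^0(k)}$ arising from the Weyl-volume comparison \eqref{eq:cG} applied to the reductive group $Z^0(k)$, which shares $\fb$ with $G$ and satisfies $\delta(Z^0(k)) = 1$. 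Finally, the Casimir prefactor in \eqref{eq:trrJr} combines with the scalar eigenvalue $C^{\fu_\fm,\eta}$ supplied by Assumption \ref{as:1}(3) to yield the exponential $\exp(\tfrac{t}{16}\Tr[C^{\fu(\fb),\fu^\bot(\fb)}] - \tfrac{t}{2}C^{\fu_\fm,\eta})$.

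The main obstacle will be the careful bookkeeping in the second step: one must verify that the $(k,Y)$-dependent integrand assembles into exactly the equivariant $\widehat{A}$- and $\mathrm{ch}$-forms on $Y_\fb(k)$, with the correct signs and orientations, much as in the Atiyah--Bott fixed-point theorem. Once this identification is made, the computation reduces to a fibrewise equivariant enhancement of the $k = 1$ identity already proved in \cite[Theorem 7.3]{Shfried}, so no fundamentally new analytic input is required.
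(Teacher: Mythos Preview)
Your overall strategy matches the paper's: reduce to \cite[Theorem 7.3]{Shfried} for the non-elliptic cases, then for $\gamma=k^{-1}\in T$ plug $\tau=\widehat{\eta}$ into Bismut's formula \eqref{eq:trrJr}, exploit the Koszul factorisation coming from \eqref{eq:hatvar}, and reorganise into equivariant characteristic forms on $Y_\fb(k)$. However, there is a genuine gap in the way you invoke the factorisation.

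The identity \eqref{eq:hatvar} is an equality in $RO(K_M)$, so the character identity
\[
\Trs^{E_{\widehat{\eta}}}\bigl[\widehat{\eta}(k^{-1}) e^{-i\widehat{\eta}(Y)}\bigr]
= \det\bigl(1 - \Ad(k^{-1}) e^{-i\ad(Y)}\bigr)\big|_{\fp_\fm}\,\Tr^{E_\eta}\bigl[\eta(k^{-1}) e^{-i\eta(Y)}\bigr]
\]
is only valid when $k^{-1}e^{-iY}$ lies in (the complexification of) $K_M$, i.e.\ for $Y\in\fk_\fm$. But the integral in \eqref{eq:trrJr} runs over $\fk(k^{-1})$, and by \eqref{eq:fan} one has $\fk(k)=\fk_\fm(k)\oplus\fk_1^\bot(\fb)$ with $\fk_1^\bot(\fb)\subset\fk^\bot(\fb)$ \emph{orthogonal} to $\fk_\fm$. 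So your factorisation does not apply on the domain of integration, and your stated decomposition $\fk(\gamma)=\fk_\fm(k)\oplus\fk_\fm^\bot(k)$ is incorrect. The paper fixes this by a double Weyl-integration manoeuvre: first reduce the integral over $\fk(k)$ to an integral over $\ft\subset\fk_\fm$ (picking up the Jacobian $\det(\ad(Y))|_{\fk(k)/\ft}$), apply the Koszul factorisation on $\ft$ where it is legitimate (this is \eqref{eq:kk1}), and then use Weyl integration for $U_M^0(k)$ to pass from $\ft$ to $\fu_\fm(k)$, where the computation of \cite[(7-25)--(7-44)]{Shfried} can be repeated verbatim.

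A smaller point: there is no ``integration in the $\fb$-direction''. Since $\fb\subset\fp$, it does not appear in the domain of the Gaussian integral in \eqref{eq:trrJr}; the factor $1/\sqrt{2\pi t}$ is simply the contribution of the one-dimensional $\fb$-summand to the prefactor $(2\pi t)^{-\dim\fz(k)/2}$, not the result of an integration. Likewise the exponential of $-\omega^{Y_\fb(k),2}/(8\pi^2|a_0|^2 t)$ emerges from the $\det(\ad(Y))|_{\fn(k)}$ factor in \eqref{590} after the passage to $\fu_\fm(k)$, not from a $\fb$-integral.
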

\begin{proof}
Equations \eqref{eq:G1=00}, \eqref{eq:ellor} with $\gamma=1$, and 
\eqref{eq:Trg} are \cite[Theorem  5.15]{Shfried}. It remains to show 
\eqref{eq:ellor} for a non trivial  $\gamma=k^{-1}\in T$. 
Set
\begin{align}\label{eq:772}
% &\fp_\fm(k)=\fp_\fm\cap \fz(k),&\fk_\fm(k)=\fk_\fm\cap \fz(k),&&\fm(k)=\fm\cap \fz(k),\\
&\fp_\fm^\bot(k)=\fp_\fm\cap \fz^\bot(k),&\fk_\fm^\bot(k)=\fk_\fm\cap \fz^\bot(k),&&\fm^\bot(k)=\fm\cap \fz^\bot(k).
\end{align}
By \eqref{eq:772}, we have
\begin{align}
&\fp_\fm=\fp_\fm(k)\oplus \fp_\fm^\bot(k), &\fk_\fm=\fk_\fm(k)\oplus \fk_\fm^\bot(k).
\end{align}
Similarly,  $k$ acts on $\fp^\bot(\fb)$ and $\fk^\bot(\fb)$. Set
\begin{align}\label{eq:7722}
&\fp^\bot_1(\fb)=\fp^\bot(\fb)\cap \fz(k), & \fk^\bot_1(\fb)=\fk^\bot(\fb)\cap \fz(k),&& \fz_1^\bot(k)=\fz^\bot(\fb)\cap \fz(k),\\
&\fp^\bot_2(\fb)=\fp^\bot(\fb)\cap \fz^\bot(k), & \fk^\bot_2(\fb)=\fk^\bot(\fb)\cap \fz^\bot(k),&& \fz_2^\bot(k)=\fz^\bot(\fb)\cap \fz^\bot(k).\notag
\end{align}
Then
\begin{align}
&\fp^\bot(\fb)=\fp^\bot_1(\fb)\oplus \fp_2^\bot(\fb), &\fk^\bot(\fb)=\fk^\bot_1(\fb)\oplus \fk_2^\bot(\fb).
\end{align}
By \eqref{eq:772} and \eqref{eq:7722}, we get
\begin{align}\label{eq:fan}
&\fp(k)=\fb\oplus \fp_\fm(k)\oplus \fp_1^\bot(\fb), &\fk(k)= \fk_\fm(k)\oplus \fk_1^\bot(\fb),\\
&\fp^\bot(k)= \fp^\bot_\fm(k)\oplus \fp_2^\bot(\fb), &\fk^\bot(k)= \fk^\bot_\fm(k)\oplus \fk_2^\bot(\fb).\notag
\end{align}
As in the case of \cite[(6-5)]{Shfried}, we have  isomorphisms of representations of $T$,
\begin{align}\label{eq:77iso}
\fp^\bot_1(\fb)\simeq \fk_1^\bot(\fb)\simeq \fn(k),
\end{align}
where the first isomorphism is given by $\ad(a_0)$. 
Moreover, $\ad(a_0)$ induces an isomorphism of representations of $T$,
\begin{align}\label{eq:78iso}
\fp^\bot_2(\fb)\simeq \fk_2^\bot(\fb).
\end{align}
Set
\begin{align}\label{eq:fan2}
&\fu_\fm(k)=\sqrt{-1}\fp_\fm(k)\oplus \fk_{\fm}(k),&\fu^\bot_\fm(k)=\sqrt{-1}\fp^\bot_\fm(k)\oplus \fk^\bot_{\fm}(k),\\
&\fu_1^\bot(\fb)=\sqrt{-1}\fp_1^\bot(\fb)\oplus \fk_1^\bot(\fb),& \fu_2^\bot(\fb)=\sqrt{-1}\fp_2^\bot(\fb)\oplus \fk_2^\bot(\fb). \notag
\end{align}

Proceeding as \cite[7-18]{Shfried}, by \eqref{eq:trrJr} and by the Weyl integral formula for Lie algebra 
\cite[ (7-17)]{Shfried}, we have 
\begin{multline}\label{eq:tr1si}
  \Trs^{[k^{-1}]}\[\exp\(-t C^{\fg,X,\widehat{\eta}}/2\)\]\\
  =\frac{1}{(2\pi t)^{\dim 
  \fz(k)/2}}\exp\(\frac{t}{16}\Tr^{\fp}\[C^{\fk,\fp}\]+\frac{t}{48}\Tr^{\fk}\[C^{\fk,\fk}\]\)\\
\frac{\vol(K^{0}(k)/T)}{|W(T,K^{0}(k))|}
\int_{Y\in\ft}\det\big(\ad(Y)\big)\big|_{\fk(k)/\ft}\ J_{k^{-1}}(Y)\\
\Trs^{E_{\widehat{\eta}}}\[\widehat{\eta}(k^{-1})\exp\big(-i\widehat{\eta}(Y)\big)\]\exp\(-|Y|^2/2t\)dY.
\end{multline}

As $\ft$ is also the Cartan subalgebra of $\fu_\fm(k)$, we will rewrite the integral on the right-hand side as an integral over $\fu_\fm(k)$.
%
%Note that $\gamma$ acts on each component of \eqref{eq:57}.
By \eqref{eq:J}, \eqref{eq:fan}-\eqref{eq:78iso}, for $Y\in \ft$, we have
\begin{multline}\label{eq:J2}
  J_{k^{-1}}({Y})=\frac{\widehat{A}\big(i\ad({Y})|_{\fp
  _\fm(k)}\big)}{\widehat{A}\big(i\ad(Y)|_{\fk_\fm(k)}\big)}\[\det\big(1-\Ad(k^{-1})\big)|_{\fp_2^\bot(\fb)\oplus \fk_2^\bot(\fb)}\]^{-1/2}
\\
  \[\frac{1}{\det\big(1-\Ad(k^{-1})\big)|_{\fp_\fm^\bot(k)\oplus 
  \fk_\fm^\bot(k)}}\frac{\det\big(1-\exp(-i\ad(Y))\Ad(k^{-1})\big)|_{\fk_\fm^\bot(k)}}{\det\big(1-\exp(-i\ad(Y))\Ad(k^{-1})\big)|_{\fp_\fm^\bot(k)}}\]^{1/2}\!\!\!\!\!\!.\end{multline}
%Moreover, $\ad(a_0)$ is an isomorphism of $\fp^\bot_i(\fb)\simeq \fk_i^\bot(\fb)$ for $i=1,2$
%
%Clearly, $\ad(a_0)$ acts on $\fp^\bot_1(\fb)\oplus \fk_1^\bot(\fb)$. Let $\fn_1\oplus \ol{\fn}_1$ be respectively the $\pm1$ eigenspace such that 
%\begin{align}
%\fp^\bot_1(\fb)\oplus \fk_1^\bot(\fb)=\fn_1\oplus \ol{\fn}_1.
%\end{align}
%Set
%\begin{align}
%\fu_{\fm}(k)=\sqrt{-1}\fp_{\fm}(k)\oplus \fk_{\fm}(k).
%\end{align}
As in \cite[(7-22)]{Shfried}, by \eqref{eq:hatvar}, \eqref{eq:77iso}, and \eqref{eq:J2}, for $Y\in \ft$, we have
\begin{multline}\label{eq:kk1}
\frac{\det(\ad(Y))|_{\fk(k)/\ft}}{\det(\ad(Y))|_{\fu_\fm(k)/\ft}}J_{k^{-1}}(Y)\Trs^{E_{\widehat{\eta}}}\[\widehat{\eta}(k^{-1})\exp\big(-i\widehat{\eta}(Y)\big)\]\\
=(-1)^{\frac{\dim\fp_\fm(k)}{2}}
\det\big(\ad(Y)\big)\big|_{\fn(k)}
\widehat{A}^{-1}\(i\ad(Y)|_{\fu_\fm(k)}\)\\\Trs^{E_{\eta}}\[\eta(k^{-1})\exp(-i\eta(Y))\]
\[\det\big(1-\Ad(k^{-1})\big)|_{\fu_2^\bot(\fb)}\]^{-1/2}\\
\[\frac{\det\big(1-\exp(-i\ad(Y))\Ad(k^{-1})\big)|_{\fu_\fm^\bot(k)}}{\det\big(1-\Ad(k^{-1})\big)|_{\fu_\fm^\bot(k)}}\]^{1/2}. 
 \end{multline}
 
Let $U_M(k)$ be the centralizer of $k$ in $U_M$, and
 let $U^0_M(k)$ be the connected component of the identity in $U_M(k)$.
 The right-hand side of \eqref{eq:kk1} is $\Ad(U^0_M(k))$-invariant.
By \eqref{eq:cG}, \eqref{eq:tr1si} and \eqref{eq:kk1}, and using 
again the Weyl integral formula \cite[ (7-17)]{Shfried}, as in \cite[(7-24)]{Shfried}, we get
\begin{multline}\label{590}
\Trs^{[k^{-1}]}\[\exp\(-t 
C^{\fg,X,\widehat{\eta}}/2\)\]=\frac{(-1)^{\frac{\dim 
\fn(k)}{2}}}{(2\pi t)^{\dim \fz(k)/2}}c_{Z^0(k)}\\\frac{\exp\(\frac{t}{16}\Tr^{\fp}\[C^{\fk,\fp}\]+\frac{t}{48}\Tr^{\fk}\[C^{\fk,\fk}\]\)}{\[\det\big(1-\Ad(k^{-1})\big)|_{\fu_2^\bot(\fb)}\]^{1/2}}
\int_{Y\in\fu_{\fm}(k)}\det\big(\ad(Y)\big)|_{\fn(k)}\\
\widehat{A}^{-1}\(i\ad(Y)|_{\fu_\fm(k)}\)
\Tr^{E_{\eta}}\[\eta(k^{-1})\exp(-i\eta(Y))\]
\\
\[\frac{\det\big(1-\exp(-i\ad(Y))\Ad(k^{-1})\big)|_{\fu_\fm^\bot(k)}}{\det\big(1-\Ad(k^{-1})\big)|_{\fu_\fm^\bot(k)}}\]^{1/2}\exp(-|Y|^2/2t)dY.
 \end{multline}
 
Proceeding as in \cite[ (7-25)-(7-44)]{Shfried}, by \eqref{590},  
% \begin{multline}
%\Trs^{[\gamma]}\[\exp\(-t C^{\fg,X,\widehat{\eta}}/2\)\]=\frac{c_{Z^0(k)}}{\sqrt{2\pi t}}\exp\big(B^*(\ka^\fg,\ka^\fg) t/8\big)\\
%\exp\(-\frac{t}{2}\Delta^{\fu_{\fm}(k)}\)\Bigg\{\exp\(-\frac{\omega^{Y_\fb(k),2}}{8\pi^2|a_0|^2t}\)\widehat{A}^{-1}\(i\ad(Y)|_{\fu_\fm(k)}\)\Trs^{E_{\eta}}\[\eta(\gamma)\exp(-i\eta(Y))\]
%\\
%\[\det\big(1-\Ad(k^{-1})\big)|_{\fz_2^\bot(\fb)}\]^{-1/2}
%\[\frac{\det\big(1-\exp(-i\ad(Y))\Ad(k^{-1})\big)|_{\fu_\fm^\bot(\gamma)}}{\det\big(1-\Ad(k^{-1})\big)|_{\fu_\fm^\bot(\gamma)}}\]^{1/2}\Bigg\}^{\max}\Bigg|_{Y=-\frac{\Omega^{\fu_\fm(k)}}{2\pi}}.
% \end{multline} 
 we get
 \begin{multline}\label{eq:782}
\Trs^{[\gamma]}\[\exp\(-t 
C^{\fg,X,\widehat{\eta}}/2\)\]=\frac{c_{Z^0(k)}}{\sqrt{2\pi t}}\\
\exp\(\frac{t}{16}\Tr\[C^{\fu(\fb),\fu^{\bot}(\fb)}\]-\frac{t}{2}C^{\fu_\fm,\eta}\)
\bigg[\exp\(-\frac{\omega^{Y_\fb(k),2}}{8\pi^2|a_0|^2t}\)\widehat{A}_{k^{-1}}(0)\\\widehat{A}^{-1}_{k^{-1}}\(N_\fb|_{Y_\fb(k)},\nabla^{N_\fb|_{Y_\fb(k)}}\){\rm ch}_{k^{-1}}\(F_{\fb,\eta}|_{Y_\fb(k)},\nabla^{F_{\fb,\eta}|_{Y_\fb(k)}} \)\bigg]^{\max}.
 \end{multline} 
 By \eqref{eq:AAA} and \eqref{eq:782}, we get \eqref{eq:ellor}. 
\end{proof}

% By  Theorem \ref{thm:sel}, Proposition \ref{prop:orbv0} and Theorem \ref{thm:orbint}, 
%  for $t>0$, we have  \begin{multline}\label{eq:trCZ}
%  \Trs\[\exp\(-tC^{\fg,Z,\widehat{\eta},\rho}/2\)\]=\frac{1}{\sqrt{2\pi t}}\exp\(\frac{t}{8}\big(B^*(\ka^\fg,\ka^\fg)-B^*(\ka^{\fu_\fm},\ka^{\fu_\fm})\big)-\frac{t}{2}C^{\fu_\fm,\eta}\)\\
% \Bigg\{\sum_{[\gamma]\in [\Gamma_e], \gamma=g_\gamma k^{-1} g_\gamma^{-1}}\Tr[\rho(\gamma)]\frac{\vol(\Gamma(\gamma)\backslash X(\gamma))}{|\delta(\gamma)|} \\
% \times c_{Z^0(k)}\[\exp\(-\frac{\omega^{{Y_\fb}(k),2}}{8\pi^2|a_0|^2 t}\)\widehat{A}_{k^{-1}}\(TY_{\fb}|
%_{Y_\fb(k)},\nabla^{TY_{\fb}|_{Y_\fb(k)}}\)\mathrm{ch}_{k^{-1}}\(F_{\fb,\eta}|_{Y_\fb(k)},\nabla^{F_{\fb,\eta}}\)\]^{\max}
%   \\
%+\!\!\!\!\sum_{[\gamma]\in [\Gamma_+],\gamma=g_\gamma e^a k^{-1} g_\gamma^{-1}}\!\!\!\!\!\!\Tr[\rho(\gamma)] \frac{\chi_{\rm orb}\(\widehat{\Gamma(\gamma)\backslash X(\gamma)}\)}{m_{[\gamma]}}\frac{|a|\Tr^{E_\eta}\[\eta(k^{-1})\]}{\left|\det\big(1-\Ad(e^ak^{-1})\big)|_{\fz^\bot(\fb)}\right|^{1/2}}\exp\(-\frac{|a|^2}{2t}\)\Bigg\}.
%\end{multline}

\subsection{Selberg zeta functions}\label{sec:selzeta}
We follow \cite[Section 7C]{Shfried}. 
Recall that $\rho:\Gamma\to \mathrm{U}(r)$ is a unitary 
representation of $\Gamma$.

\begin{defin}
For $\sigma\in \bC$,	we define a formal sum 
\begin{multline}\label{eq:Xisel}
\Xi_{\eta,\rho}(\sigma)=-\sum_{\footnotesize\substack{[\gamma]\in [\Gamma_+]\\\gamma=g_\gamma e^a k^{-1} g_\gamma^{-1}}}\Tr[\rho(\gamma)]\frac{\chi_{\rm 
orb}\big(\mathbb{S}^{1}\backslash 
B_{[\gamma]}\big)}{m_{[\gamma]}}\\\frac{\Tr^{E_\eta}\[\eta(k^{-1})\]}{\left|\det\big(1-\Ad(e^{a}k^{-1})\big)|_{\fz_0^\bot}\right|^{1/2}}e^{-\sigma|a|}
\end{multline}	
and a formal Selberg zeta function 
\begin{align}
  Z_{\eta,\rho}(\sigma)=\exp\big(\Xi_{\eta,\rho}(\sigma)\big).
\end{align}
The formal Selberg zeta function is said to be well defined if 
the same conditions as in Definition \ref{def:ruelle} hold. 
\end{defin}

% 
% In this subsection, we introduce a  Selberg-type zeta function associated to the representation $\eta$ of $M$ satisfying Assumption \ref{as:1}.
% By \eqref{eq:sel1},  by Proposition \ref{prop:orbv0}, Theorem 
% \ref{thm:orbint}, proceeding as in \cite[Theorem 5.6]{Shfried}, we find that 
%  there is $\sigma_0>0$ such that
% \begin{align}\label{eq:see}
%    \sum_{\footnotesize\substack{[\gamma]\in [\Gamma_+]\\\gamma=g_\gamma e^a k^{-1} g_\gamma^{-1}}}\frac{\left|\chi_{\rm orb}\(\bbS^1\backslash B_{[\gamma]}\)\right|}{m_{[\gamma]}}\frac{|a|e^{-\sigma_0|a|}}{\left|\det\big(1-\Ad(\gamma)\big)|_{\fz^\bot_0}\right|^{1/2}}<\infty.
% \end{align}
% Note that  there is $c_0>0$ such that for $\gamma\in \Gamma_+$ with $\gamma=g_\gamma e^a k^{-1} g_\gamma^{-1}$ as in \eqref{eq:asr}, we have
% \begin{align}\label{eq:a0c0}
%   |a|\g c_0.
% \end{align}
% By  \eqref{eq:see} and \eqref{eq:a0c0}, for $\sigma\in \bC$ and $\Re(\sigma)> \sigma_0$, the sum
% \begin{align}\label{eq:Xisel}
% \Xi_{\eta,\rho}(\sigma)=-\sum_{\footnotesize\substack{[\gamma]\in [\Gamma_+]\\\gamma=g_\gamma e^a k^{-1} g_\gamma^{-1}}}\Tr[\rho(\gamma)]\frac{\chi_{\rm orb}\big(\bbS^1\backslash B_{[\gamma]}\big)}{m_{[\gamma]}}\frac{\Tr^{E_\eta}\[\eta(k^{-1})\]}{\left|\det\big(1-\Ad(\gamma)\big)|_{\fz_0^\bot}\right|^{1/2}}e^{-\sigma|a|}
% \end{align}
% converges absolutely to a holomorphic function defined on this domain.
% 

Recall that the Casimir operator $C^{g,Z,\widehat{\eta},\rho}$ acting on $C^\infty(Z,\cF_{\widehat{\eta}}\otimes_\bC F)$ is a formally self-adjoint second order elliptic operator, which is bounded from below. Set
%Set
%\begin{align}\label{eq:AshiftC}
%  L^{Z,\widehat{\eta},\rho}=C^{\fg,Z,\widehat{\eta},\rho}+\frac{1}{4}\big(B^*(\ka^\fg,\ka^\fg)-B^*(\ka^{\fu_\fm},\ka^{\fu_\fm})\big)-C^{\fu_\fm,\eta}.
%\end{align}
%By definition,  $ L^{Z,\widehat{\eta},\rho}$ is  an essentially self-adjoint second order elliptic operator acting on 
% $C^\infty(Z,\cF_{\widehat{\eta}}\otimes_\bC F)$, which is bounded from below. For $\lambda\in \bC$, set
\begin{align}
  m_{\eta,\rho}(\lambda)=\dim_\bC \ker \(C^{\fg,Z,\widehat{\eta}^+\!\!\!,\rho}-\lambda\)-\dim_\bC \ker \(C^{\fg,Z,\widehat{\eta}^-\!\!\!,\rho}-\lambda\).
\end{align}
%When $\lambda=\frac{1}{4}\big(B^*(\ka^\fg,\ka^\fg)-B^*(\ka^{\fu_\fm},\ka^{\fu_\fm})\big)-C^{\fu_\fm,\eta}$, we write
%$r_{\eta,\rho}=m_{\eta,\rho}(\lambda)$. Then,
%\begin{align}\label{eq:rrs}
%  r_{\eta,\rho}=\dim_\bC \ker C^{\fg,Z,\widehat{\eta}^+\!\!\!,\rho}-\dim_\bC \ker C^{\fg,Z,\widehat{\eta}^-\!\!\!,\rho}.
%\end{align}
As in  \cite[(7-59)]{Shfried},
 consider  the quotient of zeta regularized determinants 
\begin{align}
{\rm det}_{\rm gr}\(C^{\fg,Z,\widehat{\eta},\rho}+\sigma\)=  \frac{\det\big(C^{\fg,Z,\widehat{\eta}^+\!\!\!,\rho}+\sigma\big)}{\det\big(C^{\fg,Z,\widehat{\eta}^-\!\!\!,\rho}+\sigma\big)}. 
\end{align}
By Remark \ref{re:redet}, it is a meromorphic function on $\bC$. 
%
%As in \eqref{eq:Torsion2}, for $\Re(s)\gg1$ and $\Re(\sigma)\gg1$, put
%\begin{align}\label{eq:T12}
%  \theta_{\eta,\rho}(s,\sigma)=-\frac{1}{\Gamma(s)}\int_0^\infty \Trs\[\exp(-t (L^{Z,\widehat{\eta},\rho}+\sigma)\]t^{s-1}dt.
%\end{align}
%Clearly, $\theta_{\eta,\rho}(s,\sigma)$ is holomorphic on this domain. 
%By \cite[Remark 2.3]{Shfried}, for $\Re(\sigma)\gg1$ fixed, the function $s\to \theta_{\eta,\rho}(s,\sigma)$
%has a meromorphic extension to $s\in \bC$, which is holomorphic at $s=0$. Moreover, 
%$ \sigma\to \exp\(\frac{\p }{\p s}\theta_{\eta,\rho}(0,\sigma)\)$ has a meromorphic extension  to $\sigma\in \bC$. 
Its zeros and poles 
belong to the set $\{-\lambda:\lambda\in \Sp(C^{\fg,Z,\widehat{\eta},\rho})\}$. The order of zero  at $\sigma=-\lambda$ is $m_{\eta,\rho}(\lambda)$. 

%\begin{re}
%From the construction, the meromorphic function $ \exp\(\frac{\p }{\p s}\theta_{\eta,\rho}(0,\sigma)\)$ is given by the quotient of zeta regularized determinant 
%\begin{align}
% \exp\(\frac{\p }{\p s}\theta_{\eta,\rho}(0,\sigma)\)=\frac{\det\big(\sigma+L^{Z,\widehat{\eta}^+,\rho}\big)}{\det\big(\sigma+L^{Z,\widehat{\eta}^-,\rho}\big)}.
%\end{align}
%\end{re}

%For $\Re(s)\g 1$ and $\Re(\sigma)>0$, put
%\begin{align}
%  \theta_{\rho,\widehat{\eta}}(s,\sigma)=-\frac{1}{\Gamma(s)}\int_0^\infty \Trs\[\exp\(-t( L^{Z,\widehat{\eta},\rho}+\sigma)\)\]t^{s-1}dt.
%\end{align}

Set 
\begin{align}
\sigma_\eta=\frac{1}{8}\Tr\[C^{\fu(\fb),\fu^{\bot}(\fb)}\]-C^{\fu_\fm,\eta}.
\end{align}
Let $P_{\eta,\rho}(\sigma)$ be the odd polynomial defined by 
\begin{multline}\label{eq:polyP}
P_{\eta,\rho}(\sigma)=\sum_{\footnotesize\substack{[\gamma]\in 
[\Gamma_e]\\\gamma=g_\gamma  k^{-1} g_\gamma^{-1}}} 
c_{Z^0(k)}\Tr[\rho(\gamma)]\frac{\vol(\Gamma(\gamma)\backslash 
X(\gamma))}{n_{[\gamma]}}\\
\bigg(
\sum^{\dim \fn(k)/2}_{j=0}(-1)^j 
\frac{\Gamma(-j-\frac{1}{2})}{j!(4\pi)^{2j+\frac{1}{2}}|a_0|^{2j}} \sigma^{2j+1}\\
 \[\omega^{{Y_\fb}(k),2j}\widehat{A}_{k^{-1}}\(TY_{\fb}|_{Y_\fb(k)},\nabla^{TY_{\fb}|_{Y_\fb(k)}}\)\mathrm{ch}_{k^{-1}}\(\cF_{\fb,\eta}|_{Y_\fb(k)},\nabla^{\cF_{\fb,\eta}|_{Y_\fb(k)}}\)\]^{\max} \!\!\bigg).
\end{multline}
%Note that  $P_{\eta,\rho}(\sigma)$ depends only on $Z\coprod \Sigma Z$, $\Gamma_e$ and $\rho|_{\Gamma_e}$. %As the notation indicates, $P_\eta(\sigma)$ does not depend on $\Gamma$ or $\rho$.

\begin{thm}\label{thm:detfor}There is
	$\sigma_{0}>0$ such that 
		\begin{align}\label{eq:see}
   \sum_{\footnotesize\substack{[\gamma]\in 
   [\Gamma_+]\\\gamma=g_\gamma e^a k^{-1} 
   g_\gamma^{-1}}}\frac{\left|\chi_{\rm orb}\(\bbS^1\backslash 
   B_{[\gamma]}\)\right|}{m_{[\gamma]}}\frac{e^{-\sigma_0|a|}}{\left|\det\big(1-\Ad(e^{a}k^{-1})\big)|_{\fz^\bot_0}\right|^{1/2}}<\infty.
\end{align}
	The Selberg zeta function $Z_{\eta,\rho}(\sigma)$ has a meromorphic extension to $\sigma\in \bC$ such that  the following identity of meromorphic functions on $\bC$ holds:
\begin{align}\label{eq:detfor}
  Z_{\eta,\rho}(\sigma)={\rm det}_{\rm gr}\(C^{\fg,Z,\widehat{\eta},\rho}+\sigma_\eta+\sigma^2\)\exp\big(P_{\eta,\rho}(\sigma)\big).
\end{align}
The zeros and poles of $Z_{\eta,\rho}(\sigma)$
belong to  $\{\pm i\sqrt{\lambda+\sigma_\eta}:\lambda\in \Sp(C^{\fg,Z,\widehat{\eta},\rho})\}$. If $\lambda\in \Sp(C^{\fg,Z,\widehat{\eta},\rho})$ and $\lambda\neq-\sigma_\eta$, the order of zero  at $\sigma=\pm i\sqrt{\lambda+\sigma_\eta}$ is $m_{\eta,\rho}(\lambda)$. The order of zero at $\sigma=0$ is $2m_{\eta,\rho}(-\sigma_\eta)$.
Also,
\begin{align}\label{eq:funceq}
  Z_{\eta,\rho}(\sigma)=Z_{\eta,\rho}(-\sigma)\exp\big(2 P_{\eta,\rho}(\sigma)\big).
\end{align}
\end{thm}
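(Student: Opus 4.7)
The plan is to extend to orbifolds the strategy employed in \cite{Shfried} for the manifold case, the principal new input being the explicit formula \eqref{eq:ellor} for the elliptic orbital integrals established in Theorem \ref{thm:orbint}. First I would apply the Selberg trace formula \eqref{eq:sel2} with $\tau=\widehat{\eta}=\widehat{\eta}^{+}-\widehat{\eta}^{-}$, splitting the sum according to $[\Gamma]=[\Gamma_{e}]\coprod [\Gamma_{+}]$. By \eqref{eq:G1=00} only classes $[\gamma]$ conjugate into $H$ survive, so one obtains
\begin{align*}
\Trs\bigl[\exp(-tC^{\fg,Z,\widehat{\eta},\rho}/2)\bigr]
=I_{e}(t)+I_{+}(t),
\end{align*}
with $I_{e}(t)$ the sum over $[\Gamma_{e}]$ computed by \eqref{eq:ellor} and $I_{+}(t)$ the sum over $[\Gamma_{+}]$ computed by \eqref{eq:Trg}. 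For $I_{+}(t)$, identifying $\vol(\Gamma(\gamma)\backslash X(\gamma))/|\delta(\gamma)|$ with $|a_{\gamma}|\,\chi_{\mathrm{orb}}(\bbS^{1}\backslash B_{[\gamma]})/m_{[\gamma]}$ via Proposition \ref{prop:orbv0} (noting that for the relevant $\gamma$ one has $X^{a,\bot}(\gamma)=X_{M}(k)$) converts the geometric weights into the dynamical weights appearing in \eqref{eq:Xisel}.

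The absolute convergence \eqref{eq:see} will be obtained by combining the classical counting bound on closed geodesics of length $\le L$ with the exponential decay of $|\det(1-\Ad(e^{a}k^{-1}))|_{\fz_{0}^{\bot}}|^{-1/2}$ relative to $e^{|a|}$; this part is essentially identical to \cite[Section 7C]{Shfried} since only non-elliptic classes enter. Once \eqref{eq:see} is in place, for $\Re(\sigma)\gg 1$ one rewrites $I_{+}(t)$ using the heat-kernel identity $\frac{1}{\sqrt{2\pi t}}e^{-|a|^{2}/(2t)}=\frac{1}{2\pi i}\int e^{-t s^{2}/2-|a|s}ds$, then applies the Mellin transform
\begin{align*}
\log\det_{\mathrm{gr}}(C^{\fg,Z,\widehat{\eta},\rho}+\sigma_{\eta}+\sigma^{2})
=-\int_{0}^{\infty}e^{-t(\sigma_{\eta}+\sigma^{2})/2}\Trs\bigl[\exp(-tC^{\fg,Z,\widehat{\eta},\rho}/2)\bigr]\frac{dt}{t}+\cdots
\end{align*}
(regularized in the standard way using \cite{Voros} as in Remark \ref{re:redet}). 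The contribution of $I_{+}(t)$ produces $\Xi_{\eta,\rho}(\sigma)=\log Z_{\eta,\rho}(\sigma)$, while the contribution of $I_{e}(t)$ produces, after evaluating the $t$-integrals $\int_{0}^{\infty}t^{-2j-3/2}e^{-t\sigma^{2}/2}dt=\sigma^{2j+1}\Gamma(-j-\tfrac{1}{2})/2^{-j-1/2}$, exactly the odd polynomial $P_{\eta,\rho}(\sigma)$ of \eqref{eq:polyP}, the sum being finite because in \eqref{eq:ellor} the Chern--Weil form has top degree $\dim Y_{\fb}(k)=\dim\fn(k)$.

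The meromorphic continuation of $Z_{\eta,\rho}$ then follows from \eqref{eq:detfor} together with the meromorphicity of $\det_{\mathrm{gr}}$ from Remark \ref{re:redet}; the assertions on zeros/poles follow by reading off multiplicities of $\det_{\mathrm{gr}}(C^{\fg,Z,\widehat{\eta},\rho}+\sigma_{\eta}+\sigma^{2})$ at $\sigma=\pm i\sqrt{\lambda+\sigma_{\eta}}$, with a factor $2$ at $\sigma=0$ accounting for the branch-type zero coming from $\sigma^{2}$. The functional equation \eqref{eq:funceq} is a direct consequence: the spectral determinant is invariant under $\sigma\mapsto -\sigma$, so \eqref{eq:detfor} applied at $\sigma$ and at $-\sigma$ yields $Z_{\eta,\rho}(\sigma)/Z_{\eta,\rho}(-\sigma)=\exp(P_{\eta,\rho}(\sigma)-P_{\eta,\rho}(-\sigma))=\exp(2P_{\eta,\rho}(\sigma))$ since $P_{\eta,\rho}$ is odd.

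The main obstacle is the bookkeeping of the elliptic contribution: whereas in \cite{Shfried} elliptic elements reduce to $\gamma=1$ and produce a single anomaly term, here each conjugacy class $[\gamma]\in[\Gamma_{e}]$ contributes an integral of an equivariant Chern--Weil form on the fixed submanifold $Y_{\fb}(k)$, and one must check that the Mellin integrals of the various powers of $1/t$ appearing in the Taylor expansion of $\exp(-\omega^{Y_{\fb}(k),2}/(8\pi^{2}|a_{0}|^{2}t))$ assemble precisely into the stated polynomial $P_{\eta,\rho}(\sigma)$, and that they are odd in $\sigma$ (which is crucial for the functional equation). This amounts to verifying, for each $j$, the numerical identity $\int_{0}^{\infty}t^{-j-3/2}e^{-t\sigma^{2}/2}\,dt\propto \sigma^{2j+1}\Gamma(-j-\tfrac{1}{2})$ and matching constants with \eqref{eq:polyP}; the rest of the argument is structurally parallel to the manifold case of \cite{Shfried}.
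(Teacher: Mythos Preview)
Your proposal is correct and follows essentially the same approach as the paper: the paper's proof is the one-liner ``Proceeding as in \cite[Theorem 7.6]{Shfried}, by Theorems \ref{thm:orbint} and \ref{thm:sel}, our theorem follows,'' and your write-up is precisely an unpacking of that sentence, correctly identifying that the only new bookkeeping beyond \cite{Shfried} is the contribution of the finitely many nontrivial elliptic classes via \eqref{eq:ellor}, which assembles into the additional summands of $P_{\eta,\rho}$.
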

\begin{proof}
Proceeding as in \cite[Theorem 7.6]{Shfried}, by Theorems 
\ref{thm:sel} and \ref{thm:orbint}, our 
 theorem  follows.
 \end{proof}

\subsection{The  proof of  Theorem \ref{Thm:3} when G has compact 
center and $\delta(G)=1$ }
\label{sec:proof}
We  apply the results of subsection \ref{sec:selzeta} to $\eta_j$. 
Recall that $\alpha\in \fb^*$ is defined in \eqref{eq:655}. 
Proceeding as in 
\cite[Theorem 7.7]{Shfried}, by \eqref{eq:sumsum}, we find that 
$R_\rho(\sigma)$ is well-defined and holomorphic on the domain $\sigma\in \bC$ and $\Re(\sigma)\gg1$, and that 
\begin{align}\label{eq:RbyZ}
  R_\rho(\sigma)=\prod_{j=0}^{2l}Z_{\eta_j,\rho}\big(\sigma+(j-l)|\alpha|\big)^{(-1)^{j-1}}.
\end{align}
By Theorem \ref{thm:detfor} and \eqref{eq:RbyZ},  $R_\rho(\sigma)$ has a meromorphic extension to $\sigma\in \bC$.

For $0\le j\le 2l$, put 
\begin{align}\label{eq:rrs}
  r_{j}=m_{\eta_j,\rho}(0).
\end{align}
By the orbifold Hodge theorem \ref{thm:orbhodge}, as in \cite[(7-74)]{Shfried}, we have 
\begin{align}
\chi_{\rm top}'(Z,F)=2\sum_{j=0}^{l-1}(-1)^{j-1}r_j+(-1)^{l-1}r_l.
\end{align}
Set
\begin{align}\label{eq:Cr}
 & C_\rho=\prod_{j=0}^{l-1}\big(-4(l-j)^2|\alpha|^2\big)^{(-1)^{j-1}r_{j}},&r_\rho=2\sum_{j=0}^{l}(-1)^{j-1}r_{j}.
\end{align}

Proceeding as in \cite[(7-76)-(7-78)]{Shfried}, we get  
\eqref{eq:masi1}. If $H^{\cdot}(Z,F)=0$, proceeding as in 
\cite[Corollary 8.18]{Shfried}, for all $0\le j\le 2l$, we have 
\begin{align}
r_j=0.
\end{align}
By \eqref{eq:Cr}, we get \eqref{eq:masi2}, which completes the proof 
of Theorem \ref{Thm:3} in the case where $G$ has compact center and  
$\delta(G)=1$. \qed

\def\cprime{$'$}
\providecommand{\bysame}{\leavevmode\hbox to3em{\hrulefill}\thinspace}
\providecommand{\MR}{\relax\ifhmode\unskip\space\fi MR }
% \MRhref is called by the amsart/book/proc definition of \MR.
\providecommand{\MRhref}[2]{%
  \href{http://www.ams.org/mathscinet-getitem?mr=#1}{#2}
}
\providecommand{\href}[2]{#2}

\address{Institut de Mathématiques de Jussieu-Paris Rive Gauche, \\
Sorbonne Université,\\
4 place Jussieu, 75252 Paris Cedex 05, France\\
\email{shu.shen@imj-prg.fr
}}

\address{School of Mathematical Sciences, \\
University of Science and Technology of China,\\
96 Jinzhai Road, 
Hefei, Anhui 230026,
P. R. China.\\
\email{jianqing@ustc.edu.cn}\\
\received{}
}

\end{document}